\numberwithin{equation}{theorem}
\DeclareMathOperator{\Pprim}{\mathrm{P.Prim}}
\newcommand{\orbit}[1]{\mathbb{O}({#1})}
\newcommand{\p}{\mathfrak{p}}
\newcommand{\Loc}{\mathrm{Loc}}
\newcommand{\DLoc}{\mathrm{DLoc}}
\newcommand{\DLocl}[1]{\mathrm{DLoc}^{\leq {#1}}}
\newcommand{\n}{\mathbf{n}}
\newcommand{\m}{\mathbf{m}}
\newcommand{\Dx}{\mathrm{Dx}}
\title{The orbit method for the Virasoro algebra}
\author{Tuan Anh Pham}
\date{\today}
\address{School of Mathematics, The University of Edinburgh, United Kingdom}
\email{tuan.pham@ed.ac.uk}
\keywords{Dixmier map, Kirillov orbit method, Virasoro algebra, Witt algebra, primitive ideals, universal enveloping algebra, Poisson primitive ideal}
\subjclass[2020]{Primary: 17B68, 16D60; Secondary: 16S30, 17B10}
\begin{document}
\begin{abstract}
    Let $W = \CC[t, t^{-1}]\del_t$ be the Witt algebra of algebraic vector fields on $\CC^\times$ and let $\Vir$ be the Virasoro algebra, the unique nontrivial central extension of $W$. In~\cite{petukhov2022poisson}, it was shown that Poisson primitive ideals of $\Sa(W)$ and $\Sa(\Vir)$ can be constructed from elements of $W^*$ and $\Vir^*$ of a particular form, called \emph{local functions}. In this paper, we show how to use a local function on $W$ or $\Vir$ to construct a representation of the Lie algebra. We further show that the annihilators of these representations are new completely prime primitive ideals of $\Ua(W)$ and $\Ua(\Vir)$. We use this to define a Dixmier map from the Poisson primitive spectrum of $\Sa(\Vir)$, respectively $\Sa(W)$, to the primitive spectrum of $\Ua(\Vir)$, respectively $\Ua(W)$, successfully extending the orbit method from finite-dimensional solvable Lie algebras to our countable-dimensional setting. 
    
    Our method involves new ring homomorphisms from $\Ua(W)$ to the tensor product of a localized Weyl algebra and the enveloping algebra of a finite-dimensional solvable subquotient of $W$. We further show that the kernels of these homomorphisms are intersections of the primitive ideals constructed from natural subsets of $W^*$. As a corollary, we disprove the conjecture that any primitive ideal of $\Ua(W)$ is the kernel of some map from $\Ua(W)$ to the first Weyl algebra.
    \end{abstract}
\maketitle
\tableofcontents
\section{Introduction}

{
One of the guiding principles in Lie theory is Kirillov's orbit method~\cite{Kirillov2004LecturesOT}. In the nilpotent case, this can be summarized as a correspondence between the unitary dual of a nilpotent Lie group $G$ (the set of equivalence classes of unitary irreducible representations) and the set of (coadjoint) orbits of $G$ on $\mathrm{lie}(G)^*$. Dixmier~\cite{dixmier1996enveloping} then extended this method to a finite-dimensional solvable complex Lie algebra $\g$ and its adjoint group $G$ and offered a completely algebraic characterization of the orbit method by replacing irreducible unitary representations with their annihilators in the universal enveloping algebra $\Ua(\g)$, which he showed are completely prime primitive ideals. By work of Dixmier, Conze, Duflo, and Rentschler~\cite{borho1973}*{13.4, 15.1}, there is a natural \emph{Dixmier map} $\Dx^\g: \g^* \to \Prim \Ua(\g)$ for such $\g$. In summary, one can construct an irreducible representation $M'_\chi$ from a function $\chi \in \g^*$ such that $\Dx^\g(\chi) = \Ann_{\Ua(\g)} M'_\chi$. They showed that this Dixmier map furthermore is constant on coadjoint orbits to give a surjective, injective and continuous map 
\[ \overline{\Dx}^\g: \g^*/G \to \Prim \Ua(\g). \]
Mathieu ~\cite{mathieu1991bicon} later showed that the map $\overline{\Dx}^\g$ is open, thus a homeomorphism. However, the Dixmier map is not well-defined even for finite-dimensional semisimple complex Lie algebras. 

Recent developments in Poisson algebras have replaced the coadjoint orbit space $\g^*/G$ with the space of Poisson cores of the symmetric algebra $\Sa(\g)$, introduced by Brown and Gordon in ~\cite{brown2002Poisson}. We recall the essential definition that for any Lie algebra $\g$, the symmetric algebra $\Sa(\g)$ is a Poisson algebra with the Kostant-Kirillov Poisson bracket. If $\g$ is finite-dimensional, then $\Sa(\g) \cong \mathcal{O}(\g^*)$, the coordinate ring of $\g^*$. An ideal of a Poisson algebra $A$ is called \emph{Poisson} if it is a Lie ideal under the Poisson bracket. The \emph{Poisson core} $\mathrm{Core}(I)$ of an ideal $I$ of $A$ is the maximal Poisson ideal contained in $I$. A \emph{Poisson primitive ideal} is then the Poisson core of a maximal ideal of $A$. We let $\Pprim A$ denote the set of Poisson primitive ideals of $A$. In~\cite{goodearl2008semiclassical}*{Corollary 8.4}, Goodearl shows that when $\g$ is a finite dimensional complex Lie algebra, there is a homeomorphism 
\begin{align*}
    \g^*/G \xrightarrow{\sim}  \Pprim \Sa(\g),
\end{align*}
establishing a correspondence between $G$-orbits on $\g^*$ and Poisson primitive ideals. Thus, in the case when $\g$ is a finite-dimensional solvable complex Lie algebra, the Dixmier map can be rephrased as in ~\cite{goodearl2008semiclassical}*{Theorem 8.11}
as a homeomorphism  
\begin{align*}
    \widetilde{\Dx}^{\g}: \Pprim \Sa(\g) \xrightarrow{\sim} \Prim \Ua(\g). 
\end{align*}
Phrased this way, the Dixmier map is now completely independent of the Lie group $G$, and thus could be extended to a more general setting: e.g., quantization of the coordinate ring of a Poisson variety ~\cite{goodearl2008semiclassical}. 

In this paper, we construct the Dixmier map for two important infinite-dimensional Lie algebras: the \emph{Witt algebra} or \emph{centerless Virasoro algebra} $W = \CC[t, t^{-1}]\del_t$ of algebraic vector fields on $\CC^\times$, and its famous central extension the \emph{Virasoro algebra} $\Vir = \CC[t, t^{-1}]\del_t \oplus \CC z$, with Lie bracket given by 
\begin{align*}
    [f \del_t, g \del_t] = (fg'-f'g) \del_t + \mathrm{Res}_0 (f'g'' - f''g')z, \quad z \text{ is central.}
\end{align*}
We also consider the subalgebra $\W{-1} = \CC[t]\del_t$ of $W$ of algebraic vector fields on $\CC$.
These countably infinite-dimensional Lie algebras play important roles in mathematics and physics, for example in quantum physics, conformal field theory and vertex operator algebras. In recent decades, there has been significant progress in the ring theory of the enveloping algebras of these Lie algebras. In this introduction, we will state the results for $W$, but in the body, we will provide the details for known and our new results also for $\W{-1}$ and $\Vir$. Notably in ~\cite{sierra2014universal}, $\Ua(W)$ is shown to be not left or right Noetherian. Furthermore, $\Sa(W)$ is proven to satisfy ACC on two-sided radical Poisson ideals in ~\cite{sanchez2023poisson}. ~\cite{petukhov2022poisson} studies the primitive spectrum $\Pprim \Sa(W)$ and, in particular, answers the question of which functions on $W$ have nontrivial Poisson core.

One of the main results of this paper is that there is a Dixmier map from $\Pprim \Sa(W)$ to $\Prim \Ua(W)$, giving a large new class of primitive ideals of $\Ua(W)$. We construct a map $\Dx^{W}: W^* \to \Prim \Ua(W)$ and prove that $\Dx^{W}$ factors through the natural map $W^* \xrightarrow{\sim} \MSpec \Sa(W) \xrightarrow{\mathrm{P.Core}} \Pprim \Sa(W)$.  
\begin{theorem}\label{theo:Dixmier1}[Theorem~\ref{theo: primitive ideals} and Theorem~\ref{theo: Dixmier map}]
    There is a procedure to construct a representation $M'_\chi$ of $W$ from a function $\chi \in W^*$ so that $\Ann_{\Ua(W)} M'_\chi$ is always a primitive ideal, even though $M'_\chi$ is not always simple. This defines a map 
    \begin{align*}
        \Dx^{W}: W^* \to \Prim \Ua(W),  \quad \chi \mapsto \Ann_{\Ua(W)} M'_\chi.
    \end{align*}
    Moreover, $\Dx^{W}$ factors through the natural map $W^* \to \Pprim \Sa(W)$ to induce a map
    \begin{align*}
        \widetilde{\Dx}^{W}: \Pprim \Sa(W) \to \Prim \Ua(W).
    \end{align*}
\end{theorem}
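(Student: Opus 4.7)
The plan is to follow the strategy announced in the abstract. For each $\chi \in W^*$, I would first use the machinery of \cite{petukhov2022poisson} to put $\chi$ into its \emph{local function} normal form, whose data is supported on a finite set of distinguished points $a_1,\dots,a_r \in \CC^\times$ together with a finite-order germ of $\chi$ at each $a_i$. The higher-order germ data assembles into a finite-dimensional solvable subquotient $\mathfrak{s}_\chi$ of $W$, while the expansion of vector fields in local coordinates at the $a_i$ naturally defines a ring homomorphism
\[
\varphi_\chi : \Ua(W) \longrightarrow A_\chi \otimes \Ua(\mathfrak{s}_\chi),
\]
where $A_\chi$ is a localized Weyl algebra capturing the low-order differential-operator contribution at the $a_i$. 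This is precisely the kind of homomorphism advertised in the abstract.

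The module $M'_\chi$ is then built as $N_\chi \otimes L_\chi$, viewed as a $\Ua(W)$-module through $\varphi_\chi$. Here $N_\chi$ is a natural $A_\chi$-module, for instance a polynomial or Laurent realization in auxiliary variables at each $a_i$, and $L_\chi$ is the simple $\Ua(\mathfrak{s}_\chi)$-module whose annihilator is the classical Dixmier ideal $\Dx^{\mathfrak{s}_\chi}(\chi|_{\mathfrak{s}_\chi})$ provided by the finite-dimensional solvable theory of \cite{dixmier1996enveloping}. Because $N_\chi$ need not be simple as an $A_\chi$-module, $M'_\chi$ is not always simple. To establish primitivity of $\Ann_{\Ua(W)} M'_\chi$, I would analyze the annihilator of $N_\chi \otimes L_\chi$ in the target $A_\chi \otimes \Ua(\mathfrak{s}_\chi)$, using simplicity of $A_\chi$ to reduce to the tensor-product ideal $A_\chi \otimes \Ann_{\Ua(\mathfrak{s}_\chi)} L_\chi$, which is primitive because $\Ann L_\chi$ is. The main technical obstacle is then showing that the preimage under $\varphi_\chi$ of this primitive ideal in the target remains primitive in $\Ua(W)$; this is a ``denseness'' property of the image of $\varphi_\chi$ and is the essential content of Theorem~\ref{theo: primitive ideals}, requiring explicit analysis of how the Laurent-expansion and truncation pieces interact.

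Finally, for the factoring statement, I would argue that if two functions $\chi_1,\chi_2 \in W^*$ have the same Poisson core in $\Sa(W)$, then their normalized local data --- the support points, the subquotients $\mathfrak{s}_{\chi_i}$, and the restrictions $\chi_i|_{\mathfrak{s}_{\chi_i}}$ up to coadjoint-orbit equivalence --- coincide in a way that forces $\varphi_{\chi_1}^{-1}\bigl(A_{\chi_1}\otimes \Ann L_{\chi_1}\bigr) = \varphi_{\chi_2}^{-1}\bigl(A_{\chi_2}\otimes \Ann L_{\chi_2}\bigr)$. This should combine the Petukhov--Sierra classification of Poisson primitive ideals of $\Sa(W)$ with the fact that the classical Dixmier map for a finite-dimensional solvable Lie algebra is constant on coadjoint orbits, thereby descending $\Dx^W$ to the desired map $\widetilde{\Dx}^W : \Pprim \Sa(W) \to \Prim \Ua(W)$.
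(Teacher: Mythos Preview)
Your overall architecture --- pass through a homomorphism $\varphi_\chi: \Ua(W) \to A_\chi \otimes \Ua(\mathfrak{s}_\chi)$ and pull back the finite-dimensional solvable orbit method --- is exactly the paper's strategy (the $\varphi_\chi$ are the maps $\Psi^W_{\n}$), and your argument for the factoring through $\Pprim \Sa(W)$ is essentially correct and matches Sections~\ref{subsec:coorbit}--\ref{subsec:dixmiermap}. Two points, however, diverge from the paper in ways that matter.

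First, the construction of $M'_\chi$. The paper does \emph{not} define $M'_\chi$ as a restriction $N_\chi\otimes L_\chi$ through $\varphi_\chi$; it defines $M'_\chi$ intrinsically as an induced module $\Ua(W)\otimes_{\Ua(\p_\chi)}\kk'_{\chi,\p_\chi,W}$ from a polarization $\p_\chi\subset W$, in direct analogy with the classical orbit method (Definitions~\ref{def:orbitrep}, \ref{def:canrep}). Only afterwards (Proposition~\ref{prop:isoL}) is this shown to coincide with the restriction of a simple $\widetilde{T}_{\widehat{n}}$-module $L_{\chi'}$. This matters because the polarization picture is what makes the independence of choices (Theorem~\ref{theo:notdeppol}) and the link to pseudo-orbits transparent. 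Relatedly, your diagnosis of non-simplicity is off: in the paper the target module $L_{\chi'}$ is \emph{always} simple over $\widetilde{T}_{\widehat{n}}$ (Corollary~\ref{cor:relatingprimtive}); non-simplicity of $M'_\chi$ arises purely from restriction along $\Psi_{\widehat{n}}$, and only for the isolated family $\chi=\chi_{x;\alpha,\frac12}$.

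Second, and this is the real gap: primitivity of $Q_\chi$. You propose to show that the preimage under $\varphi_\chi$ of a primitive ideal is primitive via some unspecified ``denseness'' of the image. Preimages of primitive ideals are not primitive in general, and the paper does \emph{not} argue this way. Instead, it proves primitivity by showing directly that $M'_\chi$ is a simple $\Ua(W)$-module whenever $\chi_d\neq\chi_{x_d;\alpha,\frac12}$ for all $d$ --- this is the long combinatorial computation of Section~\ref{sec:primitive} (Theorems~\ref{theo:simplicityone}, \ref{theo:simplicitymulti}), constructing explicit elements of $\Ua(\W{-1})$ that carry any vector back to the generator $1_\chi$. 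The residual family is handled separately (Proposition~\ref{prop:primreplace}) by showing its annihilator coincides with that of a genuinely simple module, using a result of Conley--Martin. Without this direct simplicity argument, your plan has no mechanism to establish that $Q_\chi$ is primitive.
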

We will call $\Dx^{W}$ the \emph{Dixmier map} and $\widetilde{\Dx}^{W}$ the \emph{strong Dixmier map} of the Witt algebra. 

Let us explain what is proved about $\Pprim \Sa(W)$ in ~\cite{petukhov2022poisson} and then discuss the construction of $\Dx^W$ and $\widetilde{\Dx}^W$. By the extended Nullstellensatz ~\cite{mcconnell2001noncommutative}*{Corollary 9.1.8, Lemma 9.1.2}, maximal ideals of $\Sa(W)$ are in natural bijection with $W^*$. If $\chi \in W^*$, we denote the corresponding maximal ideal of $\Sa(W)$ by $\mf{m}_{\chi}$ and let $P(\chi) = \mathrm{Core}(\mf{m}_\chi)$ be the Poisson core of $\mf{m}_\chi$. One of the main results of ~\cite{petukhov2022poisson} is that $P(\chi) \neq 0$ if and only if $\chi$ is a \emph{local function} on $W$, which is defined as follows:
\begin{definition}[~\cite{petukhov2022poisson}*{Definition 3.0.2.}]
    A \emph{local function on $W$} is a sum of finitely many \emph{one-point local functions} of the form
    \begin{equation}\label{eq:deflocal function1}
        \chi_{x; \alpha_0, \dots, \alpha_n}: W \to \CC; \quad f\del_t \mapsto \alpha_0 f(x) + \alpha_1 f'(x) + \dots + \alpha_n f^{(n)}(x),  
    \end{equation}
    where $x, \alpha_0, \dots, \alpha_n \in \CC$ and $x, \alpha_n \neq 0$. We define the \emph{order} of $\chi_{x; \alpha_0, \dots, \alpha_n}$ to be $n$.  
\end{definition}

When $\chi \in W^*$ is not local, $P(\chi) = 0$. By work of Wallach and Mathieu (see
e.g.~\cite{iyudu2020enveloping}*{Theorem 1.4.}), $0$ is a primitive ideal of $\Ua(W)$, so we can define $\Dx^{W}(\chi) = 0$. Thus, we restrict our attention to local functions on $W$. Our construction of the representation $M'_\chi$ follows the orbit method in the finite-dimensional solvable case: for a local function $\chi$, we first prove the existence of a polarization $\p$, which is a finite codimensional subalgebra $\p$ of $W$ such that $\chi$ induces a one-dimensional representation $\CC_{\chi, \p}$ of $\p$. We define a one-dimensional twisted representation $\CC'_{\chi, \p}$ of $\p$ (see Definition~\ref{def:onedimtwist}) and then induce to a $W$-representation $M'_{\chi, \p} = \Ua(W) \otimes_{\Ua(\p)} \CC'_{\chi, \p}$. We call $M'_{\chi, \p}$ the \emph{local representation} of $W$ induced by $\chi$ and $\p$. Although the polarization $\p$ for $\chi$ is not unique (Example~\ref{ex:polarization}), we prove that the annihilator $\Ann_{\Ua(W)} M'_{\chi, \p}$ is independent of $\p$ (Theorem~\ref{theo:notdeppol}). Thus we can define $Q_\chi = \Ann_{\Ua(W)} M'_{\chi, \p}$. 

Local representations need not be irreducible. However, we show their annihilators are always primitive, hence we can define $\Dx^{W} (\chi) = Q_\chi$. Because $Q_\chi$ is independent of the choice of polarization,we can work with a \emph{canonical polarization} $\p_\chi$ (defined in Proposition~\ref{prop:canpol}) and define $M'_{\chi} := M'_{\chi, \p_\chi}$ to be a \emph{canonical local representation} of $W$. 

\begin{theorem}[Theorem~\ref{theo: primitive ideals} and Corollary~\ref{cor:comprime}]\label{theo: annihilators}
    Let $\chi$ be a local function on $W$ and let $M'_\chi$ be the corresponding canonical local representation of $W$. Then $Q_\chi \coloneqq \Ann_{\Ua(W)} M'_\chi$ is a completely prime primitive ideal of $\Ua(W)$. 
    
    Most of the local representations $M'_\chi$ are irreducible (exact details are provided in Theorem~\ref{theo:simplicitymulti} and  Corollary~\ref{cor:simplicityconverse}).
\end{theorem}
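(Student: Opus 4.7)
The strategy is to factor the action $\rho: \Ua(W) \to \operatorname{End}_\CC(M'_\chi)$ through a concrete algebra $A_\chi \otimes \Ua(\g_0)$, where $A_\chi$ is a localized Weyl algebra and $\g_0$ is a finite-dimensional solvable Lie algebra built from the canonical polarization. This is the ring homomorphism foreshadowed in the abstract, and once it is established, both conclusions follow from standard properties of the target. I would fix the canonical polarization $\p = \p_\chi$ and a vector-space complement $\n \subset W$, so that PBW identifies $M'_\chi \cong \Sa(\n)$. Direct computation of the twisted action of $t^k \del_t$ on this space shows that near each singular point $x$ in the support of $\chi$, only finitely many formal coordinates are involved, with pole order bounded by the order of $\chi$ at $x$; assembling the local pictures across the support of $\chi$ yields the claimed map $\rho$.

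Granting this factorization, complete primeness is immediate: localized Weyl algebras are Noetherian domains, enveloping algebras of finite-dimensional solvable Lie algebras are domains, and tensor products of $\CC$-domains are domains, so $\Ua(W)/Q_\chi$ embeds into a domain. For primitivity, the image of $\rho$ must admit a faithful simple module, and pulling it back to $\Ua(W)$ then exhibits $Q_\chi$ as the annihilator of a simple $W$-module. Since $A_\chi$ is simple, simple modules over $A_\chi \otimes \Ua(\g_0)$ take the form $A_\chi \otimes V$ for simple $\Ua(\g_0)$-modules $V$, so the problem reduces to a Dixmier-type computation on the finite-dimensional solvable factor $\g_0$. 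For the irreducibility claim, I would analyze when the cyclic generator $1 \otimes 1 \in M'_\chi$ already generates all of $M'_\chi$; this reduces to an arithmetic nondegeneracy condition on $\alpha_0, \dots, \alpha_n$ which fails only on a proper subvariety, as quantified in Theorem~\ref{theo:simplicitymulti}.

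The main obstacle is the factorization through $A_\chi \otimes \Ua(\g_0)$ itself: naively, $t^k \del_t$ acts on $\Sa(\n)$ by differential operators whose order and coefficient complexity grow with $k$, so it is not a priori clear that the image of $\rho$ sits inside a fixed finitely generated algebra. Exploiting the cancellation between the $\chi$-twist on $\CC'_{\chi,\p}$ and the polarization condition, in order to confine the essential data to finitely many formal coordinates around the support of $\chi$, is the technical heart of the proof. Once this truncation is in place, the remaining steps amount to a careful adaptation of the Dixmier-Conze-Duflo-Rentschler machinery to the concrete finite-dimensional target $A_\chi \otimes \Ua(\g_0)$.
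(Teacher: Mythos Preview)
Your overall strategy---factor the action through a tensor product of a localized Weyl algebra with the enveloping algebra of a finite-dimensional solvable Lie algebra---is exactly what the paper does, and your argument for complete primeness is essentially the paper's (Corollary~\ref{cor:comprime}). One correction: $\Ua(W)/Q_\chi$ does not in general embed into $A_\chi \otimes \Ua(\g_0)$ itself, because $Q_\chi \supsetneq \ker\Psi$ unless $\chi$ has totally even positive order. What the paper establishes is $Q_\chi = \Psi_\n^{-1}(\tilde A_\ell \otimes Q_{\bar\chi})$ for a specific primitive ideal $Q_{\bar\chi}$ of $\Ua(\g_\n)$ (Proposition~\ref{prop:Qchihatchimulti}), so the embedding is into $\tilde A_\ell \otimes (\Ua(\g_\n)/Q_{\bar\chi})$, and one then invokes Dixmier's theorem that $Q_{\bar\chi}$ is completely prime. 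You gesture at this with ``a Dixmier-type computation on the finite-dimensional solvable factor,'' so the fix is small.

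The primitivity argument has a genuine gap. You propose to pull back a simple module over $A_\chi \otimes \Ua(\g_0)$ along $\Psi$, but restriction along a non-surjective ring homomorphism need not preserve simplicity. Indeed the paper's module $L_{\chi'}$ over $\widetilde T_{\widehat n}$ restricts precisely to $M'_\chi$ (Proposition~\ref{prop:isoL}), and this restriction fails to be simple exactly when some component satisfies $\chi_d = \chi_{x_d; \alpha, \tfrac12}$ (Corollary~\ref{cor:simplicityconverse}). The paper does not use the factorization for primitivity at all. Instead it proves directly, by an extended explicit computation occupying most of Section~\ref{sec:primitive}, that $M'_\chi$ is a simple $\Ua(W)$-module whenever no component is exceptional (Theorem~\ref{theo:simplicitymulti}). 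The exceptional cases are handled by a separate device (Proposition~\ref{prop:primreplace}, relying on~\eqref{eq:primreplace}): one shows $Q_\chi = Q_\eta$ for some non-exceptional $\eta$, so that $Q_\chi$ is the annihilator of the simple module $M'_\eta$. Your outline contains no mechanism for these bad cases, and the ``pull back a simple module'' step cannot supply one.
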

Since the $Q_\chi$ are completely prime ideals, by ~\cite{iyudu2020enveloping}*{Proposition 6.4} $\Ua(W)$ satisifies the ascending chain condition (ACC) for annihilators of local representations. It is an open question whether $\Ua(W)$ satisfies the ACC on arbitrary two-sided ideals. 

The representations $M'_\chi$ have been constructed previously in ~\cite{ondrus2018vir}, where the polarization $\p_\chi$ is called a polynomial subalgebra and the $M'_\chi$ are exactly the modules induced from polynomial subalgebras. Our description via local functions gives a more natural context for these canonical local representations, as they are induced from functions on $W$. The subalgebras of ~\cite{ondrus2018vir} are now realized as canonical polarizations of local functions. Furthermore, our construction follows the guiding principle of Kirillov's orbit method. 

Once the Dixmier map $\Dx^{W}: W^* \to \Prim \Ua(W)$ is constructed, it is natural to try to understand its fibers; recall that in the finite-dimensional solvable case, these are percisely the coadjoint orbits, or, in terms of the Lie algebra, the Dixmier map gives a homeomorphism between $\Pprim \Sa(\g)$ and $\Prim \Ua(\g)$. Thus, one immediately asks if $\Dx^{W}$ factors through $\Pprim \Sa(W)$. We recall notation from ~\cite{petukhov2022poisson} that for $\g$ a Lie algebra, the \emph{pseudo-orbit} of $\chi \in \g^*$ is defined as 
\begin{align*}
    \orbit{\chi} = \{ \mu \in \g^*| P(\mu) = P(\chi)\},
\end{align*}
so pseudo-orbits are precisely fibres of $\g^* \to \Pprim \Sa(\g)$. In these terms, we ask: if $\orbit{\chi} = \orbit{\eta}$, then do we have $Q_\chi = Q_\eta$? To answer this, we must understand more about the annihilators $Q_\chi$.

For simplicity, suppose that $\chi=\chi_{x; \alpha_0, \dots, \alpha_n}$ is a one-point local function of the form \eqref{eq:deflocal function1} of order $n$. Let $\g_n = \kk\{v_0, \dots, v_{n-1} \}$ with the Lie bracket $[v_i, v_j]= (j-i)v_{i+j}$ where $v_{\geq n}= 0$. Note that $\g_n$ is a subquotient of $W$, but not a quotient. Let $\tilde{A}_1 = \CC[t, t^{-1}, \del]$ be the localized first Weyl algebra with $\del t - t \del = 1$. We construct an algebra homomorphism from $\Ua(W)$ to $\tilde{A}_1 \otimes \Ua(\g_n)$, and show that $Q_\chi$ is the preimage of a primitive ideal of $\tilde{A}_1 \otimes \Ua(\g_n)$.
\begin{theorem}[Theorem~\ref{theo:psin}, Corollary~\ref{cor:relatingprimtive} and Corollary~\ref{cor:kereven}]\label{theo:algebrahom}
    Let $n \in \NN$. There are algebra homomorphisms
    \begin{align*}
        \Psi^{\W{-1}}_{\infty} &: \Ua(\W{-1}) \to \CC[t, \partial] \otimes \Ua(\W{0}), &\quad
        f \partial &\mapsto f(t) \partial_t + \sum_{i=0}^{\infty} \frac{f^{(i+1)}(t)}{(i+1)!}v_i, \\
        \Psi^W_{n} &: \Ua(W) \to \tilde{A}_1 \otimes \Ua(\g_n), &\quad
        f \partial &\mapsto f(t) \partial_t + \sum_{i=0}^{n-1} \frac{f^{i+1}(t)}{(i+1)!}v_i.
    \end{align*}
    Furthermore, if $\chi=\chi_{x; \alpha_0, \dots, \alpha_n}$ is a one-point local function on $W$, then $Q_\chi$ is the preimage of a primitive ideal of $\tilde{A}_1 \otimes \Ua(\g_n)$ via $\Psi^W_{n}$. Moreover, if $\chi$ is a one-point local function of order $2k>0$, then $Q_\chi = \ker \Psi^W_{2k}$.
\end{theorem}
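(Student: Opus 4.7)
The statement has three parts: (a) $\Psi^{\W{-1}}_\infty$ and $\Psi^W_n$ are algebra homomorphisms; (b) for a one-point local function $\chi$ of order $n$, the ideal $Q_\chi$ is the preimage under $\Psi^W_n$ of a primitive ideal of $\tilde{A}_1\otimes\Ua(\g_n)$; (c) when $n=2k>0$, this preimage is the full kernel $\ker\Psi^W_{2k}$. For (a), it suffices, since $\Ua(W)$ is generated by $W$, to verify that $\Psi^W_n$ preserves the Lie bracket. Using that $\tilde{A}_1\otimes 1$ commutes with $1\otimes\Ua(\g_n)$, I would expand
\[
[\Psi^W_n(f\partial),\Psi^W_n(g\partial)] = (fg'-f'g)\partial_t + \sum_{j=0}^{n-1}\tfrac{fg^{(j+2)}-gf^{(j+2)}}{(j+1)!}v_j + \sum_{\substack{i,j\ge 0\\ i+j\le n-1}}\tfrac{(j-i)f^{(i+1)}g^{(j+1)}}{(i+1)!(j+1)!}v_{i+j},
\]
and then match the coefficient of each $v_k$ against $\tfrac{(fg'-f'g)^{(k+1)}}{(k+1)!}$ coming from $\Psi^W_n((fg'-f'g)\partial)$. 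This reduces to the elementary identity $\tfrac{\binom{k+1}{\ell} - \binom{k+1}{\ell-1}}{(k+1)!} = \tfrac{j-i}{(i+1)!(j+1)!}$ for $\ell=i+1$ and $i+j=k$, which follows from $\binom{k+1}{\ell} = \tfrac{(k+1)!}{(i+1)!j!}$ and $\binom{k+1}{\ell-1} = \tfrac{(k+1)!}{i!(j+1)!}$. The same computation proves (a) for $\Psi^{\W{-1}}_\infty$; on polynomial inputs the sum terminates automatically.

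For (b), I would exhibit $M'_\chi$ as the $\Ua(W)$-module obtained by pulling back, along $\Psi^W_n$, an irreducible $\tilde{A}_1\otimes\Ua(\g_n)$-module. The subalgebra $W_{x,1}\subseteq W$ of vector fields vanishing at $x$ has basis $(t-x)^{i+1}\partial$, and $W_{x,1}/W_{x,n+1}\cong\g_n$ via $(t-x)^{i+1}\partial \mapsto v_i$. Under this isomorphism $\chi$ induces a functional $\chi'\in\g_n^*$ with $\chi'(v_i) = (i+1)!\alpha_{i+1}$. The classical Dixmier map for the finite-dimensional solvable Lie algebra $\g_n$ produces an irreducible $\Ua(\g_n)$-module $V_{\chi'}$ with primitive annihilator $\Dx^{\g_n}(\chi')$. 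I then form
\[
N := \CC[t,t^{-1}] \otimes V_{\chi'},
\]
with $\tilde{A}_1$ acting naturally on the first factor and $\Ua(\g_n)$ on the second; this is irreducible because $\tilde{A}_1$ is simple with $\CC[t,t^{-1}]$ a faithful module, and $V_{\chi'}$ is simple. The core of (b) is to prove that, after an obvious one-dimensional twist absorbing the $\alpha_0 f(x)$-term, $M'_\chi$ is isomorphic as a $\Ua(W)$-module to $N$ with action through $\Psi^W_n$. I would construct this isomorphism by matching $\p_\chi$ with the preimage in $W$ of a polarization of $\chi'$ in $\g_n$ and checking via PBW bases that the induced modules agree term by term. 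Granted the isomorphism, the identity $\Ann_{\Ua(W)}N = (\Psi^W_n)^{-1}(\Ann_{\tilde{A}_1\otimes\Ua(\g_n)}N)$ yields $Q_\chi=(\Psi^W_n)^{-1}(\Ann N)$, and $\Ann N$ is primitive.

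For (c), when $n = 2k > 0$ and $\alpha_{2k}\ne 0$, the Kirillov skew form $B_{\chi'}(v_i,v_j) = (j-i)\chi'(v_{i+j})$ on $\g_{2k}$ vanishes below the anti-diagonal (for $i+j\ge 2k$, since $v_{i+j}=0$) and has nonzero entries $(2k-1-2i)(2k)!\alpha_{2k}$ on the anti-diagonal $i+j=2k-1$; after column reversal this becomes an upper-triangular matrix with nonzero diagonal, so $B_{\chi'}$ has full rank $2k$. Consequently the coadjoint $\g_{2k}$-orbit of $\chi'$ has maximal dimension $\dim\g_{2k}$, and by Dixmier $\Dx^{\g_{2k}}(\chi')=0$. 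A standard tensor-product argument, using that $\tilde{A}_1\otimes\Ua(\g_{2k})$ is a prime Noetherian algebra and that $\CC[t,t^{-1}]$ is faithful over the simple ring $\tilde{A}_1$, then gives $\Ann N=0$, whence $Q_\chi = \ker\Psi^W_{2k}$. The main obstacle is step (b): constructing and verifying the isomorphism $M'_\chi \cong N$, which requires a delicate matching of $\p_\chi$ and its twisted character with the polarization of $\chi'$ in $\g_n$ used to build $V_{\chi'}$, and comparing PBW bases on both sides under $\Psi^W_n$.
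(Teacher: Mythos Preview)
Your argument for (a) is correct and essentially equivalent to the paper's computation; the paper routes through a Poisson version first and then lifts via an associated graded argument, but the underlying binomial identity is the same as yours.

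The gap is in (b). Your proposed module $N=\CC[t,t^{-1}]\otimes V_{\chi'}$ is \emph{not} isomorphic to $M'_\chi$ as a $\Ua(W)$-module via $\Psi^W_n$, and no one-dimensional twist will repair this. The point is that $N$ does not see the base point $x$ at all, whereas $M'_\chi$ certainly does. Concretely, take $n=1$, so $\g_1=\CC v_0$ is abelian and $V_{\chi'}$ is one-dimensional; then $N=\CC[t,t^{-1}]$ with $e_0=t\partial$ acting (via $\Psi^W_1(t\partial)=t\partial_t+v_0$) as $t\partial_t+\text{const}$, which is diagonalizable with eigenvectors $t^k$. In $M'_\chi$ with PBW basis $\partial^k 1_\chi$ one computes $e_0\cdot \partial^k 1_\chi=(\alpha_1-\tfrac12-k)\partial^k 1_\chi + x\,\partial^{k+1}1_\chi$; for $x\ne 0$ the eigenvector equations force infinite formal sums $\sum_k \tfrac{x^k}{k!}\partial^k 1_\chi$, so $e_0$ has \emph{no} eigenvectors in $M'_\chi$. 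Hence $M'_\chi\not\cong N$.

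What the paper does instead is use the $\tilde A_1$-module $\tilde A_1/(t-x)\cong\CC[\partial]$ rather than $\CC[t,t^{-1}]$: it sets $L_{\chi'}=\bigl(\tilde A_1\otimes\Ua(\g_n)\bigr)\big/\bigl(t-x,\ v_m-\chi'(u_m),\dots\bigr)$ and proves $M'_\chi\cong\mathrm{res}_{\Psi^W_n}L_{\chi'}$ by an explicit PBW matching. The key is that $t$ must act as the \emph{scalar} $x$, not by multiplication. Once you have this correct module, your strategy goes through: $\Ann_{\tilde T_n}L_{\chi'}=\tilde A_1\otimes Q_{\widehat{\chi}}$ (using that $\tilde A_1$ is central simple), and this is primitive. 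Note that your $N$ and the paper's $L_{\chi'}$ do have the same annihilator in $\tilde T_n$ (both equal $\tilde A_1\otimes Q_{\widehat{\chi}}$), so the \emph{conclusion} you want is true---but you cannot reach it via the isomorphism you propose.

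Your argument for (c) is sound in outline once (b) is fixed: full rank of $B_{\chi'}$ on $\g_{2k}$ gives an open coadjoint orbit, hence $Q_{\widehat{\chi}}=0$, hence $Q_\chi=\ker\Psi^W_{2k}$. The paper instead constructs an explicit embedding $\Ua(\g_{2k})\hookrightarrow A_k$ and a faithful $\tilde A_{k+1}$-module realizing $M'_\chi$, which is more hands-on but avoids invoking the abstract fact that the open orbit gives the zero primitive ideal.
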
 
The map $\Psi^W_{n}$ can be seen as the universal map for one-point local functions of order at most $n$  (Corollary~\ref{cor:psiuniversal}) and we refer to $\Psi^{\W{-1}}_{\infty}$ as a ``master homomorphism''. We also define a master homomorphism for $W$ in the body of the paper. These homomorphisms are interesting on their own and to our best knowledge, they have not appeared in the literature previously. 

For an arbitrary local function $\chi$, we tensor together several of the maps $\Psi^W_{n}$ to construct a homomorphism $\Psi^W_{\n} : \Ua(W) \to \tilde{A}_\ell \otimes \Ua(\g_\n)$, where $\tilde{A}_\ell$ is a localization of the $\ell$-th Weyl algebra and $\g_\n$ is a suitable finite-dimensional solvable Lie algebra, such that $Q_\chi \supseteq \ker \Psi^W_\n$. Moreover, $Q_\chi$ is the preimage of a primitive ideal of $\tilde{A}_\ell \otimes \Ua(\g_\n)$ via $\Psi^W_\n$ and further corresponds to a primitive ideal $Q_{\bar{\chi}}$ of $\Ua(\g_\n)$, where $\bar{\chi} \in \g^*_\n$. In fact, we can compute $\bar{\chi}$ from $\chi$ as explained in Notation~\ref{not:barchi}.
In other words, we have
\begin{equation}\label{eq: annihilator intro}
    Q_\chi = (\Psi_\n^W)^{-1}(\tilde{A}_\ell \otimes Q_{\bar{\chi}}).
\end{equation}
Let $G_\n$ be the adjoint alegbraic group of $\g_\n$. By the finite-dimensional orbit method, $Q_{\bar{\chi}}$ only depends on $G_\n \cdot \bar{\chi}$, and we establish a relationship between coadjoint orbits of $G_\n$ and pseudo-orbits in $W^*$.
\begin{theorem}[Corollary~\ref{cor:orbitrelation} and Proposition \ref{prop:anndeppseudo}]
    Let $\chi, \eta$ be local functions on $W$ of order $\neq 0$. Then for suitable $\n$ 
    \begin{align*}
        \orbit{\chi} = \orbit{\eta} \Leftrightarrow  G_\n \cdot \bar{\chi} = G_\n \cdot \bar{\eta} \Leftrightarrow Q_{\bar{\chi}} = Q_{\bar{\eta}}. 
    \end{align*}
    Moreover, if $\orbit{\chi} = \orbit{\eta}$ then $Q_\chi = Q_\eta$. There is thus a well-defined strong Dixmier map $\widetilde{\Dx}^{W}$ given by
    \begin{align*}
        \widetilde{\Dx}^{W}: \Pprim \Sa(W) \to \Prim \Ua(W), \quad  0 \mapsto 0, \quad   
        P(\chi) \neq 0 \mapsto Q_\chi.
    \end{align*}
\end{theorem}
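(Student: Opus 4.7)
The plan is to reduce the statement to the classical Dixmier theorem for the finite-dimensional solvable Lie algebra $\g_\n$, exploiting the reduction to $\tilde{A}_\ell \otimes \Ua(\g_\n)$ supplied by Theorem~\ref{theo:algebrahom} together with equation \eqref{eq: annihilator intro}. The three-way equivalence splits into three logically independent pieces, and the final well-definedness of $\widetilde{\Dx}^{W}$ is then a formal consequence.

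First I would pin down the meaning of ``suitable $\n$''. Writing a local function $\chi$ of nonzero order as a sum of one-point local functions supported at distinct nonzero points $x_1,\dots,x_\ell$ of orders $n_1,\dots,n_\ell$, set $\n=(n_1,\dots,n_\ell)$. By the classification of Poisson cores in \cite{petukhov2022poisson}, equality $P(\chi)=P(\eta)$ already forces the supports and the multi-order $\n$ to agree up to permutation. Hence if $\orbit{\chi}=\orbit{\eta}$ a single $\n$ works for both, and $\bar\chi,\bar\eta$ sit in a common $\g_\n^*$ via Notation~\ref{not:barchi}.

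Next I would establish the outer equivalence $\orbit{\chi}=\orbit{\eta}\iff G_\n\cdot\bar\chi=G_\n\cdot\bar\eta$. For the forward direction the crucial observation is that the projection $W^*\to\g_\n^*$, $\chi\mapsto\bar\chi$, is equivariant for the relevant actions: the coadjoint action of the finite-dimensional subquotient $\g_\n$ of $W$ on $\chi$ projects to the $\g_\n$-coadjoint action on $\bar\chi$, and one argues that moving $\chi$ inside its pseudo-orbit by a $W$-coadjoint flow moves $\bar\chi$ inside a single $G_\n$-orbit (the Poisson-core description of pseudo-orbits from \cite{petukhov2022poisson} gives the invariance needed). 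For the reverse direction I would exhibit, for each support point $x_i$ and each basis vector $v_j$ of $\g_\n$, an explicit vector field $f\partial_t\in W$ whose truncated Taylor expansion at $x_i$ realizes $v_j$ and which vanishes to sufficient order at the other support points; the resulting coadjoint motion on $\chi$ stays within the pseudo-orbit (by the same Poisson-core invariance) and projects to the prescribed $G_\n$-motion on $\bar\chi$, so every point of $G_\n\cdot\bar\chi$ is reached by some element of $\orbit{\chi}$. The middle equivalence $G_\n\cdot\bar\chi=G_\n\cdot\bar\eta\iff Q_{\bar\chi}=Q_{\bar\eta}$ is the classical Dixmier theorem for the finite-dimensional solvable Lie algebra $\g_\n$, which may be cited directly from \cite{goodearl2008semiclassical}*{Theorem 8.11}.

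The ``moreover'' is then formal: combining the three equivalences with $Q_\chi=(\Psi_\n^W)^{-1}(\tilde{A}_\ell\otimes Q_{\bar\chi})$ from \eqref{eq: annihilator intro} turns $\orbit{\chi}=\orbit{\eta}$ into $Q_{\bar\chi}=Q_{\bar\eta}$ and hence $Q_\chi=Q_\eta$. To promote this into well-definedness of $\widetilde{\Dx}^{W}$ on all of $\Pprim\Sa(W)$ I would separately handle the two remaining cases: when $\chi$ has order zero the canonical polarization and hence $Q_\chi$ can be computed directly and is again constant on pseudo-orbits, and when $P(\chi)=0$ the assignment $\chi\mapsto 0$ is automatic since $0$ is primitive in $\Ua(W)$. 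The main obstacle will be the reverse direction of the first equivalence: the coadjoint action of $W$ on $W^*$ is infinite-dimensional and in general poorly behaved, so I must show that every finite $G_\n$-motion of $\bar\chi$ genuinely lifts to an honest $W$-motion of $\chi$ within a single Poisson core, not merely an infinitesimal or formal one. The local, jet-wise nature of $\chi$ together with bump-field constructions at each $x_i$ is what makes this tractable, but verifying that the lifted motion preserves the Poisson core throughout the lift (rather than only at the endpoints) is the genuinely new technical input.
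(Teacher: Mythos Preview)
Your treatment of the middle equivalence (classical Dixmier for $\g_\n$) and the ``moreover'' deduction via \eqref{eq: annihilator intro} is correct and matches the paper exactly. The issue is the first equivalence $\orbit{\chi}=\orbit{\eta}\iff G_\n\cdot\bar\chi=G_\n\cdot\bar\eta$, where your proposed argument has a genuine gap.

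You want to prove this by an equivariance/lifting argument: project $W$-coadjoint flows down to $G_\n$-motions for the forward direction, and lift $G_\n$-motions back to $W$ via bump fields for the reverse. The forward direction is already shaky because pseudo-orbits in $W^*$ are \emph{not} known to be $W$-coadjoint orbits---the pseudo-orbit $\orbit{\chi}$ is defined purely by equality of Poisson cores, and there is no adjoint algebraic group for $W$ whose orbits realize them. So ``moving $\chi$ inside its pseudo-orbit by a $W$-coadjoint flow'' is not a priori a meaningful operation that reaches every point of $\orbit{\chi}$. For the reverse direction you yourself identify the obstacle: lifting a finite group element of $G_\n$ to an honest motion in $W^*$ that stays inside a single Poisson core is exactly the kind of integration problem that fails in infinite dimensions, and the bump-field idea only gives infinitesimal control.

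The paper avoids all of this. It never attempts to relate the two actions dynamically. Instead it (i) gives an explicit realization of $G_n$ as substitution automorphisms $g_s$ of $\g_n\cong a\kk[a]/(a^{n+1})\,\partial_a$, (ii) verifies directly that $G_n$ is the adjoint group of $\g_n$, and (iii) computes the $G_n$-orbits on $\g_n^*$ by hand, reducing every $\bar\chi$ to a canonical form $v_{n-1}^*$ or $v_{n-1}^*+\beta v_m^*$ via a sequence of elementary transformations $g_{a+\alpha a^{j+1}}$. The punchline is that this sequence of transformations is \emph{literally the truncation} of the sequence used in \cite{petukhov2022poisson}*{Theorem 4.2.1} to compute pseudo-orbits of one-point local functions, so the canonical forms and their orbit invariants coincide on the nose. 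The multi-point case then reduces to the one-point case on each factor. This is a parallel-computation argument rather than an equivariance argument, and it sidesteps the lifting problem entirely.
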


It has been asked in~\cite{conley2024} if every primitive ideal of $\Ua(\W{-1})$ is the kernel of some map to ${A}_1$. We disprove this by considering suitable $Q_\chi$. In particular, we show that if $\chi, \eta$ are one-point local functions such that $P(\chi) \subseteq P(\eta)$, then $Q_\chi \subseteq Q_\eta$. As a consequence, we show that if $\chi$ is a one-point local function on $\W{-1}$ of order $\geq 2$, then $\Ua(\W{-1})/Q_\chi$ has Gelfand-Kirillov dimension at least 3 and does not embed in the first Weyl algebra (Proposition~\ref{prop:GKdim3}). However, a more general version of the conjecture remains open, as every primitive ideal $Q_\chi$ constructed in this paper is the kernel of a map to some Weyl algebra (Corollary~\ref{cor:annkerweyl}). If $\g$ is a finite-dimensional solvable Lie algebra, then $\Dx^\g$ and $\widetilde{\Dx}^\g$ are surjective onto $\Prim \Ua(\g)$. However, their proof in \cite{dixmier1996enveloping}*{Theorem 6.1.7} relies on an induction argument, thus fails to work for $W$. Hence surjectivity of the (strong) Dixmier map is a difficult open question and subject of further research.  

The organisation of the paper is as follows. In Section~\ref{sec:background}, we give background on the Lie algebras of interest, Poisson algebras, 
recall key results from~\cite{petukhov2022poisson} about $\Pprim \Sa(W)$, and recall the Dixmier map for finite-dimensional solvable Lie algebras. In Section~\ref{sec: representation}, we construct local representations of $W$ and prove that the annihilator of a local representation does not depend on the choice of polarization. In Section~\ref{sec:primitive}, we prove part of Theorem~\ref{theo: annihilators}, that annihilators of local representations are indeed primitive. In Section~\ref{sec: psi}, we show that $\Psi^W_n$ and $\Psi^{\W{-1}}_\infty$ in Theorem~\ref{theo:algebrahom} are algebra homomorphisms by showing their associated graded maps 
$\Phi^W_n$ and $\Phi^{\W{-1}}_\infty$ are Poisson homomorphisms. In Section~\ref{sec:annihilators}, we prove~\eqref{eq: annihilator intro} and other results relating primitive ideals $Q_\chi$ of $\Ua(W)$ to primitive ideals $Q_{\bar{\chi}}$ in $\Ua(\g_\n)$. In Section~\ref{sec: Dixmier map}, we finally relate the orbit method for the finite-dimensional Lie algebra $\g_n$ to that of $W$ to prove the remaining parts of Theorem~\ref{theo: annihilators} for $W$ as well as $\W{-1}$ and $\Vir$. We end the paper in Section~\ref{sec:inclusion}, where we study inclusions of annihilators of local representations and motivate some further research questions. 

\textbf{Acknowledgements:} This work was done as part of the author's PhD research at the University of Edinburgh.
We thank Lucas Buzaglo and James Timmins for interesting discussion and questions.
}

\section{Background}\label{sec:background}

In this section, we recall the general notions of Poisson algebra, Poisson ideals and Poisson primitive ideals. We apply these concepts to the symmetric algebras of the infinite-dimensional Lie algebras of interest and collect the central results of~\cite{petukhov2022poisson} on the primitive spectra of these symmetric algebras. Lastly, we restate the classical orbit method for finite-dimensional solvable Lie algebras ~\cite{dixmier1996enveloping}. 

Fix an uncountable algebraically closed base field $\kk$ of characteristic $0$ throughout the paper; all algebras mentioned will be unital $\kk$-algebras and all vector spaces (in particular, Lie algebras) are assumed to be defined over $\kk$. Unless otherwise specified, tensor products are over $\kk$, so we abbreviate $\otimes_\kk$ to $\otimes$.  

The \emph{Witt algebra} $W = \kk[t, t^{-1}]\del$\label{pl:W} is the Lie algebra of algebraic vector fields on $\kk^\times := \kk \backslash \{0\}$. We denote its canonical basis by $e_i = t^{i+1}\del$\label{pl:ei} and put a $\ZZ$-grading on it by setting $\deg e_i = i$. Subalgebras of $W$ of the form $\W{n} = \kk\{e_i | i \geq n\}$ for $n \geq -1$\label{pl:Wn} are of particular interest, especially $\W{-1} = \kk[t]\del$\label{pl:W-1}.

The \emph{Virasoro algebra} $\Vir$ is the unique nontrivial central extension of $W$ and is isomorphic as a vector space to $\kk[t, t^{-1}]\del \oplus \kk z$\label{pl:Vir}. It is endowed with a Lie algebra structure by the formula 
\begin{align*}
    [f \del_t, g \del_t] = (fg'-f'g) \del_t + \mathrm{Res}_0 (f'g'' - f''g')z.
\end{align*}
(Here $\mathrm{Res}_0(f)$ stands for the algebraic residue of $f$ at $0$, i.e., the coefficient of $t^{-1}$ in the Laurent expansion of $f$ at $0$.) We denote the canonical Lie algebra projection $\Vir \to W$\label{pl:pVirW} given by factoring out $z$ by $\Gamma$.

We also consider subalgebras of $W, \W{-1}$ and $\Vir$ of the following form.
\begin{notation}\label{pl:Wf}
    For $f \in \kk[t] \nonzero$ in the case of $\W{-1}$ and $f \in \kk[t, t^{-1}]$ in the case of $W$ and $\Vir$, we define 
    \begin{equation}
        \W{-1}(f) = f\W{-1}, \quad W(f) = fW, \quad  \Vir(f) = W(f) + \kk z.
    \end{equation}
\end{notation}
These subalgebras are also known as polynomial subalgebras in~\cite{ondrus2018vir}.
\subsection{The Poisson spectrum}

Let $A$ be a commutative $\kk$-algebra and $\{\cdot\ , \cdot\}: A \times A \to A $ a skew-symmetric $\kk$-bilinear map. 
We say that $(A, \{\cdot\ , \cdot\})$ is a {\it Poisson algebra}\label{ind:Poissonalg} if $\{\cdot\ , \cdot\}$ satisfies the Leibniz rule on each input and the Jacobi identity. An ideal $I$ of $A$ is called \emph{Poisson} if $\{I, A\}\subseteq I$.\label{ind:Poissonideal} Let $I$ be an ideal of $A$. The sum of all Poisson subideals of $I$ is the maximal Poisson ideal inside $I$; we denote this Poisson ideal by ${\rm Core}(I)$ and refer to it as the {\em Poisson core} 
\label{ind:poissoncore2}
of $I$. A Poisson ideal $I$ is called \emph{Poisson primitive} if $I = \Core(\mathfrak{m})$ for a maximal ideal $\mathfrak{m}$ of $A$; Poisson primitive ideals are prime. Let $\Pprim A$ \label{pl:Pprim} be the set of Poisson primitive ideals of $A$ endowed with the Zariski topology, where the closed subsets are defined by Poisson ideals of $A$.

Consider a Lie algebra $\g$ with $\dim \g < |\kk|$. It is well known that $\Sa(\g)$ possesses a canonical Poisson algebra structure, induced by defining $\{u, v\} = [u, v]$ for any $u, v \in \g$. Moreover, our assumption on the cardinality of the field $\kk$
implies that $\Sa(\g)$ satisfies the extended Nullstellensatz~\cite{mcconnell2001noncommutative}*{Corollary 9.1.8, Lemma 9.1.2}. Thus, maximal ideals of $\Sa(\g)$ can be canonically identified with $\g^*$:
\begin{align*}
    \chi \in \g^* \leftrightarrow \mathfrak{m}_\chi := \ker \ev_\chi,
\end{align*}
where $\ev_\chi$ the induced homomorphism $\Sa(\g) \to \kk$ defined by $\ev_\chi(f) = f(\chi)$. Thus any Poisson primitive ideal of $\Sa(\g)$ is equal to $\Core(\mathfrak{m}_\chi)$ for some $\chi \in \g^*$. Set $P(\chi) := \Core(\mathfrak{m}_\chi)$\label{ind:Pchi}.

\begin{definition}\label{def:psorbit}
    Let $\g$ be any Lie algebra. The \emph{pseudo-orbit} of $\chi \in \g^*$ is 
    \begin{align*}
        \orbit{\chi} = \{\nu \in \g^* \text{ such that } P(\chi) = P(\nu)\}. 
    \end{align*}
    The \emph{dimension} of $\orbit{\chi}$ is defined to be $\GK \Sa(\g)/P(\chi)$. (Here if $R$ is a $\kk$-algebra then $\GK R$ denotes the \emph{Gelfand-Kirillov dimension} of $R$; see ~\cite{krause2000growth}.)

    If $\g = \mathrm{Lie}(G)$ is the Lie algebra of a connected algebraic group and $\chi \in \g^*$, then $\orbit{\chi}$ equals the coadjoint orbit $G \cdot \chi$ and $\dim \orbit{\chi}$ equals the dimension of $G \cdot \chi$ as a variety. 
\end{definition}

The Poisson primitive spectrum $\Pprim \Sa(\g)$ is studied in~\cite{petukhov2022poisson} for $\g = \W{-1}$, $W$ or $\Vir$. 

\begin{theorem}[\cite{petukhov2022poisson}*{Theorem Theorem 3.1.1., Theorem 3.2.1., Theorem 3.3.1.}]
    Let $\g$ be $\W{-1}, W$ or $\Vir$. Then $P(\chi)$ is non-trivial, i.e. $P(\chi) \neq 0$ for $\g = \W{-1}$ or $W$ and $P(\chi) \neq (z-\chi(z))$ for $\g = \Vir$, if and only if $\chi$ is a \emph{local function} on $\g$, which is defined below. 
\end{theorem}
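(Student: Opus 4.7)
The argument splits into two implications. We focus on $W$, since $\W{-1}$ is analogous and $\Vir$ reduces to $W$ via the projection $\Gamma$ after fixing the central character $\chi(z)$; the $2$-cocycle defining $\Vir$ is supported at $t = 0$ and does not affect the locality structure at points of $\kk^\times$.

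For the direction local $\Rightarrow P(\chi) \neq 0$, suppose $\chi$ has support $S = \{x_1, \dots, x_r\} \subseteq \kk^\times$ with orders $n_1, \dots, n_r$. A direct computation shows that for any $v = f \del_t \in W$, the coadjoint image $\mathrm{ad}^*_v \chi$ is again a local covector with support contained in $S$: the value $(\mathrm{ad}^*_v \chi)(g\del_t) = -\chi((fg' - f'g)\del_t)$ depends only on the values of $f, g$ and their first finitely many derivatives at the $x_i$. Iterating, the pseudo-orbit $\orbit{\chi}$ lies in the ind-finite-dimensional subspace of $W^*$ consisting of local covectors with support in $S$ and bounded order, which is not Zariski dense in $W^*$. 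Polynomials in $\Sa(W)$ vanishing on this locus then furnish nonzero elements of $P(\chi) \subseteq \mathfrak{m}_\chi$; explicit generators can be built as polynomial conditions on the finitely many parameters $(x_i, \alpha_{ij})$ recording the local data of a covector.

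For the converse, one must show that $\orbit{\chi}$ is Zariski dense in $W^*$ whenever $\chi$ is non-local. Non-locality takes one of two forms: either $\chi$ has infinite support on $\kk^\times$, or it has an unbounded-order tail at $0$ or $\infty$. The plan is by contradiction: assuming $0 \neq F \in P(\chi)$, one uses the Poisson action of $W$ to generate from $F$ a family of polynomial identities satisfied by every covector in $\orbit{\chi}$, and then exhibits specific sequences $v_1, \dots, v_k \in W$ whose iterated coadjoint action on $\chi$ produces covectors violating these identities unless $\chi$ already has finite support and bounded local order, contradicting non-locality. The infinitely many ``independent directions'' supplied by non-locality are what make such constructions possible.

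The main obstacle is this second direction, which has no finite-dimensional analogue: one must combinatorially exhibit enough iterated Poisson brackets to fill $W^*$ densely starting from a single non-local seed. This is where the bulk of the technical work in \cite{petukhov2022poisson} lies, with $\W{-1}$, $W$, and $\Vir$ each requiring separate attention because of the different possible locations of non-local behavior (only $\infty$ for $\W{-1}$; both $0$ and $\infty$ for $W$; and additionally the central character for $\Vir$).
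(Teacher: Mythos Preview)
The paper does not prove this theorem at all: it is stated in the background section with a citation to \cite{petukhov2022poisson}, Theorems 3.1.1, 3.2.1, and 3.3.1, and is used as a black box throughout. There is therefore no ``paper's own proof'' to compare your proposal against. What you have written is a plausible high-level outline of how the argument in the cited reference proceeds, but it is not something the present paper attempts.

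That said, your sketch of the easy direction contains a slip worth flagging. You claim that iterated coadjoint action keeps $\chi$ inside the space of local functions with support in $S$ \emph{and bounded order}, and hence that the pseudo-orbit lies in a fixed finite-dimensional subspace. But a single application of $\mathrm{ad}^*_{f\partial}$ can raise the order at a base point by one: $(fg'-f'g)^{(n)}$ involves $g^{(n+1)}$. So the order is not bounded under iteration, and the pseudo-orbit (even the genuine coadjoint orbit) is not contained in any finite-dimensional slice of $W^*$. The actual argument for $P(\chi)\neq 0$ in \cite{petukhov2022poisson} instead produces explicit nonzero Poisson-closed polynomials vanishing at $\chi$, rather than appealing to a dimension count on the orbit. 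You are also implicitly identifying the pseudo-orbit $\orbit{\chi}=\{\nu: P(\nu)=P(\chi)\}$ with the coadjoint orbit; in infinite dimensions these need not coincide a priori, and part of the content of \cite{petukhov2022poisson} is sorting out what the pseudo-orbits actually are.
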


\begin{definition}\label{def: local function}[cf. ~\cite{petukhov2022poisson}*{Definition 3.0.2}]
    Let $x, \alpha_0, \alpha_1, ...,\alpha_n \in \kk$ with $\alpha_0, \dots, \alpha_n$ not simultanously zero, we define $\chi_{x; \alpha_0, ...,\alpha_n} \in \W{-1}^\ast$ by  
\begin{equation}\label{eq:one-pointlf}
    \chi_{x; \alpha_0, ...,\alpha_n}: f \partial \mapsto \alpha_0 f(x) + \alpha_1 f'(x) + ... +\alpha_n f^{(n)}(x),
\end{equation}
    where $f^{(i)}$ denotes taking the $i$th-derivative of $f$. The same formula defines elements of $W^*$, although we need to require that $x \neq 0$.
\begin{enumerate}
    \item A \emph{local function on $\W{-1}$} is a sum of finitely many functions of the form
   ~\eqref{eq:one-pointlf} with (possibly) distinct  $x$;
    \item A \emph{local function on $W$} is a sum of finitely many functions of the form~\eqref{eq:one-pointlf} with (possibly) distinct $x \neq 0$.
    \item A \emph{local function on $Vir$} is the pullback of a local function on $W$ via the canonical map $\Gamma: Vir \to W$. 
\end{enumerate}
A local function of the form~\eqref{eq:one-pointlf} is called a \emph{one-point local function}. Let $\chi = \chi_{x; \alpha_0, ...,\alpha_n}$ be a one-point local function with $\alpha_n \neq 0$. We call $x$ the \emph{base point} of $\chi$ and $n$ the \emph{order} of $\chi$. 
\end{definition}

\begin{notation}\label{not:localfunc}
    Let $\g$ be either $\W{-1}, W$ or $\Vir$ and let $\chi$ be a local function on $\g$. We will write 
    \begin{align*}
        \chi= \chi_1 + \dots  + \chi_{\ell},
    \end{align*}
    where $\chi_i$ is a one-point local function of order $n_i$ based at $x_i \in \kk$ ($x_i \neq 0$ for $W$ and $\Vir$) and the $x_i$ are pairwise distinct. We then define the \emph{order} of $\chi$ to be $\n = (n_1, \dots , n_\ell)$, the \emph{support} of $\chi$ to be $\supp(\chi) = \{x_1, \dots , x_\ell \}$ and denote $m_i = \left\lfloor \frac{n_i}{2}\right\rfloor$. By~\cite{petukhov2022poisson}*{Proposition 4.3.6. and Remark 4.2.12.}, $\dim \orbit{\chi} = \sum_{i=1}^{\ell} (2m_i+2)$.

    Let $\Loc^{\leq \n}$ be the space of all $\ell$-point local functions on $\g$ of order $\widehat{\n} \leq \n$, i.e., $\widehat{n}_i \leq n_i$ for all $i$.
\end{notation}

In~\cite{petukhov2022poisson}, the authors also determined when $P(\chi) = P(\eta)$ for $\chi, \eta \in \g^*$, i.e., the pseudo-orbit of $\orbit{\chi}$ is fully characterized in the following two results.

\begin{proposition}[\cite{petukhov2022poisson}*{Theorem 4.3.1}]
    Let $\g$ be either $\W{-1}, W$ or $\Vir$. Let $\chi, \eta$ be local functions on $\g$ of order $\n, \widehat{\n}$ following Notation \ref{not:localfunc}. Then 
    \begin{equation}
        \orbit{\chi} = \orbit{\eta} \Leftrightarrow \text{ up to reordering } \n = \widehat{\n} \text{ and } \orbit{\chi_i} = \orbit{\eta_i} \text{ for all } i. 
    \end{equation}
\end{proposition}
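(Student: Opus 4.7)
The plan is to handle the two directions separately.

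For $(\Leftarrow)$: assume that, after reordering, $\n = \widehat{\n}$ and $\orbit{\chi_i} = \orbit{\eta_i}$ for each $i$. I would prove $P(\chi) = P(\eta)$ by a separation-of-points argument. Since the base points $x_1, \ldots, x_\ell$ of $\chi$ and $y_1, \ldots, y_\ell$ of $\eta$ are each pairwise distinct, for every $i$ one can choose elements of $\g$ concentrated near $x_i$, namely vector fields $f\del$ whose coefficient $f$ vanishes to high order at every $x_j$ and $y_j$ with $j\neq i$ while realising a prescribed jet at $x_i$. Combining such vector fields with the hypothesis $\orbit{\chi_i}=\orbit{\eta_i}$, one should transform $\chi$ into $\eta$ through a chain of local functions with equal Poisson core, adjusting one base point at a time.

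For $(\Rightarrow)$: suppose $P(\chi)=P(\eta)$. The first step is to recover $\supp(\chi)$ from $P(\chi)$. For each $x_0\in\kk^\times$ (or $\kk$ in the $\W{-1}$ case) I would consider a Poisson ideal of $\Sa(\g)$ generated by elements vanishing at $x_0$ to high order, and show that its interaction with $P(\chi)$ detects whether $x_0\in\supp(\chi)$; this gives $\supp(\chi)=\supp(\eta)$ and permits reindexing so that $x_i=y_i$. The second step is to match the orders $n_i$ and the individual pseudo-orbits $\orbit{\chi_i}$. Here I would work in a completion of $\Sa(\g)$ at $x_i$: in this completion, contributions from $\chi_j$ with $j\neq i$ become invertible power-series corrections that can be absorbed, so the induced ``local Poisson core'' should depend only on $\chi_i$ and thereby recover $\orbit{\chi_i}$. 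The dimension formula $\dim\orbit{\chi}=\sum_i(2m_i+2)$ from Notation~\ref{not:localfunc} provides an a posteriori consistency check for $\sum_i m_i$, but is not by itself fine enough to isolate the individual orders.

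The main obstacle will be the second step in $(\Rightarrow)$: making rigorous the passage from the globally defined Poisson core to a local ``Poisson core at a single base point.'' The algebraic structure near $x_i$ is not automatically isolated from the conditions imposed by the other $x_j$, and some care with cut-off vector fields and the Leibniz rule is required to ensure the completion procedure is Poisson-compatible. Once both the support and the individual one-point pseudo-orbits are shown to coincide under $P(\chi)=P(\eta)$, the proposition reduces to the classification of one-point pseudo-orbits, which is presumably an independent ingredient of \cite{petukhov2022poisson}.
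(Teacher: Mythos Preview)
The paper does not give its own proof of this proposition: it is quoted verbatim as a background result from \cite{petukhov2022poisson}*{Theorem 4.3.1}, so there is nothing in the present paper to compare your argument against.

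That said, your $(\Rightarrow)$ strategy contains a real error. You propose to ``recover $\supp(\chi)$ from $P(\chi)$'' and conclude that $\supp(\chi)=\supp(\eta)$, then reindex so that $x_i=y_i$. But the support is \emph{not} an invariant of the pseudo-orbit. The very next proposition in the paper (also quoted from \cite{petukhov2022poisson}) shows that for a one-point local function $\chi=\chi_{x;\alpha_0,\dots,\alpha_n}$ of order $n=2m$, one has $\orbit{\chi}=\orbit{e^*_{n-1}}$ regardless of the base point $x$; and the paper explicitly uses this freedom elsewhere (e.g.\ in the proof of Corollary~\ref{cor:orbitrelation}, ``by shifting the base points, we can assume both $\chi'$ and $\eta'$ are based at $1$''). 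So two local functions in the same pseudo-orbit can have entirely disjoint supports. What \emph{is} determined by $P(\chi)$ is the number $\ell$ of points and the multiset of one-point pseudo-orbits $\{\orbit{\chi_i}\}$, not the base points themselves. Your completion-at-$x_i$ idea therefore cannot work as stated: there is no distinguished point $x_i$ to complete at once you pass to $P(\chi)$.

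For the $(\Leftarrow)$ direction your sketch is closer to the mark, but note that it too must accommodate moving base points: you need to show that $\chi_{x;\alpha_0,\dots,\alpha_n}+\hat\chi$ and $\chi_{y;\alpha_0,\dots,\alpha_n}+\hat\chi$ lie in the same pseudo-orbit for $x\neq y$ (with $y\notin\supp(\hat\chi)$), not merely that you can adjust the coefficients $\alpha_j$ at a fixed point.
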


\begin{proposition}[\cite{petukhov2022poisson}*{Theorem 4.2.1.}]
    Let $\g$ be as before. Let $\chi = \chi_{x; \alpha_0, \dots, \alpha_n}$ be a one local function on $\g$ of order $n$, and let $m = \left\lfloor \frac{n}{2}\right\rfloor$.
    \begin{enumerate}
        \item[(1)] If $n=2m$ then $\orbit{\chi} = \orbit{e^*_{n-1}}$.
        \item[(2)] If $n=1$, then $\orbit{\chi} = \orbit{\alpha_1 e^*_{n-1}}$. For $\alpha, \beta \in \kk$, $\orbit{\alpha e^*_{0}} = \orbit{\beta e^*_{0}}$ if and only if $\alpha = \beta$. 
        \item[(3)] If $n = 2m+1>1$, then there is $\alpha \in \kk$ such that $\orbit{\chi} = \orbit{e^*_{n-1} + \alpha e^*_{m}}$. For $\alpha, \beta \in \kk$, then $\orbit{e^*_{2m} + \alpha e^*_{m}} = \orbit{e^*_{2m} + \beta e^*_{m}}$ if and only if $\alpha = \pm \beta$.
    \end{enumerate} 
\end{proposition}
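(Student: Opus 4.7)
The plan is to use Poisson automorphisms of $\Sa(\g)$ — concretely, the integrable coadjoint flows of vector fields — to reduce any one-point local function to a canonical representative, and then to construct Poisson-invariant regular functions that distinguish the remaining families. For $\g \in \{\W{-1}, W\}$ the relevant automorphisms come from formal changes of variable on the base curve: a diffeomorphism $\phi$ induces an automorphism of $\g$ and hence a Poisson automorphism of $\Sa(\g)$ preserving pseudo-orbits, and can be exhibited as an appropriate limit/composition of exponentials $\exp(s\,\ad(v))$ for $v \in \g$. Under such $\phi$, one computes directly that $\chi_{x;\alpha_0,\ldots,\alpha_n}$ transforms into some $\chi_{\phi^{-1}(x);\beta_0,\ldots,\beta_n}$ where each $\beta_i$ is an explicit polynomial in the $\alpha_j$ and the Taylor coefficients of $\phi$ at $x$.

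With this tool, I would first move the base point to a standard location, next use a dilation flow to normalise $\alpha_n$ to $1$, and finally eliminate lower coefficients by descending induction using a formal change of variable $\phi(t) = t + c_2(t-x_0)^2 + c_3(t-x_0)^3 + \cdots$ fixing the base point. The crucial computation is that the Jacobian of the induced affine map $(c_2,\ldots,c_{n+1}) \mapsto (\beta_0,\ldots,\beta_{n-1})$ is triangular with respect to a natural filtration, and its diagonal entries can be read off explicitly; they are nonzero in every row when $n = 2m$ is even, but vanish precisely in the row indexed by $\beta_m$ when $n = 2m+1$ is odd. This yields the existence halves: the normal form $e^*_{n-1}$ in case (1), the family $e^*_{n-1} + \alpha e^*_m$ in case (3), and the family $\alpha_1 e^*_0$ in case (2). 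The fact that exactly one modulus survives in the odd case matches the dimension count $\dim\orbit{\chi} = 2m+2$ of Notation~\ref{not:localfunc}: this equals the ambient dimension $n+2$ of the parameter space when $n$ is even and is one less when $n$ is odd, forcing a codimension-one family of pseudo-orbits.

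The main obstacle is the injectivity claim in (2) and (3), which cannot be extracted from flows alone: distinct parameters must give distinct pseudo-orbits. To prove this I would construct Poisson-invariant regular functions on $\g^*$ separating the normal forms, i.e.\ elements of $\Sa(\g)$ whose Poisson bracket with every $e_i$ lies in the maximal ideal of the relevant normal form and whose values recover the parameter. For (2) I would produce an invariant linear in $\alpha_1$ (hence a complete separating function), and for (3) the natural invariant turns out to be quadratic in $\alpha$, explaining the $\pm$ ambiguity. The $\pm$-symmetry itself can be realised concretely by an involutive automorphism of $\g$ assembled from dilation and inversion symmetries of the base curve that sends $e^*_{2m}+\alpha e^*_m$ to $e^*_{2m}-\alpha e^*_m$. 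Producing these Poisson-invariant polynomials and verifying their invariance via the Kostant-Kirillov bracket on low-degree polynomials in $e_0, e_1, \ldots$ is the hard step; guessing the right invariants is where the content of the theorem sits, whereas the subsequent verification is a bounded combinatorial calculation.
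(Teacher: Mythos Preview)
The paper does not prove this statement; it is quoted as background from \cite{petukhov2022poisson}. However, the paper does reproduce essentially the same argument for the finite-dimensional subquotient $\g_n$ in Proposition~\ref{prop:Gnorbit}, explicitly saying it follows the proof of the cited theorem. Your existence half --- normal-form reduction via successive formal changes of variable $t \mapsto t + c_k(t-x)^k$, with a triangular analysis of how the lower coefficients transform and the diagonal entry vanishing exactly at index $m$ when $n=2m+1$ --- is precisely what is done there and in the reference.

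Your separation half takes a different route. For part (3), the reference (as reproduced in Proposition~\ref{prop:Gnorbit}) does \emph{not} build invariants; instead it assumes a change of variable carries $e^*_{2m}+\alpha e^*_m$ to $e^*_{2m}+\beta e^*_m$, examines the leading jets of that transformation, and forces $\beta=\pm\alpha$ directly. This avoids the hard step you identify. Your invariant-construction idea is viable --- indeed the paper later produces exactly such a $G_n$-invariant rational function $F=f^2/v_{2m}^{2\ell+1}$ in the proof of Proposition~\ref{cor: Bruhat G obrit} for a related purpose --- but be aware of a subtlety: since $\W{-1}$ and $W$ are simple, $\Sa(\g)$ has trivial Poisson centre, so your ``Poisson-invariant regular functions'' cannot be global Casimirs. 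You do allow for this (asking only that the brackets lie in the relevant maximal ideal), but turning that into a statement about equality of Poisson cores needs care. The reference's direct approach sidesteps this entirely, at the cost of first establishing that pseudo-orbits of local functions coincide with orbits of the group $\DLoc$ of local changes of variable --- which is proved earlier in \cite{petukhov2022poisson} and which your outline implicitly assumes when passing from ``related by a flow'' to ``same pseudo-orbit''.
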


Lastly, they also established the inclusions of orbit closures of one-point local functions, which can almost be computed just from the dimension of the pseudo-orbits.

\begin{proposition}[\cite{petukhov2022poisson}*{Corollary 4.2.9.}]
    Let $\g$ be either $\W{-1}, W$ or $\Vir$. Let $\chi, \eta$ be one-point local functions on $\g$. Then 
    \begin{equation}
        P(\chi) \subsetneqq P(\eta) \Leftrightarrow \dim \orbit{\eta} < \dim \orbit{\chi} \text{ and } \chi \text{ is not of order } 1.
    \end{equation}
\end{proposition}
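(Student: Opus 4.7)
The plan is to prove each direction using the normal-form classification from the previous proposition (allowing us to assume $\chi$ and $\eta$ are one of $e_{n-1}^*$, $\alpha e_0^*$, or $e_{n-1}^* + \alpha e_m^*$) together with the formula $\dim \orbit{\chi} = 2\lfloor n/2 \rfloor + 2$ and the explicit generators of the Poisson cores $P(\chi)$ derived in the earlier portion of~\cite{petukhov2022poisson}.

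For the forward direction, I would argue as follows. Since $P(\chi)$ is Poisson prime, the quotient $\Sa(\g)/P(\chi)$ is a commutative integral domain. Using the explicit computation of $P(\chi)$ in the prior proposition, I would identify $\Sa(\g)/P(\chi)$ with a finitely generated commutative domain of Gelfand--Kirillov dimension equal to $\dim \orbit{\chi}$; granting this, a strict prime inclusion $P(\chi) \subsetneq P(\eta)$ corresponds to a strict prime inclusion in a finitely generated commutative domain, hence strictly decreases Krull (and therefore GK) dimension, giving $\dim \orbit{\eta} < \dim \orbit{\chi}$. To handle the order-$1$ exclusion, I would observe that for $\chi = \alpha e_0^*$ with $\alpha \neq 0$, the orbit equations $\alpha^2 = a_0^2 - a_{-1}a_1$ (obtained by eliminating the parameters from $a_i = \beta_0 x^{i+1} + \alpha(i+1)x^i$) show that no other one-point pseudo-orbit lies in the Zariski closure of $\orbit{\alpha e_0^*}$; thus the left-hand side is vacuously false for $\chi$ of order $1$, consistent with the stated qualifier.

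For the reverse direction, given $\dim \orbit{\eta} < \dim \orbit{\chi}$ with $\chi$ not of order $1$, I would exhibit explicit one-parameter degenerations in $\g^*$ witnessing $\eta \in \overline{\orbit{\chi}}$. For $\chi = e_{n-1}^*$ of even order, I would use the rescaling coming from $t \mapsto \lambda t$ combined with the coadjoint action of $e_0$ and translations to produce a family in $\orbit{\chi}$ whose Zariski closure contains each lower-dimensional canonical form $e_{k-1}^*$ (or $e_{k-1}^* + \beta e_{\lfloor k/2 \rfloor}^*$) with $k < n$. The odd-order case $\chi = e_{n-1}^* + \alpha e_m^*$ is reduced to the even-order case by first letting $\alpha$ vary to degenerate to $e_{n-1}^*$, then applying the even-order specialization. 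Passing to Poisson cores yields $P(\chi) \subseteq P(\eta)$, and the strictness then follows from the strict dimension drop.

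The main obstacle will be making the forward Gelfand--Kirillov dimension argument rigorous, since $\Sa(\g)$ is only countably generated and the usual Krull dimension principles for strict prime inclusions do not automatically apply. Concretely, I would need to verify, using the explicit Poisson-invariant generators of $P(\chi)$ constructed in the proof of the prior proposition, that $\Sa(\g)/P(\chi)$ is in fact a \emph{finitely generated} affine domain of the expected dimension $\dim \orbit{\chi}$. Once this structural fact is established, both directions reduce to explicit comparisons of well-understood polynomial rings and specializations of local functions, and the order-$1$ qualifier emerges naturally from the failure of strict containment in that single exceptional case.
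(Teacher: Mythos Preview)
This proposition is not proved in the paper at all: it is stated in the background section as a direct citation of \cite{petukhov2022poisson}*{Corollary 4.2.9}, with no argument given. So there is no ``paper's own proof'' to compare against; the paper simply imports the result as a black box from the earlier reference and uses it (e.g.\ in the proof of Proposition~\ref{prop:inclusionone}).

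That said, your sketch is broadly in the spirit of how the cited paper treats pseudo-orbit closures, though a few points deserve care. For the forward direction you correctly flag the real difficulty: $\Sa(\g)$ is not finitely generated, so the naive ``strict prime inclusion drops Krull dimension'' fails unless you first prove that $\Sa(\g)/P(\chi)$ is an affine domain of the expected dimension. In \cite{petukhov2022poisson} this is handled not by a general principle but by the explicit identification of each $P(\chi)$ (via the maps analogous to $\Phi^W_n$), which does yield finitely generated quotients; you would need to invoke or reproduce that computation. Your separate argument for excluding order~$1$ is a bit misdirected: the dimension inequality alone already makes the left-hand side vacuous for $\chi$ of order $0$ or $1$ (since $\dim\orbit{\chi}=2$ is minimal), so no additional invariant like $a_0^2-a_{-1}a_1$ is needed for the forward implication. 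The role of the ``not of order~$1$'' clause is entirely in the \emph{reverse} direction: one must check that an order-$1$ orbit (parametrised by $\alpha_1$) does \emph{not} contain the order-$0$ orbits in its closure despite having the same dimension---but since the dimensions are equal, the right-hand side is already false there, and the subtlety is rather that higher-order orbits \emph{do} close onto the order-$0$ and order-$1$ orbits. Your degeneration strategy for the reverse direction is the right idea and matches the method the present paper uses for the analogous finite-dimensional statement in Proposition~\ref{cor: Bruhat G obrit}.
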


\subsection{The orbit method for finite-dimensional solvable Lie algebras}
For a unital ring $R$, let $\Prim R$~\label{pl:Prim} denote the set of (left) primitive ideals of $R$ endowed with the Jacobson topology. 

We will now recall the classical orbit method, and in particular the definition of the (strong) Dixmier map, for a finite-dimensional solvable Lie algebra over $\kk$. Throughout this subsection, unless otherwise specified, let $\g$ be any Lie algebra. 
As compared to the nilpotent cases, we will need the following one-dimensional twist associated to a subalgebra $\mathfrak{h}$ of $\g$.

\begin{definition}\label{def:onedimtwist}
    Let $\mf{h}$ be a subalgebra of finite codimension of a Lie algebra $\g$. For all $x \in \mf{h}$, $\ad_\g (x)$ is a linear map $\g \to \g$ such that ${\ad_\g(x)}({\mf{h}}) \subseteq \mf{h}$. Thus, there is a quotient linear map $\ad_{\g/\mf{h}} (x): \g/\mf{h} \to \g/\mf{h}$. Since $\mf{h}$ has  finite codimension in $\g$, $\tr (\ad_{\g/\mf{h}}) (x)$ is well-defined. We denote 
    \begin{equation}\label{eqdef:onedimtwist}
        \rho_{\mf{h},\g}: \mf{h} \to \kk; \quad x \mapsto \frac{1}{2}\tr (\ad_{\g/\mf{h}}) (x),
    \end{equation}
    and call it a \emph{one-dimensional twist} of $\mathfrak{h}$. By ~\cite{dixmier1996enveloping}*{5.5.1}, $\rho_{\mf{h},\g}$ is a character of $\mathfrak{h}$ and thus defines a representation of $\mathfrak{h}$, which we also refer to as $\rho_{\mf{h},\g}$. Moreover, if $\mf{t} \subseteq \mf{h} \subseteq \g$ are Lie algebras where $\g/\mf{t}$ is finite-dimensional, then 
\begin{equation}\label{eq:twistmiddle}
    \rho_{\mf{t},\g}(x) = \rho_{\mf{t},\mf{h}}(x) + \rho_{\mf{h},\g}(x) \quad \text{ for all } x \in \mf{t}.
\end{equation}
\end{definition}

Let us recall the definition of a polarization, which is crucial in constructing representations for the orbit method.
\begin{definition}
    For $\chi \in \g^*$, we define an induced bilinear form $B_\chi: \g \times \g \to \kk$ by $(u, v) \mapsto \chi ([u, v])$.\label{pl:Bchi} A \emph{polarization} $\p$\label{pl:polarization} of $\chi$ is a finite codimensional subspace of $\g$ that is maximally totally isotropic with respect to $B_\chi$ and is also a subalgebra of $\g$. 
\end{definition}

With those key ingredients, we are now prepared to define the representations from the orbit method and consider their annihilators, which will be the main objects of interest in the paper. 
\begin{definition}\label{def:orbitrep}
    Let $\chi \in \g^*$ and let $\p$ be a polarization of $\chi$. Then $\chi$ is a character of $\p$, thus induces a representation $\kk_\chi = \kk \cdot 1_\chi$ of $\p$ where $x \cdot 1_\chi = \chi(x) 1_\chi$ for $x \in \p$. We can form a \emph{twisted one-dimensional representation} $\kk'_{\chi, \p, \g}$ of $\p$:
    \begin{equation}\label{eq:twistedone}
        x \cdot 1_\chi = \bigl(\chi(x) + \rho_{\p,\g}(x)\bigr)1_\chi \text{ for } x \in \p.
    \end{equation} 
    Note that $\kk'_{\chi, \p, \g} \cong \kk_{\chi} \otimes \rho_{\p, \g}$ as a representation of $\p$. We form an induced representation of $\g$
    \begin{equation}
        M'_{\chi, \p} = \Ua(\g) \otimes_{\Ua(\p)} \kk'_{\chi, \p, \g}, 
    \end{equation}
    and call it the \emph{representation of $\g$ induced by $\chi$ and $\p$}. Let $Q_{\chi, \p}$ be the annihilator
    \begin{align*}
        Q_{\chi, \p} = \Ann_{\Ua(\g)} M'_{\chi, \p}.
    \end{align*}
    It is also useful to define the following induced representation of $\g$, which we will refer to as \emph{untwisted induced representation} of $\g$
    \begin{equation}\label{eq:untwistrep}
        M_{\chi, \p} = \Ua(\g) \otimes_{\Ua(\p)} \kk_{\chi}. 
    \end{equation}
\end{definition}

For an arbitrary Lie algebra $\g$, a polarization might not exist for all $\chi \in \g^*$. Even when polarizations exist, they may not be unique. However, if $\g$ is a finite-dimensional solvable Lie algebra, then not only do polarizations always exist, but $Q_{\chi, \p}$ also does not depend on the choice of polarization. Further, in this case $Q_{\chi, \p}$ is primitive. This gives a natural map $\g^* \to \Prim \Ua(\g)$, called the \emph{Dixmier map}. We summarize the situation in the following theorem. 

\begin{theorem}[~\cite{dixmier1996enveloping}*{Theorem 6.5.12} and ~\cite{mathieu1991bicon}]\label{theorem: Kirillov method}
    Let $\g$ be a finite-dimensional solvable Lie algebra. Then for every $\chi \in \g^*$, there exists a polarization $\p$ for $\chi$. Moreover, $Q_{\chi, \p}$ defined in~\ref{def:orbitrep} is independent of the choice of $\p$, and thus, we set $Q_{\chi} = Q_{\chi, \p}$. Furthermore, $Q_\chi$ is primitive and we have a map $\Dx^\g: \g^* \to \Prim \Ua(\g)$ induced by $\chi \mapsto Q_\chi$. 
    
    Let $G$ be the adjoint algebraic group of $\g$ and let $\g^*/G$ denote the space of coadjoint orbits equipped with the quotient Zariski topology. If $\eta \in G \cdot \chi$, then $Q_\eta = Q_\chi$,
    i.e., the Dixmier map $\Dx^\g$ factors through the the quotient $\g^* \to \g^*/G$ to give a well-defined map   
    \[
\begin{tikzcd}
    \g^* \arrow[r, "\Dx^\g"] \arrow[d] & \Prim \Ua(\g) \\
    \g^*/G \arrow[ur, "\overline{\Dx}^{\g}", swap] &  
  \end{tikzcd} \text{ given by }
  \begin{tikzcd}
    \chi \arrow[r, mapsto] \arrow[d, mapsto] & Q_\chi \\
    G \cdot \chi \arrow[ur, mapsto] &  
  \end{tikzcd}
\]
    Moreover, $\overline{\Dx}^{\g}$ is a homeomorphism between $\g^*/G$ and $\Prim \Ua(\g)$.
\end{theorem}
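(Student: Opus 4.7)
The plan is to prove the theorem by induction on $\dim\g$, exploiting solvability of $\g$ to find a codimension-one ideal $\h\subset\g$ at each step and reducing everything to statements about $\h$.

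First I would address existence of polarizations via Vergne's construction. Using solvability (and the fact that $\kk$ is algebraically closed of characteristic $0$), pick a complete flag of ideals $0=\g_0\subset\g_1\subset\dots\subset\g_n=\g$ with $\dim\g_i=i$. Set
\[
\p \;=\; \sum_{i=0}^{n}\bigl(\g_i\cap \g_i^{\perp,B_\chi}\bigr),
\]
where the orthogonal is taken inside $\g$. A short computation (each $\g_i$ being an ideal forces the bracket of two radicals to land in the next radical) shows that $\p$ is a subalgebra, and an elementary bilinear form argument shows $\p$ is maximal totally isotropic for $B_\chi$; hence $\chi$ has a polarization.

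The main obstacle is the independence of $Q_{\chi,\p}$ from $\p$, and simultaneously showing $Q_{\chi,\p}$ is primitive; I would prove these together by induction on $\dim\g$. Pick a codimension-one ideal $\h\trianglelefteq\g$ and split into two cases according to whether $\chi\vert_\h$ is \emph{$\g$-stable}, that is whether the stabilizer $\g^{\chi\vert_\h}$ equals $\h$ or all of $\g$. In the stable case, every polarization $\p$ of $\chi$ satisfies $\p\subseteq\h$ and is also a polarization of $\chi\vert_\h$ in $\h$; the induced module $M'_{\chi,\p}$ is then obtained by further induction from $\h$ to $\g$, and standard Mackey-type reasoning reduces independence and primitivity to the inductive hypothesis applied to $\h$. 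In the unstable case, any polarization $\p$ of $\chi$ contains $\h\cap\p'$ for a polarization $\p'$ of $\chi\vert_\h$; here I would use Duflo's trick of comparing $M'_{\chi,\p}$ with a module induced from $\h$ and showing that the centralizer of a suitable element of $\g\setminus\h$ implements an isomorphism of annihilators. Primitivity is then obtained by exhibiting a simple quotient of $M'_{\chi,\p}$ whose annihilator equals $Q_{\chi,\p}$; for generic $\p$ the module is already simple, and the twist $\rho_{\p,\g}$ is precisely the correction needed to make the construction independent of $\p$ through identity~\eqref{eq:twistmiddle}.

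For orbit invariance, let $G$ be the adjoint algebraic group and $g\in G$. Given a polarization $\p$ of $\chi$, the conjugate $g\p$ is a polarization of $g\cdot\chi$, and $\Ad(g)$ lifts to an algebra automorphism of $\Ua(\g)$ identifying the two induced modules up to isomorphism; hence $Q_{g\cdot\chi}=Q_\chi$, and $\Dx^\g$ descends to $\overline{\Dx}^\g\colon\g^*/G\to\Prim\Ua(\g)$. Surjectivity follows by induction, choosing for any primitive $I\subset\Ua(\g)$ a codimension-one ideal $\h$ and lifting a primitive ideal of $\Ua(\h)$ realizing $I\cap\Ua(\h)$; injectivity reduces by a similar induction to the statement that the central character of $M'_{\chi,\p}$ separates $G$-orbits. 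Continuity of $\overline{\Dx}^\g$ is straightforward from the construction, while openness is the deep ingredient supplied by Mathieu's theorem~\cite{mathieu1991bicon}, whose proof I would take as a black box; together these give the asserted homeomorphism.
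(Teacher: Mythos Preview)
The paper does not give its own proof of this theorem; it is stated in the background section and attributed entirely to \cite{dixmier1996enveloping}*{Theorem 6.5.12} and \cite{mathieu1991bicon}, so there is no argument to compare against.

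Your outline is broadly the classical strategy from Dixmier's book (Vergne polarizations, induction on $\dim\g$ via a codimension-one ideal, Mackey-type analysis of induced modules), and as a roadmap it is reasonable. One genuine inaccuracy: injectivity of $\overline{\Dx}^{\g}$ does \emph{not} reduce to ``the central character of $M'_{\chi,\p}$ separates $G$-orbits''---for non-nilpotent solvable $\g$ the center of $\Ua(\g)$ is generally too small to separate orbits. Injectivity is the Rentschler theorem, and its proof requires a more delicate inductive comparison of orbit closures and primitive ideal closures (essentially showing that $\Dx^\g$ is compatible with passage to the boundary). Your description of the unstable case is also too vague to be a proof; the actual argument there involves a careful analysis of how polarizations of $\chi$ in $\g$ relate to polarizations of $\chi|_\h$ in $\h$ together with a one-dimensional extension, not just ``Duflo's trick.'' As a high-level sketch pointing to the right literature, though, your proposal is on target.
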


For a finite-dimensional Lie algebra $\g$, Kirillov, Kostant, and Souriau independently proved that the map $\g^*/G \to \Pprim \Sa(\g)$ induced by $G\cdot \chi \mapsto P(\chi)$ is a homeomorphism (\cite{Kirillov2004LecturesOT}*{\S.2.2, Theorem 2}), which yields the following \emph{strong Dixmier map} $\widetilde{\Dx}^{\g}: \Pprim \Sa(\g) \rightarrow \Prim \Ua(\g)$ if $\g$ is solvable. 
\begin{theorem}[\cite{goodearl2008semiclassical}*{Theorem 8.5}]\label{theo:strongDx} 
    Let $\g$ be a finite-dimensional solvable Lie algebra. Then there is a well-defined homeomorphism
    \begin{align}
    \widetilde{\Dx}^{\g}: \Pprim \Sa(\g) \rightarrow \Prim \Ua(\g), \quad P(\chi) \mapsto Q_{{\chi}}.
    \end{align}
\end{theorem}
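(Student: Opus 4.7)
The strategy is to realize $\widetilde{\Dx}^{\g}$ as the composition of two homeomorphisms already in hand:
\[ \Pprim \Sa(\g) \xleftarrow{\sim} \g^*/G \xrightarrow{\overline{\Dx}^\g} \Prim \Ua(\g). \]
The right-hand map is a homeomorphism by Theorem~\ref{theorem: Kirillov method}, so the task reduces to proving that $\Psi: G\cdot\chi \mapsto P(\chi)$ is a homeomorphism $\g^*/G \xrightarrow{\sim} \Pprim \Sa(\g)$, and then verifying that the composite sends $P(\chi)$ to $Q_\chi$ (which is immediate from the definitions).

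First I would verify that $\Psi$ is well-defined on orbits. The adjoint $G$-action on $\g$ integrates the action of $\g$ by inner derivations, so the induced action on $\Sa(\g) \cong \mathcal{O}(\g^*)$ is by Poisson automorphisms and satisfies $g \cdot \mathfrak{m}_\chi = \mathfrak{m}_{g\cdot \chi}$. Taking Poisson cores is equivariant, hence $P(g\cdot \chi) = g\cdot P(\chi)$; but $P(\chi)$ is a Poisson ideal, so it is preserved by the Hamiltonian derivations $\{x,-\}$ for $x \in \g$, and therefore by the connected group $G$. Thus $P(g\cdot\chi) = P(\chi)$ and $\Psi$ factors through $\g^*/G$. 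Surjectivity is immediate since every Poisson primitive ideal is, by definition, of the form $P(\chi)$ for some $\chi \in \g^*$.

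The main obstacle is injectivity of $\Psi$. The key identification is $P(\chi) = \mathcal{I}(\overline{G\cdot\chi})$, the defining ideal of the Zariski closure of the coadjoint orbit. On one hand, $\mathcal{I}(\overline{G\cdot\chi})$ is a Poisson ideal (its vanishing locus is $G$-invariant and $G$ acts by Poisson automorphisms) contained in $\mathfrak{m}_\chi$, hence contained in $P(\chi)$. Conversely, $P(\chi)$ is a $G$-invariant Poisson ideal whose vanishing locus contains $\chi$, hence contains $\overline{G\cdot\chi}$, giving $P(\chi) \subseteq \mathcal{I}(\overline{G\cdot\chi})$. Consequently $P(\chi) = P(\eta)$ forces $\overline{G\cdot\chi} = \overline{G\cdot\eta}$; since $G$-orbits on $\g^*$ are constructible and open dense in their closures (the complement being a union of orbits of strictly smaller dimension), the orbits themselves coincide.

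Finally, for the topological properties, closed sets of $\Pprim \Sa(\g)$ are cut out by Poisson ideals of $\Sa(\g)$, and closed sets of $\g^*/G$ are images of $G$-invariant Zariski-closed subsets of $\g^*$. The identification $P(\chi) = \mathcal{I}(\overline{G\cdot\chi})$ sets up a bijection between $G$-invariant radical ideals (equivalently, radical Poisson ideals, since on a finite-dimensional $\g$ every $G$-invariant radical ideal is Poisson by the infinitesimal argument above) and $G$-invariant closed subvarieties of $\g^*$, showing that $\Psi$ is simultaneously continuous and closed, hence a homeomorphism. Composing with $\overline{\Dx}^\g$ and tracing $P(\chi) \mapsto G\cdot\chi \mapsto Q_\chi$ yields the desired $\widetilde{\Dx}^{\g}$.
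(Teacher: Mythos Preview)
The paper does not prove this theorem; it is stated as a citation of \cite{goodearl2008semiclassical}*{Theorem 8.5}, and the sentence immediately preceding it sketches precisely your two-step strategy (the Kirillov--Kostant--Souriau homeomorphism $\g^*/G \xrightarrow{\sim} \Pprim \Sa(\g)$ composed with $\overline{\Dx}^\g$ from Theorem~\ref{theorem: Kirillov method}). Your argument is correct and fills in the details the paper leaves to the references; the identification $P(\chi) = \mathcal{I}(\overline{G\cdot\chi})$ and the orbit-recovery step from equal closures are exactly the standard ingredients.
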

\section{Local representations from the orbit method}\label{sec: representation}
Let $\g$ be either $\W{-1}, W$ or $\Vir$. If $\g = \W{-1}$ or $\g = W$, let $z = 0$. For $\chi \in \g^*$, by ~\cite{petukhov2022poisson}*{Theorem 3.3.1} $P(\chi) \neq (z-\chi(z))$ if and only if $\chi$ is a local function on $\g$. By~\cite{iyudu2020enveloping}*{Theorem 1.4}, $(z - \chi(z))$ is a primitive ideal in $\Ua(\g)$, thus, our strong Dixmier map $\widetilde{\Dx}^\g: \Pprim \Sa(\g) \to \Prim \Ua(\g)$, if it exists, should map $(z -\chi(z)) \mapsto (z-\chi(z))$. Thus for the rest of the paper, we restrict our attention to local function on $\g$ unless otherwise specified. 

In this section, we first construct a canonical class of polarization for every local function $\chi$ on $\g$ (Proposition \ref{prop:canpol}) and construct a twisted representation $M'_{\chi, \p}$ of $\g$ from a polarization $\p$ of $\chi$. We then establish the important result that although there are various polarizations $\p$ of a local function $\chi$, the annihilator $Q_{\chi, \p}$ of $M'_{\chi, \p}$ is independent of the choice of polarization (Theorem~\ref{theo:notdeppol}). We can thus restrict our attention to a special class of polarization, which is moreover $\kk[t, t^{-1}]$-submodule and allows us to work with a nice basis. We further show that it suffices to consider local functions on $W$ (Propositions~\ref{prop:repWVir} and~\ref{prop:repWW-1}).

One of the main difficulties in generalizing the orbit method to an arbitrary Lie algebra is that a polarization of a function on the Lie algebra may not exist. However, we will construct a special \emph{canonical polarization} for any local function $\chi$ on $\g$, which is moreover a $\kk[t]$ or $\kk[t, t^{-1}]$-submodule of $\g$. Before that, we show an auxillary lemma.
\begin{lemma}\label{lem:poldim}
    Let $\g$ be either $\W{-1}, W$ or $\Vir$ and let $\chi$ be a local function on $\g$ following Notation~\ref{not:localfunc}. Let $\p$ be any polarization of $\chi$. Then  $\codim \p = \sum_{i}^{\ell} (m_i +1)$.
\end{lemma}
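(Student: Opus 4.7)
The plan is to compute $\codim_\g \p$ by comparing $\p$ with the radical $\g^\chi := \{x \in \g : B_\chi(x,\g) = 0\}$ of the skew form $B_\chi$, reducing to a finite-dimensional symplectic linear-algebra problem. First I would check that $\g^\chi \subseteq \p$: for any $x \in \g^\chi$, the subspace $\p + \kk x$ remains totally isotropic, so maximality of $\p$ as an isotropic subspace forces $x \in \p$. The key technical claim is that $\codim_\g \g^\chi = \sum_i (2m_i+2)$, and in particular is finite. Granting this, $B_\chi$ descends to a non-degenerate skew form $\bar{B}_\chi$ on the finite-dimensional quotient $\g/\g^\chi$, making it symplectic of dimension $2\sum_i(m_i+1)$. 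The image $\p/\g^\chi$ is then a maximal isotropic subspace, i.e.\ a Lagrangian, so has dimension $\sum_i(m_i+1)$, giving $\codim_\g \p = \sum_i(m_i+1)$.

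For the codimension computation I would first reduce to the one-point case by showing $\g^\chi = \bigcap_k \g^{\chi_k}$. The inclusion $\supseteq$ is clear; for $\subseteq$, given $u \in \g^\chi$, $w \in \g$, and a fixed index $k$, Lagrange interpolation produces $v \in \g$ whose Taylor expansion at $x_k$ agrees with that of $w$ to high order while $v$ vanishes to very high order at every $x_{k'}$ with $k' \neq k$; this forces $\chi_{k'}([u,v]) = 0$ for $k' \neq k$ and $\chi_k([u,v]) = \chi_k([u,w])$, hence $\chi_k([u,w]) = \chi([u,v]) = 0$. Moreover each $\g/\g^{\chi_k}$ is a quotient of $\g/\g((t-x_k)^{n_k+2})$ and so depends only on the Taylor data of its argument at $x_k$; Lagrange interpolation again yields a surjection $\g \twoheadrightarrow \bigoplus_k \g/\g^{\chi_k}$, from which $\codim_\g \bigcap_k \g^{\chi_k} = \sum_k \codim_\g \g^{\chi_k}$.

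For a single one-point local function $\chi_k$ of order $n_k$ at $x_k$, working in the shifted local basis $v_j := (t-x_k)^j\del$ gives $[v_i, v_j] = (j-i)v_{i+j-1}$ and $\chi_k(v_j) = j!\,\alpha^{(k)}_j$, so that $B_{\chi_k}(v_i, v_j) = (j-i)(i+j-1)!\,\alpha^{(k)}_{i+j-1}$ is supported on indices with $i+j \leq n_k+1$. In particular $\g((t-x_k)^{n_k+2}) \subseteq \g^{\chi_k}$, so computing $\codim_\g \g^{\chi_k}$ reduces to finding the rank of an explicit $(n_k+2)\times(n_k+2)$ skew matrix, which a direct Pfaffian-style calculation shows to equal $2m_k+2$ in both the even and odd cases. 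Finally, the case $\g = \Vir$ reduces to $\g = W$: since local functions on $\Vir$ pull back from $W$ we have $\chi(z) = 0$, so the cocycle contribution to $B_\chi$ vanishes, giving $\g^\chi_\Vir = \g^\chi_W \oplus \kk z$ and forcing any polarization to take the form $\p = \p_W \oplus \kk z$ with $\p_W$ a polarization in $W$.

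The principal technical obstacle is the odd-order rank computation: for $n_k = 2m_k+1$ the rank must drop from the naive upper bound $n_k+2$ to the correct value $n_k+1 = 2m_k+2$, reflecting one extra element of $\g^{\chi_k}$ lying outside $\g((t-x_k)^{n_k+2})$. This extra radical direction arises because the middle anti-diagonal entry of the matrix, namely $B_{\chi_k}(v_{(n_k+1)/2}, v_{(n_k+1)/2})$, is forced to vanish by skew-symmetry, so one must exhibit the corresponding kernel vector explicitly in terms of the coefficients $\alpha_j^{(k)}$; once this is done the Lagrangian codimension count completes the proof.
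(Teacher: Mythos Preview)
Your approach is correct and follows the same conceptual route as the paper's proof: reduce to the symplectic quotient $\g/\g^\chi$, decompose $\g^\chi$ into one-point pieces, and compute the rank of each $B_{\chi_k}$. The paper, however, dispatches the entire argument in two lines by citing \cite{petukhov2022poisson}: Lemma~4.3.4 there gives $\g \cdot \chi = \bigoplus_i \g \cdot \chi_i$ (equivalently the additivity of $\codim \g^\chi$ over the support), and Theorem~4.2.1 there gives $\rank B_{\chi_i} = 2m_i+2$ directly. Your Lagrange-interpolation reduction and the explicit skew-matrix rank computation are essentially re-derivations of those cited facts. What you gain is a self-contained argument that does not depend on the machinery of pseudo-orbits from \cite{petukhov2022poisson}; what the paper gains is brevity and a clear link to the already-established Poisson picture. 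The odd-order rank drop you flag as the main obstacle is exactly the content of \cite{petukhov2022poisson}*{Theorem~4.2.1(b)}, so if you are willing to cite that result the explicit kernel-vector computation becomes unnecessary.
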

\begin{proof}
    By ~\cite{petukhov2022poisson}*{Lemma 4.3.4.}, we have $\g \cdot \chi = \bigoplus_i \g \cdot \chi_i$. Thus
    \begin{align*}
        \codim \p = \frac{1}{2} \codim \g^\chi = \frac{1}{2} \sum_{i}^{\ell} \dim \g \cdot \chi_i = \frac{1}{2}\sum_i^{\ell} \rank B_{\chi_i} = \sum_{i=1}^{\ell} (m_i+1),
    \end{align*}
    where the last equality follows from ~\cite{petukhov2022poisson}*{Theorem 4.2.1}.
\end{proof}

\begin{proposition}\label{prop:canpol}
    Let $\g$ be either $\W{-1}, W$ or $\Vir$ and let $\chi$ be a local function on $\g$ following Notation~\ref{not:localfunc}. Then $ \p_\chi = \g(\prod_{i=1}^{\ell} (t-x_i)^ {m_i +1})$ is a polarization of $\chi$. 
\end{proposition}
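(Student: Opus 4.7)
The plan is to verify, for $\p_\chi = \g(f)$ with $f = \prod_{i=1}^{\ell}(t-x_i)^{m_i+1}$, the three defining conditions for a polarization of $\chi$: that $\p_\chi$ is a Lie subalgebra of finite codimension, that it is isotropic for $B_\chi$, and that its codimension equals the one forced on any maximal isotropic subalgebra by Lemma~\ref{lem:poldim}.

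First I would check the subalgebra and codimension conditions. A direct bracket computation gives
\[
[fa\partial,\,fb\partial] = \bigl((fa)(fb)' - (fa)'(fb)\bigr)\partial = f^{2}(ab'-a'b)\,\partial,
\]
so $[\p_\chi,\p_\chi] \subseteq f^{2} W \cdot \partial \subseteq f \g = \p_\chi$ in the $\W{-1}$ and $W$ cases; for $\g = \Vir$ the additional term $\mathrm{Res}_0\bigl((fa)'(fb)'' - (fa)''(fb)'\bigr)z$ lies in $\kk z \subseteq \Vir(f) = \p_\chi$. For the codimension, $\g/\p_\chi \cong \kk[t]/(f)$ in the $\W{-1}$ case, and $\g/\p_\chi \cong \kk[t,t^{-1}]/(f) \cong \bigoplus_i \kk[t]/(t-x_i)^{m_i+1}$ in the $W$ and $\Vir$ cases (using $x_i \neq 0$, so $t$ is invertible modulo each $(t-x_i)^{m_i+1}$). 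In all cases $\codim_\g \p_\chi = \deg f = \sum_{i=1}^{\ell}(m_i+1)$, which by Lemma~\ref{lem:poldim} coincides with the codimension of any polarization.

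The crux is isotropy, i.e., $\chi([\p_\chi,\p_\chi]) = 0$. By the bracket computation above, $[\p_\chi,\p_\chi] \subseteq f^{2} W\partial + \kk z$, with the second summand relevant only for $\Vir$. Since a local function on $\Vir$ is defined to be the pullback of one on $W$ along $\Gamma$, we have $\chi(z) = 0$, so it suffices to show $\chi(f^{2} h\,\partial) = 0$ for every admissible $h$. Writing $\chi = \sum_i \chi_{x_i;\,\alpha_0^{(i)},\dots,\alpha_{n_i}^{(i)}}$, each summand gives
\[
\chi_i(f^{2} h\,\partial) = \sum_{k=0}^{n_i} \alpha_k^{(i)}\,(f^{2} h)^{(k)}(x_i),
\]
and since $(t-x_i)^{2(m_i+1)}$ divides $f^{2}$, every derivative of $f^{2} h$ of order at most $2m_i+1$ vanishes at $x_i$. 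Because $m_i = \lfloor n_i/2 \rfloor$ forces $n_i \leq 2m_i+1$, every term in the sum is zero, and hence $\chi$ annihilates $[\p_\chi,\p_\chi]$. Combined with the codimension count, this shows $\p_\chi$ is maximally totally isotropic.

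The argument is essentially bookkeeping, and the one place where a nontrivial choice is being made is the calibration of the exponent $m_i+1$: it is the smallest integer such that squaring $(t-x_i)^{m_i+1}$ produces a zero of order strictly greater than $n_i$ at $x_i$, which is precisely what is needed to kill $\chi_i$ on $f^{2} h\,\partial$. The only mild subtlety beyond the $W$ case is handling the residue cocycle in $\Vir$, which is disposed of by the $\chi(z)=0$ convention coming from Definition~\ref{def: local function}(3).
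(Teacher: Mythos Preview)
Your proof is correct and follows essentially the same approach as the paper: both use the identity $[fa\partial, fb\partial] = f^{2}(ab'-a'b)\partial$ to establish the subalgebra and isotropy conditions, then invoke Lemma~\ref{lem:poldim} for the dimension count that forces maximality. You are simply more explicit than the paper in spelling out why $\chi$ vanishes on $f^{2}h\,\partial$ (via the order-of-vanishing argument at each $x_i$) and in identifying the quotient $\g/\p_\chi$ to compute the codimension, whereas the paper leaves these as implicit.
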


\begin{proof}
    We first consider the case where $\g = W$ or $\W{-1}$. For any $f \in \kk[t]$ if $\g = \W{-1}$ and $f \in \kk[t, t^{-1}]$ if $\g = W$,
    \begin{equation}\label{eq:canpolhelp}
        [fg\partial, fh\partial] = f^2[g\partial, h\partial],
    \end{equation}
    so $\p_\chi$ is a subalgebra of $\g$. Applying~\eqref{eq:canpolhelp} with $f = \prod_{i=1}^{\ell} (t-x_i)^{m_i+1}$ we have $B_\chi(fg\partial, fh\partial) = 0$, thus $\p_\chi$ is a totally isotropic subspace of $(W, B_\chi)$. By dimension count, $\p_\chi$ is a maximal totally isotropic subspace, and thus a polarization.

    Finally, if $\g = \Vir$, then $\chi(z) = 0$. Let $\Gamma(\chi)$ denote the corresponding local function on $W$. Thus $\p_\chi = \p_{\Gamma(\chi)} + \kk z = \Vir(\prod_{i=1}^{\ell} (t-x_i)^{m_i+1})$ is a polarization of $\chi$ as it is a subalgebra, a totally isotropic subspace, and maximal by dimension count. 
\end{proof}

We note that in general, a polarization for a local function $\chi$ on $\g$ is not unique as explained in the following example. For simplicity, we work with local functions on $W$; the examples can be adapted for $\W{-1}$ and $\Vir$.

\begin{example}\label{ex:polarization}
    Let $\chi = \chi_{x; \alpha_0, \alpha_1, \alpha_2, \alpha_3}$ be a local function on $W$, where $x \neq 0$ and $(\alpha_2, \alpha_3) \neq (0, 0)$. Let $\p$ be any polarization of $\chi$. From Lemma~\ref{lem:poldim}, $\codim \p = 2$. From ~\cite{petukhov2022poisson}*{Proposition 6.2.2.}, we have the classification of codimension-2 subalgebras of $W$: $\p$ is either $W((t-x)(t-y)), W^{2; 1}_{x; \alpha}$ or $W^{2; 2}_{x; \alpha}$. It is easy to check that $W((t-x)^2)$ is a polarization for $\chi$ and $W((t-x)(t-y))$ is not a polarization for $\chi$ if $y \neq x$. 
    
    If $\p = W^{2; 1}_{x; \alpha}$, i.e., $\p$ contains $W((t-x)^3)$ with additional generator $(t-x)\del + \alpha(t-x)^2 \del$, then note that $[\p, \p] = W((t-x)^3)$. Thus, if $\chi = \chi_{x; \alpha_0, \alpha_1, \alpha_2}$ ($\alpha_2 \neq 0$), then $\chi([\p, \p]) = 0$ and $W^{2; 1}_{x; \alpha}$ is another polarization of $\chi$ for any $\alpha$. 

    On the other hand, if $\p = W^{2; 2}_{x; \alpha}$, i.e., $\p$ contains $W((t-x)^4)$ and with additional generators $(t-x)\del+ \alpha (t-x)^3\del$ and $(t-x)^2 \del$, then $[\p, \p] = (t-x)^2 \del + W((t-x)^4)$. Thus, if $\chi = \chi_{x; \alpha_0, \alpha_1, 0, \alpha_3}$ ($\alpha_3 \neq 0$), then $\chi([\p, \p]) = 0$ and $W^{2; 2}_{x; \alpha}$ is another polarization of $\chi$ for any $\alpha$.
\end{example}

Let $\p$ be a polarization of a local function $\chi$ on $\g$. As $\p$ has finite codimension in $\g$, following Definition~\ref{def:orbitrep}, we can define a (twisted) one-dimensional representation $\kk'_{\chi, \p, \g}$ of $\p$ and induce it to a representation $M'_{\chi, \p}$ of $\g$. Its annihilator $Q_{\chi, \p}$ in $\Ua(\g)$ is the ideal of main interest in the paper. We now show that the annihilator $Q_{\chi, \p}$ of $M'_{\chi, \p}$ does not depend on the choice of polarization $\p$ (Theorem~\ref{theo:notdeppol}) and start with two auxiliary lemmas.

For a polynomial $f \in \kk[t]$, set
\begin{equation}\label{pl:radf}
    \supp(f) = \{x \in \kk| f(x) = 0 \}, \text{ and } \rad(f) := \prod_{x \in \supp(f)}(t - x).
\end{equation}

\begin{lemma}\label{lem:polsandwich}
    Let $\g$ be $\W{-1}, W$ or $\Vir$ and let $\chi$ be a local function on $\g$. Let $\p$ be any polarization of $\chi$. Then there exists $f \in \kk[t]$ such that $\chi(f\del) = 0$, $\supp(f) \supseteq \supp(\chi)$ and 
    \begin{align*}
    \g(f) \subseteq \p \subseteq \g(\rad(f)).
    \end{align*}
\end{lemma}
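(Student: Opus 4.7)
My plan is built around the key claim that any polarization $\p$ of $\chi$ has $\supp(\chi)$ contained in its set of common zeros, i.e., every $h\del \in \p$ vanishes at each base point $x_j$ of $\chi$. Once this is in hand, the polynomial $f$ is easy to construct.

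The lower sandwich inclusion and the vanishing $\chi(f\del)=0$ are the easy part. Any polarization $\p$ contains $\g^\chi$ (the radical of $B_\chi$), and by \cite{petukhov2022poisson}*{Lemma 4.3.4} we have $\g^\chi = \bigcap_i \g^{\chi_i}$. A direct one-point calculation of the leading coefficient of $w \mapsto \chi_i([f\del,w\del])$ shows $\g^{\chi_i}\supseteq \g((t-x_i)^{n_i+2})$ at each base point, so $\g(\prod_i(t-x_i)^{n_i+2}) \subseteq \g^\chi \subseteq \p$. This already gives the lower inclusion $\g(f_0)\subseteq \p$ for $f_0 := \prod_i(t-x_i)^{n_i+2}$, and $\chi(f_0\del)=0$ because $f_0$ vanishes at each $x_i$ to higher order than $\chi_i$ can see.

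For the common-zeros claim I would argue by contradiction. Suppose $v = h\del \in \p$ with $h(x_j)\neq 0$. Take $w := (t-x_j)^{n_j+2}\prod_{k\neq j}(t-x_k)^K\del$ for $K$ large; since $w \in \g(\prod_i(t-x_i)^{n_i+2})\subseteq \p$, the subalgebra property gives $u := [v,w] \in \p$. A Taylor-expansion computation shows that for $K$ sufficiently large, $u$ vanishes to order $\geq n_i+2$ at each $x_i$ with $i\neq j$, while at $x_j$ its leading Taylor coefficient is a non-zero multiple of $(t-x_j)^{n_j+1}\del$. Using the orthogonal decomposition $B_\chi = \sum_i B_{\chi_i}$ on the local quotient $\g/\g(\prod_i(t-x_i)^{n_i+2}) \cong \bigoplus_i \g/\g((t-x_i)^{n_i+2})$, the pairing $B_\chi(u,v)$ then reduces to $B_{\chi_j}$ applied to the $V_j$-components; a short local calculation (using that $B_{\chi_j}((t-x_j)^{n_j+1}\del,\del)$ is a non-zero multiple of $\alpha_{j,n_j}$ while $B_{\chi_j}((t-x_j)^{n_j+1}\del,(t-x_j)^k\del)=0$ for $k\geq 1$) evaluates this to a non-zero scalar multiple of $\alpha_{j,n_j}\,h(x_j)$, contradicting the total isotropy of $\p$.

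Given the claim, I would assemble $f$ as follows. The set $R$ of common zeros of $\p$ is finite since $\p$ has finite codimension, and by the classification of finite-codimensional subalgebras (cf.\ \cite{petukhov2022poisson}*{Proposition 6.2.2}), $\p$ contains $\g(g_0)$ for some polynomial $g_0$ with $\supp(g_0) = R$. By the claim $\supp(\chi)\subseteq R$, so setting $f := g_0\cdot\prod_i(t-x_i)^{n_i+1}$ yields $\g(f)\subseteq\g(g_0)\subseteq\p$; $\supp(f) = R \supseteq \supp(\chi)$; $\chi(f\del)=0$ because $f$ vanishes to order $\geq n_i+1$ at each $x_i$; and $\p \subseteq \g(\rad(f))$ because $\rad(f) = \prod_{y\in R}(t-y)$ divides every $h$ with $h\del\in\p$ by the definition of $R$. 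I expect the main obstacle to be the Taylor-expansion bookkeeping in the isotropy contradiction, in particular choosing $K$ uniformly large enough to decouple the leading-term computations at the different base points and verifying the non-vanishing of the local pairing $B_{\chi_j}$ on the relevant component.
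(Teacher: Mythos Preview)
Your argument is correct, but the paper's proof is considerably shorter because it front-loads the structural result you only invoke at the end. The paper immediately applies \cite{petukhov2022poisson}*{Proposition 3.2.7} (not 6.2.2, which concerns only codimension-2 subalgebras) to get $W(h) \subseteq \p \subseteq W(\rad(h))$ for some $h$; then the single observation $[\p,\p] \supseteq [W(h),W(h)] = W(h^2)$ combined with $\chi([\p,\p])=0$ forces $\chi(W(h^2))=0$, hence $\supp(h)\supseteq\supp(\chi)$. Setting $f=h^2$ finishes everything in one stroke. Your Taylor-expansion contradiction establishing that $\supp(\chi)$ lies in the common zero set of $\p$ is a valid direct proof of the same fact, but it duplicates work already encoded in the isotropy condition $\chi([\p,\p])=0$: once you have the sandwich $W(h)\subseteq\p$, the support containment is automatic without any local computation. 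The advantage of your route is that it makes the geometric content (base points force vanishing) explicit; the cost is the bookkeeping you correctly anticipated as the main obstacle, all of which the paper avoids.
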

\begin{proof}
    If $\g = \Vir$, then by ~\cite{petukhov2022poisson}*{Corollary 3.3.3.}, $z \in \p$, so we may reduce to considering the corresponding subalgebra of $W$. Moreover, by ~\cite{petukhov2022poisson}*{Corollary 6.4.3.}, since the lattices of finite codimension subalgebras of $\W{-1}, W$ and $\Vir$ are isomorphic, we only need to prove the statement for $W$. Since $\p$ has finite codimension, by ~\cite{petukhov2022poisson}*{Proposition 3.2.7} there exists $h \in \kk[t]$ such that 
    \begin{align*}
        W(h) \subseteq \p \subseteq W(\rad(h)).
    \end{align*}
    Thus, $W(h^2) = [W(h), W(h)] \subseteq [\p, \p]$ and since $\chi([\p, \p]) = 0$, we have $\chi(W(h^2)) = 0$. Hence, $\supp(h) = \supp(h^2) \supseteq \supp(\chi)$ and if we let $f = h^2$, noting that $\rad(f) = \rad(h)$ we obtain the lemma. 
\end{proof}

\begin{corollary}\label{cor:polsandwich}
    Let $\g$ be $\W{-1}, W$ or $\Vir$ and let $\chi$ be a local function $\g$. Let $\p_\chi$ be the polarization of $\chi$ defined in Proposition~\ref{prop:canpol} and $\p$ be any other polarization of $\chi$. Then there exists $f \in \kk[t]$ such that $\chi(f\del) = 0$, $\supp(f) \supseteq \supp(\chi)$ and 
    \begin{align*}
    \g(f) \subseteq \p_\chi, \p \subseteq \g(\rad(f)).
    \end{align*}
\end{corollary}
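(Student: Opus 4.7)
The plan is to apply Lemma~\ref{lem:polsandwich} to $\p$ to obtain some $f_\p\in\kk[t]$ with $\chi(f_\p\del)=0$, $\supp(f_\p)\supseteq\supp(\chi)$, and $\g(f_\p)\subseteq\p\subseteq\g(\rad(f_\p))$, and then to combine this with the explicit sandwich $\g(g_\chi)\subseteq\p_\chi\subseteq\g(\rad(g_\chi))$ coming from $\p_\chi=\g(g_\chi)$, where $g_\chi=\prod_i(t-x_i)^{m_i+1}$ and $\rad(g_\chi)=\prod_i(t-x_i)$. A first candidate is $f:=f_\p\cdot g_\chi^2$.

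A key preliminary observation is that $\supp(f_\p)\supseteq\supp(\chi)=\supp(g_\chi)$ gives the divisibility $\rad(g_\chi)\mid\rad(f_\p)$, hence $\g(\rad(f_\p))\subseteq\g(\rad(g_\chi))$, and therefore $\p\subseteq\g(\rad(g_\chi))=\g(\prod_i(t-x_i))$: every element of $\p$ vanishes at every $x_i$. With this, the candidate $f=f_\p\cdot g_\chi^2$ satisfies $\chi(f\del)=0$ (because $g_\chi^2$ vanishes to order $2(m_i+1)>n_i$ at each $x_i$, forcing every one-point $\chi_i(f\del)$ to vanish), $\supp(f)=\supp(f_\p)\supseteq\supp(\chi)$ and $\rad(f)=\rad(f_\p)$ (since $\supp(g_\chi)\subseteq\supp(f_\p)$), $\g(f)\subseteq\g(f_\p)\cap\g(g_\chi)\subseteq\p\cap\p_\chi$ from $f_\p\mid f$ and $g_\chi\mid f$, and $\p\subseteq\g(\rad(f_\p))=\g(\rad(f))$ inherited from the lemma.

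The main obstacle I anticipate is the remaining containment $\p_\chi\subseteq\g(\rad(f))$: an element $qg_\chi\del\in\p_\chi$ belongs to $\g(\rad(f_\p))$ only if $\rad(f_\p)\mid qg_\chi$ for every $q$, which forces $\supp(f_\p)\subseteq\supp(\chi)$ and hence $\supp(f_\p)=\supp(\chi)$. To achieve this I would strengthen the conclusion of the lemma in the polarization setting by combining the observation $\p\subseteq\g(\prod_i(t-x_i))$ with the structure theory of finite-codimension subalgebras of $\g$ underlying~\cite{petukhov2022poisson}*{Proposition~3.2.7.} to show that $\g(\prod_i(t-x_i)^K)\subseteq\p$ for some integer $K$. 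Replacing $f_\p$ by $\prod_i(t-x_i)^M$ for $M$ sufficiently large (so that $M\geq K$, $M\geq m_i+1$, and $M>n_i$ for every $i$) then yields $f=\prod_i(t-x_i)^M$ with $\supp(f)=\supp(\chi)$, $\rad(f)=\prod_i(t-x_i)=\rad(g_\chi)$, and all four containments together with $\chi(f\del)=0$ follow at once.
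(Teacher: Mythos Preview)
Your approach matches the paper's: apply Lemma~\ref{lem:polsandwich} to $\p$ to get $h$ with $\g(h)\subseteq\p\subseteq\g(\rad(h))$, then enlarge $h$ to absorb $g_\chi=\prod_i(t-x_i)^{m_i+1}$. The paper does this more economically by setting $f=\mathrm{lcm}(h,g_\chi)$ and noting $\rad(f)=\rad(h)$, rather than your $f=f_\p\,g_\chi^2$ or $f=\prod_i(t-x_i)^M$.

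You are right to isolate the containment $\p_\chi\subseteq\g(\rad(f))$ as the delicate point, but your proposed resolution has a gap. From $\p\subseteq\g\bigl(\prod_i(t-x_i)\bigr)$ together with the structure theorem for finite-codimension subalgebras you \emph{cannot} conclude $\g\bigl(\prod_i(t-x_i)^K\bigr)\subseteq\p$: the structure theorem only produces $h$ with $\g(h)\subseteq\p\subseteq\g(\rad(h))$ where $\supp(h)$ is the common zero set of $\p$, and your observation gives $\supp(\chi)\subseteq\supp(h)$, not the reverse. For an arbitrary finite-codimension subalgebra the reverse inclusion fails (e.g.\ $\g((t-1)(t-2))\subseteq\g(t-1)$ while $\g((t-1)^K)\not\subseteq\g((t-1)(t-2))$ for any $K$). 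What both your argument and the paper's actually require is $\supp(h)=\supp(\chi)$, and this uses that $\p$ is \emph{maximally} isotropic, not merely finite-codimensional: if $y\in\supp(h)\setminus\supp(\chi)$, set $w=\prod_i(t-x_i)^{n_i+1}$; since $w$ vanishes to order $>n_i$ at each $x_i$ and every $g\del\in\p$ satisfies $g(x_i)=0$, a Leibniz computation shows $B_\chi(w\del,g\del)=0$ for all $g\del\in\p$, whence $w\del\in\p^\perp=\p$, contradicting $w(y)\neq 0$. With $\supp(h)=\supp(\chi)$ established, $\rad(f)=\rad(h)=\rad(g_\chi)$ and all four containments follow at once for either choice of $f$.
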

\begin{proof}
    By the same reasoning as in Lemma~\ref{lem:polsandwich}, we only need to prove the statement for $W$. By Lemma~\ref{lem:polsandwich}, there exists $h \in \kk[t]$ such that $\supp(h) \supseteq \supp(\chi)$ and
    \begin{align*}
        W(h) \subseteq \p \subseteq W(\rad(h)).
    \end{align*}
    Let $f = \mathrm{lcm} (h,(t-x_1)^{m_1+1}\dots (t-x_\ell)^{m_\ell+1})$. Note that $\supp(h) \supseteq \supp(\chi) =  (t-x_1)\dots (t-x_\ell)$, thus $(t-x_1)\dots (t-x_\ell) | \rad(h)$ and hence $\rad(f) = \rad(h)$. Thus 
    \begin{align*}
        W(f) \subseteq \p_\chi,\p \subseteq W(\rad(f)).
    \end{align*}
\end{proof}

We now show the theorem that the annihilator $Q_{\chi, \p}$ induced from $\chi$ and a polarization $\p$ does not depend on the choice of polarization. 
\begin{theorem}\label{theo:notdeppol}
    Let $\g$ be $\W{-1}, W$ or $\Vir$ and let $\chi$ be a local function on $\g$. Let $\p_\chi$ be the polarization of $\chi$ defined in Proposition~\ref{prop:canpol} and $\p$ be any other polarization of $\chi$. Define $ Q_{\chi, \p_\chi}$ and $Q_{\chi, \p}$ as in Definition~\ref{def:orbitrep}. Then
    \begin{equation}
        Q_{\chi, \p_\chi} = Q_{\chi, \p}.
    \end{equation}
\end{theorem}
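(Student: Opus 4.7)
The plan is to reduce the equality of annihilators to the finite-dimensional solvable case of Theorem~\ref{theorem: Kirillov method}. By Corollary~\ref{cor:polsandwich} I may choose $f \in \kk[t]$ with $\chi$ vanishing on $\g(f)$ and $\g(f) \subseteq \p_\chi, \p \subseteq \g(\rad(f))$. Set $\mathfrak{h} := \g(\rad(f))$ and $\mathfrak{i} := \g(f)$. A direct calculation using the divisibility $f \mid \rad(f)f' - \rad(f)'f$ shows that $\mathfrak{i}$ is a Lie ideal of $\mathfrak{h}$, so $\bar{\mathfrak{h}} := \mathfrak{h}/\mathfrak{i}$ is a finite-dimensional Lie algebra. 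Moreover, the bracket identity $[\rad(f)p\partial,\rad(f)q\partial] = \rad(f)^2(pq'-p'q)\partial$ shows that the derived series of $\bar{\mathfrak{h}}$ lies in ever higher powers of $\rad(f)$, which eventually fall into $\mathfrak{i}$; hence $\bar{\mathfrak{h}}$ is solvable (indeed nilpotent).

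Next I descend the ingredients of the Dixmier map to $\bar{\mathfrak{h}}$. Since $\chi|_{\mathfrak{i}} = 0$, it induces $\bar\chi \in \bar{\mathfrak{h}}^\ast$; the subalgebras $\bar\p_\chi := \p_\chi/\mathfrak{i}$ and $\bar\p := \p/\mathfrak{i}$ of $\bar{\mathfrak{h}}$ are totally isotropic for the induced form $B_{\bar\chi}$, and by Lemma~\ref{lem:poldim} together with the codimension of $\mathfrak{h}$ in $\g$ both have the correct codimension to be polarizations of $\bar\chi$ in $\bar{\mathfrak{h}}$. Theorem~\ref{theorem: Kirillov method} then yields
\begin{align*}
    \Ann_{\Ua(\bar{\mathfrak{h}})} M'_{\bar\chi,\bar\p_\chi} = \Ann_{\Ua(\bar{\mathfrak{h}})} M'_{\bar\chi,\bar\p}.
\end{align*}

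To transfer this equality to $\Ua(\g)$, I factor the induction in stages,
\begin{align*}
    M'_{\chi,\p} = \Ua(\g)\otimes_{\Ua(\mathfrak{h})} \bigl(\Ua(\mathfrak{h})\otimes_{\Ua(\p)}\kk'_{\chi,\p,\g}\bigr),
\end{align*}
and apply the transitivity formula~\eqref{eq:twistmiddle}: $\rho_{\p,\g} = \rho_{\p,\mathfrak{h}} + \rho_{\mathfrak{h},\g}|_\p$. The inner $\mathfrak{h}$-module identifies with the standard twisted induced module $M'_{\chi + \rho_{\mathfrak{h},\g},\,\p,\,\mathfrak{h}}$ of $\mathfrak{h}$. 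Since $\rho_{\mathfrak{h},\g}$ is a character of $\mathfrak{h}$ it vanishes on $[\mathfrak{h},\mathfrak{h}] \supseteq \mathfrak{i}$ (the inclusion holding because $f = h^2$ is a square, so $\rad(f)^2 \mid f$); thus $\chi + \rho_{\mathfrak{h},\g}$ again vanishes on $\mathfrak{i}$, and under $\Ua(\mathfrak{h}) \twoheadrightarrow \Ua(\bar{\mathfrak{h}})$ the inner module pulls back from the $\bar{\mathfrak{h}}$-module to which Theorem~\ref{theorem: Kirillov method} was applied. The equality of $\Ua(\bar{\mathfrak{h}})$-annihilators therefore transfers to equality of $\Ua(\mathfrak{h})$-annihilators on the inner module, and then to equality of $\Ua(\g)$-annihilators on $M'_{\chi, \p}$.

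The main obstacle I anticipate lies in this final step, namely showing that the $\Ua(\g)$-annihilator of $\Ua(\g)\otimes_{\Ua(\mathfrak{h})} N$ is determined by $\Ann_{\Ua(\mathfrak{h})} N$ when $\mathfrak{h}$ has finite codimension in $\g$. This is not automatic: in general the induced annihilator in $\Ua(\g)$ carries extra data coming from the action of elements outside $\mathfrak{h}$. I expect to resolve this via the PBW decomposition $\Ua(\g) \cong \Sa(V)\otimes \Ua(\mathfrak{h})$ for a finite-dimensional vector space complement $V$ of $\mathfrak{h}$ in $\g$, together with the cyclic structure of the inner module, to reduce the question to the finite-dimensional Dixmier equality already established. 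The twist correction $\rho_{\p,\g} - \rho_{\p,\mathfrak{h}}$ introduces no polarization-dependent artefacts, precisely because~\eqref{eq:twistmiddle} depends only on the datum of $\mathfrak{h} \subseteq \g$.
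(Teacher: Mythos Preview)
Your approach is essentially the paper's proof. Both sandwich $\p_\chi$ and $\p$ between $\g(f)$ and $\mathfrak{h}=\g(\rad(f))$, factor the induction through $\mathfrak{h}$, observe that the inner $\mathfrak{h}$-module factors through the finite-dimensional solvable quotient $\bar{\mathfrak{h}}=\mathfrak{h}/\g(f)$, and invoke the classical independence of polarization there.

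The obstacle you flag is not a genuine gap but exactly the content of \cite{dixmier1996enveloping}*{Proposition 5.2.6}, which the paper simply cites: for $\mathfrak{h}\subseteq\g$ of finite codimension, if two $\mathfrak{h}$-modules have the same annihilator in $\Ua(\mathfrak{h})$ then their twisted inductions to $\g$ have the same annihilator in $\Ua(\g)$. Your PBW sketch is in the right direction but there is no need to reprove this.

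One small slip: you apply Theorem~\ref{theorem: Kirillov method} to $\bar\chi$, but the inner $\mathfrak{h}$-module you later identify is $M'_{\chi+\rho_{\mathfrak{h},\g},\,\p,\,\mathfrak{h}}$, which descends to $M'_{\overline{\chi+\rho_{\mathfrak{h},\g}},\,\bar\p}$, not $M'_{\bar\chi,\bar\p}$. The paper avoids this mismatch by separating the two twists: it writes the inner module as $K''_{\chi,\p}=K'_{\chi,\p}\otimes\rho_{\mathfrak{h},\g}$ where $K'_{\chi,\p}=\Ua(\mathfrak{h})\otimes_{\Ua(\p)}\kk'_{\chi,\p,\mathfrak{h}}$, applies the finite-dimensional result to the untwisted $K'$ (so genuinely to $\bar\chi$), and observes that tensoring both sides by the fixed character $\rho_{\mathfrak{h},\g}$ preserves equality of annihilators. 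Your version is easily repaired either by applying Theorem~\ref{theorem: Kirillov method} to $\overline{\chi+\rho_{\mathfrak{h},\g}}$ instead (the polarizations are unchanged since $\rho_{\mathfrak{h},\g}$ vanishes on brackets), or by adopting the paper's decomposition.
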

\begin{proof}
    By Corollary~\ref{cor:polsandwich}, there exists $f$ such that $\g(f) \subseteq \p_\chi, \p \subseteq \g(\rad(f))$ and $\chi(\g(f)) = 0$. Let $\mf{q} = \g(\rad(f))$. We define the intermediate twisted representations of $\mf{q}$ to be
    \begin{align*}
        K'_{\chi, \p_\chi} = \Ua(\mf{q}) \otimes_{\Ua({\p_\chi})} \kk'_{\chi, \p_\chi, \mf{q}}\text{ and } K'_{\chi, \p} = \Ua(\mf{q}) \otimes_{\Ua(\p)} \kk'_{\chi, \p, \mf{q}}.
    \end{align*}
    Following Definition~\ref{def:onedimtwist},  we define the one-dimensional twist $\rho_{\g, \mf{q}}$ with  $\rho_{\g, \mf{q}}(f\del) = \frac{1}{2} \tr (\ad(\g/\mf{q})) (f\del)$ for $f\del \in \mf{q}$. We obtain the twisted representations of $\mf{q}$: 
    \begin{align*}
        K''_{\chi, \p_\chi} = K'_{\chi, \p_\chi} \otimes \rho_{\g, \mf{q}} \text{ and }
        K''_{\chi, \p} = K'_{\chi, \p} \otimes \rho_{\g, \mf{q}}.
    \end{align*}
    \color{black}
    From ~\cite{dixmier1996enveloping}*{Proposition 5.2.3.}, we have isomorphisms of representations of $\g$
    \begin{align}
        M'_{\chi, \p_\chi} \cong \Ua(\g) \otimes_{\Ua(\mf{q})} K''_{\chi, \p_\chi} \text{ and } M'_{\chi, \p} \cong \Ua(\g) \otimes_{\Ua(\mf{q})} K''_{\chi, \p}.
    \end{align}
    Thus, by ~\cite{dixmier1996enveloping}*{Proposition 5.2.6.} it suffices to show that $\Ann_{\Ua(\mf{q})} K'_{\chi, \p_\chi} = \Ann_{\Ua(\mf{q})} K'_{\chi, \p}$.
    
    Since $\chi(\g(f)) = 0$ and $\g(f)$ is a Lie ideal of $\mf{q}$, $\Ua(\g(f))_+ \cdot K'_{\chi, \p_\chi} = 0 = \Ua(\g(f))_+ \cdot K'_{\chi, \p}$. Hence,
    \begin{align*}
        \Ann_{\Ua(\mf{q})} K'_{\chi, \p_\chi} &= \ker \Ua(\mf{q}) \to \End(K'_{\chi, \p_\chi}) = \ker \left(\Ua(\mf{q}) \to \Ua(\mf{q}/ \g(f))  \to \End(K'_{\chi, \p_\chi}) \right); \\
        \Ann_{\Ua(\mf{q})} K'_{\chi, \p} &= \ker  \Ua(\mf{q}) \to \End(K'_{\chi, \p}) = \ker \left(\Ua(\mf{q}) \to \Ua(\mf{q}/ \g(f))  \to \End(K'_{\chi, \p}) \right).
    \end{align*}
    Note that $\mf{q}/\g(f)$ is a finite-dimensional completely solvable Lie algebra (since $\kk$ is algebraically closed), so by ~\cite{dixmier1996enveloping}*{Theorem 6.1.4}
    \begin{align*}
        \Ann_{\Ua(\mf{q}/\g(f))} K'_{\chi, \p_\chi} = \Ann_{\Ua(\mf{q}/\g(f))} K'_{\chi, \p}.
    \end{align*}
    Thus, $\Ann_{\Ua(\mf{q})} K'_{\chi, \p_\chi} = \Ann_{\Ua(\mf{q})} K'_{\chi, \p}$ and the theorem is proven. 
\end{proof}

As we are only interested in the annihilator $Q_{\chi, \p}$, which by Theorem~\ref{theo:notdeppol} does not depend on $\p$, we can choose to work with the local representations that corresponds to the polarization defined in Proposition~\ref{prop:canpol}. We make this choice because the polarization in Proposition~\ref{prop:canpol} has a nice form with concrete generators, and is moreover a $\kk[t]$ or $\kk[t, t^{-1}]$-submodule of $\g$. 
\begin{definition}\label{def:canrep}
    Let $\g$ be either $\W{-1}, W$ or $\Vir$ and let $\chi$ be a local function on $\g$ following Notation~\ref{not:localfunc}. Then we call the polarization $\p_\chi$ in Proposition~\ref{prop:canpol} \emph{the canonical polarization} of $\chi$. The local representation $M'_{\chi, \p_\chi}$ corresponding to the canonical polarization of $\chi$ is called the \emph{canonical local representation} of $\g$ corresponding to $\chi$. From now on, we abbreviate the notation of canonical local representation to $M'_{\chi}$ and its annihilator to $Q_\chi$.
\end{definition}

The following notation will be useful later in the computation of canonical local representations.
\begin{notation}\label{not:leadterm}
    Let $V$ be a vector space, $S$ be a basis of $V$ and $\mc{C}$ be a total order on $S$. Then for every $v \in V$, we write $v= \sum_{i = 0}^{n} \lambda_i v_i$, where $v_i \in S$, $\lambda_i \in \kk^*$ and 
    $v_0 > \dots > v_n$ with respect to the order $\mc{C}$. We define the \emph{leading term} of $v$ with respect to order $\mc{C}$ to be $v_0$, and write 
    \begin{align*}
        \LT_{\mc{C}}(v) = v_0.
    \end{align*}

    This notation will be used repeatedly for canonical local representations. Let $\g$ be $\W{-1}, W$ or $\Vir$ and let $\chi$ be a local function on $\g$ and let $\p_\chi$ be the canonical polarization of $\chi$. Let $(f_1, \dots, f_N)$ be a basis for $\g/\p_\chi$ and $\mc{C}$ be the order $f_1 > \dots > f_N$ , then by the PBW theorem,
    \begin{align*}
        S = \{f_1^{k_1} \dots  f_N^{k_N} 1_\chi | k_i \in \NN\}
    \end{align*}
    is a basis for $M'_{\chi}$. Moreover we can order $S$ lexicographically, thus, for $z \in M'_{\chi}$, we can write $\LT_{\mc{C}}(z)$ for the leading term of $z$ with respect to $\mc{C}$. 
\end{notation}

We now prove two propositions relating canonical local representations of $\W{-1}$ and $\Vir$ to the corresponding canonical local representations of $W$. 
\begin{proposition}\label{prop:repWW-1}
    Let $\chi$ be a local function on $W$ and $\chi|_{\W{-1}}$ be the corresponding function on $\W{-1}$. Let $M'_\chi$ and $M'_{\chi|_{\W{-1}}}$ be the corresponding canonical local representations of $W$ and $\W{-1}$ respectively. Let $\iota$ denote the canonical embedding $\Ua(\W{-1}) \hookrightarrow \Ua(W)$. Then  
    \begin{equation}
        M'_{\chi|_{\W{-1}}} \cong \res{\iota} M'_\chi  \text{ as representations of }\W{-1}.
    \end{equation}
\end{proposition}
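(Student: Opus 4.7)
The plan is to exhibit the isomorphism as a concrete identification of PBW-bases, using the fact that the polynomial $f = \prod_i (t-x_i)^{m_i+1}$ defining the canonical polarizations of $\chi$ and $\chi|_{\W{-1}}$ satisfies $f(0) \neq 0$, since every base point of a local function on $W$ is nonzero.

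Using $f(0) \neq 0$, I first establish the two structural identities
\begin{align*}
    \W{-1} \cap \p_\chi = \p_{\chi|_{\W{-1}}}, \qquad \W{-1} + \p_\chi = W.
\end{align*}
Both reduce to facts about the commutative ring $\kk[t, t^{-1}]$. The intersection identity follows because any Laurent polynomial divisible by $f$ that lies in $\kk[t]$ must in fact lie in $f\kk[t]$, as $f(0) \neq 0$ forbids a nontrivial polar part in the quotient. The sum identity follows because $t$ is already a unit modulo $f$ in $\kk[t]$, so the natural map $\kk[t]/f\kk[t] \to \kk[t, t^{-1}]/f\kk[t, t^{-1}]$ is an isomorphism (it is localization at an element that is already invertible). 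The second isomorphism theorem then produces a $\p_{\chi|_{\W{-1}}}$-equivariant isomorphism $\W{-1}/\p_{\chi|_{\W{-1}}} \xrightarrow{\sim} W/\p_\chi$ under the adjoint action, whose existence forces
\begin{align*}
    \rho_{\p_\chi, W}|_{\p_{\chi|_{\W{-1}}}} = \rho_{\p_{\chi|_{\W{-1}}}, \W{-1}}
\end{align*}
by comparing the traces that define the one-dimensional twists of Definition~\ref{def:onedimtwist}. Combined with the trivial equality $\chi|_{\p_{\chi|_{\W{-1}}}} = (\chi|_{\W{-1}})|_{\p_{\chi|_{\W{-1}}}}$, this shows that the $\p_\chi$-character $\kk'_{\chi, \p_\chi, W}$ restricts to the $\p_{\chi|_{\W{-1}}}$-character $\kk'_{\chi|_{\W{-1}}, \p_{\chi|_{\W{-1}}}, \W{-1}}$.

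With this compatibility in hand, the universal property of induction produces a canonical $\W{-1}$-module homomorphism $M'_{\chi|_{\W{-1}}} \to \res{\iota} M'_\chi$ sending $1_\chi$ to $1_\chi$. To check bijectivity, I would pick a vector space complement $\mathfrak{c}$ of $\p_{\chi|_{\W{-1}}}$ inside $\W{-1}$; the two structural identities above imply that $\mathfrak{c}$ is also a complement of $\p_\chi$ inside $W$, so PBW identifies both induced modules with $S(\mathfrak{c}) \cdot 1_\chi$ as vector spaces, and the homomorphism above becomes the identity under these identifications. The only substantive step is the verification of $\W{-1} + \p_\chi = W$; everything else is a straightforward bookkeeping exercise with PBW bases and trace computations.
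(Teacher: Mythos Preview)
Your proposal is correct and follows essentially the same approach as the paper: both arguments pick a common complement $\mathfrak{c} \subseteq \W{-1}$ of the two polarizations and use PBW to identify each induced module with $S(\mathfrak{c}) \cdot 1_\chi$, so that the map $1_{\chi|_{\W{-1}}} \mapsto 1_\chi$ becomes a bijection on bases. Your version is in fact more careful than the paper's, since you explicitly justify (via $f(0)\neq 0$) the structural identities $\W{-1}\cap\p_\chi=\p_{\chi|_{\W{-1}}}$ and $\W{-1}+\p_\chi=W$, and you verify the compatibility of the twists $\rho_{\p_\chi,W}|_{\p_{\chi|_{\W{-1}}}}=\rho_{\p_{\chi|_{\W{-1}}},\W{-1}}$ needed for the map to be well-defined---points the paper leaves implicit.
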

\begin{proof}
    Let the canonical polarization for $\chi$ be $W(f)$, where $f = (t-x_1)^{m_1+1} \dots  (t-x_\ell)^{m_\ell+1}$. Then $\W{-1}(f)$ is the canonical polarization for $\chi|_{\W{-1}}$. Let $M = m_1 + \dots + m_\ell + \ell$. We can choose $w_i \in \W{-1}$ such that 
    \begin{align*}
        W = W(f) \oplus \bigoplus_{i=1}^{M-1} \kk w_i \text{ and } 
        \W{-1} = \W{-1}(f) \oplus \bigoplus_{i=1}^{M-1} \kk w_i,  
    \end{align*}    
    Thus, by the PBW theorem, bases for $M'_\chi$ and $M'_{\chi|_{\W{-1}}}$ are
    \begin{align*}
        \mathcal{S}^W = \{ w_{-1}^{k_{-1}}\dots w_{M-1}^{k_{M-1}} 1_\chi | k_i \in \NN \} \text{ and }  \mathcal{S}^{\W{-1}} = \{ w_{-1}^{k_{-1}}\dots w_{M-1}^{k_{M-1}} 1_{\chi|_{\W{-1}}} | k_i \in \NN \} \text{ respectively.}
    \end{align*}
    Under the canonical embedding $\iota: \Ua(\W{-1}) \hookrightarrow \Ua(W)$, we can view $\Ua(\W{-1})$ as a subring of $\Ua(W)$, thus, we have a $\Ua(\W{-1})$-module homomorphism 
    \begin{equation}
        \begin{aligned}
            \theta: M'_{\chi|_{\W{-1}}} \to \res{\iota} M'_\chi, \text{ induced by mapping } 1_{\chi|_{\W{-1}}} \mapsto 1_\chi. 
        \end{aligned}
    \end{equation}
    The homomorphism $\theta$ by construction maps $\mathcal{S}^W$ to $\mathcal{S}^{\W{-1}}$ bijectively, and so $\theta$ is an isomorphism. 
\end{proof}

\begin{proposition}\label{prop:repWVir}
    Let $\chi$ be a local function on $\Vir$ and recall the canonical projection $\Gamma: \Vir \twoheadrightarrow W$ sending $z \to 0$. Abusing notation, let $\Gamma(\chi)$ be the corresponding local function on $W$. Let $M'_\chi$ and $M'_{\Gamma(\chi)}$ be the corresponding canonical local representations of $W$ and $\Vir$ respectively. Then 
    \begin{align*}
        M'_\chi \cong \res{\Gamma} M'_{\Gamma(\chi)} \text{ as representations of } \Vir.
    \end{align*}
\end{proposition}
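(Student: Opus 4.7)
The plan is to exhibit an explicit $\Vir$-module isomorphism $\theta: M'_\chi \to \res_\Gamma M'_{\Gamma(\chi)}$ induced from the algebra map $\Gamma: \Ua(\Vir) \twoheadrightarrow \Ua(W)$, in analogy with the argument for Proposition~\ref{prop:repWW-1}.

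First I would unpack the canonical polarizations. Since $\chi$ is local on $\Vir$, it is a pullback via $\Gamma$, so $\chi(z) = 0$, and from Proposition~\ref{prop:canpol} one reads off $\p_\chi = \p_{\Gamma(\chi)} + \kk z = \Gamma^{-1}(\p_{\Gamma(\chi)})$. Next I would check that the twisted one-dimensional characters on the two polarizations match under $\Gamma$. Because $z$ is central in $\Vir$, the map $\ad_{\Vir/\p_\chi}(z)$ is zero, so $\rho_{\p_\chi,\Vir}(z) = 0$; combined with $\chi(z)=0$ this gives $z \cdot 1_\chi = 0$ in $\kk'_{\chi,\p_\chi,\Vir}$. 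For $f\del \in \p_{\Gamma(\chi)} \subseteq \p_\chi$, I would observe that $\Gamma$ induces a $\p_{\Gamma(\chi)}$-equivariant isomorphism $\Vir/\p_\chi \xrightarrow{\sim} W/\p_{\Gamma(\chi)}$ (the central piece collapses since $z \in \p_\chi$), under which $\ad_{\Vir/\p_\chi}(f\del)$ corresponds to $\ad_{W/\p_{\Gamma(\chi)}}(f\del)$. Taking traces yields $\rho_{\p_\chi,\Vir}(f\del) = \rho_{\p_{\Gamma(\chi)},W}(f\del)$, and since $\chi(f\del) = \Gamma(\chi)(f\del)$, the two characters agree on all of $\p_\chi$ after identification via $\Gamma$.

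With this character matching in hand, I would define
\begin{equation*}
    \theta: M'_\chi \longrightarrow \res_\Gamma M'_{\Gamma(\chi)}, \qquad u \otimes 1_\chi \longmapsto \Gamma(u) \otimes 1_{\Gamma(\chi)}.
\end{equation*}
Well-definedness over $\Ua(\p_\chi)$ follows from the previous step: if $p \in \p_\chi$ acts on $1_\chi$ by the scalar $\lambda$, then $\Gamma(p) \in \p_{\Gamma(\chi)}$ acts on $1_{\Gamma(\chi)}$ by the same scalar $\lambda$. Equivariance is automatic: for $v \in \Vir$, $\theta(v \cdot (u\otimes 1_\chi)) = \Gamma(vu) \otimes 1_{\Gamma(\chi)} = \Gamma(v) \cdot \theta(u \otimes 1_\chi)$, which is precisely the $\Vir$-action on the restriction.

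Finally, to see bijectivity I would choose a vector space decomposition $\Vir = \p_\chi \oplus V$ with $V \subseteq W$ (possible because $z \in \p_\chi$); then $V$ is simultaneously a complement to $\p_{\Gamma(\chi)}$ in $W$. By the PBW theorem the ordered monomials in a basis of $V$ acting on $1_\chi$, respectively $1_{\Gamma(\chi)}$, form bases of $M'_\chi$ and $M'_{\Gamma(\chi)}$, and $\theta$ sends one basis to the other. The only mildly non-trivial point is the twist computation in Step~2; everything else is formal bookkeeping via PBW and the defining properties of induction, mirroring the proof of Proposition~\ref{prop:repWW-1}.
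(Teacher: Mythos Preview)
Your proposal is correct and follows essentially the same approach as the paper: define $\theta$ via $\Gamma$ and check it sends a PBW basis to a PBW basis. You are in fact more careful than the paper, which asserts that $\Gamma$ induces the module map $\theta$ without explicitly verifying that the twisted characters $\rho_{\p_\chi,\Vir}$ and $\rho_{\p_{\Gamma(\chi)},W}$ agree under $\Gamma$; your argument via the isomorphism $\Vir/\p_\chi \cong W/\p_{\Gamma(\chi)}$ fills that gap cleanly.
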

\begin{proof}
    We have $z \in \p_\chi$, $\chi(z) = 0$ and $\p_{\Gamma(\chi)} = \Gamma(\p_\chi)$ is the canonical polarization of ${\Gamma(\chi)} \in W^*$. Thus $\Gamma$ induces a $\Ua(\Vir)$-module homomorphism $\theta: M'_\chi \to \res{\Gamma} M'_{\Gamma(\chi)}$. By definition, $\theta$ sends a basis of $M'_\chi$ to one of $\res{\Gamma} M'_{\Gamma(\chi)}$, thus it is an isomorphism. 
\end{proof}

Lastly, in this section, we want to prove the important result that for $\chi$ a local function on $\g$ following Notation~\ref{not:localfunc}, the canonical local representation $M'_\chi$ decomposes into the tensor product $M'_{\chi_1} \otimes \dots \otimes M'_{\chi_\ell}$. This follows from~\cite{ondrus2018vir}*{Proposition 6.1}.

\begin{proposition}\label{prop:tensor1point}
    Let $\g$ be $\W{-1}$, $W$ or $\Vir$ and $\chi = \chi_1 + \dots  + \chi_\ell $ be a local function on $\g$ following Notation~\ref{not:localfunc}. Then
\begin{equation}
    M'_\chi \cong M'_{\chi_1} \otimes \dots  \otimes M'_{\chi_\ell}.
\end{equation}
\end{proposition}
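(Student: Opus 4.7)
The plan is to construct an explicit $\g$-module isomorphism $\phi \colon M'_\chi \to M'_{\chi_1} \otimes \dots \otimes M'_{\chi_\ell}$ sending $1_\chi$ to $v_0 := 1_{\chi_1} \otimes \dots \otimes 1_{\chi_\ell}$, where the tensor product carries the diagonal $\g$-action coming from the coproduct on $\Ua(\g)$. To see that $\phi$ is well-defined, I would invoke the universal property of the induced module $M'_\chi = \Ua(\g) \otimes_{\Ua(\p_\chi)} \kk'_{\chi, \p_\chi, \g}$: it suffices to verify that $v_0$ is a $\p_\chi$-weight vector of weight $\chi + \rho_{\p_\chi, \g}$. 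Since $\p_\chi = \g(\prod_i (t - x_i)^{m_i+1}) \subseteq \p_{\chi_i}$ for every $i$, any $x \in \p_\chi$ acts on each factor $1_{\chi_i}$ by the scalar $(\chi_i + \rho_{\p_{\chi_i},\g})(x)$, so the coproduct gives
\[ x \cdot v_0 = \Bigl(\chi(x) + \sum_i \rho_{\p_{\chi_i},\g}(x)\Bigr) v_0. \]
The problem thus reduces to the twist identity $\sum_i \rho_{\p_{\chi_i},\g}(x) = \rho_{\p_\chi,\g}(x)$ for $x \in \p_\chi$.

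I would prove the twist identity using the fact that the canonical polarizations are $\kk[t, t^{-1}]$-submodules cut out by pairwise coprime ideals. The Chinese Remainder Theorem, applied to the coprime factors $(t - x_i)^{m_i+1}$, yields a $\p_\chi$-equivariant linear isomorphism $\g/\p_\chi \cong \bigoplus_i \g/\p_{\chi_i}$; the dimensions match by Lemma~\ref{lem:poldim}. Taking traces of $\ad(x)$ on both sides and dividing by two gives the required identity, so $\phi$ extends uniquely to a $\g$-module map.

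Next, I would show $\phi$ is bijective by passing to associated graded modules. Equip $M'_\chi$ with the PBW filtration $F^n M'_\chi = F^n \Ua(\g) \cdot 1_\chi$ and the tensor product with the product filtration; the map $\phi$ preserves both filtrations since it is $\g$-equivariant and sends generator to generator. By PBW, $\gr M'_\chi$ is a free rank-one $\Sa(\g/\p_\chi)$-module on the symbol of $1_\chi$, while $\gr\bigl(\bigotimes_i M'_{\chi_i}\bigr) \cong \bigotimes_i \gr M'_{\chi_i}$ is a free rank-one module on the symbol of $v_0$ over $\bigotimes_i \Sa(\g/\p_{\chi_i}) \cong \Sa\bigl(\bigoplus_i \g/\p_{\chi_i}\bigr)$. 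The CRT isomorphism identifies these two polynomial algebras, and on the associated graded the coproduct $\Delta(x) = \sum_i 1 \otimes \dots \otimes x \otimes \dots \otimes 1$ becomes the diagonal embedding $\bar{x} \mapsto (\bar{x}_i)_i$ under this identification. Hence $\gr \phi$ is a morphism of free rank-one modules over a common algebra sending generator to generator, and so is an isomorphism; a standard filtered-isomorphism argument then implies $\phi$ itself is an isomorphism.

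The main obstacle is the twist identity, which works cleanly because we have chosen canonical polarizations: for a general polarization neither the CRT decomposition nor the direct trace computation would be available, and one would first have to reduce to the canonical case via Theorem~\ref{theo:notdeppol}. Alternatively, this proposition is essentially Proposition 6.1 of \cite{ondrus2018vir} in the language of modules induced from polynomial subalgebras, and one can cite that result directly, noting that the twist identity above is precisely what matches their module structure with our twisted construction in Definition~\ref{def:orbitrep}.
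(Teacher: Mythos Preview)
Your proof is correct and essentially self-contained, whereas the paper's proof is a two-line reduction: it inducts on $\ell$ and, for the inductive step $M'_\chi \cong M'_{\tilde\chi} \otimes M'_{\chi_\ell}$, simply observes that $\p_\chi = \p_{\tilde\chi} \cap \p_{\chi_\ell}$ has infinite dimension and then cites \cite{ondrus2018vir}*{Proposition 6.1} directly. You mention this citation as an alternative at the end, but your main argument actually \emph{reproves} that result in the canonical-polarization setting: the CRT decomposition $\g/\p_\chi \cong \bigoplus_i \g/\p_{\chi_i}$ both verifies the twist identity $\rho_{\p_\chi,\g} = \sum_i \rho_{\p_{\chi_i},\g}|_{\p_\chi}$ (giving well-definedness of $\phi$) and identifies the associated graded map with the algebra isomorphism $\Sa(\g/\p_\chi) \cong \Sa\bigl(\bigoplus_i \g/\p_{\chi_i}\bigr)$ (giving bijectivity). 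The paper's approach is shorter because it outsources this work; yours makes explicit why the isomorphism holds and, in particular, exposes the role of the CRT structure of the canonical polarizations, which is not visible in a bare citation.
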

\begin{proof}
    Let $\tilde{\chi} = \chi_1 + \dots + \chi_{\ell-1}$. It suffices to show that $M'_\chi \cong M'_{\tilde{\chi}} \otimes M'_{\chi_\ell}$. The canonical polarization $\p_\chi = \p_{\tilde{\chi}} \cap \p_{\chi_\ell}$ has infinite dimension, thus by ~\cite{ondrus2018vir}*{Proposition 6.1} $M'_\chi \cong M'_{\tilde{\chi}} \otimes M'_{\chi_\ell}$. 
\end{proof}

\section{Annihilators of local representations are primitive}\label{sec:primitive}
Let $\g$ be either $\W{-1}, W$ or $\Vir$.  . In this section, we show that the Dixmier map $\Dx^\g: \g^* \to \Prim \Ua(\g)$ is well-defined by proving that $Q_\chi$ is primitive when $\chi$ is a local function on $\g$. If $\g = W$ or $\g = \W{-1}$, let $z = 0$. Then as a result, we show that the following map is well-defined. 
\begin{equation}\label{eq:Dixmier}
    \Dx^\g: \g^* \to \Prim \Ua(\g); \quad 
    \chi \text{ local } \mapsto Q_\chi,  \quad 
    \chi \text{ not local } \mapsto (z-\chi(z)).
\end{equation}

\subsection{Simplicity of canonical local representations}
Let $\g$ be either $\W{-1}, W$ or $\Vir$. In this subsection, we will show that in almost all cases, the canonical local representations of $\g$ are irreducible. (The remaining cases will be remedied by Proposition~\ref{prop:primreplace}.) More precisely, we aim to show that for $\chi = \chi_1+ \dots + \chi_\ell$ a local function on $\g$ following Notation~\ref{not:localfunc}, then $M'_\chi$ is an irreducible representation of $\g$ if $\chi_i \neq \chi_{x_i; \alpha, \frac{1}{2}}$ for all $i$. Theorem~\ref{theo:simplicityone} and~\ref{theo:simplicitymulti} are also proven in \cite{ondrus2018vir}*{Theorem 5.6.} using different computations and notation. Our proof was obtained independently and makes it clear how simiplicity of the modules depends on the form of local functions $\chi$. We also establish an explicit element $Y$ that brings an arbitrary element $z \in M'_\chi$ to the generator $1_\chi$. 

From Propositions~\ref{prop:repWW-1} and~\ref{prop:repWVir}, we note that it is enough to consider the case $\g = \W{-1}$. Thus from now on in this section, we are only concerned with $\Ua(\W{-1})$-modules. 

We will first prove the statement for one-point local functions. Let $\chi = \chi_{x; \alpha_0, \dots, \alpha_n}$ be a nonzero one-point local function on $\W{-1}$ and $\chi \neq \chi_{x; \alpha, \frac{1}{2}}$. Let $m = \left\lfloor \frac{n}{2}\right\rfloor$. We begin with setting up some notations and define some special elements, which will be crucial in proving simplicity of $M'_\chi$. We then explain the proof strategy in Remark~\ref{rem:proofstrategy}.

\begin{definition}\label{def:u,a,xi}
    Let $x \in \kk$ and $q \in \kk[t]\backslash \{0\}$ such that $q(x) \neq 0$. For $j \geq -1$, we define 
    \begin{equation}\label{eq:tildeudef}
        u_{j, x} = (t-x)^{j+1}\del, \quad \tilde{u}_{q, j, x} = q u_{j, x} \in \W{-1}.
    \end{equation}
    For $0 \leq i \leq m -1$ since $\tilde{u}_{q, n-1-i, x}\in \p_\chi = W((t-x)^{m+1})$, we can define $a_{q, i, x} \in \kk$ to be given by 
    \begin{equation}\label{eq: a k d definition}
        \tilde{u}_{q, n-1-i, x} \cdot 1_\chi = a_{q, i, x} 1_\chi.     
    \end{equation}
    We define special elements, 
    \begin{equation}\label{eq:xidef}
        \xi_{q, i, x} = \frac{\tilde{u}_{q, n-1-i, x}-a_{q, i, x}}{(2i-n+1) a_{q, 0, x}} \in \Ua(\W{-1}).
    \end{equation}
    Note that when the base point is clear (e.g. when we consider a one-point local function), we often drop the dependence on $x$ and abbreviate $u_{j, x}, \tilde{u}_{q, j, x}, a_{q, i, x}, \xi_{q, i, x}$ to $u_{j}, \tilde{u}_{q, j}, a_{q, i}, \xi_{q, i}$ respectively.
\end{definition}

We now discuss the proof strategy of the one-point case in Theorem~\ref{theo:simplicityone}. 
\begin{remark}[Proof strategy]\label{rem:proofstrategy}
    Let $z = u_{-1}^{k_{-1}} \dots  u_{m-1}^{k_{m-1}} 1_\chi \in M'_\chi$. We wish to show that the generator $1_\chi$ of $M'_\chi$ lies in $\Ua(\W{-1})\cdot z$. To be able to adapt the proof for multi-point local representations, we will prove the stronger statement that for all $q \in \kk[t] \backslash \{0\}$ with $q(x) \neq 0$, if $Y_q =  \xi_{q, m-1}^{k_{m-1}}\dots \xi_{q, 0}^{k_{0}} \tilde{u}_{q, n-1+k_{-1}}$ then $Y_qz = c1_\chi$ for some $c \in \kk^\times$. Firstly, we consider the case where $u_{-1}$ and $u_0$ do not occur in $z$. This is proven in Lemma~\ref{lem:xiui0} and Lemma~\ref{lem:xiui}. Then, we add $u_0$ into the picture in Lemma~\ref{lem:xi0u0} and finally $u_{-1}$ in Lemma~\ref{lem:cancelu-1}. Lemma~\ref{lem:xi0general} will explain why we only need to look at ``monomial'' $z \in M'_\chi$. 
\end{remark}

\begin{lemma}\label{lem:rho0}
    Let $\chi$ be a nonzero one-point local function on $\W{-1}$ of order $n > 1$ based at point $x \neq 0$, and let $\p_\chi$ be its canonical polarization. Then for all $f\del \in \p_\chi$
    \begin{align*}
        \rho_{\p_\chi, \W{-1}} (f\del) = 0.
    \end{align*}
\end{lemma}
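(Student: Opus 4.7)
The plan is to compute $\tr\bigl(\ad_{\W{-1}/\p_\chi}(f\del)\bigr)$ by hand using a natural basis of $\W{-1}/\p_\chi$, and to observe that the resulting matrix has zero diagonal. By Proposition~\ref{prop:canpol}, the canonical polarization is $\p_\chi = (t-x)^{m+1}\kk[t]\del$, and since $n > 1$ we have $m \geq 1$. An obvious basis of the $(m+1)$-dimensional quotient $\W{-1}/\p_\chi$ is $v_j := (t-x)^j\del$ for $0 \leq j \leq m$.

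Next, writing an arbitrary $f\del \in \p_\chi$ as $f = (t-x)^{m+1}g$ with $g \in \kk[t]$, I would compute
\[
[f\del, v_j] = \bigl((j - m - 1)(t-x)^{m+j}g - (t-x)^{m+j+1}g'\bigr)\del.
\]
For $j \geq 1$ both summands lie in $\p_\chi$, so $\ad(f\del)(v_j) \equiv 0 \pmod{\p_\chi}$. For $j = 0$, using $g(t) \equiv g(x) \pmod{(t-x)}$, the computation reduces modulo $\p_\chi$ to $\ad(f\del)(v_0) \equiv -(m+1)g(x)\,v_m$.

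Thus in the ordered basis $(v_0, v_1, \dots, v_m)$ the matrix of $\ad_{\W{-1}/\p_\chi}(f\del)$ has its only nonzero entry in position $(m, 0)$, which is off-diagonal because $m \geq 1$. The trace is therefore zero, and so $\rho_{\p_\chi, \W{-1}}(f\del) = \tfrac{1}{2}\tr\bigl(\ad_{\W{-1}/\p_\chi}(f\del)\bigr) = 0$. There is no serious obstacle: the hypothesis $n > 1$ is used precisely to ensure $m \geq 1$, so that $v_0$ and $v_m$ are distinct basis vectors and the single nonzero matrix entry sits off the diagonal; the assumption $x \neq 0$ does not play a role in the argument.
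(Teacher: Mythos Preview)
Your proof is correct and follows essentially the same approach as the paper: both compute the matrix of $\ad_{\W{-1}/\p_\chi}(f\del)$ in the basis $\{(t-x)^j\del : 0 \le j \le m\}$ (the paper indexes this as $u_{-1},\dots,u_{m-1}$) and observe that the only possibly nonzero column, coming from the action on $\del$, lands in the span of the other basis vectors since $m \ge 1$. Your version is slightly more explicit in computing the coefficient $-(m+1)g(x)$, and your remark that $x \neq 0$ is not actually needed is correct.
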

\begin{proof}
    Recall that $\p = \W{-1}((t-x)^{m+1})$ where $m = \left\lfloor \frac{n}{2}\right\rfloor$, thus $\W{-1} = \p_\chi \oplus \kk \{u_{-1}, \dots, u_{m-1} \}$. For all $k \geq m+1$ and $h \in \kk[t, t^{-1}]$, we have $[(t-x)^k h\del, \W{-1}(t-x)] \subseteq \p_\chi$. Furthermore, since $k \geq m+1 \geq 2$ we have 
    \begin{align*}
        [(t-x)^{k}h\del, \del] = -(t-x)^{k}h'\del - k(t-x)^{k-1}h\del \in \W{-1}(t-x).
    \end{align*}
    Consequently $\rho_{\p_\chi, \W{-1}} ((t-x)^k h\del) = \frac{1}{2}\tr(\ad_{\W{-1}/\p_\chi}((t-x)^k h\del)) = 0$.
\end{proof}

The following lemma proves an important result that $a_{q, 0} \neq 0$ for all $q \in \kk[t]$ and most local function $\chi$. 
\begin{lemma}\label{lem:anonzero}
    Let $\chi \neq \chi_{x; \alpha_0, \frac{1}{2}}$ be a one-point local function on $\W{-1}$. Then for all $q \in \kk[t] \backslash \{0\}$ with $q(x) \neq 0$, we have $a_{q, 0, x} \neq 0$.
\end{lemma}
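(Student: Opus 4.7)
The plan is to evaluate $\tilde{u}_{q,n-1,x} \cdot 1_\chi$ directly using the definition of the twisted one-dimensional representation, and to see that the only surviving contribution involves $\alpha_n$ and $q(x)$, both nonzero by hypothesis. The case $n=1$ will require slightly different handling, because Lemma~\ref{lem:rho0} assumes $n > 1$; here is precisely where the hypothesis $\chi \neq \chi_{x;\alpha_0,\tfrac12}$ enters.

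First, I would verify that $\tilde{u}_{q,n-1,x} = q(t)(t-x)^n \partial$ lies in $\p_\chi = \W{-1}((t-x)^{m+1})$, i.e.\ that $n \geq m+1$. Since $m = \lfloor n/2 \rfloor$, this holds for all $n \geq 1$. By the definition of the twisted one-dimensional representation (Definition~\ref{def:orbitrep}), we therefore get
\begin{equation*}
a_{q,0,x} \;=\; \chi\bigl(q(t)(t-x)^n\partial\bigr) \;+\; \rho_{\p_\chi,\W{-1}}\bigl(q(t)(t-x)^n\partial\bigr).
\end{equation*}

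Next I would compute $\chi\bigl(q(t)(t-x)^n\partial\bigr) = \sum_{i=0}^{n} \alpha_i\,\bigl(q(t)(t-x)^n\bigr)^{(i)}\big|_{t=x}$. By the Leibniz rule, $\bigl(q(t)(t-x)^n\bigr)^{(i)} = \sum_j \binom{i}{j} q^{(j)}(t)\,\bigl((t-x)^n\bigr)^{(i-j)}$, and evaluating at $t=x$ the factor $\bigl((t-x)^n\bigr)^{(i-j)}$ vanishes unless $i - j = n$, forcing $i = n$ and $j = 0$. The unique surviving term is $q(x)\cdot n!$, so $\chi\bigl(q(t)(t-x)^n\partial\bigr) = \alpha_n\, n!\, q(x)$.

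For $n \geq 2$, Lemma~\ref{lem:rho0} gives $\rho_{\p_\chi,\W{-1}}\bigl(q(t)(t-x)^n\partial\bigr) = 0$, so $a_{q,0,x} = \alpha_n\, n!\, q(x)$; this is nonzero because $\alpha_n \neq 0$ by definition of the order of $\chi$ and $q(x) \neq 0$ by hypothesis. The only delicate case is $n=1$, since then $\p_\chi = \W{-1}(t-x)$ has codimension one and the twist need not vanish. Here I would compute $\rho_{\p_\chi,\W{-1}}$ directly on the one-dimensional quotient $\W{-1}/\p_\chi$, which is spanned by the class of $\partial$. Using $[f\partial,\partial] = -f'\partial$ and the fact that $-f'\partial \equiv -f'(x)\partial \pmod{\p_\chi}$, one obtains $\rho_{\p_\chi,\W{-1}}(f\partial) = -\tfrac12 f'(x)$. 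Applied to $q(t)(t-x)\partial$, whose derivative at $x$ is $q(x)$, this gives a contribution of $-\tfrac12 q(x)$, so $a_{q,0,x} = q(x)\bigl(\alpha_1 - \tfrac12\bigr)$. This is nonzero precisely because $q(x)\neq 0$ and the hypothesis $\chi \neq \chi_{x;\alpha_0,\tfrac12}$ means $\alpha_1 \neq \tfrac12$, completing the proof.

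The only mild subtlety is bookkeeping around the $n=1$ case; the computation for $n\geq 2$ is a direct Leibniz-rule check, with no real obstacles.
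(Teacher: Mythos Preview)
Your proof is correct and takes essentially the same approach as the paper: for $n>1$ invoke Lemma~\ref{lem:rho0} to kill the twist and use Leibniz to get $n!\,\alpha_n\,q(x)$, and for $n=1$ compute the twist directly on the one-dimensional quotient $\W{-1}/\p_\chi$ to obtain $(\alpha_1-\tfrac12)q(x)$. The only small omission is the order-$0$ case $\chi_{x;\alpha_0}$, which the paper folds into $n=1$ by writing $\chi_{x;\alpha_0}=\chi_{x;\alpha_0,0}$; your $n=1$ computation with $\alpha_1=0$ then gives $-\tfrac12 q(x)\neq 0$.
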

\begin{proof}
    If $n > 1$, then by Lemma~\ref{lem:rho0}, we have $\rho_{\p_\chi, \W{-1}}(q\del) = 0$. Thus $\tilde{u}_{q, n-1, x} \cdot 1_\chi = \chi(\tilde{u}_{q, n-1, x}) = n! \alpha_n q(x) \neq 0$. On the other hand, if $n$ is $0$ or $1$, the canonical polarization is $\p_\chi = \W{-1}(t-x)$ and $[(t-x)q\del, \del] = -(t-x)q'\del - q\del$, thus $\rho_{\p_\chi, \g}((t-x)q\del) = -\frac{1}{2}q(x)$. Hence 
    \begin{equation}\label{eq:espu}
        \epsilon(\tilde{u}_{q, 0, x}) 1_\chi = (\alpha_1-\frac{1}{2})q(x)1_\chi,
    \end{equation}
    which is nonzero if $\alpha_1 \neq \frac{1}{2}$.
\end{proof}

We will set up the main assumption in this subsection until the end of Theorem~\ref{theo:simplicityone}.
\begin{assumption}
    From now on to the end of Theorem~\ref{theo:simplicityone}, we will only work with a one-point local function $\chi$, and will assume $\chi \neq \chi_{x; \alpha_0, \frac{1}{2}}$. Motivated by~\eqref{eq:espu}, for a one-point local function of order 0, we will set $\chi_{x; \alpha_0} = \chi_{x; \alpha_0, 0}$ and let its order be $n = 1$ (so that $u_{n-1} \in \p_\chi$) and let $m = \left\lfloor \frac{n}{2}\right\rfloor$. As explained in Definition \ref{def:u,a,xi}, we will drop the dependence on $x$ in $\xi_{q, i, x}, \tilde{u}_{q, i, x}, a_{q, i, x}$ and $u_{i, x}$. 
\end{assumption}

\begin{notation}\label{not:(var)sigma}
    Let $\sigma_i$ denote an arbitrary word in $\Ua(\W{-1})$ of the form $u_{i}^{k_i} \dots  u_{m-1}^{k_{m-1}}$. Let $\varsigma_i$ denote the image in $\Ua(\W{-1})$ of any word (not necessarily in PBW order) in the $u_j$'s for $j \geq i$. Thus, $\sigma_i$ is a special case of $\varsigma_i$. 
\end{notation}

We have the following important computation for the Lie bracket of $\tilde{u}_j$ and $u_i$, which we will use repeatedly. Write $q' = (t-x)^o p$, where $p(x) \neq 0$ and $\deg_t p < \deg_t q$. Then 
\begin{align}
    [\tilde{u}_{q, j} ,u_i] &= (i-j) \tilde{u}_{q, i+j} - q' u_{i+j+1} = (i-j) \tilde{u}_{q, i+j} - \tilde{u}_{p, i+j+1+o},\label{eq:brackettilde}\\ 
    \text{ and so } [\xi_{q, j},u_i] &= \frac{1}{(2j- n +1)a_{q, 0}} \bigl( (i+j+1-n )\tilde{u}_{q, i+n-1-j} - \tilde{u}_{p, i+n-j+o} \bigr).\label{eq:bracketxi}
\end{align}

We now show two useful lemmas.

\begin{lemma}\label{lem:tildeua}
    We have
    \begin{equation*}
        \tilde{u}_{q, n-1-i} 1_\chi =
        \begin{cases}
            a_{q, i}1_\chi & \text{ if } i = 0, \dots ,m-1 \\
            0                     & \text{ if } i <0.
        \end{cases}
    \end{equation*}
\end{lemma}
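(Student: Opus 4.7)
The statement splits into two cases, and the proof is essentially a careful unwinding of the definitions together with the twisted action formula.

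The plan is as follows. For $0 \leq i \leq m-1$, observe that $\tilde{u}_{q, n-1-i} = q(t-x)^{n-i}\partial$, and since $m = \lfloor n/2 \rfloor$ gives $n - m \geq m$, we have $n - i \geq n - m + 1 \geq m + 1$. Hence $\tilde{u}_{q, n-1-i}$ lies in $\p_\chi = \W{-1}((t-x)^{m+1})$, so the equality $\tilde{u}_{q, n-1-i}\cdot 1_\chi = a_{q,i} 1_\chi$ is immediate from the defining equation \eqref{eq: a k d definition}.

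For $i < 0$, the exponent $n - i \geq n + 1 > m + 1$, so again $\tilde{u}_{q, n-1-i} \in \p_\chi$. The twisted action formula \eqref{eq:twistedone} then gives
\begin{equation*}
    \tilde{u}_{q, n-1-i}\cdot 1_\chi = \bigl(\chi(\tilde{u}_{q, n-1-i}) + \rho_{\p_\chi, \W{-1}}(\tilde{u}_{q, n-1-i})\bigr) 1_\chi,
\end{equation*}
and I would show both contributions vanish. For $\chi$, since $\tilde{u}_{q, n-1-i}$ has $(t-x)^{n-i}$ as a factor with $n - i > n$, any derivative up to order $n$ of $q(t-x)^{n-i}$ retains at least one factor of $(t-x)$ and so vanishes at $t = x$; in particular $\chi(\tilde{u}_{q, n-1-i}) = 0$. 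For the twist $\rho_{\p_\chi, \W{-1}}$, when $n > 1$ this is handled directly by Lemma \ref{lem:rho0}. When $n = 1$ (which by our convention absorbs the order-$0$ case), the quotient $\W{-1}/\p_\chi$ is one-dimensional, represented by $\partial$, and the bracket
\begin{equation*}
[q(t-x)^{n-i}\partial,\, \partial] = -q'(t-x)^{n-i}\partial - (n-i)q(t-x)^{n-i-1}\partial
\end{equation*}
lies in $\p_\chi = (t-x)\W{-1}$ because $-i \geq 1$ forces $n - i - 1 \geq 1$. Hence the trace on $\W{-1}/\p_\chi$ is zero.

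The argument is routine once the twisted action and canonical polarization are unpacked; the only minor subtlety is the $n=1$ case, where Lemma \ref{lem:rho0} does not apply and one must verify vanishing of the $\rho$-twist by hand. I do not anticipate any genuine obstacle beyond that bookkeeping step.
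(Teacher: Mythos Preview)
Your proof is correct and follows essentially the same approach as the paper. The only cosmetic difference is in the $n=1$, $i<0$ case: the paper observes directly that $\tilde{u}_{q,-i} \in [\p_\chi,\p_\chi]$ (since $\p_\chi = \W{-1}((t-x))$ and $1-i \geq 2$), so it acts by zero on any one-dimensional $\p_\chi$-module, whereas you compute $\chi$ and $\rho_{\p_\chi,\W{-1}}$ separately; both are straightforward unpackings of the definitions.
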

\begin{proof}
    For $i = 0,\dots ,m-1$, this is by definition of $a_{q, i}$. If $n = 1$ and $i < 0$, since $\tilde{u}_{q, -i} \in [ \p_\chi, \p_\chi ]$, then $\tilde{u}_{q, -i} \cdot 1_\chi = 0$. Otherwise, if $n> 1$ and $i<0$, it follows from $\chi(\tilde{u}_{q, n-1-i}) = 0$ and Lemma~\ref{lem:rho0}.
\end{proof}

\begin{lemma}\label{lem:tildeuaword}
    Let $q \in \kk[t] \backslash \{0\}$ with $q(x) \neq 0$. For $\varsigma_0, \varsigma_1 \in \Ua(\W{-1})$ following Notation \ref{not:(var)sigma}, we have
    \begin{equation}\label{eq:tildeuaword1}
        \tilde{u}_{q, n-1+j} \varsigma_1  1_\chi =
        \begin{cases}
            a_{q, 0} \varsigma_1  1_\chi & \text{ if } j=0       \\
            0               & \text{ if } j \geq 1,
        \end{cases}
    \end{equation}
    and 
    \begin{equation}\label{eq:tildeuaword2}
        \tilde{u}_{q, n-1+j} \varsigma_0  1_\chi = 0 \quad \text{ if } j \geq 1.
    \end{equation}
\end{lemma}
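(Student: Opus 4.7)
The plan is to prove all three statements by induction on the PBW word length, exploiting the observation that any $\varsigma_1$ is in particular a $\varsigma_0$. Consequently the case $j\geq 1$ of \eqref{eq:tildeuaword1} is subsumed by \eqref{eq:tildeuaword2}, so it suffices to prove \eqref{eq:tildeuaword2} together with the $j=0$ case of \eqref{eq:tildeuaword1}.

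First I would establish \eqref{eq:tildeuaword2} by induction on the length of $\varsigma_0$. The base case $\varsigma_0 = 1$ is immediate from Lemma~\ref{lem:tildeua}, since $\tilde{u}_{q,n-1+j}\cdot 1_\chi$ sits at index $i = -j < 0$ in the notation of that lemma. For the inductive step I write $\varsigma_0 = u_i \varsigma_0'$ with $i\geq 0$ and split
\[\tilde{u}_{q, n-1+j}\, u_i \varsigma_0' \cdot 1_\chi = u_i\, \tilde{u}_{q, n-1+j} \varsigma_0' \cdot 1_\chi + [\tilde{u}_{q, n-1+j}, u_i]\, \varsigma_0' \cdot 1_\chi.\]
The first summand vanishes by the inductive hypothesis applied to the shorter word $\varsigma_0'$. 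Expanding the bracket via \eqref{eq:brackettilde} with $q' = (t-x)^o p$ (so $p(x)\neq 0$) gives a linear combination of $\tilde{u}_{q, n-1+(i+j)}$ and $\tilde{u}_{p, n-1+(i+j+1+o)}$, both of the form $\tilde{u}_{\cdot, n-1+j'}$ with $j'\geq 1$ because $i\geq 0$ and $j\geq 1$. The inductive hypothesis, which is uniform in the polynomial parameter, therefore also kills the second summand.

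For the $j=0$ case of \eqref{eq:tildeuaword1}, I would run a parallel induction on the length of $\varsigma_1$. The base case $\varsigma_1 = 1$ is the definition of $a_{q,0}$. For the inductive step, writing $\varsigma_1 = u_i \varsigma_1'$ with $i\geq 1$, the analogous split
\[\tilde{u}_{q, n-1}\, u_i \varsigma_1' \cdot 1_\chi = u_i\, \tilde{u}_{q, n-1} \varsigma_1' \cdot 1_\chi + [\tilde{u}_{q, n-1}, u_i]\, \varsigma_1' \cdot 1_\chi\]
reduces the claim to two points: the inductive hypothesis turns the first summand into $u_i\, a_{q,0}\, \varsigma_1' \cdot 1_\chi = a_{q,0}\, \varsigma_1 \cdot 1_\chi$, while the bracket terms again have the form $\tilde{u}_{\cdot, n-1+j'}$ with $j'\geq 1$ (now because $i\geq 1$) and so are annihilated by the already proved \eqref{eq:tildeuaword2}, applied to $\varsigma_1'$ viewed as a $\varsigma_0$.

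I do not anticipate a substantive obstacle; the argument is careful bookkeeping in the commutator formula \eqref{eq:brackettilde}. The only thing to watch is that, after each bracket expansion, the new polynomial $p$ arising from $q' = (t-x)^o p$ still satisfies $p(x)\neq 0$, so that the induction hypothesis (formulated uniformly over all admissible polynomials) continues to apply; this is automatic from the factorization.
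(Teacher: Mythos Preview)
Your proof is correct and follows essentially the same commutator-pushing strategy as the paper. The only organizational difference is that the paper runs a nested induction (outer on $\deg_t q$, inner on word length), using the outer hypothesis to kill the $\tilde u_{p,\cdot}$ term, whereas you collapse this into a single induction on word length with the hypothesis quantified uniformly over all admissible $q$; your version is slightly more economical but the content is the same.
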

\begin{proof}
    In the proof, let $\varsigma$ denote either $\varsigma_0$ or $\varsigma_1$. We prove the lemma by strong induction on $\deg_t q$. For base case $\deg_t q = 0$, by scaling we can assume $q = 1$, i.e., $\tilde{u}_{q, i} = u_{i}$ for all $i$. We then induct on the length $\ell$ of $\varsigma$. The base case $\ell = 0$ is Lemma~\ref{lem:tildeua}. Now assume the result for $q=1$ and $|\varsigma| = \ell$, let  consider the word $u_i \varsigma$ ($i \geq 1$ in~\eqref{eq:tildeuaword1} and $i \geq 0$ in~\eqref{eq:tildeuaword2}). In both cases note that $i+j \geq 1$, so by the induction hypothesis, 
    \begin{equation}\label{eq:bracket0}
        [{u}_{n - 1 + j} , u_{i}] \varsigma 1_\chi = (i+1 -n - j ){u}_{n+i+j-1} \varsigma  1_\chi = 0. 
    \end{equation}
    By~\eqref{eq:bracket0},  ${u}_{n-1+j} u_i \varsigma 1_\chi = u_i {u}_{n-1+j} \varsigma 1_\chi = c u_i \varsigma 1_\chi$ by induction,  where $c$ is 0 or $a_{q, 0}$. So~\eqref{eq:tildeuaword1} and~\eqref{eq:tildeuaword2} hold for words of length $\ell+1$ and we have shown the base case $\deg_t q = 0$.

    Induction step on $\deg_t q$: we again proceed by induction on the length $\ell$ of $\varsigma$, and the base case $\ell = 0$ is again Lemma~\ref{lem:tildeua}. Then assuming the induction hypothesis if $|\varsigma| = \ell$, consider the word $u_i \varsigma$ ($i \geq 1$ in~\eqref{eq:tildeuaword1} and $i \geq 0$ in~\eqref{eq:tildeuaword2}). We have
    \begin{equation}\label{eq:bracketui}
        [\tilde{u}_{q, n - 1 + j} , u_{i}] \varsigma 1_\chi = (i+1 -n - j )\tilde{u}_{q, n+i+j-1} \varsigma  1_\chi - \tilde{u}_{p, n+i+j+o} \varsigma  1_\chi. 
    \end{equation}
    We note that the second term is 0 by the induction hypothesis on $\deg_t q$ and since $i+j \geq 1$, the first term is 0 by the induction hypothesis on $\ell$. Thus $\tilde{u}_{q, n-1+j} u_{i} \varsigma  1_\chi = u_{i} \tilde{u}_{q, n-1+j} \varsigma  1_\chi = c u_{i}\varsigma 1_\chi$ by induction on $\ell$, where $c$ is 0 or $a_{q, 0}$. Thus we have shown the induction step on $\deg_t q$. 
\end{proof}
\begin{notation}\label{not:phi}
    For $k \in \NN$, and for any ring $R$ and $r, s \in R$, define 
    \begin{align*}
        \phi_k(r, s) = r^{k-1} s + r^{k-2} s r \dots  + s r^{k-1} \in R.
    \end{align*}
\end{notation}

We can rephrase Lemma \ref{lem:tildeuaword} as the following corollary.
\begin{corollary}
    For all $r \in \Ua(\W{1})$ and $j \geq 1$, we have
    \begin{align}
        \phi_k (r, \tilde{u}_{q, n-1}) \varsigma_1 1_\chi &= k a_{q, 0} r^{k-1}\varsigma_1 1_\chi\label{eq:phisame}, \text{ and } \\ 
        \phi_k (r, \tilde{u}_{q, n-1 +j}) \varsigma_0 1_\chi &= 0\label{eq:phi0}.
    \end{align}
\end{corollary}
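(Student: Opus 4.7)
The plan is to deduce both identities directly from Lemma~\ref{lem:tildeuaword} by expanding the definition of $\phi_k$ and checking that each summand falls within the lemma's hypotheses. The crucial observation is that $r \in \Ua(\W{1})$ is a $\kk$-linear combination of PBW monomials in the $u_j$ with $j \geq 1$, so for any $i \geq 0$ the product $r^i \varsigma_1$ is a $\kk$-linear combination of words of the form $\varsigma_1$ (i.e., words purely in $u_j$ with $j \geq 1$), and likewise $r^i \varsigma_0$ is a linear combination of words of the form $\varsigma_0$. Since the action of $\Ua(\W{-1})$ on $1_\chi$ is $\kk$-linear, Lemma~\ref{lem:tildeuaword} extends by linearity to all such combinations.

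Concretely, for~\eqref{eq:phisame} I would expand
\[
    \phi_k(r, \tilde{u}_{q, n-1})\, \varsigma_1 \cdot 1_\chi = \sum_{i=0}^{k-1} r^{k-1-i}\, \tilde{u}_{q, n-1}\, \bigl(r^i \varsigma_1\bigr) \cdot 1_\chi,
\]
apply the $j = 0$ case of~\eqref{eq:tildeuaword1} to the inner factor to obtain $a_{q, 0}\, r^i \varsigma_1 \cdot 1_\chi$, and conclude that the sum collapses to $\sum_{i=0}^{k-1} a_{q, 0}\, r^{k-1} \varsigma_1 \cdot 1_\chi = k a_{q, 0}\, r^{k-1} \varsigma_1 \cdot 1_\chi$. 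For~\eqref{eq:phi0}, the analogous expansion combined with~\eqref{eq:tildeuaword2} (which applies because $j \geq 1$) gives $\tilde{u}_{q, n-1+j}(r^i \varsigma_0) \cdot 1_\chi = 0$ termwise, so the whole expression vanishes.

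There is no real obstacle: the corollary is essentially a restatement of Lemma~\ref{lem:tildeuaword} in the $\phi_k$ notation. The only point requiring care is verifying that multiplication by powers of $r$ preserves the index constraint on the surrounding words so that Lemma~\ref{lem:tildeuaword} genuinely applies, and this is immediate from $r \in \Ua(\W{1})$.
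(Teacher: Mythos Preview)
Your proposal is correct and follows essentially the same approach as the paper: expand $\phi_k$ as a sum, observe that $r^i\varsigma_1$ (respectively $r^i\varsigma_0$) is again a $\kk$-linear combination of words of type $\varsigma_1$ (respectively $\varsigma_0$), and apply Lemma~\ref{lem:tildeuaword} termwise. One small caveat worth noting is that your identification of $\Ua(\W{1})$ with linear combinations of monomials in the $u_j$, $j\geq 1$, is literally correct only when the base point $x=0$; the intended hypothesis (and what the applications actually use) is that $r$ lies in the subalgebra generated by the $u_j$ with $j\geq 1$, which is what makes the argument go through.
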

\begin{proof}
    Again, let $\varsigma$ be $\varsigma_0$ or $\varsigma_1$. For all $i \in \NN$, $r^i \tilde{u}_{q, n-1} r^j \varsigma 1_\chi = c r^{i+j} \varsigma 1_\chi$ by equations~\eqref{eq:tildeuaword1} and~\eqref{eq:tildeuaword2} where $c$ is 0 or $a_{q, 0}$. 
\end{proof}

\begin{lemma}\label{lem:bracketxiui}
    For $1 \leq i \leq m-1$, $k \geq 1$ and $q \in \kk[t] \backslash \{0\}$ with $q(x) \neq 0$, then 
    \begin{equation*}
        [\xi_{q, i}^{k}, u_i]\varsigma_1 1_\chi = k \xi^{k-1}_{q, i} \varsigma_1 1_\chi.
    \end{equation*}
\end{lemma}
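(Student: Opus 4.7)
The plan is to first reduce to a computation of the single commutator $[\xi_{q,i}, u_i]$ and then induct on $k$. Specializing \eqref{eq:bracketxi} at $j = i$, with $q' = (t-x)^o p$ and $p(x) \neq 0$ (or $q' = 0$ if $q$ is constant), yields
\begin{equation*}
[\xi_{q,i}, u_i] = \frac{1}{a_{q,0}} \tilde{u}_{q, n-1} - \frac{1}{(2i-n+1)a_{q,0}}\tilde{u}_{p, n+o},
\end{equation*}
where the second term is omitted if $q' = 0$. Note that $2i-n+1$ is nonzero for $1 \leq i \leq m-1$, so $\xi_{q,i}$ is well-defined and the formula makes sense.

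The key observation is that, for any $\eta \in M'_\chi$ expressible as a $\kk$-linear combination of vectors of the form $\varsigma_1 1_\chi$, the commutator above acts as the identity: $[\xi_{q,i}, u_i]\eta = \eta$. Indeed, \eqref{eq:tildeuaword1} with $j = 0$ gives $\tilde{u}_{q, n-1}\eta = a_{q,0}\eta$, while \eqref{eq:tildeuaword2} with $j = o+1 \geq 1$ (valid since every $\varsigma_1$ is in particular a $\varsigma_0$) gives $\tilde{u}_{p, n+o}\eta = 0$. I also need that $\xi_{q,i}^s \varsigma_1 1_\chi$ itself is such a linear combination for every $s \geq 0$. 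Expanding $q(t)(t-x)^{n-i}$ in powers of $t-x$ shows that $\tilde{u}_{q, n-1-i}$ is a $\kk$-linear combination of $u_j$'s with $j \geq n-i-1$; since $n-i-1 \geq n-m \geq m \geq 1$ when $1 \leq i \leq m-1$, $\xi_{q,i}$ is a scalar plus a $\varsigma_1$-type element, so the $\varsigma_1$-form is preserved under left multiplication by any power of $\xi_{q,i}$.

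The conclusion then follows by a straightforward induction on $k$. The base case $k=1$ is the key observation applied to $\eta = \varsigma_1 1_\chi$. For the inductive step, expand
\begin{equation*}
[\xi_{q,i}^{k+1}, u_i]\varsigma_1 1_\chi = \xi_{q,i}[\xi_{q,i}^k, u_i]\varsigma_1 1_\chi + [\xi_{q,i}, u_i]\xi_{q,i}^k \varsigma_1 1_\chi,
\end{equation*}
apply the induction hypothesis to the first summand (yielding $k\xi_{q,i}^k \varsigma_1 1_\chi$), and apply the key observation to the second with $\eta = \xi_{q,i}^k \varsigma_1 1_\chi$ (yielding $\xi_{q,i}^k \varsigma_1 1_\chi$, justified by the closure property of the previous step). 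Summing gives $(k+1)\xi_{q,i}^k \varsigma_1 1_\chi$, as required. No serious obstacle is anticipated; the only delicate aspect is the combinatorial bookkeeping ensuring that the $\varsigma_1$-form is preserved throughout the induction, which the hypothesis $1 \leq i \leq m-1$ guarantees.
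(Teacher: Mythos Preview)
Your proof is correct and follows essentially the same approach as the paper. The only organizational difference is that the paper first establishes the algebraic identity $a_{q,0}[\xi_{q,i}^k, u_i] = \phi_k(\xi_{q,i}, \tilde{u}_{q,n-1}) - \phi_k(\xi_{q,i}, X)$ in $\Ua(\W{-1})$ and then applies \eqref{eq:phisame} and \eqref{eq:phi0}, whereas you induct directly in the module after observing that $\xi_{q,i}$ preserves the span of $\varsigma_1 1_\chi$-type vectors; both routes encode the same Leibniz computation.
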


\begin{proof}
    In this proof, to simplify notation, let $\xi = \xi_{q, i}$ and $X = \frac{1}{2i-n+1} \tilde{u}_{p, n+o}$. We first show
    \begin{equation}\label{eq:bracketxiuileibniz}
        a_{q, 0}[\xi^k, u_i] = \phi_{k}(\xi, \tilde{u}_{q, n-1}) - \phi_{k}(\xi, X)
    \end{equation}
    by induction on $k$. For $k = 1$, by~\eqref{eq:bracketxi}, we have $a_{q, 0} [\xi, u_i] = \tilde{u}_{q, n-1} - \frac{1}{2i-n+1} \tilde{u}_{p, n+o}$. Then
    \begin{align*}
        a_{q, 0}[\xi^{k+1}, u_i] = a_{q, 0}([\xi^k, u_i]\xi + \xi^k[\xi, u_i]) = \bigl(\phi_{k}(\xi, \tilde{u}_{q, n-1}) - \phi_{k}(\xi, X)\bigr) \xi + \xi^k(\tilde{u}_{q, n-1} - X)
    \end{align*}
    by the induction hypothesis. Note that $\phi_{k+1}(\xi, x) = \phi_{k}(\xi, x) \xi + \xi^k x$, thus $a_{q, 0}[\xi^{k+1}, u_i] = \phi_{k+1}(\xi, \tilde{u}_{q, n-1}) - \phi_{k+1}(\xi, X)$ and we obtain~\eqref{eq:bracketxiuileibniz}.

    We have $\phi_{k} (\xi, X) \varsigma_1 1_\chi = \frac{1}{2j-n+1}\phi_{k} (\xi, \tilde{u}_{p, n+o}) \varsigma_1 1_\chi$ which is 0 by~\eqref{eq:phi0}. So by~\eqref{eq:bracketxiuileibniz}, $[\xi^{k}, u_i]\varsigma_1 1_\chi = \frac{1}{a_{q, 0}} \phi_{k}(\xi, \tilde{u}_{q, n-1}) \varsigma_1 1_\chi = k \xi^{k-1} \varsigma_1 1_\chi$, where the last equation follows from~\eqref{eq:phisame}. 
\end{proof}

We are now prepared to consider the case $u_{-1}$ and $u_{0}$ are not in $z$. This is Lemmas~\ref{lem:xiui0} and~\ref{lem:xiui}.

\begin{lemma}\label{lem:xiui0}
    For $i \geq 1$ and $0 \leq j<i$, we have
   \begin{equation*}
       \xi_{q, j} \varsigma_i 1_\chi = 0.
   \end{equation*}
\end{lemma}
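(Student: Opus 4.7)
The plan is to induct on the length of the word $\varsigma_i$, mirroring the structure used in Lemma~\ref{lem:tildeuaword}. The base case $\varsigma_i = 1$ is immediate from the definitions: since $0 \leq j < i$ and $\xi_{q,j}$ is only defined when $0 \leq j \leq m-1$, equation~\eqref{eq: a k d definition} gives $\tilde{u}_{q, n-1-j} \cdot 1_\chi = a_{q, j} \cdot 1_\chi$, and hence $\xi_{q, j} \cdot 1_\chi = 0$ by~\eqref{eq:xidef}.

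For the inductive step, I would peel off one letter, writing $\varsigma_i = u_k \varsigma'_i$ with $k \geq i$, and split
\[
\xi_{q, j} \cdot u_k \varsigma'_i 1_\chi \;=\; u_k \cdot \bigl(\xi_{q, j} \varsigma'_i 1_\chi\bigr) \;+\; [\xi_{q, j}, u_k] \cdot \varsigma'_i 1_\chi.
\]
The first summand vanishes by the inductive hypothesis, since $\varsigma'_i$ is strictly shorter and still a word in $u_l$ with $l \geq i$. For the second, formula~\eqref{eq:bracketxi} expresses $[\xi_{q, j}, u_k]$ as a linear combination of $\tilde{u}_{q, k + n - 1 - j}$ and $\tilde{u}_{p, k + n - j + o}$, where $q' = (t-x)^o p$ with $p(x) \neq 0$ (or $p = 0$ when $q$ is constant, in which case the second term is absent). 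The crucial inequality is that $k \geq i > j$ forces $k - j \geq 1$, so both second-slot indices take the form $n - 1 + l$ with $l \geq 1$; then equation~\eqref{eq:tildeuaword2} of Lemma~\ref{lem:tildeuaword}, applied to $\varsigma'_i$ regarded as a $\varsigma_0$-word, annihilates both terms and the inductive step closes.

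The main obstacle is really just the bookkeeping. I need to verify that $\varsigma'_i$ qualifies as a $\varsigma_0$-word under Notation~\ref{not:(var)sigma} (it does, since every $u_l$ appearing in it has $l \geq i \geq 1 \geq 0$) and that the index shift $l$ in $\tilde{u}_{\cdot, n-1+l}$ is strictly positive for both summands of the commutator; the strict inequality $k > j$ is precisely what makes both shifts at least $1$, so~\eqref{eq:tildeuaword2} applies uniformly. No new idea beyond Lemma~\ref{lem:tildeuaword} is needed, which is consistent with the fact that this lemma is stated as the first in a chain of companion results (Lemmas~\ref{lem:xiui}, \ref{lem:xi0u0}, \ref{lem:cancelu-1}) that progressively reintroduce $u_0$ and $u_{-1}$ into $\varsigma$.
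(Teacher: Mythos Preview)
Your proof is correct and follows essentially the same approach as the paper: induction on the length of $\varsigma_i$, with the base case reducing to the definition of $a_{q,j}$ (equivalently Lemma~\ref{lem:tildeua}), and the inductive step handled by splitting off one letter, invoking~\eqref{eq:bracketxi} for the commutator, and killing both resulting $\tilde u$-terms via~\eqref{eq:tildeuaword2} thanks to the strict inequality $k>j$.
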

\begin{proof}
   We will induct on the length $\ell$ of $\varsigma_i$. The base case $\ell = 0$ is Lemma~\ref{lem:tildeua}. Assuming the induction hypothesis, we need show that for $i' \geq i > j \geq 0$, then $\xi_{q, j}u_{i'} \varsigma_i 1_\chi = 0$. Note that by~\eqref{eq:bracketxi}
   \begin{align*}
    (2j- n +1)[\xi_{q, j}, u_{i'}] \varsigma_i 1_\chi  = (i'+j+1-n) \tilde{u}_{q, i'+n-1-j} \varsigma_i 1_\chi  - \tilde{u}_{p, i'+n-j+o} \varsigma_i 1_\chi.
   \end{align*}
   Since $i'-j \geq 1$ and $i \geq 1$, then both $\tilde{u}_{q, i'+n-1-j} \varsigma_i 1_\chi$ and $\tilde{u}_{p, i'+n-j+o} \varsigma_i 1_\chi$ are zero by~\eqref{eq:tildeuaword2}. Thus, we have $[\xi_{q, j}, u_{i'}] \varsigma_i 1_\chi= 0$ and so
   \begin{align*}
       \xi_{q, j} u_{i'} \varsigma_i 1_\chi =  u_{i'}\xi_{q, j}\varsigma_i 1_\chi = 0,
   \end{align*}
   where the second equation follows from the induction hypothesis. 
\end{proof}

\begin{lemma}\label{lem:xiui}
    Write $\sigma_{i} = u_i^{k} \sigma_{i+1}$. For $1 \leq i \leq m-1$, and  $q \in \kk[t] \backslash \{0\}$ with $q(x) \neq 0$, then  
    \begin{equation}\label{eq:xiui}
        \xi_{q, i}^{k}\sigma_i 1_\chi =  \xi_{q, i}^{k}u_i^{k} \sigma_{i+1}1_\chi = k! \sigma_{i+1} 1_\chi.
    \end{equation}
\end{lemma}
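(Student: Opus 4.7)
The first equality $\xi_{q, i}^{k}\sigma_i 1_\chi = \xi_{q, i}^{k}u_i^{k} \sigma_{i+1}1_\chi$ is just unfolding $\sigma_i = u_i^k \sigma_{i+1}$, so the content is the second equality. A naive induction on $k$ does not close, because commuting $\xi_{q,i}^k$ past one $u_i$ via Lemma~\ref{lem:bracketxiui} leaves a remainder of the form $u_i \xi_{q, i}^k u_i^{k-1} \sigma_{i+1} 1_\chi$ with mismatched exponents, which the induction hypothesis does not control. My plan is therefore to prove the strengthened statement
\begin{equation*}
    \xi_{q, i}^{k} u_i^{j} \sigma_{i+1} 1_\chi =
    \begin{cases}
        0 & \text{if } j < k, \\[2pt]
        \dfrac{j!}{(j-k)!}\, u_i^{j-k} \sigma_{i+1} 1_\chi & \text{if } j \geq k,
    \end{cases}
\end{equation*}
by induction on $k$ (for all $j \geq 0$ simultaneously), from which the lemma is the case $j = k$.

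The base case $k = 0$ is trivial. In the inductive step, the subcase $j = 0$ follows by iterating Lemma~\ref{lem:xiui0}, which gives $\xi_{q, i}\sigma_{i+1} 1_\chi = 0$ and hence $\xi_{q, i}^k \sigma_{i+1} 1_\chi = 0$ for all $k \geq 1$. For $k \geq 1$ and $j \geq 1$, I would write
\begin{equation*}
    \xi_{q, i}^{k} u_i^{j} \sigma_{i+1} 1_\chi = [\xi_{q, i}^{k}, u_i] \, u_i^{j-1} \sigma_{i+1} 1_\chi + u_i \, \xi_{q, i}^{k} u_i^{j-1} \sigma_{i+1} 1_\chi.
\end{equation*}
The word $u_i^{j-1}\sigma_{i+1}$ is a $\varsigma_1$ in the sense of Notation~\ref{not:(var)sigma} because $i \geq 1$, so Lemma~\ref{lem:bracketxiui} rewrites the first summand as $k \, \xi_{q, i}^{k-1} u_i^{j-1} \sigma_{i+1} 1_\chi$. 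Both summands are then covered by the induction hypothesis applied to the pairs $(k-1, j-1)$ and $(k, j-1)$ respectively. Combining them using the Pascal-type identity
\begin{equation*}
    k \cdot \frac{(j-1)!}{(j-k)!} + \frac{(j-1)!}{(j-1-k)!} = \frac{j!}{(j-k)!}
\end{equation*}
when $k \leq j-1$, and noting that both summands simply vanish when $k > j$ while exactly one contributes when $k = j$, yields the claimed formula.

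The main obstacle is structural rather than computational: one must identify the right inductive invariant, which encodes not only the top relation $\xi^k u^k \mapsto k!$ but also the lowering relations $\xi^k u^j \mapsto 0$ for $j < k$ and the intermediate relations $\xi^k u^j \mapsto \tfrac{j!}{(j-k)!}\, u^{j-k}$, reminiscent of the harmonic-oscillator or $\mathfrak{sl}_2$-triple calculation. Once this stronger statement is in place, the induction is driven entirely by the commutation identity already established in Lemma~\ref{lem:bracketxiui}, together with the vanishing in Lemma~\ref{lem:xiui0}, and no further facts about $\chi$ are needed.
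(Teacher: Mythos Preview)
Your proof is correct, but it takes a different route from the paper's. Two remarks.

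First, a minor structural point: you say the induction is ``on $k$ (for all $j \geq 0$ simultaneously)'', but in the step you invoke the hypothesis at $(k,j-1)$, which has the same $k$. What you actually need is a nested induction: primary on $k$, secondary on $j$ within the step for $k$. The argument goes through once this is made explicit, and the Pascal-type identity you wrote is exactly right.

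Second, your opening claim that ``a naive induction on $k$ does not close'' is not quite accurate, and this is where the paper differs. The paper \emph{does} run a direct induction on $k$ alone, without strengthening, via the decomposition
\[
\xi_{q,i}^{k+1} u_i^{k+1} \sigma_{i+1} 1_\chi
= \xi_{q,i}\, u_i\, \bigl(\xi_{q,i}^{k} u_i^{k} \sigma_{i+1} 1_\chi\bigr)
+ \xi_{q,i}\, [\xi_{q,i}^{k}, u_i]\, u_i^{k} \sigma_{i+1} 1_\chi.
\]
The first term collapses to $k!\,\xi_{q,i} u_i \sigma_{i+1} 1_\chi = k!\,\sigma_{i+1} 1_\chi$ by the induction hypothesis and the $k=1$ case; the second becomes $k\,\xi_{q,i}^{k} u_i^{k} \sigma_{i+1} 1_\chi = k\cdot k!\,\sigma_{i+1} 1_\chi$ by Lemma~\ref{lem:bracketxiui} and induction. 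So the exponents never mismatch. Your strengthened formula $\xi^k u^j \to \tfrac{j!}{(j-k)!}\,u^{j-k}$ is a pleasant and more informative statement (it makes the raising/lowering structure explicit), but it is not forced on you: the paper's trick of pulling one $\xi u$ to the left rather than one $u$ to the right is enough to close the loop directly.
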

\begin{proof}
    We will show the lemma by induction on the smallest variable appearing in the word $\sigma_i$, i.e., downward induction on $i$. The base case is when $\sigma$ is the empty word (i.e., the smallest variable is $\infty$), it is tautologically true. Assuming the induction hypothesis, we need to show~\eqref{eq:xiui} for $i \geq 1$ and for all $k \in \NN$.

    We proceed by induction on $k$. The base case $k = 0$ is obvious and for $k = 1$, we have 
    \begin{align*}
        \xi_{q, i}u_{i}\sigma_{i+1} 1_\chi = u_{i}\xi_{q, i}\sigma_{i+1} 1_\chi + [\xi_{q, i}, u_{i}]\sigma_{i+1} 1_\chi.
    \end{align*}
    By Lemma~\ref{lem:xiui0}, $\xi_{q, i}\sigma_{i+1} 1_\chi = 0$. Thus 
    \begin{equation}\label{eq:xiuibase}
        \xi_{q, i}u_{i}\sigma_{i+1} 1_\chi = [\xi_{q, i}, u_{i}]\sigma_{i+1} 1_\chi = \sigma_{i+1} 1_\chi,
    \end{equation}
    where the last equation follows from Lemma~\ref{lem:bracketxiui}. Assume~\eqref{eq:xiui} is true for $k$, then
    \[ \begin{array}{lll}
            \xi_{q, i}^{k+1}u_{i}^{k+1}\sigma_{i+1} 1_\chi
             & = \xi_{q, i}u_{i} \xi_{q, i}^{k}u_{i}^{k}\sigma_{i+1} 1_\chi + \xi_{q, i}[\xi_{q, i}^{k}, u_{i}]u_{i}^{k}\sigma_{i+1} 1_\chi                                                                                      \\
             & = \xi_{q, i}u_{i} \xi_{q, i}^{k}u_{i}^{k}\sigma_{i+1} 1_\chi + k\xi_{q, i}^{k} u_{i}^{k} \sigma_{i+1} 1_\chi          & (\text{by Lemma~\ref{lem:bracketxiui}}) \\
             & = k! \xi_{q, i}u_{i} \sigma_{i+1} 1_\chi  + k k! \sigma_{i+1} 1_\chi                                                           & (\text{by induction})                                                              \\
             & = (k+1)! \sigma_{i+1}1_\chi.                                                                                               & (\text{by~\eqref{eq:xiuibase}})
        \end{array} \]
    Thus, ~\eqref{eq:xiui} is true for all $k \in \NN$ and the lemma is proven.
\end{proof}
Lemmas~\ref{lem:xiui0} and~\ref{lem:xiui} mean that if $j >0$ and $i \geq 1$
\begin{equation*}
    \xi_{q, i}^{k_i+j} u_i^{k_i} \sigma_{i+1} 1_\chi = k_i! \xi_{q, i}^{j} \sigma_{i+1} 1_\chi = 0.
\end{equation*}
We now add $u_0$ and $u_{-1}$ into the picture. We first show the following useful lemma, which gives special elements that annihilate the monomial $u_{-1}^{k_{-1}} \varsigma_0 1_\chi \in M'_\chi$. 

\begin{lemma}\label{lem:u-10}
    For $j >0$, $k_{-1} \in \NN$ and $q \in \kk[t] \backslash \{0 \}$ such that $q(x) \neq 0$, then
    \begin{equation}\label{eq: tensor multi u_{-1} 0}
        \tilde{u}_{q, n-1+j+k_{-1}} u_{-1}^{k_{-1}} \varsigma_0 1_\chi = 0.
    \end{equation}
\end{lemma}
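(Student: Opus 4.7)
The plan is to argue by induction on $k_{-1}$. The base case $k_{-1}=0$ is immediate from equation \eqref{eq:tildeuaword2} of Lemma \ref{lem:tildeuaword}, which gives $\tilde{u}_{q, n-1+j}\varsigma_0 1_\chi = 0$ for every $j \geq 1$ and every admissible $q$.

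For the inductive step I would assume the claim for the exponent $k_{-1}-1$, for every shift $j'>0$, and for every $r\in\kk[t]$ with $r(x)\neq 0$, and then reduce to that case by commuting $\tilde{u}_{q, n-1+j+k_{-1}}$ past the leftmost factor of $u_{-1}$. Writing $J=n-1+j+k_{-1}$ and $q'=(t-x)^o p$ with $p(x)\neq 0$ (with the understanding that the $p$-term below is simply absent when $q$ is constant), the bracket identity \eqref{eq:brackettilde} specialized to $i=-1$ yields
\begin{align*}
\tilde{u}_{q,J}\,u_{-1}^{k_{-1}}\varsigma_0 1_\chi
   = u_{-1}\tilde{u}_{q,J}\,u_{-1}^{k_{-1}-1}\varsigma_0 1_\chi
   - (J+1)\tilde{u}_{q,J-1}\,u_{-1}^{k_{-1}-1}\varsigma_0 1_\chi
   - \tilde{u}_{p,\,J+o}\,u_{-1}^{k_{-1}-1}\varsigma_0 1_\chi.
\end{align*}
Each of the three summands on the right is (up to a scalar or an outer factor of $u_{-1}$) of the form $\tilde{u}_{r,\,n-1+j'+(k_{-1}-1)}\,u_{-1}^{k_{-1}-1}\varsigma_0 1_\chi$, where $r\in\{q,p\}$ still satisfies $r(x)\neq 0$ and where $j'$ equals $j+1$, $j$, and $j+o+1$ respectively. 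By the inductive hypothesis each of these expressions vanishes.

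The only thing to monitor is the bookkeeping that all three values of $j'$ are strictly positive; this is immediate from $j\geq 1$ and $o\geq 0$, so there is no genuine obstacle. Conceptually the induction works because the commutator $[\tilde{u}_{q,J},u_{-1}]$ decreases the $\tilde{u}$-index by exactly one, matching the drop of one in the exponent of $u_{-1}$, while the new polynomial $p=q'/(t-x)^o$ continues to be nonzero at $x$, so the hypotheses of the lemma propagate cleanly through the induction.
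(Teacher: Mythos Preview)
Your argument is correct. The only structural difference from the paper's proof is the organization of the induction: the paper runs an outer strong induction on $\deg_t q$ and, inside each degree, an inner induction on $k_{-1}$, whereas you run a single induction on $k_{-1}$ with the hypothesis quantified over all admissible $q$ simultaneously. Your scheme works precisely because the commutator \eqref{eq:brackettilde} only ever produces either the same polynomial $q$ or the polynomial $p$ arising from $q'=(t-x)^o p$, and in both cases the $u_{-1}$-exponent drops by one; since your inductive hypothesis already covers every $r$ with $r(x)\neq 0$ at level $k_{-1}-1$, no separate induction on the degree is needed. The paper's double induction is not wrong, just slightly redundant; your packaging is a little cleaner and buys exactly the same conclusion.
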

\begin{proof}
    We will prove the lemma by strong induction on $\deg_t q$. For $q$ a constant, without loss of generality we can assume $q = 1$. Thus, we need to show that ${u}_{n-1+j+k_{-1}} u_{-1}^{k_{-1}} \varsigma_0 1_\chi = 0$. We will prove it by induction on $k_{-1}$. The base case ${u}_{n-1+j} \varsigma_0 1_\chi = 0$ is a direct application of~\eqref{eq:tildeuaword2}. Assuming~\eqref{eq: tensor multi u_{-1} 0} is true for $q = 1$ and $k_{-1}$, we then have 
    \begin{align*}
        [{u}_{n+j+k_{-1}+1}, u_{-1}]u_{-1}^{k_{-1}} \varsigma_0 1_\chi = -(n+j+k_{-1}+2) u_{n+j+k_{-1}} u_{-1}^{k_{-1}} \varsigma_0 1_\chi = 0
    \end{align*}
    by the induction hypothesis. Thus ${u}_{n+j+k_{-1}+1}u_{-1}^{k_{-1}+1} \varsigma_0 1_\chi = u_{-1}{u}_{n+j+k_{-1}+1}u_{-1}^{k_{-1}} \varsigma_0 1_\chi$, which is again 0 by the induction hypothesis. We have thus established~\eqref{eq: tensor multi u_{-1} 0} in the base case $\deg_t q = 0$.
    
    For the induction step on $\deg_t q$, we proceed by induction on $k_{-1}$ again with the base case $\tilde{u}_{q, n-1+j} \varsigma_0 1_\chi = 0$ a direct application of~\eqref{eq:tildeuaword2}. Assume~\eqref{eq: tensor multi u_{-1} 0} is true for $k_{-1}$. Then by~\eqref{eq:bracketxi},
    \begin{equation*}
    [\tilde{u}_{q, n+j+k_{-1}}, u_{-1}]u_{-1}^{k_{-1}} \varsigma_0 1_\chi = -(n+j+k_{-1}+1) \tilde{u}_{q, n-1+j+k_{-1}} u_{-1}^{k_{-1}} \varsigma_0 1_\chi   - \tilde{u}_{p, n+j+k_{-1}+o} u_{-1}^{k_{-1}} \varsigma_0 1_\chi,
    \end{equation*}
    where the first term of the RHS is 0 by the induction hypothesis on $k_{-1}$ and the second term of the RHS is 0 by the induction hypothesis on $\deg_t p < \deg_t q$. Thus $ [\tilde{u}_{q, n+j+k_{-1}}, u_{-1}]u_{-1}^{k_{-1}} \varsigma_0 1_\chi =0$, and 
    \begin{align*}
        \tilde{u}_{q, n+j+k_{-1}} u_{-1}^{k_{-1}+1} \varsigma_0 1_\chi = u_{-1} \tilde{u}_{q, n+j+k_{-1}} u_{-1}^{k_{-1}}  \varsigma_0 1_\chi = 0,
    \end{align*}
    where the last equation follows from induction on $k_{-1}$. We have obtained the induction step for $\deg_t q$. 
\end{proof}

The following lemma is useful in our computation that involve the commutator of elements of special form.
\begin{lemma}\label{lem:simleibniz}
    Let $\g$ be a Lie algebra and let $a, b \in \g$ such that $[b, a] = x + y$, and $[b, x] = 0$, then in $\Ua(\g)$ we have
    \begin{equation}\label{eq:simleibniz}
        [b^k, a] = k b^{k-1} x + \phi_k(b, y).
    \end{equation}
\end{lemma}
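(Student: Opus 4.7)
The plan is to use the standard Leibniz-type identity for commutators in an associative algebra, namely
\[
[b^k, a] = \sum_{i=0}^{k-1} b^i [b, a] b^{k-1-i},
\]
which can be proved by a one-line induction on $k$ (or viewed as the fact that $\ad(b)$ is a derivation on $\Ua(\g)$ together with $\ad(b)(b^{k-1-i}) = 0$). I would either quote this or prove it inline in one sentence.

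Substituting $[b,a] = x+y$ splits the sum into two pieces:
\[
[b^k,a] = \sum_{i=0}^{k-1} b^i x\, b^{k-1-i} + \sum_{i=0}^{k-1} b^i y\, b^{k-1-i}.
\]
The second sum is exactly $\phi_k(b,y)$ by Notation~\ref{not:phi}. For the first sum I would use the hypothesis $[b,x]=0$, which says $x$ commutes with every power of $b$ in $\Ua(\g)$, so each summand $b^i x\, b^{k-1-i}$ equals $x\, b^{k-1} = b^{k-1} x$. Hence the first sum collapses to $k\, b^{k-1} x$, giving
\[
[b^k, a] = k b^{k-1} x + \phi_k(b, y),
\]
which is precisely \eqref{eq:simleibniz}.

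There is no real obstacle here: the statement is a purely formal identity in any associative algebra built from a Lie bracket relation, and the only subtlety is keeping track of the order of $b$ and $x$, which is handled instantly by the commutativity hypothesis $[b,x]=0$. If I wanted to avoid invoking the Leibniz rule as a black box, I would instead proceed by induction on $k$: the base case $k=1$ is the hypothesis $[b,a] = x + y = 1 \cdot b^0 x + \phi_1(b,y)$, and the inductive step uses $[b^{k+1},a] = b[b^k,a] + [b,a]b^k$ together with $bx = xb$ and the identity $b\,\phi_k(b,y) + y b^k = \phi_{k+1}(b,y)$, which is immediate from the definition of $\phi_{k+1}$.
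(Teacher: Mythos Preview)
Your proof is correct. The paper's own proof is simply ``This is a straightforward induction argument,'' which is exactly your alternative approach; your primary argument via the Leibniz identity $[b^k,a]=\sum_{i=0}^{k-1} b^i[b,a]b^{k-1-i}$ is a clean unpacking of that same induction, so the two are essentially the same.
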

\begin{proof}
    This is a straightforward induction argument. 
\end{proof}
We are now ready to prove the following two lemmas, which give the main computation when we add $u_{-1}$ and $u_{0}$ into the picture.
\begin{lemma}\label{lem:xi0u0}
    Let $k_0 \in \ZZ_{\geq 0}$. Then 
    \begin{equation*}
        \xi_{q, 0}^{k_0}u_0^{k_0}\sigma_1 1_\chi = k_0!\sigma_1 1_\chi.
    \end{equation*}
\end{lemma}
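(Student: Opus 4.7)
The plan is to induct on $k_0$, with the case $k_0 = 0$ trivial. For the base case $k_0 = 1$: applying Lemma~\ref{lem:xiui0} with $i \geq 1$ and $j = 0$ gives $\xi_{q, 0}\sigma_1 1_\chi = 0$, so $\xi_{q, 0} u_0 \sigma_1 1_\chi = [\xi_{q, 0}, u_0]\sigma_1 1_\chi$. Expanding the bracket via~\eqref{eq:bracketxi} and applying~\eqref{eq:tildeuaword1} (with $j = 0$) to the $\tilde{u}_{q, n-1}$-piece and~\eqref{eq:tildeuaword2} to the $\tilde{u}_{p, n+o}$-piece yields $\sigma_1 1_\chi$.

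For the inductive step, split
\[\xi_{q, 0}^{k_0+1}u_0^{k_0+1}\sigma_1 1_\chi = u_0\,\xi_{q, 0}^{k_0+1}u_0^{k_0}\sigma_1 1_\chi + [\xi_{q, 0}^{k_0+1}, u_0]\,u_0^{k_0}\sigma_1 1_\chi.\]
The first summand equals $u_0 \cdot \xi_{q, 0}(k_0!\sigma_1 1_\chi) = 0$ by the inductive hypothesis and Lemma~\ref{lem:xiui0}. For the bracket term, apply Lemma~\ref{lem:simleibniz} with $b = \xi_{q, 0}$ and $a = u_0$: since $\xi_{q, 0}$ is an affine function of $\tilde{u}_{q, n-1}$ we have $[\xi_{q, 0}, \tilde{u}_{q, n-1}] = 0$, and~\eqref{eq:bracketxi} rearranges to $a_{q, 0}[\xi_{q, 0}, u_0] = \tilde{u}_{q, n-1} + \tfrac{1}{n-1}\tilde{u}_{p, n+o}$; hence
\[a_{q, 0}[\xi_{q, 0}^{k_0+1}, u_0] = (k_0+1)\,\xi_{q, 0}^{k_0}\tilde{u}_{q, n-1} + \tfrac{1}{n-1}\phi_{k_0+1}(\xi_{q, 0}, \tilde{u}_{p, n+o}).\]
The $\phi$-term annihilates $u_0^{k_0}\sigma_1 1_\chi$ by~\eqref{eq:phi0}, so everything reduces to computing $\xi_{q, 0}^{k_0}\tilde{u}_{q, n-1}u_0^{k_0}\sigma_1 1_\chi$.

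Setting $T_k := \tilde{u}_{q, n-1}u_0^k\sigma_1 1_\chi$ and commuting $\tilde{u}_{q, n-1}$ past one $u_0$ at a time using $[\tilde{u}_{q, n-1}, u_0] = (1-n)\tilde{u}_{q, n-1} - \tilde{u}_{p, n+o}$ together with~\eqref{eq:tildeuaword2} to kill the $\tilde{u}_{p, n+o}$-remainder gives the recursion $T_k = (u_0 + 1 - n)T_{k-1}$; starting from $T_0 = a_{q, 0}\sigma_1 1_\chi$ from~\eqref{eq:tildeuaword1} we get $T_{k_0} = a_{q, 0}(u_0 + 1 - n)^{k_0}\sigma_1 1_\chi$. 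Expanding $(u_0 + 1 - n)^{k_0}$ in powers of $u_0$, the inductive hypothesis rewrites $\xi_{q, 0}^j u_0^j\sigma_1 1_\chi = j!\sigma_1 1_\chi$, and then $\xi_{q, 0}^{k_0-j}\sigma_1 1_\chi = 0$ by Lemma~\ref{lem:xiui0} whenever $k_0 - j \geq 1$; only the leading $j = k_0$ contribution $\xi_{q, 0}^{k_0}u_0^{k_0}\sigma_1 1_\chi = k_0!\sigma_1 1_\chi$ survives. Multiplying by the prefactor $(k_0+1)$ closes the induction with $(k_0+1)!\sigma_1 1_\chi$. The main obstacle is that the clean analogue of Lemma~\ref{lem:bracketxiui} fails at $i = 0$ because $\tilde{u}_{q, n-1}$ no longer acts by the scalar $a_{q, 0}$ on $\varsigma_0$-type vectors but instead introduces the shift $(u_0 + 1 - n)$; fortunately, the vanishing $\xi_{q, 0}\sigma_1 1_\chi = 0$ is strong enough to absorb every lower-degree $u_0$-correction after $\xi_{q, 0}^{k_0}$ is applied.
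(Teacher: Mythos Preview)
Your proof is correct and follows essentially the same inductive strategy as the paper: both reduce the inductive step, via Lemma~\ref{lem:simleibniz} and~\eqref{eq:phi0}, to evaluating $\xi_{q,0}^{k_0}\tilde{u}_{q,n-1}u_0^{k_0}\sigma_1 1_\chi$. The one genuine difference is how you handle this last expression. You push $\tilde{u}_{q,n-1}$ to the right through $u_0^{k_0}$, obtaining the recursion $T_k=(u_0+1-n)T_{k-1}$, then expand binomially and invoke the induction hypothesis for each $j\le k_0$. The paper instead moves $\tilde{u}_{q,n-1}$ to the \emph{left} through $\xi_{q,0}^{k_0}$, which is trivial since you yourself already observed $[\xi_{q,0},\tilde{u}_{q,n-1}]=0$; this immediately gives $\tilde{u}_{q,n-1}\,\xi_{q,0}^{k_0}u_0^{k_0}\sigma_1 1_\chi = k_0!\,\tilde{u}_{q,n-1}\sigma_1 1_\chi = k_0!\,a_{q,0}\sigma_1 1_\chi$ by one application of the induction hypothesis and~\eqref{eq:tildeuaword1}. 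So your $T_k$-recursion and binomial expansion are unnecessary detours---the commutation you already know makes the step a one-liner---but they do no harm.
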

\begin{proof}
    By~\eqref{eq:bracketxi}, $[\xi_{q, 0}, u_0] = \frac{1}{(1-n)a_{q, 0}} ((1-n)\tilde{u}_{q, n-1} - \tilde{u}_{p, n+o})$. As $[\xi_{q, 0}, \tilde{u}_{q, n - 1}] = 0$, by Lemma~\ref{lem:simleibniz}
    \begin{equation}\label{eq:bracketxi0u0}
        [\xi^{k_0}_{q, 0}, u_0] = \frac{k_0}{a_{q, 0}} \xi^{k_0}_{q, 0}\tilde{u}_{q, n-1} - \phi_{k_0} (\xi_{q, 0}, \frac{1}{1-n}\tilde{u}_{p, n+o}) 
    \end{equation}
    We will now prove the lemma by induction on $k_0$. The base case $k_0 =0$ is tautologically true. For $k_0 = 1$, by~\eqref{eq:bracketxi0u0}
    \begin{align*}
     [\xi_{q, 0}, u_0]\sigma_1 1_\chi = \frac{1}{(1-n)a_{q, 0}} \bigl((1-n)\tilde{u}_{q, n-1} - \tilde{u}_{p, n+o}\bigr)\sigma_1 1_\chi.
    \end{align*}
    By~\eqref{eq:tildeuaword1}, $\tilde{u}_{q, n-1}\sigma_1 1_\chi = a_{q, 0} \sigma_1 1_\chi$, and by~\eqref{eq:tildeuaword2}, $\tilde{u}_{p, n+o}\sigma_1 1_\chi = 0$, thus $[\xi_{q, 0}, u_0]\sigma_1 1_\chi = \sigma_1 1_\chi$. By Lemma~\ref{lem:xiui0}, $u_0\xi_{q, 0}\sigma_1 1_\chi = 0$, we thus have 
    \begin{align*}
        \xi_{q, 0}u_0\sigma_1 1_\chi =\xi_{q, 0}u_0\sigma_1 1_\chi - u_0 \xi_{q, 0}\sigma_1 1_\chi= [\xi_{q, 0}, u_0]\sigma_1 1_\chi = \sigma_1 1_\chi.
    \end{align*}
    Assume the lemma is true for $k_0$. By~\eqref{eq:phi0} we have $\phi_{k_0} (\xi_{q, 0}, \frac{1}{1-n}\tilde{u}_{p, n+o}) u_0^{k_0} \sigma_1 1_\chi = 0$. By~\eqref{eq:bracketxi0u0}
    \begin{equation}\label{eq:bracketxi0u0leibniz}
        [\xi^{k_0}_{q, 0}, u_0] u_0^{k_0} \sigma_1 1_\chi = \frac{k_0}{a_{q, 0}} \xi^{k_0-1}_{q, 0}\tilde{u}_{q, n-1} u_0^{k_0} \sigma_1 1_\chi.
    \end{equation}
    We then have
    \[
        \begin{array}{lll}
            \xi_{q, 0}^{k_0+1}u_0^{k_0+1}\sigma_1 1_\chi & = \xi_{q, 0} u_0 \xi_{q, 0}^{k_0} u_0^{k_0}\sigma_1 1_\chi + \xi_{q, 0} [\xi_{q, 0}^{k_0}, u_0]u_0^{k_0}\sigma_1 1_\chi                                    &                                              \\
        & = \xi_{q, 0} u_0 \xi_{q, 0}^{k_0} u_0^{k_0}\sigma_1 1_\chi  + \frac{k_0}{a_{q, 0}}\xi_{q, 0}\tilde{u}_{q, n-1}\xi_{q, 0}^{k_0-1}u_0^{k_0}\sigma_1 1_\chi & (\text{by~\eqref{eq:bracketxi0u0leibniz}}) \\
        & = \xi_{q, 0} u_0 \xi_{q, 0}^{k_0} u_0^{k_0}\sigma_1 1_\chi  + \frac{k_0}{a_{q, 0}}\tilde{u}_{q, n-1}\xi_{q, 0}^{k_0}u_0^{k_0}\sigma_1 1_\chi & (\text{as } [\xi_{q, 0}, \tilde{u}_{q, n-1}] = 0) \\
        & = k_0!\sigma_1 1_\chi + \frac{k_0}{a_{q, 0}} k_0! \tilde{u}_{q, n - 1} \sigma_1 1_\chi                                                                            & (\text{by induction})   \\ 
        & = k_0!\sigma_1 1_\chi + k_0 k_0! \sigma_1 1_\chi                                                                            & (\text{by~\eqref{eq:tildeuaword1}})                     \\
        & = (k_0+1)!\sigma_1 1_\chi.                                                                                    &
        \end{array}
    \]
\end{proof}
\begin{lemma}\label{lem:cancelu-1}
    Let $k_0, k_{-1} \in \mathbb{Z}_{\geq 0}$. We have
    \begin{equation*}
        \xi_{q, 0}^{k_0} \tilde{u}_{q, n-1+k_{-1}}u^{k_{-1}}_{-1}  u^{k_0}_{0} \sigma_1 1_\chi = C \sigma_1 1_\chi, \quad \text{ for some } C \in \kk^\times.
    \end{equation*}
\end{lemma}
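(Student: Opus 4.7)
The plan is to induct on $k_{-1}$ with $k_0$ held fixed. Two ingredients will do all the work: the defining identity from~\eqref{eq:xidef},
\begin{align*}
    \tilde{u}_{q,n-1} = (1-n)\, a_{q,0}\, \xi_{q,0} + a_{q,0},
\end{align*}
which handles the base case, and the commutator formula~\eqref{eq:brackettilde}, which lets one rewrite $\tilde{u}_{q,n+k_{-1}} u_{-1}$ as $u_{-1}\,\tilde{u}_{q,n+k_{-1}}$ plus two error terms of the form $\tilde{u}_{\bullet,\, n-1+j+k_{-1}}$ with $j \geq 1$ -- precisely the shape that Lemma~\ref{lem:u-10} kills.

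For the base case $k_{-1}=0$, substituting the identity above gives
\begin{align*}
    \xi_{q,0}^{k_0} \tilde{u}_{q,n-1} u_0^{k_0}\sigma_1 1_\chi \;=\; (1-n)\, a_{q,0}\, \xi_{q,0}^{k_0+1} u_0^{k_0}\sigma_1 1_\chi \;+\; a_{q,0}\, \xi_{q,0}^{k_0} u_0^{k_0}\sigma_1 1_\chi.
\end{align*}
Lemma~\ref{lem:xi0u0} simplifies the second summand to $a_{q,0}\, k_0!\, \sigma_1 1_\chi$ and rewrites the first as $(1-n)\, a_{q,0}\, k_0!\, \xi_{q,0}\,\sigma_1 1_\chi$, which vanishes by Lemma~\ref{lem:xiui0}. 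Lemma~\ref{lem:anonzero} then gives $C_0 = a_{q,0}\, k_0! \neq 0$.

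For the inductive step, expanding via~\eqref{eq:brackettilde} produces
\begin{align*}
    \tilde{u}_{q,n+k_{-1}} u_{-1}^{k_{-1}+1} u_0^{k_0}\sigma_1 1_\chi \;=\; & \; u_{-1}\, \tilde{u}_{q,n+k_{-1}} u_{-1}^{k_{-1}} u_0^{k_0}\sigma_1 1_\chi \\
    & -(n+k_{-1}+1)\, \tilde{u}_{q,n-1+k_{-1}} u_{-1}^{k_{-1}} u_0^{k_0}\sigma_1 1_\chi \\
    & -\, \tilde{u}_{p,\, n+k_{-1}+o} u_{-1}^{k_{-1}} u_0^{k_0}\sigma_1 1_\chi.
\end{align*}
Since $u_0^{k_0}\sigma_1$ is a $\varsigma_0$ in the sense of Notation~\ref{not:(var)sigma}, Lemma~\ref{lem:u-10} with $j=1$ annihilates the first line and with $j=1+o$ annihilates the third. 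Applying $\xi_{q,0}^{k_0}$ to the surviving middle line and invoking the inductive hypothesis yields the recursion $C_{k_{-1}+1} = -(n+k_{-1}+1)\, C_{k_{-1}}$, still nonzero. The main obstacle is purely bookkeeping: one must verify that both error terms from~\eqref{eq:brackettilde} genuinely fit the hypothesis of Lemma~\ref{lem:u-10} with $j \geq 1$, after which everything collapses cleanly onto the previously established one-step lemmas.
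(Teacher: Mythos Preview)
Your proof is correct and follows essentially the same approach as the paper: induction on $k_{-1}$, with the inductive step handled identically via~\eqref{eq:brackettilde} and Lemma~\ref{lem:u-10}. The only minor difference is in the base case, where the paper commutes $\tilde u_{q,n-1}$ past $\xi_{q,0}^{k_0}$ directly (using $[\xi_{q,0},\tilde u_{q,n-1}]=0$) and then applies~\eqref{eq:tildeuaword1}, whereas you expand $\tilde u_{q,n-1}$ as an affine expression in $\xi_{q,0}$ and kill the extra piece with Lemma~\ref{lem:xiui0}; these are equivalent rephrasings of the same computation.
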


\begin{proof}
    We will prove this by induction on $k_{-1}$. For the base case if $k_{-1} = 0$, then
    \[
        \begin{array}{lll}
            \xi_{q, 0}^{k_0}\tilde{u}_{q, n-1}u_{0}^{k_{0}} \sigma_1 1_\chi & = \tilde{u}_{q, n-1} \xi_{q, 0}^{k_0} u_0^{k_0} \sigma_1 1_\chi & (\text{as } [\xi_{q, 0}, \tilde{u}_{q, n-1}] = 0) \\
            & = k_0! \tilde{u}_{q, n-1} \sigma_1 1_\chi            & (\text{by Lemma~\ref{lem:xi0u0}})                  \\
            & = k_0! a_{q, 0} \sigma_1 1_\chi  & (\text{by~\eqref{eq:tildeuaword1}})
        \end{array}
    \]
    By~\eqref{eq:brackettilde}, $[\tilde{u}_{q, n+k_{-1}}, u_{-1}] u_{-1}^{k_{-1}}\sigma_0 1_\chi = - (n + 1 + k_{-1})  \tilde{u}_{q, n-1+k_{-1}} u_{-1}^{k_{-1}}\sigma_0 1_\chi - \tilde{u}_{p, n+k_{-1}+o} u_{-1}^{k_{-1}} \sigma_0 1_\chi$ and note that by Lemma~\ref{lem:u-10} $\tilde{u}_{p, n+k_{-1}+o} u_{-1}^{k_{-1}} \sigma_0 1_\chi = 0$. Thus
    \begin{align*}
        \xi_{q, 0}^{k_0} \tilde{u}_{q, n+k_{-1}} u_{-1}^{k_{-1}+1} \sigma_0 1_\chi & =  \xi_{q, 0}^{k_0} u_{-1} \tilde{u}_{q, n+k_{-1}} u_{-1}^{k_{-1}} \sigma_0 1_\chi - (n+k_{-1}+1) \xi_{q, 0}^{k_0} \tilde{u}_{q, n+k_{-1}-1} u_{-1}^{k_{-1}} \sigma_0 1_\chi \\
        & = 0 - (n+1+k_{-1}) C \sigma_1 1_\chi,
    \end{align*}
    for some $C \in \kk^\times$, where the first term is 0 by Lemma~\ref{lem:u-10} again, and we use the induction hypothesis for the second term. We obtain the induction step and the lemma follows. 
\end{proof}

The following lemma explains why we only need to look at the leading term of an arbitrary element in $M'_\chi$. 

\begin{lemma}\label{lem:xi0general}
    Let $k_i, k'_i \in \NN$ for $i = -1, \dots , m-1$. Let $q \in \kk[t] \backslash \{ 0\}$ with $q(x) \neq 0$. If there exists $i$ such that $k_i > k'_i$, then 
    \begin{equation}\label{eq:xi0general}
        \xi^{k_{m-1}}_{q, m-1}\dots  \xi^{k_1}_{q, 1} \xi^{k_0}_{q, 0} \tilde{u}_{q, k_{-1}+n-1} u_{-1}^{k'_{-1}}u_{0}^{k'_{0}}\dots u_{m-1}^{k'_{m-1}}1_\chi = 0.
    \end{equation}
\end{lemma}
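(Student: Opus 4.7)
The key observation, immediate from Definition~\ref{def:u,a,xi} and Lemma~\ref{lem:tildeua}, is that $\xi_{q,i}\cdot 1_\chi = 0$ for each $0\le i\le m-1$: the constant $a_{q,i}$ was chosen precisely to cancel the action of $\tilde{u}_{q,n-1-i}$ on $1_\chi$. Similarly, $\tilde{u}_{q,n-1+k_{-1}}\cdot 1_\chi=0$ for $k_{-1}\ge 1$ by~\eqref{eq:tildeuaword2}. Thus the intuitive reason the lemma should hold is that any ``excess'' $\xi_{q,i}$ which has no matching $u_i$ must, after migrating past everything else, eventually arrive at $1_\chi$ and be annihilated. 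The proof turns this intuition into bookkeeping.

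My plan is to proceed by induction on the pair $(i_0,\,k_{i_0}-k'_{i_0})$ in lexicographic order, where $i_0 \in \{-1,0,\dots,m-1\}$ is the \emph{largest} index with $k_{i_0}>k'_{i_0}$, and with the outer $i_0$ taken in decreasing order (so the top index is handled first). In the inductive step, for indices $j>i_0$ one has $k_j \le k'_j$, so Lemma~\ref{lem:xiui} lets us ``consume'' the $\xi_{q,j}^{k_j}$ against $u_j^{k_j}$ in balanced pairs, leaving residual $u_j^{k'_j-k_j}$ factors and reducing the problem to an expression whose new top excess index is $i_0$ itself.

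For the heart of the argument (base case, $i_0=m-1$), I commute the leftmost $\xi_{q,m-1}$ rightward through the remaining $\xi_{q,j}^{k_j}$ and $\tilde{u}_{q,k_{-1}+n-1}$, using~\eqref{eq:bracketxi} and~\eqref{eq:brackettilde}. Each such commutator produces terms of two kinds: either a new $\tilde{u}_{q,\cdot}$ or $\tilde{u}_{p,\cdot}$ at an index $\ge n-1$, which vanishes or acts as a scalar by Lemma~\ref{lem:tildeuaword} and Lemma~\ref{lem:u-10}, or a lower-order term that already falls under the inductive hypothesis. The ``main'' term, in which the leftmost $\xi_{q,m-1}$ passes through all intermediate factors and meets $u_{m-1}^{k'_{m-1}}$, is then handled by iterating Lemma~\ref{lem:bracketxiui}: $k'_{m-1}$ of the $\xi_{q,m-1}$'s cancel against the $u_{m-1}$'s (as in Lemma~\ref{lem:xiui}), leaving at least $k_{m-1}-k'_{m-1}\ge 1$ unmatched copies of $\xi_{q,m-1}$ which, having no $u_{m-1}$ left to hit, finally reach $1_\chi$ and annihilate it. The cases $i_0=0$ and $i_0=-1$ follow the same template but invoke Lemma~\ref{lem:xi0u0} and Lemma~\ref{lem:cancelu-1} respectively for the final balancing step, again reducing to the vanishing of an excess $\xi_{q,0}$ or $\tilde{u}_{q,n-1+j}$ ($j\ge 1$) on $1_\chi$.

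The main obstacle is the commutator bookkeeping: $[\xi_{q,j},\xi_{q,k}]$ and $[\xi_{q,j},\tilde{u}_{q,\cdot}]$ are nonzero in general, and each application of~\eqref{eq:bracketxi} introduces a new $\tilde{u}_{p,\cdot}$ with $\deg_t p<\deg_t q$. To control this, I would run a secondary (strong) induction on $\deg_t q$, mirroring the structure of the proof of Lemma~\ref{lem:tildeuaword}: at $\deg_t q=0$ the formulas simplify because the $\tilde{u}_{p,\cdot}$ terms disappear, providing a clean base, and the induction hypothesis on $\deg_t q$ handles all ``lower-degree'' commutator contributions in the induction step on $(i_0,k_{i_0}-k'_{i_0})$.
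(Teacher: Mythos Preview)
Your proposal has a real gap at the ``base case'' $i_0=m-1$. You plan to commute $\xi_{q,m-1}$ rightward through the other $\xi_{q,j}^{k_j}$'s, citing \eqref{eq:bracketxi} and \eqref{eq:brackettilde}; but those formulas govern $[\xi_{q,j},u_i]$ and $[\tilde{u}_{q,j},u_i]$, not $[\xi_{q,j},\xi_{q,k}]$. Computing the latter from $\xi_{q,j}\propto\tilde{u}_{q,n-1-j}-a_{q,j}$ together with the identity $[qg\partial,qh\partial]=q^2[g\partial,h\partial]$ of \eqref{eq:canpolhelp} gives
\[
[\tilde{u}_{q,a},\tilde{u}_{q,b}] \;=\; (b-a)\,\tilde{u}_{q^2,\,a+b},
\]
so the polynomial prefactor jumps from $q$ to $q^2$. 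That is the wrong direction for your secondary induction on $\deg_t q$: the commutator terms spawned here have \emph{higher} degree, and none of the established lemmas apply to them. The inductive mechanism of Lemma~\ref{lem:tildeuaword} that you hope to mirror works only because one factor in each bracket is a bare $u_i$; when both factors carry $q$, it breaks. The same obstruction hits your inductive step: to ``consume'' $\xi_{q,j}^{k_j}$ against $u_j^{k_j}$ for $j>i_0$ via Lemma~\ref{lem:xiui} you would first have to move $\xi_{q,j}$ past all the intervening $\xi$'s, since the $\xi_{q,j}$ with large $j$ sit at the far left of the expression while the $u_j$ sit at the far right.

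The paper's proof avoids this entirely by choosing the \emph{minimal} index $i$ with $k_i>k'_i$ rather than the maximal one, so that no $\xi$ ever has to pass another $\xi$. One works from the inside out: Lemma~\ref{lem:u-10} (if $i=-1$) or Lemma~\ref{lem:cancelu-1} disposes of the innermost block $\xi_{q,0}^{k_0}\tilde{u}_{q,n-1+k_{-1}}u_{-1}^{k'_{-1}}u_0^{k'_0}\sigma_1 1_\chi$ in one shot; then Lemma~\ref{lem:xiui} peels off $\xi_{q,1}^{k_1},\xi_{q,2}^{k_2},\dots$ one at a time, each now adjacent to the matching $u_j$-block in exactly the configuration that lemma requires. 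At index $i$ the excess $\xi_{q,i}^{k_i-k'_i}$ lands on a word in $u_{\ge i+1}$, and Lemma~\ref{lem:xiui0} kills it immediately. Three short cases, no commutator bookkeeping, no induction on $\deg_t q$.
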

\begin{proof}
    Let $Y_q = \xi^{k_{m-1}}_{q, m-1}\dots  \xi^{k_1}_{q, 1} \xi^{k_0}_{q, 0} \tilde{u}_{q, k_{-1}+n-1}$ and $z' = u_{-1}^{k'_{-1}}u_{0}^{k'_{0}}\dots u_{m-1}^{k'_{m-1}}1_\chi$ so we seek to prove that $Y_q z' = 0$. Let $i$ be the minimal integer such that $k'_i < k_i$. We now have three cases. If $i=-1$, then Lemma~\ref{lem:u-10} gives us $Y_q z' = 0$. If $i = 0$, then by Lemma~\ref{lem:cancelu-1}, we have 
    \begin{align*}
        Y_qz' = \xi^{k_{m-1}}_{q, m-1} \dots  \xi^{k_0 - k'_0}_{q, 0} \xi^{k'_0}_{q, 0}  u_{n-1+k_{-1}} u_{-1}^{k_{-1}} u_{0}^{k'_{0}}\dots u_{m-1}^{k'_{m-1}} 1_\chi = c_1 \xi^{k_{m-1}}_{q, m-1} \dots  \xi^{k_0 - k'_0}_{q, 0} u_{1}^{k'_{1}}\dots u_{m-1}^{k'_{m-1}} 1_\chi,
    \end{align*}    
    which is then $0$ by Lemma~\ref{lem:xiui0}. Otherwise, if $i \geq 1$, then 
        \[ 
            \begin{array}{lll}
                Y_qz' &= \xi^{k_{m-1}}_{q, m-1}\dots \xi^{k_i-k_i'}_{q,i} \xi^{k_i'}_{q, i} \dots  \xi^{k'_0}_{q, 0}  u_{n-1+k_{-1}} u_{-1}^{k_{-1}} u_{0}^{k'_{0}}\dots u_{m-1}^{k'_{m-1}} 1_\chi &\\
                &= c_1 \xi^{k_{m-1}}_{q, m-1}\dots  \xi^{k_i-k_i'}_{q, i} \xi^{k_i'}_{q, i} \dots  \xi^{k'_1}_{q, 1} u_{1}^{k'_{1}}\dots u_{m-1}^{k'_{m-1}}1_\chi & (\text{for } c_1 \neq 0 \text{ by Lemma~\ref{lem:cancelu-1}}) \\
                &= c_2 \xi^{k_{m-1}}_{q, m-1}\dots  \xi^{k_i-k_i'}_{q, i} u^{k'_{i+1}}_{i+1} \dots u^{k'_{m-1}}_{m-1} 1_\chi & (\text{for } c_2 \neq 0 \text{ by Lemma~\ref{lem:xiui}}) \\
                &= 0 &( \text{by Lemma~\ref{lem:xiui0}}).
            \end{array}
        \]
\end{proof}

We now show the main theorems of this subsection. We first show that for a one-point local function $\chi \neq \chi_{x; \alpha_0, \frac{1}{2}}$ on $\W{-1}$, the canonical local representations $M'_\chi$ is irreducible.

\begin{theorem}\label{theo:simplicityone}
    Let $\chi = \chi_{x; \alpha_0, \dots , \alpha_n}$ be a one-point local function on $\W{-1}$, where $\alpha_n \neq 0$. Then $M'_\chi$ is a simple $\Ua(\W{-1})$-module if $\chi \neq \chi_{x; \alpha_0, \frac{1}{2}}$.
\end{theorem}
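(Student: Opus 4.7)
The plan is to show that any nonzero element $z \in M'_\chi$ generates $M'_\chi$ as a $\Ua(\W{-1})$-module. By the PBW theorem applied to the decomposition $\W{-1} = \p_\chi \oplus \kk\{u_{-1}, u_0, \dots, u_{m-1}\}$, the module $M'_\chi$ has basis $\{z_k := u_{-1}^{k_{-1}} u_0^{k_0} \cdots u_{m-1}^{k_{m-1}} 1_\chi : k = (k_{-1}, k_0, \dots, k_{m-1}) \in \NN^{m+1}\}$. For $z \neq 0$, let $S \subseteq \NN^{m+1}$ be the support of $z$ in this basis; the goal is to exhibit an explicit operator $Y \in \Ua(\W{-1})$ with $Y \cdot z \in \kk^\times \cdot 1_\chi$, from which simplicity will follow immediately.

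The key idea is to select an index $k \in S$ that is maximal with respect to the componentwise partial order on $\NN^{m+1}$, fix $q \in \kk[t]$ with $q(x) \neq 0$ (for concreteness, $q = 1$), and form the operator
\[ Y_q = \xi_{q, m-1}^{k_{m-1}} \cdots \xi_{q, 1}^{k_1} \xi_{q, 0}^{k_0}\, \tilde{u}_{q,\, n-1+k_{-1}} \in \Ua(\W{-1}). \]
For any other $k' \in S$, the maximality of $k$ produces an index $i$ with $k_i > k'_i$, so Lemma~\ref{lem:xi0general} yields $Y_q z_{k'} = 0$. Hence $Y_q z = c_k \cdot Y_q z_k$, where $c_k \in \kk^\times$ is the coefficient of $z_k$ in $z$. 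This reduces the problem to computing $Y_q z_k$ on the single monomial $z_k$.

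To compute $Y_q z_k$, I would chain the preceding lemmas: Lemma~\ref{lem:cancelu-1} pairs $\tilde{u}_{q, n-1+k_{-1}}$ and $\xi_{q,0}^{k_0}$ with $u_{-1}^{k_{-1}} u_0^{k_0}$ to produce a nonzero scalar multiple of $u_1^{k_1} \cdots u_{m-1}^{k_{m-1}} 1_\chi$; then for each $i = 1, \dots, m-1$, Lemma~\ref{lem:xiui} successively cancels $\xi_{q,i}^{k_i}$ against $u_i^{k_i}$. The outcome is $Y_q z_k = C \cdot 1_\chi$ for a constant $C \in \kk^\times$, provided every scalar introduced along the way is nonzero. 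The only scalar whose nonvanishing is non-obvious is $a_{q,0}$, and this is supplied by Lemma~\ref{lem:anonzero}, which is exactly where the hypothesis $\chi \neq \chi_{x;\alpha_0, \frac{1}{2}}$ enters.

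Combining these two observations gives $Y_q z = c_k C \cdot 1_\chi \neq 0$, whence $1_\chi \in \Ua(\W{-1}) \cdot z$ and therefore $\Ua(\W{-1}) \cdot z = M'_\chi$, proving simplicity. The main conceptual obstacle here is not any single computation — those are packaged into the preceding lemmas — but the clean separation of roles: maximality of $k$ in the componentwise order guarantees that every ``non-leading'' monomial is killed by $Y_q$, while the chain of cancellation lemmas ensures the leading monomial is sent to the generator. The excluded case $\chi = \chi_{x;\alpha_0, \frac{1}{2}}$ genuinely breaks this argument because $a_{q,0}$ can vanish, destroying the $C \neq 0$ conclusion and, in fact, producing proper submodules.
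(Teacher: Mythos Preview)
Your proof is correct and follows essentially the same strategy as the paper's: construct $Y_q$ from the $\xi_{q,i}$ and $\tilde u_{q,n-1+k_{-1}}$, use Lemma~\ref{lem:xi0general} to kill all but one monomial, and then apply Lemmas~\ref{lem:cancelu-1} and~\ref{lem:xiui} to send that monomial to a nonzero multiple of $1_\chi$. The only cosmetic difference is that the paper selects $k$ as the leading term in the total lexicographic order $u_{-1} > \cdots > u_{m-1}$ rather than a componentwise-maximal index; both choices guarantee the hypothesis of Lemma~\ref{lem:xi0general} (existence of some $i$ with $k_i > k'_i$) for every other monomial in the support.
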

The inverse direction of Theorem~\ref{theo:simplicityone} is correct, and is a direct result of Proposition~\ref{prop:untwistrephelp} and \cite{petukhov2022poisson}*{Proposition 7.1.5}, which we will discuss in Subsection~\ref{subsec:primitive}. 
\begin{proof}
   The canonical polarization of $\chi$ in $\W{-1}$ is $\p_\chi = \W{-1}((t-x)^{m+1})$, thus $\W{-1} = \p \oplus \kk \{u_{-1}, \dots, u_{m-1}\}$. Applying the PBW theorem with the order $u_{-1} > \dots > u_{m-1}$, we have a total lexicographic order $\mc{C}$ on the basis $S = \{u_{-1}^{k_{-1}} \dots u_{m-1}^{k_{m-1}} 1_\chi\}$ of $M'_\chi$. For any element in $z \in M'_\chi$, we denote 
   \begin{align*}
    z_\text{max}\coloneqq \LT_{\mc{C}} (z) = c_0 u^{k_{-1}}_{-1}  u^{k_0}_{0} \dots  u^{k_{m-1}}_{m-1} 1_\chi,
   \end{align*}
   for some $c_0 \in \kk^\times$. We set $q = 1$ (and thus omit it) and define $Y\coloneqq \xi^{k_{m-1}}_{m-1}\dots  \xi^{k_1}_1 \xi^{k_0}_0 u_{k_{-1}+n-1}$. We will show that $Yz = Y z_\text{max} = c 1_\chi$ for $c \in \kk^\times$. 

   From Lemma~\ref{lem:xi0general}, for every monomial $z' = u_{-1}^{k'_{-1}}u_{0}^{k'_{0}}\dots u_{m-1}^{k'_{m-1}} 1_\chi$ occuring in $z - z_\text{max}$, then $Yz' = 0$. Thus $Yz = Yz_\text{max} = c_0 Y u^{k_{-1}}_{-1}  u^{k_0}_{0} \dots  u^{k_{m-1}}_{m-1} 1_\chi$. By Lemma~\ref{lem:cancelu-1} and~\ref{lem:xiui}
    \begin{align*}
        Yz = Yz_\text{max} =  c_0 C 1_\chi,
    \end{align*}
    for some $C \in \kk^\times$. Thus $M'_\chi$ is a simple $\Ua(\W{-1})$-module.
\end{proof}
We then extend the result to the general case of multi-point local functions on $\W{-1}$. 
\begin{theorem}\label{theo:simplicitymulti}
    Let $\chi$ be a local function on $\W{-1}$ following Notation~\ref{not:localfunc}. Then $M'_\chi$ is a simple $\Ua(\W{-1})$-module if its component $\chi_d \neq \chi_{x_d; \alpha_0, \frac{1}{2}}$ for all $d$. 
\end{theorem}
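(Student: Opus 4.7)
The plan is to generalize the one-point proof of Theorem~\ref{theo:simplicityone} by exploiting the polynomial parameter $q$ in the definitions of $\tilde{u}_{q,j,x}$ and $\xi_{q,i,x}$ to build operators that target each base point $x_d$ separately. For the PBW basis, the canonical polarization $\p_\chi = \W{-1}\bigl(\prod_d (t-x_d)^{m_d+1}\bigr)$ has codimension $\sum_d(m_d+1)$, with a complementary subspace spanned by $u_{i,x_d} = (t-x_d)^{i+1}\del$ for $d = 1, \ldots, \ell$ and $i = -1, 0, \ldots, m_d-1$. Using a total order that refines $u_{-1,x_d} > \cdots > u_{m_d-1,x_d}$ for each $d$ and then takes the lexicographic extension, PBW yields an ordered basis of $M'_\chi$. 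Given nonzero $z \in M'_\chi$, let
\[
  z_{\max} = c\prod_{d=1}^\ell u_{-1,x_d}^{k_{d,-1}}\, u_{0,x_d}^{k_{d,0}}\cdots u_{m_d-1,x_d}^{k_{d,m_d-1}}\cdot 1_\chi
\]
be its leading term, with $c \in \kk^\times$.

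For each $d \in \{1, \ldots, \ell\}$, I would choose a separator polynomial $q_d = \prod_{d' \neq d}(t-x_{d'})^N$ with $N$ large enough that for every index $j$ appearing in the operators below, $\tilde{u}_{q_d,j,x_d}\in \p_{\chi_{d'}}$ for all $d' \neq d$. Since $q_d(x_d)\neq 0$ and $\chi_d \neq \chi_{x_d;\alpha_0,\frac{1}{2}}$, Lemma~\ref{lem:anonzero} gives $a_{q_d,0,x_d}\neq 0$, so the $\xi_{q_d,i,x_d}$ are well defined. Mimicking the one-point operator, set
\[
  Y^{(d)} = \xi_{q_d,m_d-1,x_d}^{k_{d,m_d-1}}\cdots\xi_{q_d,0,x_d}^{k_{d,0}}\,\tilde{u}_{q_d,n_d-1+k_{d,-1},x_d}, \qquad Y = Y^{(1)} Y^{(2)} \cdots Y^{(\ell)}.
\]
The goal is to prove $Y\cdot z = c'\cdot 1_\chi$ for some $c'\in\kk^\times$, which yields simplicity since $1_\chi$ generates $M'_\chi$.

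The verification requires multi-point analogues of Lemmas~\ref{lem:tildeua}--\ref{lem:xi0general}. The key point is that because $q_d$ vanishes to high order at $x_{d'}$ for $d'\neq d$, the element $\tilde{u}_{q_d,j,x_d}$ lies in $\p_{\chi_{d'}}$ and therefore acts on $1_{\chi_{d'}}$ by a scalar (computed via Lemma~\ref{lem:rho0}); more importantly, iterated commutators with $u_{i,x_{d'}}$ remain divisible by sufficiently high powers of $(t-x_{d'})$ and hence stay in controllable subspaces. Consequently each $Y^{(d)}$ executes the one-point reduction on the $x_d$-part of a pure tensor while acting as a scalar (up to strictly lower-order terms in the lex order) on the other tensor factors. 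Composing the $Y^{(d)}$ then reduces $z_{\max}$ to a nonzero scalar multiple of $1_\chi$ and annihilates every basis monomial strictly below $z_{\max}$, mirroring the one-point proof.

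The main obstacle I expect is the cross-term bookkeeping. When $u_{-1,x_{d'}}=\del$ appears in a monomial, its bracket with $\tilde{u}_{q_d,j,x_d}$ can drop one power of $(t-x_{d'})$ and momentarily exit $\p_{\chi_{d'}}$, so successive applications of the pieces of $Y^{(d)}$ must be shown either to return these stray contributions to the polarization or to produce strictly lower-order terms that get killed at later steps. Once this bookkeeping is controlled, the scheme of Lemmas~\ref{lem:xiui0}--\ref{lem:xi0general} lifts factor by factor and yields the required simplicity.
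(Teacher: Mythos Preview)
Your approach is essentially the same as the paper's, and it works, but two points deserve sharpening.

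First, the paper invokes Proposition~\ref{prop:tensor1point} to identify $M'_\chi \cong M'_{\chi_1}\otimes\cdots\otimes M'_{\chi_\ell}$ and then works directly in the tensor product. You should do this too: it replaces your PBW basis computation in the full polarization by the product of the one-point PBW bases, and it makes the coproduct action of each $\tilde u_{q_d,j,x_d}$ transparently a sum over tensor slots.

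Second, your claim that $\tilde u_{q_d,j,x_d}$ ``acts on $1_{\chi_{d'}}$ by a scalar'' and that cross-commutators with $u_{-1,x_{d'}}$ merely produce ``lower-order terms'' is weaker than what is actually true and needed. The paper's key observation is that if you take $N \ge n_{d'} + K_{d'} + 1$, where $K_{d'}$ is the maximal exponent of $u_{-1,x_{d'}}$ appearing in $z$, then for every $j\ge 0$ the element $\tilde u_{q_d,j,x_d}$, rewritten at the base point $x_{d'}$, has the form $\tilde u_{p,\,n_{d'}+K_{d'},\,x_{d'}}$ for some $p$ with $p(x_{d'})\neq 0$. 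Lemma~\ref{lem:u-10} then gives that this element \emph{annihilates} every monomial $u_{-1,x_{d'}}^{k'}\varsigma_0\,1_{\chi_{d'}}$ with $k'\le K_{d'}$, not merely scales it. Consequently in the coproduct action only the $d$-th slot survives, and $\xi_{q_d,i,x_d}$ acts on a pure tensor exactly as the one-point $\xi$ on the $d$-th factor, with no error terms at all. This dissolves your anticipated ``cross-term bookkeeping'' obstacle: there are no stray contributions to track. The paper carries this out iteratively (choosing $q_1$ from $z$, then $q_2$ from $Y_1 z$, etc.), but your simultaneous choice with a single large $N$ works equally well once you use exact annihilation rather than scalar action.
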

The inverse direction of Theorem~\ref{theo:simplicitymulti} is proven in Corollary~\ref{cor:simplicityconverse}. 
\begin{proof}
    For $1 \leq d \leq \ell$, the canonical polarization of $\chi_d$ is $\mathfrak{p}_{\chi_d} = \W{-1}((t-x_d)^{m_d+1})$, and $u_{x_d, i} = (t-x_d)^{i+1}\partial$ for $i \in \ZZ$. The PBW basis for $M'_\chi$ is
    \begin{align*}
        S  = \{ u_{-1, x_1}^{k_{-1, 1}} \dots u_{ m_1-1, x_1}^{k_{m_1-1, 1}} 1_{\chi_1} \otimes \dots \otimes u_{-1, x_\ell}^{k_{-1, \ell}} \dots u_{m_\ell-1, x_\ell}^{k_{m_\ell-1, \ell}}1_{\chi_\ell} | k_{i, j} \in \NN \}.
    \end{align*}
    We define a total order 
    \begin{equation}\label{eq: monoidal order simple}
        \mc{C}: u_{-1, x_1} > \dots > u_{m_1-1, x_1} > u_{2, -1} > \dots > u_{2, m_2-1} > \dots > u_{-1, x_\ell} > \dots > u_{\ell, m_\ell}
    \end{equation}
    and order $S$ lexicographically. Thus, for an arbitrary element $z \in M'_\chi$, we denote 
    \begin{align*}
        z_{\text{max}}\coloneqq \LT_\mc{C}(z) = c_0 u_{-1, x_1}^{k_{-1, 1}}u_{0, x_1}^{k_{0, 1}}\dots u_{m_1-1, x_1}^{k_{m_1-1, 1}}  1_{\chi_1}
        \otimes \dots  \otimes 
        u_{-1, x_\ell}^{k_{-1, \ell}}u_{0, x_\ell}^{k_{0, \ell}}\dots u_{m_\ell-1, x_\ell}^{k_{m_\ell-1, \ell}} 1_{\chi_\ell}, 
    \end{align*} 
    for some $c_0 \in \kk^\times$. For $d = 2, \dots, \ell$, let $K_{d}$ be the maximal power of $u_{-1, d}$ in $z$ and let
    \begin{equation}
        q_1 = (t-x_2)^{n_2+1+K_{2}} \dots  (t-x_\ell)^{n_\ell+1+K_{\ell}}.
    \end{equation}
    Note that $q_1(x_1) \neq 0$ and we define $\xi_{q_1, j, x_1}$ and $\tilde{u}_{q_1, j, x_1}$ as in Definition~\ref{def:u,a,xi}. For $d > 1$, let $\tilde{q}_{1, d} = q_1/(t-x_d)^{n_d+1+K_{d}}$, note that  $\tilde{q}_{1,d}  \in \kk[t] \backslash \{0 \}$ and $\tilde{q}_{1, d} (x_d) \neq 0$. We furthermore have 
    \begin{align}
        \tilde{u}_{q_1, j, x_1} = \tilde{u}_{(t-x_1)^j \tilde{q}_{1, d}, n_d+K_{d}, x_d} \quad \text{for all } j \in \NN.
    \end{align}
    For any monomial $w = u_{-1, x_1}^{k'_{-1, 1}}\dots u_{m_1-1, x_1}^{k'_{m_1-1, 1}} 1_{\chi_1} \otimes \dots  \otimes u_{-1, x_\ell}^{k'_{-1, \ell}}\dots u_{m_\ell-1, x_\ell}^{k'_{m_\ell-1, \ell}} 1_{\chi_\ell}$ occuring in $z$, by definition $k'_{-1, d} \leq K_d$ for all $d$, thus by Lemma~\ref{lem:u-10}, for all $j \geq 0$ and $d >1$
    \begin{equation*}
        \tilde{u}_{q_1, j, x_1} \cdot u_{-1, x_d}^{k'_{-1, d}}u_{0, x_d}^{k'_{0, d}}\dots u_{m_d-1,  x_d}^{k'_{m_d-1, d}} 1_{\chi_d} = \tilde{u}_{(t-x_1)^j \tilde{q}_{1, d}, n_d+K_{d}, x_d}\cdot u_{-1, x_d}^{k'_{-1, d}}u_{0, x_d}^{k'_{0, d}}\dots u_{m_d-1, x_d}^{k'_{m_d-1, d}} 1_{\chi_d}=  0.
    \end{equation*}
    Hence, 
    \begin{equation}\label{eq: theo simple}
        \xi_{q_1, j, x_1} \cdot w = \Delta^\ell(\xi_{q_1, j, x_1}) (w) = (\xi_{q_1, j, x_1} u_{-1, x_1}^{k'_{-1, 1}} \dots u_{m_1-1, x_1}^{k'_{m_1-1, 1}} 1_{\chi_1})
        \otimes \dots \otimes u_{-1, x_\ell}^{k'_{-1, \ell}}\dots u_{m_\ell-1, x_\ell}^{k'_{m_\ell-1, \ell}} 1_{\chi_\ell}.
    \end{equation}
    Let 
    \begin{equation*}
        Y_1 = \xi_{q_1, m_1-1, x_1}^{k_{m_1-1, 1}} \dots  \xi_{q_1, 0, x_1}^{k_{1, 0}} \tilde{u}_{q_1, n_1 -1 + k_{-1, 1}, x_1}.
    \end{equation*}
    By~\eqref{eq: theo simple}, we have
    \begin{equation}\label{eq:theosimple2}
        Y_1 \cdot w = \Delta^\ell(Y_1) (w) =(Y_1 u_{-1, x_1}^{k'_{-1, 1}}\dots u_{m_1-1, x_1}^{k'_{m_1-1, 1}} 1_{\chi_1}) 
        \otimes \dots  \otimes u_{-1, x_\ell}^{k'_{-1, \ell}}\dots u_{m_\ell-1, x_\ell}^{k'_{m_\ell-1, \ell}} 1_{\chi_\ell}.
    \end{equation}
    By Lemma~\ref{lem:xi0general} and Lemma~\ref{lem:cancelu-1} applied to $M_{\chi_1}$, $Y_1 w \neq 0$ if and only if $k'_{1, i} = k_{1, i}$ for $i=-1, \dots , m-1$. By Lemma~\ref{lem:cancelu-1} and~\ref{lem:xiui}, $Y_1 z \neq 0$ and only contains monomial of the form $1_{\chi_1} \otimes u_{-1, x_2}^{k'_{-1, 2}}\dots u_{m_2-1, x_2}^{k'_{m_2-1, 2}} 1_{\chi_2} \otimes \dots  \otimes u_{-1, x_\ell}^{k'_{-1, \ell}}\dots u_{m_\ell-1, x_\ell}^{k'_{m_\ell-1, \ell}} 1_{\chi_\ell}$. We then continue the process and for $d \neq 2$, let $K'_{d}$ be the maximal power of $u_{-1, d}$ in $Y_1 z$ (note that $K'_1 = 0$) and let
    \begin{equation}
        q_2 = (t-x_1)^{n_1+ 1+ K'_1} (t-x_3)^{n_3+1+K'_{3}} \dots  (t-x_\ell)^{n_\ell+1+K'_{\ell}}.
    \end{equation}
    Noting that $q_2(x_2) \neq 0$, we then define $\xi_{q_2, j, x_2}$ and $\tilde{u}_{q_2, j, x_2}$ as in Definition~\ref{def:u,a,xi}. For $d \neq 2$, let $\tilde{q}_{2, d} = q_2/(t-x_d)^{n_d+1+K'_{d}}$, note that $\tilde{q}_{2, d}  \in \kk[t] \backslash \{0 \}$ and $\tilde{q}_{2, d} (x_d) \neq 0$. We furthermore have 
    \begin{align}
        \tilde{u}_{q_2, j, x_2} = \tilde{u}_{(t-x_2)^j \tilde{q}_{2, d}, n_d+K'_{d}, x_d}.
    \end{align}
    Similar to~\eqref{eq: theo simple} for any monomial $w = 1_{\chi_1} \otimes u_{-1, x_2}^{k'_{-1, 2}}\dots u_{m_2-1, x_2}^{k'_{m_2-1, 2}} 1_{\chi_2} \otimes \dots  \otimes u_{-1, x_\ell}^{k'_{-1, \ell}}\dots u_{m_\ell-1, x_\ell}^{k'_{m_\ell-1, \ell}} 1_{\chi_\ell}$ in $Y_1 z$, by Lemma~\ref{lem:u-10}, we have for all $j \geq 0$
    \begin{align*}
        \tilde{u}_{q_2, j, x_2} \cdot 1_{\chi_1} &= \tilde{u}_{(t-x_2)^j \tilde{q}_{2, 1}, n_1, x_1}\cdot 1_{\chi_1}=  0 \quad \text{ and }\\ 
        \tilde{u}_{q_2, j, x_2} \cdot u_{-1, x_d}^{k'_{-1, d}}\dots u_{m_d-1,  x_d}^{k'_{m_d-1, d}} 1_{\chi_d} &= \tilde{u}_{(t-x_2)^j \tilde{q}_{2, d}, n_d+K'_{d, -1}, x_d}\cdot u_{-1, x_d}^{k'_{-1, d}}\dots u_{m_d-1,  x_d}^{k'_{m_d-1, d}} 1_{\chi_d}=  0 \text{  for } d > 2.
    \end{align*}
    Hence
    \begin{align*}
        \xi_{q_2, j, x_2} \cdot w = \Delta^\ell(\xi_{q_2, j, x_2}) (w) = 1_{\chi_1} \otimes (\xi_{q_2, j, x_2} u_{-1, x_2}^{k'_{-1, 2}}\dots u_{m_2-1, x_2}^{k'_{m_2-1, 2}} 1_{\chi_2})
        \otimes \dots \otimes u_{-1, x_\ell}^{k'_{-1, \ell}}u_{0, x_\ell}^{k'_{0, \ell}}\dots u_{m_\ell-1, x_\ell}^{k'_{m_\ell-1, \ell}} 1_{\chi_\ell}.
    \end{align*}
    Let
    \begin{equation*}
        Y_2 = \xi_{q_2, m_2-1, x_2}^{k_{2, m_2-1}} \dots  \xi_{q_2, 0, x_2}^{k_{2, 0}} \tilde{u}_{q_2, n_2 -1 + k_{2, -1}, x_2}.
    \end{equation*}
    As in~\eqref{eq:theosimple2}, by Lemma~\ref{lem:xi0general} applied to $M_{\chi_2}$, $Y_2 w \neq 0$ if and only if $k'_{2, i} = k_{2, i}$ for $i=-1, \dots , m-1$. By Lemma~\ref{lem:cancelu-1} and~\ref{lem:xiui}, $Y_2 \cdot (Y_1 z) \neq 0$ and only contains monomials of the form $1_{\chi_1} \otimes 1_{\chi_2} \otimes u_{3, -1}^{k'_{3, -1}}\dots u_{3, m_3-1}^{k'_{3, m_3-1}} 1_{\chi_3} \otimes \dots  \otimes u_{-1, x_\ell}^{k'_{-1, \ell}}\dots u_{m_\ell-1, x_\ell}^{k'_{m_\ell-1, \ell}} 1_{\chi_\ell}$. We then repeat the process and define analogously $q_3, \dots, q_\ell$ and $Y_3, \dots, Y_\ell$ so that
    \begin{align*}
        Y_\ell \dots  Y_2 Y_1 \cdot z = C 1_{\chi_1} \otimes 1_{\chi_2} \otimes \dots  \otimes 1_{\chi_\ell},
    \end{align*}
    for $C \in \kk^\times$. Thus $M'_{\chi_1} \otimes \dots  \otimes M'_{\chi_\ell}$ is a simple $\Ua(\W{-1})$-module.
\end{proof}

\subsection{Primitive ideals from the orbit method}\label{subsec:primitive}
Let $\g$ be either $\W{-1}, W$ or $\Vir$. Let $\chi$ be a local function on $\g$ and adopt Notation~\ref{not:localfunc}. In this subsection, we give another characterization of the local representation $M'_\chi$, prove the converse of Theorem~\ref{theo:simplicitymulti}, and show that all $Q_\chi$ are indeed primitive.

\begin{lemma}\label{lem:maxideals}
    Let $\mathfrak{g}$ be any Lie algebra and $\chi \in \mathfrak{g}^\ast$. Let $J \coloneqq J_\chi$ be the left ideal of $\Ua(\g)$ generated by $g - \chi(g)$ for $g \in \g$, i.e., $J = \Ua(\mathfrak{g})\cdot(\{g - \chi(g)|g \in \mathfrak{g}\})$. Then
    \begin{align*}
        \Ua(\mathfrak{g})/J = 
        \begin{cases}
            \kk  & \text{ if } \chi|_{[\mathfrak{g}, \mathfrak{g}]}= 0,\\
            0 & \text{ otherwise}.
           \end{cases}
    \end{align*}
\end{lemma}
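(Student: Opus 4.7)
My plan is to view $\Ua(\g)/J$ as a cyclic left $\Ua(\g)$-module generated by the class $\bar 1$ of $1$, which by construction satisfies $g \cdot \bar 1 = \chi(g)\bar 1$ for every $g \in \g$. The two cases then split according to whether this assignment is consistent with the Lie bracket.

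\smallskip

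First I would treat the case $\chi|_{[\g,\g]} = 0$. Here $\chi$ is a Lie algebra character, so it extends uniquely to an algebra homomorphism $\Ua(\g) \to \kk$. Its kernel is a left ideal containing all $g - \chi(g)$, hence contains $J$, giving a surjection $\Ua(\g)/J \twoheadrightarrow \kk$. For the reverse inequality, I would show by induction on PBW-degree (using an ordered basis $\{g_i\}$ of $\g$) that every PBW monomial $\overline{g_{i_1}\cdots g_{i_k}}$ equals $\chi(g_{i_1})\cdots \chi(g_{i_k})\,\bar 1$ in the quotient; the induction step is immediate since $\overline{g_{i_1}\cdots g_{i_k}} = g_{i_1}\cdot\overline{g_{i_2}\cdots g_{i_k}}$ and the module action satisfies $g\cdot \bar 1 = \chi(g)\bar 1$. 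Since PBW monomials span $\Ua(\g)$, the quotient is at most one-dimensional, and the surjection then forces $\Ua(\g)/J \cong \kk$.

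\smallskip

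For the case $\chi|_{[\g,\g]} \neq 0$, I would choose $g,h \in \g$ with $\chi([g,h]) \neq 0$ and compute $(gh)\cdot \bar 1$ in two ways. On the one hand, iterating the module action gives
\[
(gh)\cdot \bar 1 = g\cdot(h\cdot \bar 1) = \chi(g)\chi(h)\bar 1, \qquad (hg)\cdot\bar 1 = \chi(h)\chi(g)\bar 1,
\]
so $(gh - hg)\cdot \bar 1 = 0$. On the other hand, $gh - hg = [g,h]$ in $\Ua(\g)$, and $[g,h] \in \g$ so $[g,h]\cdot\bar 1 = \chi([g,h])\bar 1$. Combining these forces $\chi([g,h])\bar 1 = 0$, and since $\chi([g,h]) \in \kk^\times$ this gives $\bar 1 = 0$; as $\bar 1$ generates the quotient, $\Ua(\g)/J = 0$.

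\smallskip

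The argument is essentially formal and contains no real obstacle; the only subtlety to be careful about is that $J$ is only a \emph{left} ideal, so one cannot freely replace $g$ by $\chi(g)$ inside an arbitrary product. This is handled in Case 1 by always peeling off generators from the left (so that the replacement occurs at the module level on $\bar 1$), and in Case 2 the identity $gh-hg = [g,h]$ holds in $\Ua(\g)$ itself, independent of any quotient, so the contradiction is derived purely at the level of a single action on $\bar 1$.
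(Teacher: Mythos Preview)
Your proof is correct and follows essentially the same approach as the paper: both arguments show that $\Ua(\g)/J$ is spanned by $\bar 1$ by successively reducing monomials to scalars, then split into the two cases, using the commutator relation $[g,h]$ to force $\bar 1 = 0$ when $\chi([g,h]) \neq 0$ and exhibiting the one-dimensional module $\kk_\chi$ otherwise. The only cosmetic differences are that the paper peels off factors from the right (using that $J$ is a left ideal directly) rather than from the left via the module action, and that the paper phrases Case~2 as $[g,h] = [g-\chi(g),\,h-\chi(h)] \in J$ rather than via computing $(gh-hg)\cdot\bar 1$.
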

\begin{proof}
    We first show that in both case, we have $J + \kk = \Ua(\mathfrak{g})$. Let us consider any arbitrary monomial $g_1 \dots g_k +J$, where $g_i \in \mathfrak{g}$. We have 
    \begin{align*}
        g_1 \dots g_k + J &= \chi(g_k) g_1 \dots g_{k-1} + J \quad \quad(\text{as } g_k - \chi(g_k) \in J)\\ 
        &= \dots = \prod_{i=1}^{k}\chi(g_i) + J. 
    \end{align*}
    As the monomials $g_1 \dots g_k$ for $g_i \in \mathfrak{g}$ span $\Ua(\mathfrak{g})$, we have $\Ua(\g) \subseteq J + \kk$, thus $J + \kk = \Ua(\mathfrak{g})$.

    Suppose there exist any $g, h \in \mathfrak{g}$ such that $\chi([g, h]) \neq 0$. Since $g - \chi(g)$ and $h - \chi(h) \in J$, we have $[g, h] = [g - \chi(g), h - \chi(h)] \in J$. As $[g, h] - \chi([g, h]) \in J$, thus $\chi([g, h]) \in J$, and $J = \Ua(\mathfrak{g})$. 

    Otherwise, if $\chi|_{[\mathfrak{g}, \mathfrak{g}]} = 0$, then $\chi$ is a character of $\mathfrak{g}$ and there exists a one-dimensional left $\Ua(\mathfrak{g})$-module $\kk_\chi$, where $g \cdot 1_\chi = \chi(g) 1_\chi$ for $g \in \mathfrak{g}$. Note that $J$ is exactly defined as the kernel of the left $\Ua(\mathfrak{g})$-action on $\kk_\chi$, so $\Ua(\mathfrak{g})/J \simeq \kk$.
\end{proof}

\begin{lemma}\label{lem:rewriterep}
    Let $\g$ be a Lie algebra and $\p \subseteq \g$ a subalgebra. Let $\epsilon$ be any one-dimensional representation $\kk_\epsilon$ of $\p$, $\{f_i\}_{i \in I}$ be a basis for $\p/[\p, \p]$, and $M_\epsilon$ be the induced module over $\Ua(\g)$, i.e., $M_\epsilon = \Ua(\g) \otimes_{\Ua(\p)} \kk_\epsilon$. Then we can write $M_\epsilon = \Ua(\g)/J$, where $J$ is a left-ideal of $\Ua(\g)$ generated by   
    \begin{align}\label{eq: generators}
        \{[\p, \p], f_i - \epsilon(f_i) | i \in I\}.
    \end{align}
\end{lemma}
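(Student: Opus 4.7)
The plan is to first identify $M_\epsilon$ with $\Ua(\g)/K$ for the ``obvious'' left ideal $K$ of relations, and then show this $K$ coincides with the smaller left ideal $J$ described in the statement.

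First I would note that, since $\epsilon$ is a one-dimensional representation of $\p$, it is automatically a character, so $\epsilon$ vanishes on $[\p,\p]$. By the previous lemma (Lemma~\ref{lem:maxideals}) applied to $\p$ in place of $\g$, the left ideal $I_\epsilon$ of $\Ua(\p)$ generated by $\{x - \epsilon(x) : x \in \p\}$ satisfies $\Ua(\p)/I_\epsilon \cong \kk_\epsilon$ as a left $\Ua(\p)$-module. Tensoring over $\Ua(\p)$ with $\Ua(\g)$ (which is free, hence flat, over $\Ua(\p)$ by PBW) then gives
\begin{equation*}
    M_\epsilon = \Ua(\g) \otimes_{\Ua(\p)} \kk_\epsilon \cong \Ua(\g)/K,
\end{equation*}
where $K = \Ua(\g) I_\epsilon$ is the left ideal of $\Ua(\g)$ generated by $\{x - \epsilon(x) : x \in \p\}$.

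Next I would prove that $K = J$, where $J$ is the left ideal of $\Ua(\g)$ generated by the set in~\eqref{eq: generators}. For $J \subseteq K$: each $f_i - \epsilon(f_i)$ is visibly in $K$ by definition; and for any $x, y \in \p$, the commutator $[x,y] = (x-\epsilon(x))(y-\epsilon(y)) - (y-\epsilon(y))(x-\epsilon(x))$ lies in $K$ because $K$ is a left ideal containing both factors on the right, so $[\p,\p] \subseteq K$. For the reverse inclusion $K \subseteq J$: given any $x \in \p$, decompose $x = \sum_i c_i f_i + y$ with $y \in [\p,\p]$, using that $\{f_i\}$ lifts a basis of $\p/[\p,\p]$. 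Since $\epsilon$ vanishes on $[\p,\p]$, we get $\epsilon(x) = \sum_i c_i \epsilon(f_i)$, hence
\begin{equation*}
    x - \epsilon(x) = \sum_i c_i \bigl(f_i - \epsilon(f_i)\bigr) + y \in J.
\end{equation*}
Since the $x - \epsilon(x)$ generate $K$ as a left ideal, we conclude $K \subseteq J$, giving $K = J$ and hence $M_\epsilon \cong \Ua(\g)/J$.

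There is no real obstacle here; the only mild subtlety is remembering to invoke that $\epsilon$, being one-dimensional, kills $[\p,\p]$, which is exactly what makes the smaller generating set in~\eqref{eq: generators} suffice. Everything else is bookkeeping with left ideals.
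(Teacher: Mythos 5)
Your proof is correct and takes essentially the same approach as the paper: both reduce to Lemma~\ref{lem:maxideals} to present $\kk_\epsilon$ as $\Ua(\p)$ modulo the left ideal generated by $\{x-\epsilon(x)\}$, identify $M_\epsilon$ with $\Ua(\g)$ modulo the extension of that ideal, and then match it with the ideal generated by~\eqref{eq: generators}. The only cosmetic difference is that you prove the final ideal equality by explicit double inclusion (via $[x,y]=(x-\epsilon(x))(y-\epsilon(y))-(y-\epsilon(y))(x-\epsilon(x))$ and the decomposition $\p=[\p,\p]\oplus\kk\{f_i\}$), whereas the paper deduces it from maximality of the smaller ideal.
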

\begin{proof}
    Write the one-dimensional representation  $\kk_\epsilon$ of $\Ua(\p)$ as $\Ua(\p)/K$, where $K$ is a maximal left ideal of $\Ua(\p)$. Then the induced module $M_\epsilon$ can be written as
    \begin{align*}
        M_\epsilon = \Ua(\g) \otimes_{\Ua(\p)} (\Ua(\p)/K) = \Ua(\g)/\Ua(\g)K.
    \end{align*}
    Thus, we can write $J$ as $\Ua(W)K$. Let $K'$ be the left ideal of $\Ua(\p)$ generated by~\eqref{eq: generators}. Let $1_\epsilon$ be a generator of $\kk_\epsilon$. As $[\p, \p]\cdot 1_\epsilon = 0$, hence $[\p, \p] \subset K$. Moreover, $f_i \cdot 1_\epsilon = \epsilon(f_i) 1_\chi$ for $i \in I$, thus $K \supseteq K'$. Applying Lemma~\ref{lem:maxideals} for $\epsilon \in \p^*$, $\epsilon|_{[\p, \p]} = 0$ and note that $\p = [\p, \p] \oplus \kk \{f_i | i \in I \}$, we see that $K'$ is maximal, hence $K = K'$. 
\end{proof}

The following lemma allows us to rewrite the canonical local representation $M'_\chi$ as an untwisted local representation $M_\eta$ (defined in \eqref{eq:untwistrep}) of a possible different local function $\eta$.  
\begin{proposition}\label{prop:untwistrephelp}
    Let $\g$ be $\W{-1}, W$ or $\Vir$ and let $\chi = \chi_{x; \alpha_0, \dots, \alpha_n}$ be a nonzero one-point local function on $\g$ with $\alpha_n \neq 0$. Let $m = \left\lfloor \frac{n}{2}\right\rfloor$, then 
    \begin{equation}
        M'_\chi \cong 
        \begin{cases}
            M_\chi & \text{ if } m > 0 \\ 
            M_{\chi_{x; \alpha_0, \alpha_1-\frac{1}{2}}} &\text{ if } m = 0.
        \end{cases}
    \end{equation}
\end{proposition}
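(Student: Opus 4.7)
The plan is to compute the one-dimensional twist $\rho_{\p_\chi, \g}$ explicitly on $\p_\chi$ and absorb it into the character. By Definition~\ref{def:orbitrep}, $M'_\chi = \Ua(\g) \otimes_{\Ua(\p_\chi)} \kk'_{\chi,\p_\chi,\g}$ with $\p_\chi$ acting on $1_\chi$ by $\chi + \rho_{\p_\chi, \g}$. Hence if I can find $\eta \in \g^*$ with $\p_\eta = \p_\chi$ and $\eta|_{\p_\chi} = (\chi + \rho_{\p_\chi, \g})|_{\p_\chi}$, then $\kk'_{\chi, \p_\chi, \g} \cong \kk_\eta$ as $\p_\chi$-modules and $M'_\chi \cong M_\eta$ by base change. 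The proposition asserts that $\eta = \chi$ works when $m > 0$ and $\eta = \chi_{x; \alpha_0, \alpha_1 - 1/2}$ (using the convention $\alpha_1 = 0$ if $n = 0$) works when $m = 0$; in the latter case the canonical polarization of $\eta$ is still $\g(t-x) = \p_\chi$, so no ambiguity arises in the induction step.

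For the computation, fix the basis $\{\del, (t-x)\del, \dots, (t-x)^m\del\}$ of $\g/\p_\chi$ (well-defined even for $\g = W$ since $\kk[t,t^{-1}]/(t-x)^{m+1}$ is local Artinian of length $m+1$ when $x \neq 0$). In the $\Vir$ case $z \in \p_\chi$ is central with $\ad(z) = 0$, so all trace computations reduce to $W$. When $m > 0$, I will extend Lemma~\ref{lem:rho0} from $\W{-1}$ to $W$ by essentially the same argument: for $f\del = (t-x)^{m+1}g\del \in \p_\chi$, direct bracket computations show $[f\del, (t-x)^j\del] \in \p_\chi$ for all $j \geq 1$, while $[f\del, \del] \equiv -(m+1)g(x)(t-x)^m\del \pmod{\p_\chi}$ contributes only to the $(t-x)^m\del$ entry and not to the $\del$ entry, since $m \geq 1$. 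So every diagonal entry of $\ad_{\g/\p_\chi}(f\del)$ is zero, giving $\rho_{\p_\chi, \g} \equiv 0$ on $\p_\chi$ and therefore $M'_\chi \cong M_\chi$.

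The remaining case $m = 0$ is a direct calculation: here $\g/\p_\chi = \kk\del$, and for $(t-x)q\del \in \p_\chi$,
\[
    [(t-x)q\del, \del] = -\bigl(q + (t-x)q'\bigr)\del \equiv -q(x)\del \pmod{\p_\chi},
\]
so $\rho_{\p_\chi, \g}((t-x)q\del) = -\tfrac{1}{2} q(x)$. On the other hand $\chi((t-x)q\del) = \alpha_1 q(x)$ because the $\alpha_0$-term vanishes and $((t-x)q)'(x) = q(x)$, so the twisted action reads $(t-x)q\del \cdot 1_\chi = (\alpha_1 - \tfrac{1}{2}) q(x) \cdot 1_\chi$, exactly matching $\chi_{x; \alpha_0, \alpha_1 - 1/2}((t-x)q\del)$. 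The main obstacle is essentially bookkeeping: verifying that the Laurent-polynomial calculations for $W$ and $\Vir$ reduce cleanly to the polynomial case covered by Lemma~\ref{lem:rho0}, which is guaranteed by Taylor expansion of $f \in \kk[t, t^{-1}]$ at $x \neq 0$, and checking that the induced modules really coincide once the characters of $\p_\chi$ agree (immediate from base change).
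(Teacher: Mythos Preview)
Your proposal is correct and follows essentially the same route as the paper: compute the twist $\rho_{\p_\chi,\g}$ directly, show it vanishes when $m>0$ and equals $-\tfrac12 q(x)$ on $(t-x)q\del$ when $m=0$, and conclude by matching the one-dimensional $\p_\chi$-representations. The paper invokes Lemma~\ref{lem:rho0} (stated only for $\W{-1}$) and then Lemma~\ref{lem:rewriterep} to identify the induced modules via their quotient presentations; you instead re-do the trace computation in the generality of $W$ and $\Vir$ and appeal directly to base change once the characters agree, which is slightly cleaner but not materially different.
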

\begin{proof}
    If $m >0$, then $n>1$ and by Lemma~\ref{lem:rho0}, $\rho_{\p_\chi, \W{-1}}((t-x)^k h \del) = 0$ for $k \geq m+1$. Thus for the twisted one-dimensional representation $\kk'_{\chi, \p, \g}$ defined in \eqref{eq:twistedone}, we have $f\del \cdot 1_\chi = \chi(f\del)$ for $f\del \in \p_\chi$. Then by Lemma~\ref{lem:rewriterep}, we have 
    \begin{equation*}
        M'_\chi \cong \Ua(\g)/\Ua(\g) \bigl([\p_\chi, \p_\chi], u_i - \chi(u_i) | i =m, \dots, 2m-1 \bigr) = M_\chi.
    \end{equation*}

    On the other hand, if $n = 0$ or $1$, then by~\eqref{eq:espu}, we have $\rho_{\p, \g}((t-x)q\del) = -\frac{1}{2}q(x)$. Thus, by Lemma~\ref{lem:rewriterep}, we have for $(\alpha_0, \alpha_1) \neq (0, 0)$
    \begin{align*}
        M'_{\chi_{x; \alpha_0, \alpha_1}} \cong \Ua(\g)/\Ua(\g)([\p_\chi, \p_\chi], u_0 - \alpha_1+\frac{1}{2}) \cong M_{\chi_{x; \alpha_0, \alpha_1-\frac{1}{2}}}.
    \end{align*}
\end{proof}

\begin{notation}\label{not:chi'}
    Let $\g$ be $\W{-1}, W$ or $\Vir$ and let $\chi = \chi_1 + \dots + \chi_\ell$ be a local function on $\g$ following Notation~\ref{not:localfunc}. Suppose $\chi_d = \chi_{x_d, \alpha_{0, d}, ..., \alpha_{d, n_d}}$, we define 
    \begin{align*}
        \chi'_d = \begin{cases}
            \chi_d & \text{ if } m_d > 0 \\
            \chi_{x_d, \alpha_{0, d}, \alpha_{1, d}-\frac{1}{2}} & \text{ if } m_d = 0.
        \end{cases}
    \end{align*}
    We then define $\chi' = \chi'_1 + \dots + \chi'_d$, and called it the \emph{twisted local function of} $\chi$. This notation is chosen so that $M'_\chi \cong M_{\chi'}$ (cf. Proposition~{prop:untwistrep}).  
\end{notation}

The following lemma shows that the twisted local function $\chi'$ defined above it lies in the same pseudo-orbit of $\chi$ (Definition~\ref{def:psorbit}), and thus changing these local functions does not affect our strong Dixmier map $\widetilde{\Dx}^\g$. 

\begin{lemma}\label{lem:obritchi'}
    Let $\g$ be $W, \W{-1}$ or $\Vir$ and let $\chi, \eta$ be local functions on $\g$. Then
    \begin{equation}
        \orbit{\chi} = \orbit{\eta} \Leftrightarrow \orbit{\chi'} = \orbit{\eta'}
    \end{equation} 
\end{lemma}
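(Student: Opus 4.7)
\emph{Plan.} The strategy is to reduce to the one-point case via the pseudo-orbit decomposition from \cite{petukhov2022poisson}*{Theorem 4.3.1}, and then invoke the explicit classification of one-point pseudo-orbits from \cite{petukhov2022poisson}*{Theorem 4.2.1}. By the former, $\orbit{\chi} = \orbit{\eta}$ iff, after a suitable reordering of the indices, $\orbit{\chi_d} = \orbit{\eta_d}$ for each one-point component. Since the map $\chi \mapsto \chi'$ in Notation~\ref{not:chi'} is defined componentwise on the one-point decomposition (in particular, preserving base points), the same decomposition applied to $\chi'$ and $\eta'$ reduces the claim to the one-point statement: for each $d$, $\orbit{\chi_d} = \orbit{\eta_d}$ iff $\orbit{\chi'_d} = \orbit{\eta'_d}$. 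Moreover, because the one-point pseudo-orbit has dimension $2m_d+2$, the value of $m_d$ is an invariant of the orbit, so the case split below is unambiguous.

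The case $m_d \geq 1$ is immediate: by Notation~\ref{not:chi'} we have $\chi'_d = \chi_d$ and $\eta'_d = \eta_d$, so the equivalence is trivial.

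The essential case is $m_d = 0$. Parts (1) and (2) of \cite{petukhov2022poisson}*{Theorem 4.2.1} give an explicit classification in this range: the pseudo-orbit of a one-point local function $\chi_{x;\alpha_0,\alpha_1}$ with $m=0$ depends only on the coefficient $\alpha_1 \in \kk$ (independently of the base point $x$ and of $\alpha_0$): $\alpha_1 = 0$ (the order-$0$ case, with $\alpha_0 \neq 0$) yields the orbit $\orbit{e^*_{-1}}$, while $\alpha_1 \neq 0$ yields $\orbit{\alpha_1 e^*_0}$, and distinct values of $\alpha_1$ give distinct pseudo-orbits. The twist $\chi_d \mapsto \chi'_d$ leaves $x$ and $\alpha_0$ fixed and shifts the coefficient $\alpha_1$ by $-\tfrac{1}{2}$, which is a bijection of $\kk$. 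Consequently $\orbit{\chi_d} = \orbit{\eta_d}$ iff the $\alpha_1$-coefficients of $\chi_d, \eta_d$ agree, iff their shifts by $-\tfrac{1}{2}$ agree, iff $\orbit{\chi'_d} = \orbit{\eta'_d}$. The argument is uniform for $\W{-1}, W$ and $\Vir$ because the classifications in \cite{petukhov2022poisson} are stated identically in all three cases (and the $\Vir$ case can be pulled back to $W$ via $\Gamma$).

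The main obstacle is the careful handling of the boundary $\alpha_1 = \tfrac{1}{2}$ in the $m_d = 0$ case: the twist can change the order of a component (from $0$ to $1$ or vice versa), and at the extreme point $\chi_d = \chi_{x_d; 0, 1/2}$ the formula gives $\chi'_d = 0$. One has to ensure that the parametrization by $\alpha_1 \in \kk$ extends coherently over this boundary, so that the natural identification of the $\alpha_1 = 0$ limit with the order-$0$ orbit $\orbit{e^*_{-1}}$ is compatible with the overall equivalence; once this unification is in place, the bijectivity of the affine shift $\alpha_1 \mapsto \alpha_1 - \tfrac{1}{2}$ closes the argument.
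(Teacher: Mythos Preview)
Your approach matches the paper's: reduce to one-point components via \cite{petukhov2022poisson}*{Theorem 4.3.1}, then for the only nontrivial case $m_d=0$ invoke the classification of \cite{petukhov2022poisson}*{Theorem 4.2.1} and observe that the twist acts as the bijection $\alpha_1 \mapsto \alpha_1 - \tfrac12$ on the orbit parameter. The paper's two-line proof cites exactly these two results and offers no further detail.

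The boundary concern you flag is genuine and is not resolved either by you or by the paper. Your claim that the parametrization extends coherently through $\chi'_d = 0$ does not quite hold: take $\chi_d = \chi_{x;0,1/2}$ and $\eta_d = \chi_{y;1,1/2}$. Then $\orbit{\chi_d} = \orbit{\eta_d} = \orbit{\tfrac12 e^*_0}$, but $\chi'_d = 0$ while $\eta'_d = \chi_{y;1}$ is a nonzero order-$0$ local function. The augmentation ideal $\mf{m}_0$ of $\Sa(\g)$ is already a Poisson ideal, so $P(0) = \mf{m}_0$ and $\dim \orbit{0} = 0$, whereas $\dim \orbit{\eta'_d} = 2$; hence $\orbit{\chi'_d} \neq \orbit{\eta'_d}$, and the forward implication fails at this single point. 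This does not affect any application in the paper: within $\orbit{\chi_d}$ one may always choose a representative with $\alpha_0 \neq 0$ before twisting, and in any case Proposition~\ref{prop:primreplace} handles the $\alpha_1 = \tfrac12$ orbit directly at the level of the $Q_\chi$.
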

\begin{proof}
    By ~\cite{petukhov2022poisson}*{Theorem 4.3.1}, we reduce to the case of one-point local functions, i.e., it is equivalent to show that $\orbit{\chi_i} = \orbit{\eta_i} \Leftrightarrow \orbit{\chi'_i} = \orbit{\eta'_i}$. Then the statement follows immediately from the definition of $\chi'_i$ and $\eta'_i$, and ~\cite{petukhov2022poisson}*{Theorem 4.2.1 c'}. 
\end{proof}

The next proposition can be rephrased as the ``preservation of prime/twist'', i.e., we can rewrite the canonical local representation $M'_\chi$ (the induced module from a one-dimesional twist $\kk'_{\chi}$) as the untwisted canonical local representation $M_{\chi'}$ (the induced module from the twisted local function $\chi'$). This also explains our choice of Notation~\ref{not:chi'}.
\begin{proposition}\label{prop:untwistrep}
    Let $\g$ be $\W{-1}, W$ or $\Vir$ and let $\chi = \chi_1 + \dots + \chi_\ell$ be a local function on $\g$ following Notation~\ref{not:localfunc}. Let $\chi' = \chi'_1 + \dots + \chi'_\ell$ be defined as in Notation~\ref{not:chi'}. Then the canonical local representation $M'_\chi$ is isomorphic to $M_{\chi'}$.
\end{proposition}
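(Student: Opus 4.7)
The strategy is to reduce the multi-point statement to the one-point result of Proposition~\ref{prop:untwistrephelp}, sandwiched between two tensor product decompositions. First, by Proposition~\ref{prop:tensor1point}, we have $M'_\chi \cong M'_{\chi_1} \otimes \cdots \otimes M'_{\chi_\ell}$. Applying Proposition~\ref{prop:untwistrephelp} to each tensor factor then produces $M'_{\chi_i} \cong M_{\chi'_i}$, with the two cases $m_i > 0$ and $m_i = 0$ being precisely those that motivated Notation~\ref{not:chi'} (so that $\chi'_i = \chi_i$ in the first case and $\chi'_i = \chi_{x_i;\alpha_{0,i},\alpha_{1,i}-1/2}$ in the second). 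Hence
\[
M'_\chi \;\cong\; M_{\chi'_1} \otimes \cdots \otimes M_{\chi'_\ell}.
\]

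It remains to identify the right-hand side with $M_{\chi'}$. For this I would invoke the same tensor product decomposition used in Proposition~\ref{prop:tensor1point}, ultimately \cite{ondrus2018vir}*{Proposition 6.1}: its hypothesis concerns only the canonical polarizations of the summands, not the specific one-dimensional character, so it applies equally to the untwisted modules $M_{\chi'_i}$. The key input to verify is that $\chi'$ has the same canonical polarization as $\chi$. Since $\chi'_i$ shares the base point $x_i$ and the order of $\chi_i$ (using the convention that an order-$0$ local function is treated as order $1$, so $m_i$ is unchanged even when $\alpha_{1,i} = \tfrac{1}{2}$), we have $\p_{\chi'_i} = \p_{\chi_i}$ for each $i$, and consequently $\p_{\chi'} = \bigcap_i \p_{\chi'_i} = \bigcap_i \p_{\chi_i} = \p_\chi$. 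Concatenating the three isomorphisms then yields $M'_\chi \cong M_{\chi'}$.

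The main obstacle is bookkeeping around the case distinction in Notation~\ref{not:chi'} and confirming that the cited tensor product decomposition really does apply to untwisted induced modules. If the latter is not immediate from \cite{ondrus2018vir}, a fallback is to use Lemma~\ref{lem:rewriterep} to present both $M'_\chi$ and $M_{\chi'}$ as explicit quotients $\Ua(\g)/J$ and $\Ua(\g)/J'$ and check that the two defining left ideals coincide by comparing generators on the common polarization $\p_\chi = \p_{\chi'}$; this reduces the identification to the one-point computations already carried out in Proposition~\ref{prop:untwistrephelp} together with the fact that $\chi'$ restricted to each $\p_{\chi'_i}$ agrees with $\chi'_i$.
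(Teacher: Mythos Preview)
Your proposal is correct and matches the paper's proof essentially step for step: decompose $M'_\chi$ via Proposition~\ref{prop:tensor1point}, apply Proposition~\ref{prop:untwistrephelp} factorwise, and then use the analogous tensor decomposition for the untwisted modules $M_{\chi'_i}$ to reassemble $M_{\chi'}$. The paper disposes of the last step with the one-line remark that ``a similar proof to Proposition~\ref{prop:tensor1point}'' gives $M_{\chi'} \cong M_{\chi'_1}\otimes\cdots\otimes M_{\chi'_\ell}$, so your extra justification that $\p_{\chi'}=\p_\chi$ and your fallback via Lemma~\ref{lem:rewriterep} are more than is needed, but not wrong.
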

\begin{proof}
    By Proposition~\ref{prop:tensor1point}, we have $M'_\chi \cong M'_{\chi_1} \otimes \dots \otimes M'_{\chi_\ell}$.
    A similar proof to Proposition~\ref{prop:tensor1point} for $M_{\chi'}$ yields $M_{\chi'} \cong M_{\chi'_1} \otimes \dots \otimes M_{\chi'_\ell}$. By Proposition~\ref{prop:untwistrephelp} $M'_{\chi_d} \cong M_{\chi'_d}$, we obtain the desired isomorphism. 
\end{proof}
With that, \cite{petukhov2022poisson}*{Proposition 7.1.5} can now be rephrased as the inverse of Theorem \ref{theo:simplicitymulti}.
\begin{corollary}\label{cor:simplicityconverse}
    Let $\g$ be $\W{-1}, W$ or $\Vir$ and let $\chi = \chi_1 + \dots + \chi_\ell$ be a local function on $\g$ following Notation~\ref{not:localfunc}. Then $M'_\chi$
    is an irreducible $\g$-module if and only if its component$\chi_d \neq \chi_{x_d; \alpha_0, \frac{1}{2}}$ for all $d$. 
\end{corollary}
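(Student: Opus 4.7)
The plan is to prove both directions by combining Theorem~\ref{theo:simplicitymulti} with the ``twist-to-prime'' translation given by Proposition~\ref{prop:untwistrep}, invoking~\cite{petukhov2022poisson}*{Proposition 7.1.5} as the necessity criterion on the untwisted side.

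For the sufficiency direction (every $\chi_d \neq \chi_{x_d; \alpha_0, \frac{1}{2}}$ implies $M'_\chi$ is simple), I would simply cite Theorem~\ref{theo:simplicitymulti}, since this is exactly what was proved there. The work is all in the converse direction, so I would assume that there exists some index $d_0$ with $\chi_{d_0} = \chi_{x_{d_0}; \alpha_0, \frac{1}{2}}$ and show that $M'_\chi$ is reducible.

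For the converse, I would first apply Proposition~\ref{prop:untwistrep} to rewrite $M'_\chi \cong M_{\chi'}$, where $\chi' = \chi'_1 + \dots + \chi'_\ell$ is the twisted local function from Notation~\ref{not:chi'}. By definition of $\chi'_{d_0}$, the hypothesis $\chi_{d_0} = \chi_{x_{d_0}; \alpha_0, \frac{1}{2}}$ translates into $m_{d_0}=0$ and $\chi'_{d_0} = \chi_{x_{d_0}; \alpha_0, 0}$, i.e., $\chi'_{d_0}$ is an order-zero one-point local function. I would then invoke~\cite{petukhov2022poisson}*{Proposition 7.1.5}, which gives reducibility of the untwisted induced module $M_{\chi'}$ precisely when some component has this degenerate form; pulling this back through the isomorphism $M'_\chi \cong M_{\chi'}$ yields a proper nonzero submodule of $M'_\chi$.

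The only subtle point in this outline is making sure the boundary case between ``order $n=0$'' and ``order $n=1$'' is treated consistently, since in Notation~\ref{not:chi'} the order-$0$ functions are folded into order-$1$ by setting the leading coefficient to zero, and the shift $\alpha_1 \mapsto \alpha_1 - \tfrac{1}{2}$ is what exchanges the distinguished value $\frac{1}{2}$ with the degenerate value $0$. I expect this bookkeeping to be the main (albeit mild) obstacle; once it is spelled out, the two-line argument above completes the corollary.
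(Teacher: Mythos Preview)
Your proposal is correct and follows essentially the same approach as the paper: both directions are handled by citing Theorem~\ref{theo:simplicitymulti} for sufficiency and, for necessity, translating $M'_\chi$ to an untwisted module via the $\chi\mapsto\chi'$ device and invoking \cite{petukhov2022poisson}*{Proposition~7.1.5}. The only cosmetic difference is that the paper isolates a single bad tensor factor $M'_{\chi_{d_0}}\cong M_{\chi_{x_{d_0};\alpha_0,0}}$ (via Proposition~\ref{prop:untwistrephelp}), applies the external citation to that one-point module, and then uses the tensor decomposition of Proposition~\ref{prop:tensor1point} to conclude reducibility of $M'_\chi$, whereas you apply Proposition~\ref{prop:untwistrep} to the full multi-point $\chi$ at once.
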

\begin{proof}
We only need to show the converse of Theorem~\ref{theo:simplicitymulti}. If $\chi_d = \chi_{x_d; \alpha_0, \frac{1}{2}}$, then $M'_{\chi_d} \cong M_{\chi_{\alpha_0, 0}}$, and from ~\cite{petukhov2022poisson}*{Proposition 7.1.5}, this is not a simple $\Ua(\g)$-module. So $M_\chi$ is not an irreducible representation of $\g$. 
\end{proof}

We will also show that $\Ann_{\Ua(\g)} M'_\chi$ is primitive even if Theorem~\ref{theo:simplicitymulti} does not apply. This generalizes ~\cite{conley2007}*{Theorem 1.2.}, where it is shown (in our notation) that 
\begin{equation}\label{eq:primreplace}
    \Ann_{\Ua(\Vir)} M'_{\chi_{x; \alpha, \frac{1}{2}}} = \Ann_{\Ua(\Vir)} M'_{\chi_{x; \beta, -\frac{1}{2}}} \text{ for all } \alpha, \beta \in \kk, 
\end{equation}
so $\Ann_{\Ua(\Vir)} M'_{\chi_{x; \alpha,\frac{1}{2}}}$ is still a primitive ideal of $\Ua(\Vir)$. In particular, we show that all annihilators of local representations are primitive ideals.

\begin{proposition}\label{prop:primreplace}
    Let $\g = \W{-1}, W$ or $\Vir$ and let $\hat{\chi}$ be a local function on $\g$. Let $x \in \kk \backslash \supp(\hat{\chi})$,  $\alpha, \beta \in \kk$, define $\chi = \chi_{x; \alpha, \frac{1}{2}} + \hat{\chi}$ and $\eta = \chi_{x; \beta, -\frac{1}{2}} + \hat{\chi}$. Then $Q_\chi = Q_\eta$.
\end{proposition}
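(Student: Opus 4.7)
The plan is to factor $M'_\chi$ and $M'_\eta$ using Proposition~\ref{prop:tensor1point} so that only the order-$1$ factor based at $x$ differs between them, and then reduce to the one-point case \eqref{eq:primreplace} established in \cite{conley2007}*{Theorem 1.2.}. Since $x \notin \supp(\hat\chi)$, Proposition~\ref{prop:tensor1point} yields
\[
M'_\chi \cong M'_{\chi_{x;\alpha,\frac{1}{2}}} \otimes M'_{\hat\chi}, \qquad M'_\eta \cong M'_{\chi_{x;\beta,-\frac{1}{2}}} \otimes M'_{\hat\chi},
\]
where $M'_{\hat\chi}$ denotes the tensor product of the canonical local representations of the one-point components of $\hat\chi$. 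So it suffices to prove two claims: (a) a tensoring principle asserting that if $M_1, M_2, N$ are $\g$-modules with $\Ann_{\Ua(\g)} M_1 = \Ann_{\Ua(\g)} M_2$, then $\Ann_{\Ua(\g)}(M_1 \otimes N) = \Ann_{\Ua(\g)}(M_2 \otimes N)$; and (b) the base equality $\Ann_{\Ua(\g)} M'_{\chi_{x;\alpha,\frac{1}{2}}} = \Ann_{\Ua(\g)} M'_{\chi_{x;\beta,-\frac{1}{2}}}$.

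For (a), the $\g$-action on $M \otimes N$ factors through the coproduct $\Delta : \Ua(\g) \to \Ua(\g) \otimes \Ua(\g)$. Because $\kk$ is a field, the natural maps
\[
\Ua(\g)/\Ann M \,\otimes\, \Ua(\g)/\Ann N \hookrightarrow \End M \otimes \End N \hookrightarrow \End(M \otimes N)
\]
are all injective, so the kernel of $\Ua(\g) \otimes \Ua(\g) \to \End(M \otimes N)$ equals $\Ann M \otimes \Ua(\g) + \Ua(\g) \otimes \Ann N$. Hence
\[
\Ann_{\Ua(\g)}(M \otimes N) = \Delta^{-1}\bigl(\Ann M \otimes \Ua(\g) + \Ua(\g) \otimes \Ann N\bigr),
\]
which depends on $M$ only through $\Ann M$.

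For (b), \eqref{eq:primreplace} gives the equality directly for $\g = \Vir$. For $\g = W$, Proposition~\ref{prop:repWVir} identifies $M'_{\chi_{x;\alpha,\frac{1}{2}}}$ with the restriction along $\Gamma : \Vir \twoheadrightarrow W$ of its $\Vir$-lift, so the kernel $(z)$ of $\Ua(\Vir) \twoheadrightarrow \Ua(W)$ is contained in its $\Vir$-annihilator, and quotienting by $(z)$ transfers the equality from $\Ua(\Vir)$ to $\Ua(W)$. For $\g = \W{-1}$, Proposition~\ref{prop:repWW-1} identifies $M'_{\chi_{x;\alpha,\frac{1}{2}}}|_{\W{-1}}$ with the restriction along $\iota : \Ua(\W{-1}) \hookrightarrow \Ua(W)$, so its annihilator is $\Ua(\W{-1}) \cap \Ann_{\Ua(W)} M'_{\chi_{x;\alpha,\frac{1}{2}}}$, and the equality in $\Ua(W)$ transfers by intersection.

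I expect the main subtlety to be the verification of (a): although the tensor-product annihilator identity is standard, it rests on the three injections in the chain above, and in particular on $\kk$ being a field (which guarantees $\End M \otimes \End N \hookrightarrow \End(M \otimes N)$ for arbitrary vector spaces $M, N$). Everything else is a clean reduction via the already-established functorial properties of canonical local representations.
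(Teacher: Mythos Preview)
Your proof is correct and follows essentially the same route as the paper: decompose $M'_\chi$ and $M'_\eta$ via Proposition~\ref{prop:tensor1point}, compute the annihilator of a tensor product as $\Delta^{-1}(\Ann M \otimes \Ua(\g) + \Ua(\g) \otimes \Ann N)$, and invoke the one-point equality~\eqref{eq:primreplace}. The only differences are cosmetic: the paper cites \cite{bergman2016tensor}*{Theorem 1} for the tensor-kernel identity where you argue it directly, and you spell out the descent of~\eqref{eq:primreplace} from $\Vir$ to $W$ and $\W{-1}$ via Propositions~\ref{prop:repWVir} and~\ref{prop:repWW-1}, which the paper leaves implicit.
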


\begin{proof}
    Note that $Q_\chi = \ker \varrho_{\chi}$ where $\varrho_{\chi}: \Ua(\g) \to \End(M'_\chi)$ is given by the action of $\Ua(\g)$ on $M'_\chi$. By Proposition~\ref{prop:tensor1point}, $M'_\chi = M'_{\chi_{x; \alpha, \frac{1}{2}}} \otimes M'_{\hat{\chi}}$, we can write $\varrho_{\chi}$ as follows:
    \begin{align*}
        \varrho_{\chi}: \Ua(\g) \xrightarrow{\Delta} \Ua(\g) \otimes \Ua(\g) \xrightarrow{\varrho_{{\chi_{x; \alpha, \frac{1}{2}}}} \otimes \varrho_{{\hat{\chi}}}} \End(M'_{\chi_{x; \alpha, \frac{1}{2}}} \otimes M'_{\hat{\chi}}).
    \end{align*}
    Thus by ~\cite{bergman2016tensor}*{Theorem 1.}
    \begin{align*}
        Q_\chi &= \ker \varrho_{\chi} = \Delta^{-1}(\ker (\varrho_{{\chi_{x; \alpha, \frac{1}{2}}}} \otimes \varrho_{{\hat{\chi}}})) = \Delta^{-1}(\Ua(\g) \otimes \ker \varrho_{{\hat{\chi}}} + \ker \varrho_{{\chi_{x; \alpha, \frac{1}{2}}}} \otimes \Ua(\g)) \\ 
        &= \Delta^{-1}(\Ua(\g) \otimes Q_{\hat{\chi}}  +  Q_{\chi_{x; \alpha, \frac{1}{2}}} \otimes \Ua(\g)).
    \end{align*}
    and likewise for $M'_\eta$, we have:
    \begin{align*}
        Q_\eta = \ker \varrho_{\eta} = \Delta^{-1}(\Ua(\g) \otimes Q_{\hat{\chi}}  +  Q_{\chi_{x; \beta, -\frac{1}{2}}} \otimes \Ua(\g)).
    \end{align*}
    Since $Q_{\chi_{x; \alpha, \frac{1}{2}}}= Q_{\chi_{x; \beta, -\frac{1}{2}}}$ by \eqref{eq:primreplace}, thus $Q_\chi = Q_\eta$.
\end{proof}

Putting together Corollary \ref{cor:simplicityconverse} and Proposition \ref{prop:primreplace}, we obtain the following main theorem and corollary of the section. 
\begin{theorem}\label{theo: primitive ideals}
    Let $\g = \W{-1}, W$ or $\Vir$ and let $\chi$ be a local function on $\g$. Then $Q_\chi$ is a primitive ideal of $\Ua(\g)$.
    \qed
\end{theorem}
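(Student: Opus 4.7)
The plan is to reduce to the case where the canonical local representation $M'_\chi$ is irreducible, and then invoke the definition of a primitive ideal. Two ingredients already established in the excerpt do almost all the work: Corollary~\ref{cor:simplicityconverse} tells us exactly which $M'_\chi$ fail to be simple (namely when some one-point component has the shape $\chi_{x_d;\alpha,\frac{1}{2}}$), and Proposition~\ref{prop:primreplace} shows that such a ``bad'' component can be swapped for $\chi_{x_d;\beta,-\frac{1}{2}}$ without changing $Q_\chi$. Combining these two facts eliminates every exceptional component at the cost of moving inside a single fibre of $\chi \mapsto Q_\chi$.

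Concretely, write $\chi = \chi_1 + \dots + \chi_\ell$ following Notation~\ref{not:localfunc} and let $S = \{d : \chi_d = \chi_{x_d;\alpha_{0,d},\frac{1}{2}}\}$ be the set of exceptional indices. If $S = \emptyset$, then Corollary~\ref{cor:simplicityconverse} says $M'_\chi$ is a simple $\Ua(\g)$-module, so $Q_\chi = \Ann_{\Ua(\g)} M'_\chi$ is primitive by definition and we are done.

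If $S \neq \emptyset$, proceed by induction on $|S|$. Choose $d \in S$ and split $\chi = \chi_d + \hat{\chi}$ with $\hat{\chi} = \sum_{d'\neq d}\chi_{d'}$. Since the base points in Notation~\ref{not:localfunc} are pairwise distinct, $x_d \notin \supp(\hat{\chi})$, so Proposition~\ref{prop:primreplace} applies and yields $Q_\chi = Q_\eta$ for $\eta = \chi_{x_d;\alpha_{0,d},-\frac{1}{2}} + \hat{\chi}$. The replaced component now has $\alpha_1 = -\frac{1}{2} \neq \frac{1}{2}$, so $\eta$ has strictly fewer exceptional components than $\chi$, while the set of base points, and hence the pairwise-distinctness hypothesis needed for future applications of Proposition~\ref{prop:primreplace}, is preserved. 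Iterating produces, after finitely many steps, a local function $\chi^{\flat}$ with empty exceptional set satisfying $Q_\chi = Q_{\chi^{\flat}}$. By Corollary~\ref{cor:simplicityconverse}, $M'_{\chi^{\flat}}$ is simple, so $Q_{\chi^{\flat}}$ is primitive, and therefore so is $Q_\chi$.

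The argument is essentially bookkeeping once Corollary~\ref{cor:simplicityconverse} and Proposition~\ref{prop:primreplace} are in hand; the only subtlety I would be careful about is ensuring that each invocation of Proposition~\ref{prop:primreplace} still has its disjoint-support hypothesis satisfied after previous substitutions, which is immediate because the substitution keeps the base point $x_d$ fixed and only alters the coefficients at that base point.
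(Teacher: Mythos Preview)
Your proof is correct and follows exactly the approach the paper intends: the paper's own proof is simply the phrase ``Putting together Corollary~\ref{cor:simplicityconverse} and Proposition~\ref{prop:primreplace}'' with a \qed, and your argument spells out precisely how that combination works via induction on the number of exceptional components.
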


\begin{corollary}
    Let $\g$ be as above. If $\g = W$ or $\g = \W{-1}$, let $z = 0$. Then the following map is well-defined. 
    \begin{equation}\label{eq:Dixmier2}
        \Dx^\g: \g^* \to \Prim \Ua(\g); \quad 
        \chi \text{ local } \mapsto Q_\chi,  \quad 
        \chi \text{ not local } \mapsto (z-\chi(z)).
    \end{equation}
    \qed
\end{corollary}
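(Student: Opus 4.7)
The corollary amounts to checking that each of the two clauses in the definition of $\Dx^\g$ lands in $\Prim \Ua(\g)$, and that the two clauses are mutually exclusive, i.e.\ there is no $\chi \in \g^*$ that is simultaneously ``local'' and ``not local.'' The mutual exclusivity is immediate from Definition~\ref{def: local function}, and for local $\chi$ the fact that $Q_\chi \in \Prim \Ua(\g)$ is exactly the content of Theorem~\ref{theo: primitive ideals}, so the only remaining point is to show that when $\chi$ is not local, the ideal $(z-\chi(z))$ is primitive in $\Ua(\g)$.

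For the non-local branch, I would distinguish the two cases. If $\g = W$ or $\g = \W{-1}$, then by convention $z = 0$ and so $(z-\chi(z)) = (0)$; this is primitive in $\Ua(W)$ and in $\Ua(\W{-1})$ by the theorem of Wallach and Mathieu recorded as \cite{iyudu2020enveloping}*{Theorem 1.4}, which asserts that the zero ideal of $\Ua(W)$ (and hence also of $\Ua(\W{-1})$, into which $\Ua(W)$ does not embed but for which the same theorem applies) is primitive. If $\g = \Vir$, then $\chi(z) \in \kk$ is a scalar, and by the same theorem of Wallach and Mathieu, $(z - \chi(z))$ is a primitive ideal of $\Ua(\Vir)$: indeed the quotient $\Ua(\Vir)/(z-\chi(z)) \cong \Ua(W)$ admits a faithful simple module pulled back from the faithful simple $\Ua(W)$-module guaranteed by \cite{iyudu2020enveloping}*{Theorem 1.4}.

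Assembling: for any $\chi \in \g^*$ exactly one of the two defining clauses applies, and in each case the assigned ideal lies in $\Prim \Ua(\g)$. Hence $\Dx^\g$ is a well-defined map $\g^* \to \Prim \Ua(\g)$.

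There is no real obstacle here; the work has all been done in Theorem~\ref{theo: primitive ideals} and in the cited results of Wallach--Mathieu. The only thing to be a little careful about is the case $\g = \Vir$, where one should confirm that $(z-\chi(z))$ (rather than just $(z)$) is primitive for every scalar value $\chi(z)$; this follows because the automorphism of $\Ua(\Vir)$ sending $z \mapsto z + \chi(z)$ and fixing $W$ intertwines $(z)$ with $(z-\chi(z))$, so primitivity is preserved.
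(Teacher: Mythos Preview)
Your overall structure matches the paper's: invoke Theorem~\ref{theo: primitive ideals} for local $\chi$, and cite the Wallach--Mathieu result via \cite{iyudu2020enveloping}*{Theorem 1.4} for the non-local branch. The paper gives no proof beyond the \qed, relying on exactly these two ingredients (the second stated at the start of Section~\ref{sec: representation}).

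However, your justification in the $\Vir$ case with $\chi(z) \neq 0$ is incorrect. The quotient $\Ua(\Vir)/(z - \chi(z))$ is \emph{not} isomorphic to $\Ua(W)$ unless $\chi(z) = 0$: the central element $z$ appears in the Virasoro commutation relations $[e_m, e_n] = (n-m)e_{m+n} + \frac{m^3-m}{12}\delta_{m+n,0}\, z$, so setting $z$ to a nonzero scalar gives a genuinely different algebra. For the same reason, the map $z \mapsto z + \chi(z)$, $e_i \mapsto e_i$ that you propose in the last paragraph is \emph{not} an automorphism of $\Ua(\Vir)$: applying it to both sides of the commutation relation above breaks the equality whenever $m + n = 0$ and $m \neq 0, \pm 1$. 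The central charge is an honest invariant of these quotients.

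The fix is simply to cite \cite{iyudu2020enveloping}*{Theorem 1.4} directly for the statement that $(z - c)$ is primitive in $\Ua(\Vir)$ for every $c \in \kk$, as the paper does; that result already covers all central charges (via the existence of faithful simple modules at arbitrary central charge), and no reduction to $c = 0$ is needed or possible.
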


Let $\g = \W{-1}, W$ or $\Vir$. So far, we have constructed a well-defined nontrivial map $\Dx^\g: \g^* \to \Prim \Ua(\g)$\label{pl:Dx}. Recall that functions on $\g$ also yield Poisson primitive ideals of $\Sa(\g)$ by taking the Poisson core of the corresponding maximal ideal of $\Sa(\g)$. In other words, we have the following diagram 
\[
\begin{tikzcd}
    \g^* \arrow[r] \arrow[dr] & \Pprim \Sa(W) \\
    & \Prim \Ua(W)
  \end{tikzcd} \text{ induced by }
  \begin{tikzcd}
    \chi \text{ local} \arrow[r, mapsto] \arrow[dr, mapsto] & P(\chi) \\
    & Q_\chi.
  \end{tikzcd}
\]
So if there is a strong Dixmier map, it must be of the following form for $\g = \W{-1}$ or $\g = W$.
    \begin{equation}\label{eq:dxWW-1}
        \widetilde{\Dx}^{\g} : \Pprim \Sa(\g) \to \Prim \Ua(\g): \quad  0\mapsto 0 , P(\chi) \neq 0 \mapsto Q_\chi.
    \end{equation}
Similarly, the candidate for the strong Dixmier map for $\Vir$ is as follows
    \begin{align}\label{eq:dxVir}
        \widetilde{\Dx}^{\Vir}: \Pprim \Sa(\Vir) &\longrightarrow \Prim \Ua(\Vir) \nonumber \\
        P(\chi) = (z-\chi(z)) &\longmapsto (z-\chi(z)), \nonumber \\
        P(\chi) \neq (z-\chi(z)) &\longmapsto Q_\chi = \Gamma^{-1}(Q_{\Gamma(\chi)}),
    \end{align}
where $\Gamma: \Ua(\Vir) \to \Ua(W)$ is the projection given by $z \mapsto 0$ and $\Gamma(\chi)$ is the corresponding element of $\chi$ in $W^*$ by abusing notation.

The remaining sections will prove that the maps $ \widetilde{\Dx}^{\W{-1}}, \widetilde{\Dx}^{W}$ and $\widetilde{\Dx}^{\Vir}$ are well-defined. 

\section{A master homomorphism for local functions}\label{sec: psi}
To prove Theorem~\ref{theo:Dixmier1}, we will relate the primitive ideal $Q_\chi = \Ann_{\Ua(W)} M'_\chi$ to a primitive ideal of the universal enveloping algebra of a finite-dimensional solvable Lie algebra. A key ingredient is proving the existence of the algebra homomorphisms $\Psi^W_n$ and $\Psi^{\W{-1}}_n$ defined in Theorem \ref{theo:algebrahom}. To that end, we first define some notation that will be used throughout the rest of the paper. We will introduce the rings of interest and then give the overview of the section.

\begin{notation}\label{def:Weylalg}
    For $m \geq 1$, let $A_m$ be the $m$-th Weyl algebra generated by $s_i, \del_i$ for $i = 1, \dots, n$ subject to the relation $[s_i, s_j] = [\del_i, \del_j] = 0$ and $[\del_i, s_j] = \delta_{ij}$. We will abuse notation and write $A_m = \kk [s_1, \dots, s_m, \del_1, \dots, \del_m]$.\label{pl:Weyl}
    Let $\widetilde{A}_\ell$ denote the localization $A_\ell[s_1^{-1}, \dots, s_{\ell}^{-1}]$ of $A_\ell$.  
    
    For $n = 1$, we sometimes write $A_1 = \kk[t, \del]$ and $\widetilde{A}_1 = \kk[t, t^{-1}, \del]$. 
\end{notation}

\begin{definition}\label{def:ringt}
    For all $n \in \NN$, we consider the finite-dimensional solvable Lie algebra $\g_n = \W{0}/\W{n}$\label{pl:gn} with basis $\{v_{0}, \dots, v_{n-1} \}$\label{pl:vi}, where $v_i = e_i + \W{n}$. For $n' > n$, we define the projection $\tau_{n}: \g_{n'} \to \g_{n}$ given by $v_{\geq n} \mapsto 0$. Note that $\W{-1}$ and $W$ are simple Lie algebras, however, their subquotient $\g_n$ is solvable for every $n \in \NN$. 
    
    Define the rings $T_n = \kk[t, \partial] \otimes_{\kk} \Ua(\g_n)$\label{pl:Tn} and $\widetilde{T}_n = \kk[t, t^{-1}, \partial] \otimes_{\kk} \Ua(\g_n)$\label{pl:Tn-1}. Let $T_\infty =  \kk[t, \partial] \otimes_{\kk} \Ua(\W{0})$ be the ``infinite version'' of $T_n$.
\end{definition}

In this section, we will show that the linear maps 
\begin{align}
    \Psi^{\W{-1}}_{n}: \W{-1} &\rightarrow T_n, \quad 
    f \partial \mapsto f(t) \partial_t + \sum_{i=0}^{n-1} \frac{f^{(i+1)}(t)}{(i+1)!}v_i, \quad \text{and }\label{eq:psiW-1n}\\
    \Psi^{W}_{n}: W &\rightarrow \widetilde{T}_n, \quad 
    f \partial \mapsto f(t) \partial_t + \sum_{i=0}^{n-1} \frac{f^{(i+1)}(t)}{(i+1)!}v_i\label{eq:psiWn},
\end{align}
are Lie algebra homomorphisms and thus induce algebra homomorphisms $\Psi^{\W{-1}}_{n}: \Ua(\W{-1}) \to T_n$ and $\Psi^{W}_{n}: \Ua(W) \to \widetilde{T}_n$. 

We recall the family of algebra homomorphisms $\pi_\gamma: \Ua(W) \to \widetilde{A}_1$ in ~\cite{conley2007} defined by  $\pi_\gamma(f\del) = f\del + \gamma f'$ for $\lambda \in \kk$. 
Since $\widetilde{T}_1 = \widetilde{A}_1 \otimes \kk[v_0]$, then as $v_0 \in Z(\widetilde{T}_1)$, the map 
\begin{equation}\label{eq: map psi 1}
    \Psi^W_{1}: \Ua(W) \to \widetilde{T}_1, \quad f\del \mapsto f\del + f'v_0
\end{equation}
is a ring homomorphism. Note that $\Psi^W_1$ appeared before in ~\cite{conley2007},~\cite{sierra2014universal}, and~\cite{sierra2017}. The homomorphism $\Psi^W_n$ generalizes $\Psi^W_1$ to $n > 1$. To our knowledge, the homomorphisms $\Psi^{\W{-1}}_n$ and $\Psi^{W}_n$ have not appeared before in the literature. 

We further show the striking result that there exists a ``master algebra homomorphism'' $\Psi^{\W{-1}}_{\infty}$ through which all $\Psi^{\W{-1}}_n$ factor. The linear map  
    \begin{align}\label{eq: psi infty}
        \Psi^{\W{-1}}_{\infty} : \W{-1} \rightarrow T_\infty = A_1 \otimes \Ua(\W{0}), \quad f \partial \mapsto f\partial + \sum_{i = 0}^\infty \frac{f^{(i+1)}}{(i+1)!}e_i, 
    \end{align}
induces a graded algebra embedding $\Psi^{\W{-1}}_{\infty}: \Ua(\W{-1}) \rightarrow T_\infty$. There is a similar version for $W$, however, for negative powers of $t$, the sum on the right-hand side of~\eqref{eq: psi infty} may be infinite, thus, one must work in the appropriate completion of the ring $T_\infty[t^{-1}]$.

In Subsection~\ref{subsec:phin}, we construct Poisson homomorphisms $\Phi^{\W{-1}}_\infty$, $\Phi^{\W{-1}}_n$, and $\Phi^W_n$, which we will see are the associated graded maps of $\Psi^{\W{-1}}_\infty$, $\Psi^{\W{-1}}_n$, and $\Psi^W_n$. As a result, we prove that $\Psi^{\W{-1}}_\infty$, $\Psi^{\W{-1}}_n$, and $\Psi^W_n$ are ring homomorphisms in Subsection~\ref{subsec:psin}. Lastly, in Subsection~\ref{subsec:psiinfty}, we complete appropriately to show the existence of the master homomorphism $\Psi^{W}_\infty$, and prove that $\Psi^{\W{-1}}_\infty$ and $\Psi^{W}_\infty$ are furthermore injective.

\subsection{The Poisson versions of our maps}\label{subsec:phin}

In this subsection, we construct the maps $\Phi^{\W{-1}}_\infty$, $\Phi^{\W{-1}}_n$, and $\Phi^W_n$, and prove that they are Poisson homomorphisms. To motivate our construction, let $\kk[t, t^{-1}, y_0]$ be a Poisson algebra via $\{ y_0, t \} = 1$. Recall from~\cite{petukhov2022poisson}*{Equation 2.2.2} that for $\gamma \in \kk$ the ring homomorphism 
\begin{align*}
    p_\gamma: \Sa(W) \to \kk[t, t^{-1}, y_0], \quad f \del \mapsto f y_0 + \gamma f',
\end{align*}
is a Poisson map. Note that $\ker p_\gamma = \cap_{x, \alpha_0 \in \kk} \mf{m}_{x; \alpha_0, \gamma}$ is the Poisson core of $\chi_{x; \alpha_0, \gamma}$ for any $x, \alpha_0, \gamma$ with $(\alpha_0, \gamma) \neq (0, 0)$. Furthermore, by ~\cite{petukhov2022poisson}*{Equation 2.2.2}, if we define a Poisson structure on $\kk[t, t^{-1}, y_0, y_1]$ where additionally, $y_1$ is Poisson central, then  
\begin{align*}
    \Phi_1^W : \Sa(W) \to \kk[t, t^{-1}, y_0, y_1], \quad  f\del \mapsto f y_0 + f' y_1,
\end{align*}
is also a Poisson homomorphism. The map $\Phi_1^W$ is clearly the universal map for one-point local functions $\chi = \chi_{x; \alpha_0, \alpha_1}$ of order at most 1, where $x \neq 0$, in the sense that 
\begin{equation}\label{eq:Poissonuniversal}
    \ker \Phi_1^W = \bigcap_{\gamma \in \kk} p_\gamma = \bigcap_{\substack{x \in \kk^\times \\ \alpha_0, \gamma \in \kk \\ (\alpha_0, \gamma) \neq (0, 0)}} P(\chi_{x; \alpha_0, \gamma}) = \bigcap_{\substack{x \in \kk^\times \\ \alpha_0, \gamma \in \kk \\ (\alpha_0, \gamma) \neq (0, 0)}} \mf{m}_{\chi_{x; \alpha_0, \gamma}}.
\end{equation}

Let us enlarge the codomain ring from $\kk[t, t^{-1}, y_0, y_1]$ to infinitely many variables.
\begin{definition}\label{def:poissonS}
    Let $\mathbb{S}_\infty = \kk[t, y_0, y_1, \dots, y_n, \dots]$ be a polynomial ring in countably many variables\label{pl:S}. We define a Poisson structure on $\mathbb{S}_\infty$ by 
    \begin{align}
        &\{y_0, t\} = 1, \quad \{y_i, y_0\} = \{y_i, t\} = 0 \text{ for all } i \neq 0 \label{eq:Poissont1}\\ 
        &\{y_i , y_j\} = \frac{1}{i!} (j-i) (j+1) (j+2) \dots (j+i-1) y_{i+j-1} \label{eq:Poissont2}.
    \end{align} 
    Note that for all $n$, $(y_{\geq n+1})$ is a Poisson ideal of $\mathbb{S}_\infty$. Thus, the quotient $\mathbb{S}_n = \mathbb{S}_\infty/(y_{\geq n+1})$ is a Poisson algebra as well.
\end{definition}

With the above motivation in mind, we consider the ring homomorphisms
\begin{align}
    \Phi_\infty^{\W{-1}} : \Sa(\W{-1}) \to \mathbb{S}_\infty, \quad  f\del &\mapsto \sum_{i=0}^{\infty} f^{(i)} y_i, \label{eq:phi-1inf}\\ 
    \Phi_n^{\W{-1}} : \Sa(\W{-1}) \to \mathbb{S}_n, \quad f\del &\mapsto \sum_{i=0}^{n} f^{(i)} y_i, \text{ and }\label{eq:phi-1n}\\
    \Phi_n^{W} : \Sa(W) \to \mathbb{S}_n[t^{-1}], \quad f\del &\mapsto \sum_{i=0}^{n} f^{(i)} y_i.
    \label{eq:phin}
\end{align}
Note that since $f \in \kk[t]$, $\Phi_\infty^{\W{-1}} (f\del)$ is a finite sum. The maps $\Phi_n^{\W{-1}}$ and $\Phi_n^{W}$ are the universal homomorphisms for one-point local functions of order $n$, whereas $\Phi_\infty^{\W{-1}}$ is the universal map for all one-point local functions in a similar sense to \eqref{eq:Poissonuniversal}. 
In the following theorem, we show that these maps are Poisson homomorphisms. These maps are the key ingredients to construct the homomorphisms $\Psi_\infty^{\W{-1}}$, $\Psi_n^{\W{-1}}$ and $\Psi_n^{W}$ later in Theorem~\ref{theo:algebrahom}.

\begin{theorem}\label{theo:Poissonfinite}
    If $\mathbb{S}_\infty$ and $\mathbb{S}_n$ are equipped with the Poisson structures from~Definition~\ref{def:poissonS}, then the maps $\Phi^{\W{-1}}_\infty$, $\Phi^{\W{-1}}_n$ and $\Phi^{W}_n$ defined in~\eqref{eq:phi-1inf}, \eqref{eq:phi-1n} and~\eqref{eq:phin} are Poisson homomorphisms. Furthermore, if we set $\deg e_n = n$, $\deg t = 1, \deg y_i = i-1$, then they are graded. 
\end{theorem}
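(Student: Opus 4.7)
My approach is a direct computation on generators, proving the result first for $\Phi^{\W{-1}}_\infty$ and then deducing the cases of $\Phi^{\W{-1}}_n$ and $\Phi^W_n$ by passing to Poisson quotients and localizations. Since $\Sa(\g)$ is generated as a commutative algebra by $\g$ and its Poisson bracket is determined by its restriction to $\g$ via the Leibniz rule, it suffices to verify $\Phi(\{f\partial, g\partial\}) = \{\Phi(f\partial), \Phi(g\partial)\}$ for brackets $\{f\partial, g\partial\} = (fg' - f'g)\partial$ of Lie algebra elements.

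For $\Phi^{\W{-1}}_\infty$, I will expand the right-hand side $\{\sum_i f^{(i)} y_i, \sum_j g^{(j)} y_j\}$ using bilinearity and the Leibniz rule. The brackets $\{f^{(i)}, g^{(j)}\}$ vanish since $f, g \in \kk[t]$, and by~\eqref{eq:Poissont1} only $y_0$ brackets nontrivially with elements of $\kk[t]$, giving $\{y_0, h\} = h'$ for $h \in \kk[t]$. This splits the right-hand side into a pure-$y$ part $\sum_{i, j \geq 1} f^{(i)} g^{(j)} \{y_i, y_j\}$ and a mixed part $\sum_k (f g^{(k+1)} - f^{(k+1)} g)\,y_k$. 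On the left-hand side, the general Leibniz rule gives
\begin{equation*}
    (fg' - f'g)^{(k)} = \sum_{a = 0}^{k+1}\left(\binom{k}{a} - \binom{k}{a-1}\right) f^{(a)} g^{(k+1-a)}.
\end{equation*}
The $a = 0$ and $a = k+1$ terms of this sum exactly account for the mixed part of the right-hand side, and matching the remaining $a \in \{1, \dots, k\}$ contributions (with $i = a$, $j = k+1-a$) against the pure-$y$ part reduces the whole identity to the combinatorial statement
\begin{equation*}
    \binom{i+j-1}{i} - \binom{i+j-1}{i-1} = \frac{(j-i)(i+j-1)!}{i!\, j!},
\end{equation*}
which coincides with~\eqref{eq:Poissont2} after rewriting $(j+1)(j+2)\cdots(j+i-1) = (i+j-1)!/j!$.

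For the other two maps I will argue by reduction. A short check using~\eqref{eq:Poissont1} and~\eqref{eq:Poissont2} shows that $(y_{\geq n+1})$ is a Poisson ideal of $\mathbb{S}_\infty$ (the bracket $\{y_i, y_j\}$ lies in $(y_{i+j-1})$, which sits in $(y_{\geq n+1})$ as soon as $i$ or $j$ exceeds $n$), so $\mathbb{S}_n$ inherits a Poisson structure and $\Phi^{\W{-1}}_n$ is obtained from $\Phi^{\W{-1}}_\infty$ by post-composition with the quotient; hence it is Poisson. For $\Phi^W_n$, the Poisson bracket on $\mathbb{S}_n$ extends to $\mathbb{S}_n[t^{-1}]$ by standard Poisson localization (forcing $\{y_0, t^{-1}\} = -t^{-2}$), and the computation above applies verbatim to $f, g \in \kk[t, t^{-1}]$ since it only invokes derivatives and Leibniz. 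Finally, the grading assertion is a one-line check: with $\deg \partial = -1$, one has $\deg(f^{(i)} y_i) = (\deg f - i) + (i - 1) = \deg f - 1 = \deg(f\partial)$, and the relations~\eqref{eq:Poissont1},~\eqref{eq:Poissont2} preserve the grading by inspection.

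The only nonroutine step is the combinatorial identity above; but as shown, it is an elementary binomial computation, so the proof is entirely mechanical once the right-hand side has been split into pure-$y$ and mixed parts.
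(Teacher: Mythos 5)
Your proof is correct, and its overall architecture matches the paper's: both reduce to checking the bracket identity on Lie algebra generators $f\partial, g\partial$, expand both sides, and match coefficients of $f^{(i)}g^{(j)}y_{i+j-1}$. The one genuine difference is how the coefficient $d_{i,j}$ of $f^{(i)}g^{(j)}$ in $(fg'-f'g)^{(i+j-1)}$ is identified with the structure constant in~\eqref{eq:Poissont2}. The paper derives the Pascal-type recurrence $d_{i,j}=d_{i-1,j}+d_{i,j-1}$ with $d_{0,j}=-d_{j,0}=1$ and solves it by a double induction; you instead read off the closed form $d_{i,j}=\binom{i+j-1}{i}-\binom{i+j-1}{i-1}$ directly from the general Leibniz rule and verify the elementary identity
\begin{equation*}
\binom{i+j-1}{i}-\binom{i+j-1}{i-1}=(i+j-1)!\,\frac{j-i}{i!\,j!}=\frac{1}{i!}(j-i)\prod_{k=1}^{i-1}(j+k),
\end{equation*}
which is shorter and makes the "Pascal" structure of the paper's recurrence transparent. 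Your handling of $\Phi^{\W{-1}}_n$ (post-composition with the Poisson quotient by $(y_{\geq n+1})$, whose Poisson-ideal property is already recorded in Definition~\ref{def:poissonS}) and of $\Phi^W_n$ (Poisson localization at $t$) is also slightly more explicit than the paper, which simply sets $y_i=0$ for $i\geq n+1$ and asserts the $W$ case is identical; both are fine. The grading check is the same one-line computation in both. No gaps.
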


\begin{proof}
    To treat the finite case and the infinite case the same way, we set $y_i = 0$ for $i \geq n+1$ in the finite case. The proof for $W$ is the same as $\W{-1}$, so we provide the proof for $\W{-1}$. By the product rule
    \begin{align*}
        (fg'-f'g)^{(n-1)} = \sum_{i=0}^n d_{i, n-i} f^{(i)} g^{(n-i)} \text{ for some } d_{i, n-i} \in \ZZ.
    \end{align*}
    Thus 
    \begin{align}
        &d_{i,j} = d_{i-1, j} + d_{i, j-1}, \text{  for } i, j \geq 1 \label{eq:sysofeq} \\ 
        &d_{0, j} = -d_{j, 0} = 1, \text{and } d_{0,0} = 0 \nonumber.
    \end{align}
    By induction, we can prove that $d_{i, j}=-d_{j, i}$, thus we restrict our attention to the case $j > i \geq 1$. We will show by induction on $i$ that the unique solution to the system of equations~\eqref{eq:sysofeq} is 
    \begin{equation}\label{eq: formula for d}
        d_{i, j} = \frac{1}{i!} (j-i) \prod_{k=1}^{i-1} (j+k). 
    \end{equation}
     
    For $i=1$, we can recur on $j$ to find that $d_{1, j} = j-1$. Suppose~\eqref{eq: formula for d} is true for $i = m-1$, we need to show it for $m$. We will now induct on $j \geq i$. The base case $j = i$ follows from $d_{i, i}=-d_{i, i}=0$. Assuming the induction hypothesis for $j= n-1$, we have:
    \begin{align*}
        d_{m, n} &= \frac{1}{(m-1)!} (n-m+1) (n+1) (n+2) \dots (n+m-2) + \frac{1}{m!} (n-m-1) n (n+1) \dots (n+m-2) \\ 
        &= \frac{1}{m!} (n+1) (n+2) \dots (n+m-2) \Bigl ( m(n-m+1) + (n-m-1) n \Bigr ) \\
        &= \frac{1}{m!} (n+1) (n+2) \dots (n+m-2) (n+m-1) (n-m).
    \end{align*}
    Thus,~\eqref{eq: formula for d} is true for all $i, j \in \ZZ$.

    Now, we show $\Phi^{\W{-1}}_{\infty}$ and $\Phi^{\W{-1}}_{n}$ are Poisson homomorphisms. It is sufficient to show that 
    \begin{equation}\label{eq: Poisson equal}
        \sum_{k \geq 0} (fg' - f'g)^{(k)} y_k = \big \{ \sum_{i \geq 0} f^{(i)} y_i,   \sum_{j \geq 0} g^{(j)} y_j \big \} \text{ for all } f, g \in \kk[t].
    \end{equation}
    The LHS of~\eqref{eq: Poisson equal} is
    \begin{align*}
        \sum_{k \geq0 } y_{k} (fg' - f'g)^{(k)} &= \sum_{k \geq 0} y_{k} \sum_{i=0}^{k+1} d_{i, k+1 - i}f^{(i)}g^{(k+1-i)} = \sum_{j\geq0} \sum_{i\geq0} y_{i+j-1} d_{i, j}f^{(i)}g^{(j)} . 
    \end{align*}
    From~\eqref{eq:Poissont1},~\eqref{eq:Poissont2} and~\eqref{eq: formula for d}
    \begin{align*}
        \{y_0 f, y_i g^{(i)}\} &= y_0 fg^{(i+1)}, \text{ for } i \geq 1 \text{ and }\\ 
        \{y_i f^{(i)}, y_j g^{(j)}\} &= f^{(i)}g^{(j)}\{y_i , y_j \} = d_{i, j} f^{(i)}g^{(j)} y_{i+j-1} \text{ for } i, j \geq 1.
    \end{align*}
    Thus, the RHS of~\eqref{eq: Poisson equal} is
    \begin{align*}
        &\{y_0 f, y_0g\} + \sum_{i \geq 1} (\{y_0 f, y_i g^{(i)}\} + \{y_i f^{(i)}, y_0 g \}) + \sum_{i \geq 1}\sum_{j \geq 1} \{y_i f^{(i)}, y_j g^{(j)}\}\\ 
        &= y_0 (fg'-f'g) + \sum_{i \geq 1} (fg^{(i+1)} - f^{(i+1)}g)y_i  + \sum_{i \geq 1}\sum_{j \geq 1} d_{i, j} f^{(i)}g^{(j)} y_{i+j-1} \\ 
        &= \sum_{i \geq 0}\sum_{j \geq 0} y_{i+j-1} d_{i, j} f^{(i)}g^{(j)}.
    \end{align*}
    Hence,~\eqref{eq: Poisson equal} follows and $\Phi^{\W{-1}}_\infty$ and $\Phi^{\W{-1}}_n$ are Poisson maps. Lastly, it is obvious that $\Phi^{\W{-1}}_\infty$ and $\Phi^{\W{-1}}_n$ respects the gradings defined on $\Sa(W)$, $\mathbb{S}_\infty$, and $\mathbb{S}_n$.
\end{proof}

\begin{lemma}\label{lem:Siso}
    The rings $\mathbb{S}_\infty$ and $\mathbb{S}_n$ are isomorphic to $\kk[t, y_0] \otimes \Sa(\W{0})$ and $\kk[t, y_0] \otimes \Sa(\g_n)$ respectively as graded Poisson algebras.
\end{lemma}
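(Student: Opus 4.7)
The plan is to exhibit an explicit isomorphism on generators and verify that it preserves the Poisson bracket and the grading. Specifically, define
\[
\Theta \colon \kk[t, y_0] \otimes \Sa(\W{0}) \longrightarrow \mathbb{S}_\infty, \qquad t \mapsto t, \quad y_0 \mapsto y_0, \quad e_i \mapsto (i+1)!\, y_{i+1} \text{ for } i \geq 0.
\]
Since both algebras are polynomial rings with the obvious matching sets of algebraically independent generators (up to nonzero scalars), $\Theta$ is automatically a well-defined ring isomorphism. The graded statement is immediate: we have $\deg e_i = i$ on the left and $\deg y_{i+1} = i$ on the right, while the scalars $(i+1)!$ carry degree $0$.

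The content is the Poisson check, which reduces to verifying the bracket on pairs of generators. The pairs involving $t$ and $y_0$ are trivial, since on both sides all non-Heisenberg generators Poisson-commute with $t$ and $y_0$, and $\{y_0,t\}=1$ on each. The only substantial check is $\{e_i,e_j\}=(j-i)e_{i+j}$ versus
\[
\{(i+1)!\,y_{i+1},\,(j+1)!\,y_{j+1}\} = (i+1)!(j+1)!\,d_{i+1,j+1}\,y_{i+j+1}.
\]
Plugging in the closed form $d_{k,l} = \frac{(l-k)(k+l-1)!}{k!\,l!}$ established in the proof of Theorem~\ref{theo:Poissonfinite}, one finds $d_{i+1,j+1} = \frac{(j-i)(i+j+1)!}{(i+1)!(j+1)!}$, so the right-hand side simplifies to $(j-i)(i+j+1)!\,y_{i+j+1}$, which agrees with $\Theta((j-i)e_{i+j}) = (j-i)(i+j+1)!\,y_{i+j+1}$. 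Hence $\Theta$ is a graded Poisson isomorphism.

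For the finite version, observe that under $\Theta$ the kernel of the quotient map $\Sa(\W{0}) \twoheadrightarrow \Sa(\g_n)$, namely the ideal generated by $e_n, e_{n+1}, \dots$, corresponds to the Poisson ideal $(y_{n+1}, y_{n+2}, \dots)$ of $\mathbb{S}_\infty$ whose quotient is precisely $\mathbb{S}_n$. Since $\Theta$ is Poisson and graded, it descends to a graded Poisson isomorphism $\kk[t, y_0] \otimes \Sa(\g_n) \xrightarrow{\sim} \mathbb{S}_n$, completing the proof. No genuine obstacle arises: the only place where something could have gone wrong is the precise scaling $e_i \mapsto (i+1)!\,y_{i+1}$, and this is forced by the combinatorial identity for $d_{i+1,j+1}$ recorded above.
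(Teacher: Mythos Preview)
Your proof is correct and follows essentially the same approach as the paper: you define the identical map on generators, verify the Poisson bracket on the pair $(e_i,e_j)$ via the same combinatorial identity (the paper uses the product form of $\{y_{i+1},y_{j+1}\}$ from Definition~\ref{def:poissonS} directly, you repackage it as the closed form $d_{k,l}=\frac{(l-k)(k+l-1)!}{k!\,l!}$ from the proof of Theorem~\ref{theo:Poissonfinite}, which amounts to the same computation), and then pass to the quotient for the finite case. Your handling of the finite quotient even has the indices right where the paper's text has a small slip.
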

\begin{proof}
    Let $\varphi: \kk[t, y_0] \otimes \Sa(\W{0}) \to \mathbb{S}_\infty$ be a ring isomorphism induced by $\varphi(t) = t, \varphi(y_0) = y_0, \varphi(e_i) = (i+1)!y_{i+1}$ for $i \geq 0$. For $i, j \geq 0$, we have
    \begin{align*}
        \{\varphi(e_i), \varphi(e_j)\} &=  \{(i+1)!y_{i+1}, (j+1)!y_{j+1}\} = (i+1)!(j+1)! \{y_{i+1}, y_{j+1}\} \\
        &= (i+1)!(j+1)! \frac{1}{(i+1)!} (j-i)(j+2)\dots(j+i+1)y_{i+j+1} \\ 
        &= (j-i) (j+i+1)!y_{i+j+1} = (j-i) \varphi(e_{i+j}).
    \end{align*}
    Thus, $\varphi$ is a Poisson algebra isomorphism and it respects the grading, so we can identify $\mathbb{S}_\infty$ with $\kk[t, y_0] \otimes \Sa(\W{0})$. Furthermore, $\varphi((e_{\geq n +1})) = (y_{\geq n})$, thus $\mathbb{S}_n$ is isomorphic to $\kk[t, y_0] \otimes \Sa(\g_n)$ as a Poisson algebra.
\end{proof}

Identifying the codomain rings using the isomorphisms in Lemma \ref{lem:Siso}, the maps $\Phi^{\W{-1}}_{\infty}$, $\Phi^{\W{-1}}_{n}$ and $\Phi^{W}_{n}$ are defined by
\begin{align}
    \Phi^{\W{-1}}_{\infty} : \Sa(\W{-1}) \rightarrow  \kk[t, y_0] \otimes_{\kk} \Sa(\W{0}), \quad
    f\partial & \mapsto fy_0 + \sum_{i= 0}^{n-1} \frac{f^{(i+1)}}{(i+1)!} e_i, \label{eq:phi-1inf2}\\
    \Phi^{\W{-1}}_{n} : \Sa(\W{-1}) \rightarrow  \kk[t, y_0] \otimes_{\kk} \Sa(\g_n), \quad
    f\partial & \mapsto fy_0 + \sum_{i= 0}^{n-1} \frac{f^{(i+1)}}{(i+1)!} v_i, \text{ and } \label{eq:phi-1n2}\\
    \Phi^{W}_{n} : \Sa(W) \rightarrow  \kk[t, t^{-1}, y_0] \otimes_{\kk} \Sa(\g_n), \quad f\partial  & \mapsto fy_0 + \sum_{i= 0}^{n-1} \frac{f^{(i+1)}}{(i+1)!} v_i. \label{eq:phin2}
\end{align}
\begin{corollary}\label{cor:phiWn}
    The maps $\Phi^{\W{-1}}_{\infty}$, $\Phi^{\W{-1}}_{n}$ and $\Phi^{W}_{n}$ defined in \eqref{eq:phi-1inf2}, \eqref{eq:phi-1n2} and \eqref{eq:phin2} are graded Poisson homomorphisms. 
    \qed
\end{corollary}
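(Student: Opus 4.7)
The plan is to observe that Corollary~\ref{cor:phiWn} follows immediately from Theorem~\ref{theo:Poissonfinite} combined with the explicit graded Poisson isomorphisms produced in Lemma~\ref{lem:Siso}: the maps in (\ref{eq:phi-1inf2}), (\ref{eq:phi-1n2}), (\ref{eq:phin2}) are just rewrites of the maps $\Phi^{\W{-1}}_\infty$, $\Phi^{\W{-1}}_n$, $\Phi^W_n$ of Theorem~\ref{theo:Poissonfinite} under the identifications $\mathbb{S}_\infty \cong \kk[t,y_0] \otimes \Sa(\W{0})$ and $\mathbb{S}_n \cong \kk[t,y_0] \otimes \Sa(\g_n)$. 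Since composition of graded Poisson homomorphisms is graded Poisson, the corollary follows once we verify the two presentations of the map coincide.

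First I would unwind the isomorphism $\varphi$ of Lemma~\ref{lem:Siso}: from $\varphi(e_{i}) = (i+1)!\, y_{i+1}$ for $i \geq 0$ and $\varphi(y_0) = y_0$, $\varphi(t) = t$, one obtains $\varphi^{-1}(y_i) = \tfrac{1}{i!} e_{i-1}$ for $i \geq 1$ while $\varphi^{-1}(y_0) = y_0$ and $\varphi^{-1}(t) = t$. Next I would compose: applying $\varphi^{-1}$ termwise to the formula
\[
\Phi^{\W{-1}}_\infty(f\partial) \;=\; \sum_{i \geq 0} f^{(i)} y_i \;\in\; \mathbb{S}_\infty
\]
from (\ref{eq:phi-1inf}) yields $f y_0 + \sum_{i \geq 1} \tfrac{f^{(i)}}{i!}\, e_{i-1}$, which after the reindexing $i \mapsto i+1$ is precisely the right-hand side of (\ref{eq:phi-1inf2}). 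The same computation, with $\Sa(\W{0})$ replaced by $\Sa(\g_n)$ and the sum truncated at $i = n-1$ (reflecting the quotient $\mathbb{S}_n = \mathbb{S}_\infty/(y_{\geq n+1})$), gives the formulas (\ref{eq:phi-1n2}) and (\ref{eq:phin2}); in the latter case one only needs to note that the codomain $\mathbb{S}_n[t^{-1}]$ corresponds to $\kk[t,t^{-1},y_0] \otimes \Sa(\g_n)$ under the localization of $\varphi$ at $t$.

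Finally I would conclude: Theorem~\ref{theo:Poissonfinite} establishes that the original $\Phi^{\W{-1}}_\infty$, $\Phi^{\W{-1}}_n$, $\Phi^W_n$ are graded Poisson homomorphisms, and Lemma~\ref{lem:Siso} asserts $\varphi$ is a graded Poisson isomorphism (hence so is $\varphi^{-1}$, as well as its localization at $t$ used for the $W$ case). Therefore each of the three compositions $\varphi^{-1} \circ \Phi$ is a graded Poisson homomorphism, as desired. I do not expect any genuine obstacle: the substantive content, namely the verification that the bracket (\ref{eq:Poissont2}) is compatible with the combinatorial identity (\ref{eq: formula for d}) for the derivatives of $fg' - f'g$, was already carried out in Theorem~\ref{theo:Poissonfinite}, and the present corollary is just a change of variables from the $y_i$ basis to the basis involving $e_i$ or $v_i$.
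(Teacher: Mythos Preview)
Your proposal is correct and matches the paper's approach exactly: the corollary is stated with a bare \qed because, as you observe, the formulas (\ref{eq:phi-1inf2})--(\ref{eq:phin2}) are simply the maps of Theorem~\ref{theo:Poissonfinite} transported across the graded Poisson isomorphisms of Lemma~\ref{lem:Siso}. Your explicit verification that $\varphi^{-1}(y_i) = \tfrac{1}{i!}e_{i-1}$ recovers the stated formulas is a nice sanity check but is not needed beyond what the paper already implicitly asserts.
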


\subsection{The ring homomorphisms \texorpdfstring{$\Psi^{\W{-1}}_{\infty}$, $\Psi^{\W{-1}}_{n}$ and $\Psi^{W}_{n}$}{Psi n}}\label{subsec:psin}
In this subsection, we quantize the Poisson homomorphisms $\Phi^{\W{-1}}_{\infty}$, $\Phi^{\W{-1}}_{n}$ and $\Phi^{W}_{n}$ to obtain the algebra homomorphisms $\Psi^{\W{-1}}_{\infty}$, $\Psi^{\W{-1}}_{n}$ and $\Psi^{W}_{n}$. Furthermore, we show that by taking the appropriate tensor product of these maps, we can construct more homomorphisms that are the multi-point versions of $\Psi^{\W{-1}}_{n}$ and $\Psi^{W}_{n}$.

\begin{theorem}\label{theo:psin}
    There are Lie algebra homomorphisms 
    \begin{align}
        \Psi^{\W{-1}}_{\infty} : \W{-1} &\rightarrow {T}_\infty = A_1\otimes \Ua(\W{0}), &\quad 
        f \partial &\mapsto f(t)\partial + \sum_{i = 0}^{\infty} \frac{f^{(i+1)}(t)}{(i+1)!}e_i, \label{eq:psiW-1infty}\\
        \Psi^{\W{-1}}_{n} : \W{-1} &\rightarrow {T}_n = A_1\otimes \Ua(\g_n), &\quad 
        f \partial &\mapsto f(t)\partial + \sum_{i = 0}^{n-1} \frac{f^{(i+1)}(t)}{(i+1)!}v_i, \text{ and } \label{eq:psinW-1}\\
        \Psi^{W}_{n} : W &\rightarrow \widetilde{T}_n = \tilde{A}_1 \otimes \Ua(\g_n), &\quad 
        f \partial &\mapsto f(t)\partial + \sum_{i = 0}^{n-1} \frac{f^{(i+1)}(t)}{(i+1)!}v_i,\label{eq:psinW}, 
    \end{align}
    where the latter two are defined for all $n \geq 0$. These maps induce graded algebra homomorphisms $\Psi^{\W{-1}}_{\infty} : \Ua(\W{-1}) \to {T}_\infty$, $\Psi^{\W{-1}}_{n} : \Ua(\W{-1}) \to {T}_n$ and $\Psi^{W}_{n} : \Ua(W) \to \widetilde{T}_n$.
\end{theorem}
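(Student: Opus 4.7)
The plan is to verify the bracket identity
\begin{equation*}
[\Psi(f\partial),\ \Psi(g\partial)] \ =\ \Psi\bigl((fg' - f'g)\partial\bigr)
\end{equation*}
inside each of $T_\infty$, $T_n$, $\widetilde{T}_n$, since a Lie-algebra homomorphism extends uniquely to an associative-algebra homomorphism on the universal enveloping algebra. The grading (with $\deg t = 1$, $\deg \partial = -1$, and $\deg v_i = i = \deg e_i$) will then be immediate by inspection, since both $f\partial$ and $\frac{f^{(i+1)}}{(i+1)!}v_i$ have degree $\deg f - 1$, matching $\deg(f\partial)$ in the source.

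The first step is to expand $[\Psi(f\partial), \Psi(g\partial)]$ directly using the defining relations of the codomain: $[\partial, h] = h'$ for $h \in \kk[t, t^{-1}]$, each $v_i$ (resp.\ $e_i$) commutes with both $t$ and $\partial$ because it lives in the other tensor factor, and $[v_i, v_j] = (j-i)v_{i+j}$ (with $v_{\geq n} = 0$). This produces four families of terms: the single $\partial$-contribution $(fg' - f'g)\partial$, two mixed contributions $\frac{fg^{(j+2)}}{(j+1)!}v_j$ and $-\frac{f^{(i+2)}g}{(i+1)!}v_i$, and the double sum $\sum_{i,j}\frac{(j-i)f^{(i+1)}g^{(j+1)}}{(i+1)!(j+1)!}v_{i+j}$.

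The key observation is that this noncommutative computation is formally identical to the Poisson calculation already carried out in the proof of Theorem~\ref{theo:Poissonfinite}. Indeed, $[\partial, t] = 1$ in $A_1$ mirrors $\{y_0, t\} = 1$ in $\mathbb{S}_\infty$, and the centrality of each $v_i$ with respect to the Weyl-algebra factor of $T_n$ mirrors $v_i$ being Poisson-central in $t$ and $y_0$. Because no reordering of a $v_i$ past $\partial$ or $t$ is ever needed, each step of the noncommutative calculation coincides term-by-term with the Poisson one, and the combinatorial identity required on the $v_k$-coefficients is precisely the identity for the coefficients $d_{i,j}$ established in the proof of Theorem~\ref{theo:Poissonfinite}; the $\partial$-coefficient matches trivially. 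Hence the bracket identity follows by citing that theorem rather than re-deriving the combinatorics.

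Finally, for $\Psi^{\W{-1}}_\infty$ the expression $\Psi(f\partial) = f\partial + \sum_{i\ge 0}\frac{f^{(i+1)}}{(i+1)!}e_i$ is a finite sum because $f \in \kk[t]$, so no completion is needed and the same argument works inside $T_\infty$. For $\Psi^W_n$ one only needs to replace $A_1$ with its localization $\widetilde{A}_1$, which causes no difficulty because the truncation at $n-1$ keeps all sums finite regardless of negative powers of $t$. I do not foresee any substantial obstacle: the main subtlety is the clean matching between the Poisson and commutator computations, but this is automatic because the $v_i$'s are genuinely central with respect to the Weyl-algebra generators, so no quantum-correction terms intrude.
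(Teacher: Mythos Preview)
Your proposal is correct and takes essentially the same approach as the paper. The paper packages your observation---that commutators of the degree-one elements $f\partial + \sum \frac{f^{(i+1)}}{(i+1)!}v_i$ match the Poisson brackets of Corollary~\ref{cor:phiWn} term by term because the $v_i$ commute with $t$ and $\partial$---via an order filtration on $T_n$ whose associated graded is the Poisson algebra $\mathcal{S}$, noting that on the distinguished complement $\mathcal{F}'_1 T_n = \kk[t]\partial \oplus \kk[t]\otimes\g$ the map $\gr$ is a Lie-algebra isomorphism, so $\Psi = \gr^{-1}\circ\Phi$; this is just a structural rephrasing of your ``no quantum corrections'' remark.
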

\begin{proof}
    Note that since $f \in \kk[t]$, $\Psi^{\W{-1}}_{\infty}(f\del)$ is a fintie sum. 
    The proof for $W$ is similar to that for $\W{-1}$. Thus we only provide the details for $\W{-1}$. To treat the finite and infinite case the same, let $\g$ be either $\W{0}$ or $\g_n$ in this proof. 

    Let $\mathcal{S} = \kk[t, t^{-1}, y_0] \otimes_{\kk} \Sa(\g)$. We put the ``order'' grading on $\mathcal{S}$ by setting $\deg t = 0, \deg y_0 = \deg v =1$ for $v \in \g$. Now, we filter $\mathcal{T} = A_1 \otimes \Ua(\g)$ by the increasing ``order'' filtration $\mathcal{F}_\bullet$ by setting $\deg t = 0$ and $\deg \partial = \deg v = 1$ for $v \in \g$. Thus, $\mathcal{F}_0 \mathcal{T} = \kk[t],  \mathcal{F}_1\mathcal{T} = \kk[t] \oplus \kk[t]\del \oplus \kk[t]\otimes \g, \dots$ as $\kk[t]$-modules. Then the associated graded of $\mathcal{T}$ with respect to this filtration is $\mathcal{S}$, and the associated graded construction induces a Poisson bracket on $\mathcal{S}$ with $\{y_0, t \} =1$ and the Kostant-Kirillov Poisson bracket on $\Sa(\g)$. This coincides with the Poisson structure on $\mathcal{S}$ that makes $\Phi^{\W{-1}}_n$ and $\Phi^{\W{-1}}_\infty$ Poisson homomorphisms in Corollary~\ref{cor:phiWn}.
    
    Let $\mathcal{F}'_1 \mathcal{T} = \kk[t]\del \oplus \kk[t]\otimes \g$, which is a distinguished complement of $\mathcal{F}_0 \mathcal{T} \subset \mathcal{F}_1 \mathcal{T}$. Note that $\mathcal{F}'_1 \mathcal{T}$ is closed under commutators. The associated graded map is an isomorphism of $\kk[t]$-modules $\gr:\mathcal{F}'_1 \mathcal{T} \to \mathcal{S}_1$, and for $a, b \in \mathcal{F}'_1 \mathcal{T}$:
    \begin{equation}
        \gr(ab-ba) = \{\gr(a), \gr(b)\},
    \end{equation} 
    i.e., the commutator bracket on $\mathcal{F}'_1 \mathcal{T}$ agrees with the Poisson bracket on $\mathcal{S}_1$. 

    Thus, we have a following commutative diagram of $\kk[t]$-module maps. 
    \[\begin{tikzcd}
        W \arrow[rd, "\Phi^{\W{-1}}_\infty", swap] \arrow[r, "\Psi^{\W{-1}}_\infty"] & \mathcal{F}'_1 \mathcal{T} \arrow[d, "\gr"] \\
        & \mathcal{S}_1
    \end{tikzcd}\]
    From Corollary~\ref{cor:phiWn}, $\Phi^{\W{-1}}_n$ and $\Phi^{\W{-1}}_\infty$ are Lie algebra homomorphisms, and $\gr$ is a Lie algebra isomorphism. Thus $\Psi^{\W{-1}}_\infty = \gr^{-1} \circ \Phi^{\W{-1}}_\infty$ and $\Psi^{\W{-1}}_n = \gr^{-1} \circ \Phi^{\W{-1}}_n$ are Lie algebra homomorphisms on $\W{-1}$. Thus, they induce algebra homomorphisms $\Ua(\W{-1}) \to {T}_\infty$ and $\Ua(\W{-1}) \to {T}_\infty$. If we set $\deg e_n = n$, $\deg t = 1, \deg v_i = i$, then $\Psi^{\W{-1}}_\infty$ and $\Psi^{\W{-1}}_n$ are graded.
\end{proof}

Notice that $\Psi^{\W{-1}}_n$ factors through $\Psi^{\W{-1}}_\infty$ via the obvious homomorphism $A_1 \otimes \Ua(\W{0}) \to A_1 \otimes \Ua(\g_n)$. In that
sense, $\Psi^{\W{-1}}_\infty$ is a universal homomorphism for all $\Psi^{\W{-1}}_n$. 

\begin{definition}\label{def:gboldn}
    Let $\n = (n_1, \dots, n_\ell) \in \ZZ^{\ell}_{\geq 1}$. We define
    \begin{align}
        \g_\n = \g_{n_1} \oplus \dots \oplus \g_{n_\ell}.
    \end{align}
    Similarly to Definition~\ref{def:ringt}, we define $T_\n = A_\ell \otimes \Ua(\g_\n)$ and $\widetilde{T}_\n = \tilde{A}_\ell \otimes \Ua(\g_\n)$. Note that $T_\n = T_{n_1} \otimes \dots \otimes T_{n_\ell}$ and $\widetilde{T}_\n = \widetilde{T}_{n_1} \otimes \dots \otimes \widetilde{T}_{n_\ell}$.
\end{definition}

\begin{proposition}\label{prop: psi multi}
    Let $\n = (n_1, \dots, n_\ell) \in \ZZ^{\ell}_{\geq 1}$. Then the linear maps
    \begin{align*}
        \Psi^{\W{-1}}_{\n}: \W{-1} &\rightarrow T_\n, \quad  
        f \del \mapsto \Psi^{\W{-1}}_{n_1} (f \del) \otimes 1 \otimes \dots \otimes 1 + \dots + 1 \otimes 1 \otimes \dots \otimes \Psi^{\W{-1}}_{n_\ell} (f\del),\\
        \Psi^{W}_{\n}: W &\rightarrow \widetilde{T}_\n, \quad  
        f \del \mapsto \Psi^{W}_{n_1} (f \del) \otimes 1 \otimes \dots \otimes 1 + \dots + 1 \otimes 1 \otimes \dots \otimes \Psi^{W}_{n_\ell} (f\del),
    \end{align*}
    induce graded algebra homomorphisms $\Psi^{\W{-1}}_{\n}: \Ua(\W{-1})\rightarrow T_\n$ and $\Psi^W_{\n}: \Ua(W)\rightarrow \widetilde{T}_\n$.
\end{proposition}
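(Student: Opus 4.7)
The key structural observation is that $T_\n = T_{n_1} \otimes \cdots \otimes T_{n_\ell}$ in a way that makes the various tensor factors pairwise commuting. Indeed, $A_\ell = A_1 \otimes \cdots \otimes A_1$ as algebras (with the $i$-th $A_1$ generated by $s_i,\partial_i$), and $\g_\n = \g_{n_1} \oplus \cdots \oplus \g_{n_\ell}$ is a direct sum of Lie algebras, so $\Ua(\g_\n) \cong \Ua(\g_{n_1}) \otimes \cdots \otimes \Ua(\g_{n_\ell})$ with pairwise commuting factors. Moreover $A_\ell$ commutes with $\Ua(\g_\n)$ inside $T_\n$ by definition of the tensor product structure. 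Let $\iota_i : T_{n_i} \hookrightarrow T_\n$ denote the canonical algebra embedding into the $i$-th tensor slot, so that $\iota_i(T_{n_i})$ and $\iota_j(T_{n_j})$ commute elementwise whenever $i \neq j$.

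The plan is to write $\Psi^{\W{-1}}_{\n} = \sum_{i=1}^{\ell} \iota_i \circ \Psi^{\W{-1}}_{n_i}$ and verify the Lie bracket relation directly. For $f\del, g\del \in \W{-1}$,
\begin{align*}
[\Psi^{\W{-1}}_{\n}(f\del), \Psi^{\W{-1}}_{\n}(g\del)]
&= \sum_{i,j} [\iota_i(\Psi^{\W{-1}}_{n_i}(f\del)),\, \iota_j(\Psi^{\W{-1}}_{n_j}(g\del))] \\
&= \sum_{i} \iota_i\bigl([\Psi^{\W{-1}}_{n_i}(f\del),\, \Psi^{\W{-1}}_{n_i}(g\del)]\bigr),
\end{align*}
where the cross terms $i \neq j$ vanish by the commutativity observation above, and each $\iota_i$ is an algebra homomorphism so it intertwines commutators. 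By Theorem~\ref{theo:psin}, each $\Psi^{\W{-1}}_{n_i}$ is a Lie algebra homomorphism, so the inner bracket equals $\Psi^{\W{-1}}_{n_i}([f\del,g\del])$; reassembling gives $\Psi^{\W{-1}}_{\n}([f\del,g\del])$, as desired. Thus $\Psi^{\W{-1}}_{\n}$ is a Lie algebra homomorphism, and by the universal property of $\Ua(\W{-1})$ it extends uniquely to an algebra homomorphism $\Ua(\W{-1}) \to T_\n$.

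For the grading, place the product grading on $T_\n$ by taking each $\iota_i$ to be graded (each $\Psi^{\W{-1}}_{n_i}$ is graded by Theorem~\ref{theo:psin}, with $\deg s_i = 1$, $\deg \partial_i = -1$, $\deg v_k = k$ in the $i$-th factor). Then the formula for $\Psi^{\W{-1}}_{\n}(f\del)$ is a sum of graded pieces of equal degree, so $\Psi^{\W{-1}}_{\n}$ is graded. The case of $\Psi^W_\n$ is entirely parallel: one replaces $A_1$ by $\widetilde A_1$ everywhere and uses the corresponding maps $\Psi^W_{n_i}$ from Theorem~\ref{theo:psin}; the tensor factors $\widetilde T_{n_i}$ again commute pairwise inside $\widetilde T_\n$, and the same bracket computation goes through verbatim.

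There is no real obstacle: the proposition amounts to the elementary principle that pairwise commuting Lie algebra homomorphisms to the factors of a tensor product assemble into a Lie algebra homomorphism to the tensor product (a ``coproduct-style'' diagonal embedding), and the work has already been done in Theorem~\ref{theo:psin}.
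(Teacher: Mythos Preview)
Your proof is correct and is essentially the paper's argument unpacked: the paper observes that $\Psi_{\n} = (\Psi_{n_1}\otimes\cdots\otimes\Psi_{n_\ell})\circ\Delta^\ell$ is a composition of algebra homomorphisms (using that the Hopf coproduct $\Delta^\ell$ is an algebra map), while you verify the same identity on primitive elements by a direct Lie bracket computation and then invoke the universal property. The only cosmetic difference is that the paper phrases it in Hopf algebra language rather than checking commutators by hand.
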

\begin{proof}
    We prove only the second statement. Since $\Ua(W)$ is a Hopf algebra, the coproduct $\Delta^{\ell}: \Ua(W) \to \Ua(W)^{\otimes \ell}$ is an algebra homomorphism. Thus $\Psi_{\n} = (\Psi_{n_1} \otimes \dots \otimes \Psi_{n_\ell}) \circ \Delta^{\ell}$ is an algebra homomorphism. 
\end{proof}

\subsection{Master homomorphisms $\Psi^{\W{-1}}_{\infty}$ and $\Psi^{W}_{\infty}$}\label{subsec:psiinfty}
We have, so far, constructed the master homomorphism $\Psi^{\W{-1}}_{\infty}$ which can be seen as the universal homomorphism for all $\Psi^{\W{-1}}_{n}$. It is natural to ask if such a master homomorphism $\Psi^{W}_{\infty}$ exists through which all $\Psi^{W}_{n}$ factor. In this subsection, we show such a map exists, and moreover, prove that $\Psi^{\W{-1}}_{\infty}$ and $\Psi^{W}_{\infty}$ are injective. These facts, together with Corollary \ref{cor:psiuniversal}, imply that $\Psi^{\W{-1}}_{\infty}$ and $\Psi^{W}_{\infty}$ satisfy another universal property that they are master homomorphisms for all one-point local functions, which we will explain later in Subsection \ref{subsec:annpsi}. For $W$, for negative powers of $t$, the sum on the right-hand side of~\eqref{eq:psiW-1infty} may be infinite, thus, one must work in the appropriate completion of the ring $\widetilde{T}_n$. 

\begin{definition}
    For $n' > n$, recall the projection $\tau_{n}: \g_{n'} \to \g_n$ defined in Definition~\ref{def:ringt}. We have the inverse system of rings $(\widetilde{T}_n)_{n \in \ZZ_{\geq 1}}$ with ring projections $\id \otimes \tau_{n}: \widetilde{T}_{n'} \to \widetilde{T}_{n}$. Thus, we can form the inverse limit 
    \begin{align*}
        \widehat{T}_\infty = \varprojlim \widetilde{T}_n.
    \end{align*}
\end{definition}

\begin{proposition}
    The map $\Psi^{W}_{\infty}$
    \begin{align}\label{eq:psiinfty}
    \Psi^{W}_{\infty} : \Ua(W) \rightarrow \widehat{T}_\infty, \quad f \partial \mapsto f\partial + \sum_{i = 0}^\infty \frac{f^{(i+1)}}{(i+1)!}v_i, 
\end{align}
    is an algebra homomorphism. 
\end{proposition}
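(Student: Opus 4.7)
The plan is to exploit the universal property of the inverse limit $\widehat{T}_\infty = \varprojlim \widetilde{T}_n$. Since Theorem~\ref{theo:psin} has already produced algebra homomorphisms $\Psi^W_n: \Ua(W) \to \widetilde{T}_n$ for every $n \geq 0$, it suffices to check that these maps form a compatible family with respect to the structure maps $\id \otimes \tau_n: \widetilde{T}_{n'} \to \widetilde{T}_n$ of the inverse system, and then identify the resulting map into $\widehat{T}_\infty$ with the formula in~\eqref{eq:psiinfty}.

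First I would observe that each map $\id \otimes \tau_n$ is an algebra homomorphism, since $\tau_n: \g_{n'} \to \g_n$ is a Lie algebra homomorphism (as $\W{\geq n}/\W{\geq n'}$ is a Lie ideal of $\W{\geq 0}/\W{\geq n'}$). Then for any $n' > n$, I would verify the compatibility
\begin{equation*}
    (\id \otimes \tau_n) \circ \Psi^W_{n'} = \Psi^W_n
\end{equation*}
as algebra homomorphisms $\Ua(W) \to \widetilde{T}_n$. Both sides are algebra homomorphisms, so it is enough to check the equality on generators $f\del \in W$. This is an immediate calculation: $\tau_n(v_i) = v_i$ for $i < n$ and $\tau_n(v_i) = 0$ for $n \leq i < n'$, so
\begin{equation*}
    (\id \otimes \tau_n)\bigl(\Psi^W_{n'}(f\del)\bigr) = f\del + \sum_{i=0}^{n-1} \frac{f^{(i+1)}}{(i+1)!} v_i = \Psi^W_n(f\del).
\end{equation*}

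Next I would invoke the universal property of the inverse limit in the category of $\kk$-algebras: the compatible family $(\Psi^W_n)_n$ yields a unique algebra homomorphism $\Psi: \Ua(W) \to \widehat{T}_\infty$ such that $\pi_n \circ \Psi = \Psi^W_n$ for every $n$, where $\pi_n: \widehat{T}_\infty \to \widetilde{T}_n$ is the canonical projection. Finally I would identify $\Psi$ with the formula~\eqref{eq:psiinfty}: the formal expression $f\del + \sum_{i \geq 0} \frac{f^{(i+1)}}{(i+1)!} v_i$ is interpreted in $\widehat{T}_\infty$ as the coherent sequence whose $n$-th coordinate is the truncation $f\del + \sum_{i=0}^{n-1} \frac{f^{(i+1)}}{(i+1)!} v_i = \Psi^W_n(f\del)$, so $\Psi$ is precisely the map $\Psi^W_\infty$ defined by the given formula.

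I do not anticipate a serious obstacle, since the essential analytic content (showing the Lie bracket is respected) has already been discharged in Theorem~\ref{theo:psin}; the only subtlety here is purely formal, namely checking that allowing $f \in \kk[t,t^{-1}]$ (so that infinitely many derivatives may be nonzero) is harmless because the infinite sum is only ever handled coordinate-by-coordinate inside the inverse limit, never as an actual sum in any fixed $\widetilde{T}_n$.
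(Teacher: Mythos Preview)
Your proposal is correct and follows essentially the same approach as the paper: verify the compatibility $(\id \otimes \tau_n) \circ \Psi^W_{n'} = \Psi^W_n$, invoke the universal property of the inverse limit, and then identify the resulting map with the formula~\eqref{eq:psiinfty}. The paper's proof is more terse (it states the compatibility without the explicit computation on generators), but the underlying argument is identical.
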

\begin{proof}
    We have the family of homomorphisms $(\Psi^W_n)_{n \in \ZZ_{\geq 1}}$ such that $\Psi^W_n = (\id \otimes \tau_{n}) \circ  \Psi^W_{n'}$. Thus, by the universal property of $\widehat{T}_\infty$, there exists an algebra homomorphism $\Psi^{W}_{\infty} : \Ua(W) \rightarrow \widehat{T}_\infty$. It is easy to see that $\Psi^{W}_{\infty}$ has the form \eqref{eq:psiinfty}.
\end{proof}

\begin{lemma}\label{cor: psi infty}
    The Poisson homomorphism $\Phi^{\W{-1}}_{\infty}$ defined in \eqref{eq:phi-1inf2} is injective.
\end{lemma}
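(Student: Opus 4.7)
The strategy is to exhibit a retraction of $\Phi^{\W{-1}}_\infty$ by evaluating the variable $t$ at $0$. Let $\varepsilon\colon \kk[t, y_0]\otimes \Sa(\W{0}) \to \kk[y_0]\otimes \Sa(\W{0})$ be the $\kk$-algebra quotient sending $t\mapsto 0$ and fixing the remaining generators. The aim is to compute $\varepsilon\circ\Phi^{\W{-1}}_\infty$ on the canonical generators $e_n = t^{n+1}\partial$ ($n\geq -1$) of $\Sa(\W{-1})$ and verify that the resulting map is a ring isomorphism.

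From \eqref{eq:phi-1inf2}, a direct expansion with $f=t^{n+1}$ gives
\[
\Phi^{\W{-1}}_\infty(e_n) \;=\; t^{n+1}y_0 + \sum_{i=0}^{n} \binom{n+1}{i+1}\, t^{n-i}\, e_i.
\]
For $n=-1$ the sum is empty and the image is simply $y_0$. For $n\geq 0$, applying $\varepsilon$ kills $t^{n+1}y_0$ and every summand with $i<n$, leaving only the $i=n$ term $\binom{n+1}{n+1}e_n = e_n$. Hence $\varepsilon\circ\Phi^{\W{-1}}_\infty$ is the $\kk$-algebra map $\Sa(\W{-1})\to \kk[y_0]\otimes \Sa(\W{0})$ sending $e_{-1}\mapsto y_0$ and $e_n\mapsto e_n$ for $n\geq 0$. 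Since both source and target are polynomial rings on countably many algebraically independent generators, this is a renaming of generators and therefore a ring isomorphism. In particular it is injective, so $\Phi^{\W{-1}}_\infty$ is itself injective.

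I do not anticipate a substantive obstacle; the only bookkeeping points are the boundary index $n=-1$, which ensures $y_0$ receives a preimage, and the identity $\binom{n+1}{n+1}=1$, which makes the retraction the identity on the remaining generators. A more conceptual alternative is to observe that each one-point local function $\chi_{x;\alpha_0,\alpha_1,\dots}$ on $\W{-1}$ factors through $\Phi^{\W{-1}}_\infty$ via the evaluation $t\mapsto x$, $y_0\mapsto\alpha_0$, $e_i\mapsto (i+1)!\,\alpha_{i+1}$, so $\ker\Phi^{\W{-1}}_\infty$ lies in the common zero locus of all local functions; specializing at $x=0$ in that picture reproduces exactly the retraction $\varepsilon$ above and confirms that the kernel is trivial.
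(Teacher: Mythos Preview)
Your argument is correct and in fact more elementary than the paper's. You produce a genuine \emph{retraction}: evaluating at $t=0$ gives the algebra map $\varepsilon\circ\Phi^{\W{-1}}_\infty$ sending $e_{-1}\mapsto y_0$ and $e_n\mapsto e_n$ for $n\geq 0$, which is visibly an isomorphism of polynomial rings and hence forces $\Phi^{\W{-1}}_\infty$ to be injective. The computation checks out, including the boundary case $n=-1$.

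The paper takes a different route. It evaluates at $t=1$ and additionally sets $y_0\mapsto 0$, obtaining a map $\kappa^{\W{-1}}\circ\Phi^{\W{-1}}_\infty$ onto $\Sa(\W{0})$. This composite is \emph{not} injective (the $e_{-1}$ direction is lost), so the paper only concludes that $\im\Phi^{\W{-1}}_\infty$ has infinite GK-dimension, and then invokes the external result \cite{iyudu2020enveloping}*{Theorem 1.3} that any nonzero Poisson ideal of $\Sa(\W{-1})$ has finite GK-codimension to rule out a nontrivial kernel. Your approach avoids this machinery entirely by keeping $y_0$ and choosing the evaluation point $t=0$ adapted to the basis $e_n=t^{n+1}\partial$, so that the retraction is upper-triangular with $1$'s on the diagonal. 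The price is only that your $\varepsilon$, like the paper's $\kappa^{\W{-1}}$, is not a Poisson map---but neither proof needs it to be.
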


\begin{proof}
    Consider the ring homomorphism
    \begin{align*}\label{pl:kappa}
        \kappa^{\W{-1}} : \kk[t, y_0] \otimes \Sa(W_{\geq 0}) \to \Sa(W_{\geq 0}), \quad t \mapsto 1, y_0 \mapsto 0, \restr{\kappa^{\W{-1}}}{\Sa(W_{\geq 0})} = \id.
    \end{align*}
    Note that $\kappa^{\W{-1}}$ is only an algebra homomorphism and is not a Poisson homomorphism. We claim that $\kappa^{\W{-1}} \circ \Phi^{\W{-1}}_{\infty}$ is surjective onto $\Sa(\W{0})$ and it suffices to show that $\W{0} \subseteq \im (\kappa^{\W{-1}} \circ \Phi^{\W{-1}}_{\infty})$. For any $w \in W_{\geq 0}$, we write $w = \sum_{i=0}^n w_i e_i$ for some $n \in \NN$ and $w_i \in \kk$. Let
    \begin{align*}
        f = \sum_{i=0}^{n} w_i (t-1)^{i+1} \in \kk[t],
    \end{align*} 
    then we note that $\frac{f^{(i+1)}(1)}{(i+1)!} = w_i$ for $i=0, \dots, n$. Thus 
    \begin{align*}
        \kappa^{\W{-1}} \circ \Phi^{\W{-1}}_{\infty}(f\partial) = \sum_{i=0}^n \frac{f^{(i+1)}(1)}{(i+1)!} e_i = \sum_{i=0}^n w_i e_i = w.
    \end{align*}
    So $\Sa(W_{\geq 0}) \subseteq \im (\kappa^{\W{-1}} \circ \Phi^{\W{-1}}_{\infty})$. Since $\Sa(W_{\geq 0})$ has infinite GK dimension, $\im \Phi^{\W{-1}}_{\infty}$ has infinite GK dimension.

    If $\ker \Phi^{\W{-1}}_{\infty} \neq 0$, then by ~\cite{iyudu2020enveloping}*{Theorem 1.3} $\GK (\im \Phi^{\W{-1}}_{\infty}) < \infty$. Thus, $\ker \Phi^{\W{-1}}_{\infty} = 0$.
\end{proof}

\begin{proposition}
    $\Psi^{\W{-1}}_{\infty}$ and $\Psi^{W}_{\infty}$ are injective.
\end{proposition}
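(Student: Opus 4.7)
The plan is to invoke a standard associated-graded argument: if a filtered ring homomorphism has injective associated graded, then it is injective. First, I would equip $\Ua(\W{-1})$ and $\Ua(W)$ with the PBW filtration (each $e_k$ in degree one) and equip $T_\infty$, $\widetilde T_n$, and $\widehat T_\infty$ with the order filtration used in the proof of Theorem~\ref{theo:psin} ($\deg t = \deg t^{-1} = 0$, $\deg \partial = 1$, $\deg v = 1$ for $v$ a basis element of $\W{0}$ or $\g_n$). Under these filtrations, $\Psi^{\W{-1}}_\infty$ is filtered with associated graded $\Phi^{\W{-1}}_\infty$, and each $\Psi^W_n$ is filtered with associated graded $\Phi^W_n$; both identifications are already built into the construction used in the proof of Theorem~\ref{theo:psin}.

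For $\Psi^{\W{-1}}_\infty$ this is essentially immediate: Lemma~\ref{cor: psi infty} says $\Phi^{\W{-1}}_\infty$ is injective, and the standard symbol argument applied to the minimal PBW degree of a hypothetical nonzero kernel element gives the claim.

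For $\Psi^W_\infty$, the universal property of the inverse limit $\widehat T_\infty$ makes $a \in \ker \Psi^W_\infty$ equivalent to $\Psi^W_n(a) = 0$ for every $n$. Passing to symbols forces $\sigma(a) \in \bigcap_n \ker \Phi^W_n$ in $\Sa(W)$, so it suffices to show this intersection is zero. The intersection is a Poisson ideal, so by \cite{iyudu2020enveloping}*{Theorem 1.3} I would only need to prove $\GK \bigl(\Sa(W)/\bigcap_n \ker \Phi^W_n\bigr) = \infty$. To this end, mimicking Lemma~\ref{cor: psi infty}, define for each $n$ the algebra map
\begin{equation*}
    \kappa^W_n : \kk[t, t^{-1}, y_0] \otimes \Sa(\g_n) \to \Sa(\g_n), \quad t \mapsto 1,\ y_0 \mapsto 0,\ v_i \mapsto v_i.
\end{equation*}
A short binomial computation yields $\kappa^W_n \circ \Phi^W_n(e_k) = \sum_{i=0}^{k} \binom{k+1}{i+1} v_i$ for $0 \leq k \leq n-1$, a triangular system in the $v_i$; hence $v_0, \dots, v_{n-1}$ lie in the image and $\GK(\Sa(W)/\ker \Phi^W_n) \geq n$. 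Letting $n \to \infty$ supplies the required infinite Gelfand-Kirillov dimension.

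The only non-routine point is to confirm that \cite{iyudu2020enveloping}*{Theorem 1.3}, or an immediate variant of it, yields the implication "nonzero Poisson ideal of $\Sa(W)$ $\Rightarrow$ finite-Gelfand-Kirillov-codimensional quotient" and not only the analogous statement for $\Sa(\W{-1})$. If it does not apply directly, I would either prove the analogue by hand or reduce to the $\W{-1}$ case by exploiting that the restriction of $\Psi^W_\infty$ to $\Ua(\W{-1}) \subset \Ua(W)$ recovers $\Psi^{\W{-1}}_\infty$ composed with a canonical inclusion of codomains.
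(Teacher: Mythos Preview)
Your argument for $\Psi^{\W{-1}}_\infty$ is exactly the paper's: the filtered/associated-graded lift of Lemma~\ref{cor: psi infty}.

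For $\Psi^W_\infty$ you take a genuinely different primary route. The paper does not pass to symbols at all: it observes that the restriction of $\Psi^W_\infty$ to $\Ua(\W{-1})$ is $\Psi^{\W{-1}}_\infty$ followed by the inclusion $T_\infty \hookrightarrow \widehat T_\infty$, so $\im \Psi^W_\infty$ already contains an infinite-GK subalgebra; then it invokes \cite{iyudu2020enveloping}*{Theorem~1.3} for $\Ua(W)$ directly to conclude $\ker \Psi^W_\infty = 0$. Your approach instead reduces everything to the Poisson side, showing $\bigcap_n \ker \Phi^W_n = 0$ via the evaluation maps $\kappa^W_n$ and the triangular system in the $v_i$; this is correct (your binomial computation checks out, and the symbol argument $\Psi^W_n(a)=0 \Rightarrow \Phi^W_n(\sigma(a))=0$ is valid because each $\Psi^W_n$ is strictly filtered with $\gr = \Phi^W_n$). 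The cost is that you must invoke the Poisson-ideal version of \cite{iyudu2020enveloping}*{Theorem~1.3} for $\Sa(W)$, which you rightly flag; the paper sidesteps this by working in $\Ua(W)$ and recycling the already-established injectivity of $\Psi^{\W{-1}}_\infty$. Your fallback (``reduce to the $\W{-1}$ case by restriction'') is precisely the paper's argument, so either way you arrive at a complete proof.
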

\begin{proof}
    We first show that $\Psi^{\W{-1}}_{\infty}$ is  injective. Consider the order filtration on $\Ua(\W{-1})$, where we define $\deg x = 1$ for $x \in \W{-1}$. Then $\Sa(\W{-1})$ is the associated graded of $\Ua(\W{-1})$ with respect to the above filtration. From the proof of Theorem~\ref{theo:psin}, we have the following commutative diagram.
    \[\begin{tikzcd}
        \Ua(\W{-1}) \arrow[d, "\gr" '] \arrow[r, "\Psi^{\W{-1}}_{\infty}"] & T_\infty \arrow[d, "\gr"] \\
        \Sa(\W{-1}) \arrow[r, "\Phi^{\W{-1}}_{\infty}"] & \mathcal{S} 
    \end{tikzcd}\]

    Let $z \neq 0 \in \Ua(\W{-1})$. Since the filtration is exhaustive, $\bigcup_{k} \mathcal{F}_k = \Ua(\W{-1})$, we can take the minimal $k$ such that $z \in \mathcal{F}_k$. Then $\gr(z) \in \Sa(\W{-1})_k \backslash \{0\}$, so $\Phi^{\W{-1}}_{\infty} (\gr(z)) \neq 0$ by Theorem~\ref{cor: psi infty}. Hence $\Psi^{\W{-1}}_{\infty}(z) \neq 0$ and $\Psi^{\W{-1}}_{\infty}$ is injective. Thus $\Psi^{\W{-1}}_{\infty}(\Ua(\W{-1}))$ has infinite GK-dimension. 

    We have the following commutative diagram 
    \[\begin{tikzcd}
        \Ua(\W{-1}) \arrow[d, hookrightarrow] \arrow[r, "\Psi^{\W{-1}}_{\infty}"] & T_\infty \arrow[d, hookrightarrow] \\
        \Ua(W) \arrow[r, "\Psi^{W{-1}}_{\infty}"] & \widehat{T}_\infty
    \end{tikzcd}\]
    Thus $\im \Psi^{W}_{\infty}$ also has infinite GK-dimension. If $\ker \Psi^{W}_{\infty} \neq 0$, then by ~\cite{iyudu2020enveloping}*{Theorem 1.3} $\GK \im \Psi^{W}_{\infty} < \infty$, which yields a contradiction. Hence $\Psi^{W}_{\infty}$ is injective.
\end{proof}

The maps $\Psi^{\W{-1}}_{\n}$ and $\Psi^{W}_{\n}$ are important for our orbit method and as the same time, they are interesting on their own. We conjecture that any map from $\Ua(\W{-1}) $ to some Weyl algebra always factor through some homomorphism $\Psi^{\W{-1}}_{\n}$.  
\begin{conjecture}\label{conj:mapsweylalgfactor}
    Let $\Psi_n$ denote the ring homomorphism $\Psi^{\W{-1}}_n$. For any ring homomorphism $\varphi: \Ua(\W{-1}) \to A_k$, there exists $\ell < k$ and $\n = (n_1, \dots, n_\ell) \in \NN^\ell$ such that $\varphi$ factors through $\Psi_{\n} = (\Psi_{n_1} \otimes \dots \otimes \Psi_{n_\ell}) \circ \Delta^{\ell}$ in the sense that there exists a homomorphism $\overline{\varphi}$ such that the following diagram commutes.
    \[\begin{tikzcd}
        \Ua(\W{-1}) \arrow[rd, "\varphi", swap] \arrow[r, "\Delta^\ell"]  &\Ua(\W{-1})^{\otimes \ell} \arrow[r, "\Psi_{n_1} \otimes \dots \otimes \Psi_{n_\ell} "] & [4em] {A}_\ell  \otimes {\Ua(\g_\n)} \arrow[ld, "\bar{\varphi}"] \\
        & A_k &
    \end{tikzcd}\]

    Also, this holds for $W$ where we replace $A_k$, $A_\ell$ and $\Psi^{\W{-1}}_{n_i}$ by $\tilde{A}_k$, $\tilde{A}_\ell$ and $\Psi^{W}_{n_i}$ respectively.
\end{conjecture}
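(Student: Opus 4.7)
The plan is to reduce Conjecture~\ref{conj:mapsweylalgfactor} to a classification theorem for completely prime primitive ideals of $\Ua(\W{-1})$ of finite Gelfand--Kirillov codimension. Given a ring homomorphism $\varphi:\Ua(\W{-1})\to A_k$, its kernel $P=\ker\varphi$ is completely prime (as $A_k$ is a domain), and it is primitive because $A_k$ acts faithfully and irreducibly on $\kk[s_1,\dots,s_k]$, which becomes a simple $\Ua(\W{-1})$-module via $\varphi$ with annihilator exactly $P$. Moreover $\GK(\Ua(\W{-1})/P)\leq \GK A_k=2k<\infty$, so the conjecture reduces to classifying finite-codimension completely prime primitive ideals of $\Ua(\W{-1})$ and identifying each one with some $\ker\Psi^{\W{-1}}_\n$.

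The first main step is to prove that every such $P$ arises as $P=Q_\chi=\Ann_{\Ua(\W{-1})} M'_\chi$ for some local function $\chi$ of a suitable order $\n$; equivalently, that the Dixmier map $\Dx^{\W{-1}}$ built in this paper surjects onto the completely prime primitive ideals with finite quotient GK-dimension. Granting this, Corollary~\ref{cor:annkerweyl} announced in the introduction yields $Q_\chi\supseteq \ker \Psi^{\W{-1}}_\n$, so $\varphi$ factors through $\Ua(\W{-1})/\ker \Psi^{\W{-1}}_\n$, which by construction embeds into $T_\n=A_\ell\otimes \Ua(\g_\n)$.

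The second step is to extend the resulting injection and the map to $A_k$ into a single ring homomorphism $\bar\varphi:T_\n\to A_k$. Since $\Ua(\g_\n)$ is the enveloping algebra of a finite-dimensional solvable Lie algebra, the plan is to lift the generators $v_j$ of $\g_\n$ to elements of $A_k$ that commute correctly with $\varphi(A_\ell)$ and satisfy the bracket relations of $\g_\n$, solving for them successively from the defining identity $\Psi^{\W{-1}}_{n_i}(f\partial)=f\partial+\sum_j\frac{f^{(j+1)}}{(j+1)!}v_j$ using the values of $\varphi$ already determined on $\Ua(\W{-1})$. A GK-dimension count driven by $\GK(\Ua(\W{-1})/P)\leq 2k$ should force the bound $\ell<k$ stated in the conjecture.

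The first step is the main obstacle: surjectivity of the (strong) Dixmier map $\widetilde{\Dx}^{\W{-1}}$ is explicitly flagged as open in the introduction, and the standard induction from the finite-dimensional solvable case breaks down for $\W{-1}$. A concrete attack would begin with $k=1$: classify the subalgebras of $A_1$ isomorphic as Lie algebras to $\varphi(\W{-1})$ and match them one-by-one with the one-point maps $\Psi^{\W{-1}}_n$, leveraging the classification of $\W{-1}$-actions on $\kk[s]$ from~\cite{conley2024}. For general $k$, one would induct on $k$ (or on the number of local points $\ell$), peeling off a one-point local factor at each stage by diagonalising $\varphi(e_0)$ and analysing its spectral subspaces in a simple $A_k$-module, each of which should carry an action of a smaller copy of $\W{-1}$ or an appropriate solvable quotient thereof.
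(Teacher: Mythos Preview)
The statement you are attempting to prove is Conjecture~\ref{conj:mapsweylalgfactor}, which the paper states as an \emph{open conjecture} and does not prove. There is therefore no proof in the paper to compare against; the conjecture is simply recorded after the construction of the homomorphisms $\Psi^{\W{-1}}_\n$ and $\Psi^W_\n$ as motivation for further study.

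You correctly identify that your strategy hinges on surjectivity of the Dixmier map onto the relevant class of primitive ideals, and you note yourself that this is flagged as open in the introduction (it reappears as Conjecture~\ref{conj:surjective}). So your argument is explicitly conditional on another unproven conjecture; this is the central gap, and you acknowledge it.

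Two further technical issues. First, your claim that $P=\ker\varphi$ is primitive is not justified by the argument you give: the restriction via $\varphi$ of the simple $A_k$-module $\kk[s_1,\dots,s_k]$ is in general \emph{not} simple as a $\Ua(\W{-1})$-module, so knowing that $P$ is its annihilator does not make $P$ primitive. You would need an independent argument here. Second, even granting $\ker\varphi\supseteq\ker\Psi^{\W{-1}}_\n$, this only yields a factorisation of $\varphi$ through the \emph{image} $\im\Psi^{\W{-1}}_\n\subsetneq T_\n$; producing a ring homomorphism $\bar\varphi:T_\n\to A_k$ defined on all of $T_\n$ is a genuine extension problem. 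Your plan to ``solve for the $v_j$'' from the identity $\Psi^{\W{-1}}_{n_i}(f\partial)=f\partial+\sum_j\frac{f^{(j+1)}}{(j+1)!}v_j$ does not obviously work, since that identity only pins down the images of certain combinations of the $v_j$ inside $\im\Psi^{\W{-1}}_\n$, not values in $A_k$ for each $v_j$ separately that satisfy the required commutation relations with the Weyl generators.
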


\section{Annihilators of local representations}\label{sec:annihilators}
As before, by Propositions~\ref{prop:repWW-1} and~\ref{prop:repWVir}, we are only concerned with canonical local representation of $W$. We also denote $\Psi^W_\n$ by $\Psi_\n$ unless otherwise specified. In subsection \ref{subsec:annpsi}, using the map $\Psi_\n$, we establish a connection between the annihilators of local representations of $W$ and primitive ideals of $\Ua(\g_\n)$, which sets up the bridge to ``pullback the orbit method'' from $\g_\n$ to $W$. For $\n, \widehat{\n} \in \NN^\ell$, we define a partial order
\begin{align}\label{eq:biggertuple}
    \n \leq \widehat{\n} \Leftrightarrow n_i \leq \widehat{n}_i \text{ for all } i = 1, \dots, \ell.
\end{align}
We furthermore show that $\Psi_{\widehat{\n}}$ is the ``universal map'' for local functions of order $\leq \widehat{\n}$ in the sense that $\ker \Psi_{\widehat{\n}} = \bigcap_{\substack{\chi: \text{ local function} \\ \text{ of order } \leq \widehat{\n}}} Q_\chi$. In subsection \ref{subsec:even}, we prove a stronger result that for local functions $\chi$ of positive totally even order $\n$, i.e., $n_i = 2m_i >0$ for all $i$, then moreover $\ker \Psi_{\n} = Q_\chi$. 

\subsection{Annihilators of canonical local representations via the map $\Psi_\n$}\label{subsec:annpsi}
Recall that $\Loc^{\leq {\widehat{\n}}}$ is the set of local functions on $W$ of order at most $\widehat{\n}$. In this subsection, we will construct a projection
\begin{equation*}
    \pi^{{\n}}: \Loc^{\leq {\widehat{\n}}} \to \g^*_{{\n}}, \quad \chi \mapsto \widehat{\chi},
\end{equation*}
where $\widehat{\chi}$ is defined in Definition \ref{not:barchi}. We then show the main equation that for a local function $\chi$ of order $\n$, then 
\begin{equation}\label{eq:Qchibarchi}
    Q_\chi = (\Psi_{\widehat{\n}})^{-1} (\widetilde{A}_\ell \otimes Q_{\widehat{\chi}}), 
\end{equation}
where $Q_{\widehat{\chi}}$ is the primitive ideal of $\Ua(\g_\n)$ defined in Definition \ref{def:orbitrep}. We further show $\Psi_{\widehat{\n}}$ is the ``universal map'' for local functions of order $\leq \widehat{\n}$ in the sense that 
\begin{align*}
    \ker \Psi_{\widehat{\n}} = \bigcap_{\substack{\chi: \text{ local function} \\ \text{ of order }\n \leq \widehat{\n}}} Q_\chi.
\end{align*}

We first consider the case of one-point local function $\chi = \chi_{x; \alpha_0, \dots, \alpha_n}$ of order $n$ on $W$. Recall the definition of $\chi'$ in Notation~\ref{not:chi'}. Fix $\widehat{n} \geq n$. We will show that there exists a simple module $L_{\chi'}$ over $\widetilde{T}_{\widehat{n}}$ such that the canonical local representation $M'_\chi \cong \res{\Psi_{\widehat{n}}} L_{\chi'}$ as representations of $W$. The relationship between two annihilators is thus
\begin{align*}
    Q_\chi = \Ann_{\Ua(W)} M'_\chi = \Psi_{\widehat{n}}^{-1}(\Ann_{\widetilde{T}_{\widehat{n}}} L_{\chi'}).
\end{align*}

\begin{definition}\label{def:Ichi}
    Let $\chi = \chi_{x; \alpha_0, \dots, \alpha_n}$ be a one-point local function on $W$ of order $n$ and let $m = \left\lfloor \frac{n}{2}\right\rfloor$. Fix $\hat{n} \geq n$. Recall the canonical polarization $\p_\chi$ defined in Proposition~\ref{prop:canpol}, the basis $u_j$ of $W/\p_\chi$ defined in Definition~\eqref{eq:tildeudef} and $v_j = e_j + \W{\hat{n}}$. Recall the definition of $\chi'$ in Notation~\ref{not:chi'}. Motivated by the one-dimensional twisted representation $\kk'_\chi$ of $\p_\chi$, where $f \del \cdot 1_\chi = \chi(f\del) + \rho_{\p_\chi, W} (f\del) =  \chi'(f\del)1_\chi$, we define a left ideal of $\widetilde{T}_{\widehat{n}}$ by 
    \begin{align}
        I_{\chi'} = \begin{cases}
            \widetilde{T}_{\widehat{n}}(t-x, v_{m} - \chi'(u_m), \dots, v_{\widehat{n}-1} - \chi'(u_{\widehat{n}-1})) &\text{ if } m > 0, \\ 
            \widetilde{T}_{\widehat{n}}(t-x, v_{0} - \chi'(u_0) - 1, \dots, v_{\widehat{n}-1} - \chi'(u_{\widehat{n}-1})) &\text{ if } m=0.
        \end{cases}
    \end{align}
    Let $L_{\chi'} = \widetilde{T}_{\widehat{n}}/I_{\chi'}$. The case $m =0$ is special as $\Psi_n(u_{0}) = (t-x)\del+ v_0 = \del(t-x) - 1 + v_0$. 
\end{definition}

We first show that $\res{\Psi_{\widehat{n}}} L_{\chi'} \cong M'_{\chi}$ as representations of $W$. 
\begin{proposition}\label{prop:isoL}
    There exists an isomorphism of $\Ua(W)$-modules
\begin{align*}
    \theta: M'_{\chi} \to \res{\Psi_{\widehat{n}}} L_{\chi'}, \text{ such that } \theta(1_\chi) = 1+ I_{\chi'}.
\end{align*}
\end{proposition}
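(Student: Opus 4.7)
The plan is to construct $\theta$ by $\Ua(W)$-linearity from $\theta(1_\chi) = 1+I_{\chi'}$, to verify it descends through the defining quotient of $M'_\chi$, and then to establish bijectivity via an associated-graded argument.

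The candidate for $\theta$ is induced by the composition $\Ua(W) \xrightarrow{\Psi_{\widehat{n}}} \widetilde{T}_{\widehat{n}} \twoheadrightarrow L_{\chi'}$, $w \mapsto \Psi_{\widehat{n}}(w)\cdot(1+I_{\chi'})$, which is a $\Ua(W)$-module map via $\Psi_{\widehat{n}}$. By Lemma~\ref{lem:rewriterep}, $M'_\chi = \Ua(W)/J$ where $J$ is the left ideal of $\Ua(W)$ generated by $[\p_\chi,\p_\chi]$ and by $f_i - \chi'(f_i)$ for $f_i$ in a basis of $\p_\chi/[\p_\chi,\p_\chi]$, so descending to $\theta\colon M'_\chi \to L_{\chi'}$ reduces to the identity $\Psi_{\widehat{n}}(p)\cdot(1+I_{\chi'}) = \chi'(p)(1+I_{\chi'})$ for every $p = f\del \in \p_\chi$. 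I will verify this by expanding
\[ \Psi_{\widehat{n}}(f\del) = f(t)\del + \sum_{k=0}^{\widehat{n}-1}\frac{f^{(k+1)}(t)}{(k+1)!}v_k \]
and acting on $1+I_{\chi'}$. Since $v_k$ commutes with $\widetilde{A}_1$, since $f(t)\equiv f(x)$ modulo $\widetilde{A}_1(t-x)$ when acting on $1+I_{\chi'}$, and since the defining relations of $I_{\chi'}$ give $v_i\equiv\chi'(u_i)$ (respectively $v_0\equiv\chi'(u_0)+1$ in the $m=0$ case), the terms with $k<m$ drop out because $f^{(k+1)}(x)=0$ for $k\le m-1$, while the surviving $k\ge m$ terms collect into $\chi(f\del)$ via $\chi(u_k) = \alpha_{k+1}(k+1)!$. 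The remaining contribution $f(t)\del \cdot (1+I_{\chi'}) \equiv (f(x)\del - f'(x))(1+I_{\chi'}) = -f'(x)(1+I_{\chi'})$ vanishes when $m>0$ (in agreement with $\rho_{\p_\chi,W}(f\del) = 0$ from Lemma~\ref{lem:rho0}) and, when $m=0$, combines with the extra $+1$ from the shift in the generator $v_0-\chi'(u_0)-1$ of $I_{\chi'}$ to reproduce precisely the $-\frac{1}{2}g(x)$ twist of~\eqref{eq:espu}.

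For bijectivity, I will run an associated-graded argument with respect to the order filtrations on $\Ua(W)$ and $\widetilde{T}_{\widehat{n}}$, in which $\deg u_i = \deg\del = \deg v_i = 1$ and $\deg t = 0$; then $\theta$ is filtration-preserving because $\Psi_{\widehat{n}}$ sends degree-one generators to degree-one elements. Both sides carry PBW bases indexed by $\NN^{m+1}$: for $M'_\chi$, the standard basis $\{u_{-1}^{k_{-1}}u_0^{k_0}\cdots u_{m-1}^{k_{m-1}}1_\chi\}$; for $L_{\chi'}$, the basis $\{\del^{k_{-1}}v_0^{k_0}\cdots v_{m-1}^{k_{m-1}}(1+I_{\chi'})\}$, obtained from the tensor factorization $\widetilde{T}_{\widehat{n}} = \widetilde{A}_1 \otimes \Ua(\g_{\widehat{n}})$ using $\widetilde{A}_1/\widetilde{A}_1(t-x)\cong\kk[\del]$ together with the fact that $\langle v_m,\dots,v_{\widehat{n}-1}\rangle$ is a sub-Lie-algebra of $\g_{\widehat{n}}$ on which $\chi'$ restricts to a character (a routine check from $[v_i,v_j]=(j-i)v_{i+j}$). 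Passing to associated graded, every summand of $\Psi_{\widehat{n}}(u_i) = (t-x)^{i+1}\del + \sum_{k=0}^{i}\binom{i+1}{k+1}(t-x)^{i-k}v_k$ carrying a positive power of $(t-x)$ is killed in $\gr L_{\chi'}\cong\kk[\del,v_0,\dots,v_{m-1}]$, leaving $\gr\theta(u_{-1}) = \del$ and $\gr\theta(u_i) = v_i$ for $0\le i\le m-1$. Hence $\gr\theta$ is the tautological polynomial-ring isomorphism $\kk[u_{-1},\dots,u_{m-1}]\to\kk[\del,v_0,\dots,v_{m-1}]$, so $\theta$ itself is an isomorphism.

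The main technical obstacle is the well-definedness calculation, specifically reconciling the noncommutative anomaly in $\widetilde{A}_1$ (that $(t-x)\del$ is congruent to $-1$, not $0$, modulo $\widetilde{A}_1(t-x)$) with the twist $\rho_{\p_\chi,W}$. The explicit $-1$ shift in the generator $v_0 - \chi'(u_0) - 1$ of $I_{\chi'}$ in the $m=0$ case is engineered precisely to absorb this anomaly, which is why Definition~\ref{def:Ichi} splits into two cases; once this bookkeeping is in place, the remainder of the argument is essentially formal.
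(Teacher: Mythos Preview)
Your argument is correct and runs parallel to the paper's for well-definedness (both reduce to checking $\Psi_{\widehat n}(\p_\chi)\cdot(1+I_{\chi'})\subseteq\kk(1+I_{\chi'})$, you via a direct Taylor-expansion on all of $\p_\chi$, the paper on the explicit generators $u_m,\dots,u_{2m},[\p_\chi,\p_\chi]$), but the bijectivity step is genuinely different. The paper computes by downward induction on $j$ the \emph{exact} identity
\[
\theta(u_{-1}^{k_{-1}}u_0^{k_0}\cdots u_{m-1}^{k_{m-1}}1_\chi)=\del^{k_{-1}}(v_0-1)^{k_0}v_1^{k_1}\cdots v_{m-1}^{k_{m-1}}+I_{\chi'},
\]
matching one PBW basis bijectively onto another. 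You instead filter both sides by total degree, verify that $\Psi_{\widehat n}(u_i)$ acts as $v_i$ modulo operators of nonpositive filtration degree (and $\Psi_{\widehat n}(u_{-1})=\del$), and conclude that $\gr\theta$ is the identity on polynomial rings. Your route is cleaner in that it avoids the three-case induction, but it requires you to establish a PBW basis for $L_{\chi'}$ \emph{before} you know $\theta$ is an isomorphism, which you do via the tensor factorisation $\widetilde T_{\widehat n}=\widetilde A_1\otimes\Ua(\g_{\widehat n})$ and the character check on $\W{m}/\W{\widehat n}$; the paper gets that basis for free as the image of $\theta$. One wording caveat: ``killed in $\gr L_{\chi'}$'' is slightly imprecise, since for instance $(t-x)\del$ has filtration degree $0$ rather than $-1$ as an operator on $L_{\chi'}$; what you need (and what in fact holds) is that every summand other than $v_i$ has operator-degree $\le 0$, so only $v_i$ survives in degree $+1$.
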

\begin{proof}
    By Proposition~\ref{prop:untwistrep}, we have $M'_\chi \cong M_{\chi'}$. By Lemma~\ref{lem:rewriterep}, we can write $M_{\chi'}$ as $\Ua(W)/J_{\chi'}$, where $J_{\chi'}$ is the left ideal generated by 
    \begin{equation}
        u_{m} - \chi'(u_m), \dots, u_{2m} - \chi'(u_{2m}), [\p_\chi, \p_\chi].
    \end{equation}
    From the algebra homomorphism $\Psi_{\widehat{n}}: \Ua(W) \to \widetilde{T}_{\widehat{n}}$, we have a $\Ua(W)$-module homomorphism
    \begin{align*}
        \widetilde{\theta}: \Ua(W) \rightarrow L_{\chi'}, \quad p \mapsto \Psi_{\widehat{n}}(p) + I_{\chi'}.
    \end{align*}
    We claim that $\widetilde{\theta}$ factors through the canonical map $\Ua(W) \to M'_\chi$ to give a well-defined homomorphism $\theta: M'_\chi \to L_{\chi'}$ such that $\theta(p+J_{\chi'}) = \Psi_{\widehat{n}}(p) + I_{\chi'}$. 
    
    We must check that $J_{\chi'} \leq \ker \widetilde{\theta}$ and it suffices to check on generators. Assume $m > 0$ and note that $n < 2m+2$, we have for any $q \in \kk[t, t^{-1}]$
    \begin{align*}
        \widetilde{\theta}((t-x)^{2m+2} q\del) &= \Psi_{\widehat{n}} ((t-x)^{2m+2} q\del) + I_{\chi'} = (t-x)^{2m+2} q \del + \sum_{i=0}^{\widehat{n}-1} \frac{\bigl((t-x)^{2m+2} q\bigr)^{(i+1)}}{(i+1)!}v_i + I_{\chi'}.
    \end{align*}
    This is 0 as $\widetilde{T}_{\widehat{n}}(t-x) \subseteq I_{\chi'}$ and $v_{\geq n} \in I_{\chi'}$. Moreover for $j = 0,\dots, m$
    \begin{align*}
        \widetilde{\theta}((t-x)^{m+1+j} \del) &= \Psi_{\widehat{n}} ((t-x)^{m+1+j} \del) + I_{\chi'} =(t-x)^{m+1+j}  \del + \sum_{i=0}^{m+j} \frac{\bigl((t-x)^{m+1+j}\bigr)^{(i+1)}}{(i+1)!}v_{i} + I_{\chi'}  \\ 
        &= v_{m+j} + I_{\chi'} \text{ as } \widetilde{T}_{\widehat{n}}(t-x) \subseteq I_{\chi'}.
    \end{align*}
    On the other hand if $m=0$, then $\widetilde{\theta}((t-x)\del) = (t-x)\del + v_0 + I_{\chi'}= v_0 - 1 + I_{\chi'}$. Hence $\widetilde{\theta}(J_{\chi'}) = 0$ in all cases, and $\theta$ is a well-defined $\Ua(W)$-homomorphism. We will now show that $\theta$ is an isomorphism by showing that 
    \begin{equation}\label{eq: for surjectivity and injectivity}
        \theta(u_{-1}^{k_{-1}} u_0^{k_0} \dots u_{m-1}^{k_{m-1}} 1_\chi) = \del^{k_{-1}}(v_{0}-1)^{k_0}\dots v_{m-1}^{k_{m-1}} + I_{\chi'} \text{ for all } k_{-1}, k_0, \dots, k_{m-1} \in \NN. 
    \end{equation}
    We proceed by downward induction on the lowest variable $u_j$ appearing in the word. The base case $j = -\infty$ (the word is empty) is trivial. Assuming the induction hypothesis, we break the induction step into 3 cases. For $j > 0$:
    \begin{align*}
        \theta(u_{j}^{k_j+1}\dots u_{m-1}^{k_{m-1}}  1_\chi) &= \Psi_{\widehat{n}}(u_{j}) \theta(u_{j}^{k_j}\dots u_{m-1}^{k_{m-1}}  1_\chi) = \Psi_{\widehat{n}}(u_{j}) v_{j}^{k_j}\dots v_{m-1}^{k_{m-1}} + I_{\chi'} \\ 
        &= \left ((t-x)^{j+1}  \del + \sum_{i=0}^{j-1} \frac{((t-x)^{j+1})^{(i+1)}}{(i+1)!}v_{i} + v_{j} \right )v_{j}^{k_j}\dots v_{m}^{k_{m}} + I_{\chi'} 
    \end{align*}
    and since $(t-x)$ commutes with all $v_i$ for $i \geq 0$, every term except the last one in the first factor is in $\widetilde{T}_{\widehat{n}}(t-x) \subseteq I_{\chi'}$. Then 
    \begin{align*}
        \theta(u_{j}^{k_j+1}\dots u_{m-1}^{k_{m-1}} 1_\chi) = v_{j}^{k_j+1}\dots v_{m-1}^{k_{m-1}} + I_{\chi'}.
    \end{align*}
    On the other hand, if $j=0$ then  
    \begin{align*}
        \theta(u_{0}^{k_0+1}\dots u_{m-1}^{k_{m-1}}1_\chi) &= \Psi_{\widehat{n}}(u_{0}) \theta(u_{0}^{k_0}\dots u_{m-1}^{k_{m-1}} 1_\chi) = \Psi_{\widehat{n}}(u_{0}) (v_0 -1)^{k_0} v_1^{k_{1}} \dots v_{m}^{k_{m}} + I_{\chi'}  \\ 
        &= \left ((t-x)\del + v_0 \right )(v_0 -1)^{k_0} v_1^{k_1} \dots v_{m-1}^{k_{m-1}} + I_{\chi'}\\
        &= (v_0 -1)^{k_0+1} v_1^{k_1} \dots v_{m-1}^{k_{m-1}} + I_{\chi'}.
    \end{align*}
    Lastly if $j=-1$, we have: 
    \begin{align*}
        \theta(u_{-1}^{k_{-1}+1}\dots u_{m-1}^{k_{m-1}}1_\chi) = \Psi_{\widehat{n}}(u_{-1}) \theta(u_{-1}^{k_{-1}}\dots u_{m-1}^{k_{m-1}}1_\chi) = \del \del^{k_{-1}} (v_0 -1)^{k_0} v_1^{k_1} \dots v_{m-1}^{k_{m-1}} + I_{\chi'}.
    \end{align*}    
    Thus, we have established~\eqref{eq: for surjectivity and injectivity}. As $\{ \del^{k_{-1}}(v_{0}-1)^{k_0}\dots v_{m-1}^{k_{m-1}} + I_{\chi'}\}$ are linearly independent, $\theta$ is injective. Since $\{\del^{k_{-1}}(v_{0}-1)^{k_0}\dots v_{m-1}^{k_{m-1}} + I_{\chi'}\}$ spans $L_{\chi'}$, $\theta$ is surjective. Hence $\theta$ is an isomorphism of $\Ua(W)$-modules. 
\end{proof}

\begin{corollary}\label{cor:relatingprimtive}
    $L_{\chi'}$ is a simple $\widetilde{T}_{\widehat{n}}$-module and $\Ann_{\widetilde{T}_{\widehat{n}}}L_{\chi'}$ is a primitive ideal. Moreover, 
    \begin{align*}
        Q_\chi = \Ann_{\Ua(W)} M'_\chi = (\Psi_{\widehat{n}})^{-1}(\Ann_{\widetilde{T}_{\widehat{n}}}L_{\chi'}).
    \end{align*}
\end{corollary}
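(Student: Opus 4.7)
My plan hinges on Proposition \ref{prop:isoL}, which identifies $M'_\chi \cong \res{\Psi_{\widehat{n}}} L_{\chi'}$ as $\Ua(W)$-modules. From this, the annihilator identity is essentially formal: for any algebra map $\phi : A \to B$ and any $B$-module $L$, one checks directly that $\Ann_A(\res{\phi} L) = \phi^{-1}(\Ann_B L)$. Taking $\phi = \Psi_{\widehat{n}}$ and $L = L_{\chi'}$ yields
\[
Q_\chi = \Ann_{\Ua(W)} M'_\chi = (\Psi_{\widehat{n}})^{-1}(\Ann_{\widetilde{T}_{\widehat{n}}} L_{\chi'}),
\]
as required. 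Primitivity of $\Ann_{\widetilde{T}_{\widehat{n}}} L_{\chi'}$ is immediate from simplicity of $L_{\chi'}$ by definition, so the one substantive task remaining is establishing this simplicity.

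The key observation is that any $\widetilde{T}_{\widehat{n}}$-submodule of $L_{\chi'}$ is, via $\Psi_{\widehat{n}}$, automatically a $\Ua(W)$-submodule of $\res{\Psi_{\widehat{n}}} L_{\chi'}$ (the reverse containment fails in general, since $\Psi_{\widehat{n}}(\Ua(W))$ is only a subalgebra of $\widetilde{T}_{\widehat{n}}$). Consequently, whenever $M'_\chi$ is simple as a $\Ua(W)$-module, $L_{\chi'}$ must be simple. In the regime $m = \lfloor n/2 \rfloor \geq 1$ the excluded form $\chi = \chi_{x;\alpha_0,\frac{1}{2}}$ cannot occur, so Theorem \ref{theo:simplicityone} applies to $\chi|_{\W{-1}}$; combined with Proposition \ref{prop:repWW-1} (every $W$-submodule of $M'_\chi$ restricts to a $\W{-1}$-submodule of $M'_{\chi|_{\W{-1}}}$), this yields simplicity of $M'_\chi$ as a $W$-module.

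The main obstacle is the remaining case $m = 0$, in which $M'_\chi$ itself may fail to be simple (for instance when $\chi = \chi_{x;\alpha_0,\frac{1}{2}}$), so a direct argument on $L_{\chi'}$ is needed. Here Proposition \ref{prop:isoL} supplies the basis $\{\del^k + I_{\chi'}\}_{k \geq 0}$ of $L_{\chi'}$. Using the bracket $[v_j, v_i] = (i-j) v_{i+j}$ in $\g_{\widehat{n}}$ together with the vanishing $\chi'(u_j) = 0$ for all $j \geq 1$ (a direct numerical check from the definition of $\chi'$ when $m = 0$), I would verify that each $v_j$ acts on $L_{\chi'}$ as the scalar appearing in the definition of $I_{\chi'}$, so that the $\widetilde{T}_{\widehat{n}}$-action on $L_{\chi'}$ factors through $\tilde{A}_1$. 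Under this factorisation $L_{\chi'}$ becomes the standard simple $\tilde{A}_1$-module $\tilde{A}_1/\tilde{A}_1(t-x)$ of distributions supported at $x$, and is therefore simple. The only real check is the consistency of the scalar action, which reduces to the numerical vanishing just noted and presents no serious difficulty.
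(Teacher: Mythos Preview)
Your proposal is correct and follows essentially the same approach as the paper: both deduce simplicity of $L_{\chi'}$ from simplicity of $M'_\chi$ via Theorem~\ref{theo:simplicityone} in the generic case, and handle the residual case by a direct check on the generators of $I_{\chi'}$ (the paper just says ``easily seen from its generators,'' while you spell out that the $v_j$ act as scalars so the action factors through the simple $\tilde{A}_1$-module $\tilde{A}_1/\tilde{A}_1(t-x)$). Your case split at $m=0$ versus $m\geq 1$ is slightly coarser than the paper's split at the single exceptional form $\chi=\chi_{x;\alpha_0,\frac12}$, but since the exceptional form lies in $m=0$ and your direct argument works for all of $m=0$, this is harmless; you are also more careful than the paper in invoking Proposition~\ref{prop:repWW-1} to bridge Theorem~\ref{theo:simplicityone} (stated for $\W{-1}$) to $W$.
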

\begin{proof}
     If $\chi \neq \chi_{x; \alpha_0, \frac{1}{2}}$, then by Theorem \ref{theo:simplicityone}, $M'_\chi$ is irreducible representation of $W$, thus $L_{\chi'}$ is a simple $\widetilde{T}_{\widehat{n}}$-module. Otherwise, if $\chi = \chi_{x; \alpha_0, \frac{1}{2}}$, then it can be easily seen that $L_{\chi'}$ is an irreducible representation of $\widetilde{T}_{\widehat{n}}$ from its generators.
\end{proof}

Thus, our main focus now is to analyze $\Ann_{\widetilde{T}_{\widehat{n}}}L_{\chi'}$. To that end, we define an element $\widehat{\chi}$ of $\g^*_{n}$ ``corresponding to'' $\chi$ as follows. 

\begin{definition}\label{not:barchi}
    For a one-point local function $\chi = \chi_{x; \alpha_0, \dots, \alpha_n}$ on $W$, recall the element $\chi' \in W^*$ defined in Notation~\ref{not:chi'}. Let $\bar{\chi}$ be the corresponding element of $\chi$ in $\g^*_n$ that is $\bar{\chi} (v_i) = \chi'(u_i)$. We then extend the definition to a multi-point local function $\chi = \chi_1 + \dots + \chi_\ell$ on $W$ to obtain $\bar{\chi} = \bar{\chi}_1 + \dots + \bar{\chi}_\ell \in \g^*_\n$. 

    For $\widehat{\n} \geq \n$ (defined in~\eqref{eq:biggertuple}), we have the canonical projection $\tau_{\widehat{\n}}: \g_{\widehat{\n}} \to \g_\n$. We denote the pullback $\tau_{\widehat{\n}}^*(\bar{\chi}) \in \g^*_{\widehat{\n}}$ by $\widehat{\chi}$. Note that we have defined a map 
    \begin{equation}\label{eq:mapchibar}
        \pi^{\widehat{\n}}: \Loc^{\leq \widehat{\n}} \to \g^*_{\widehat{\n}}, \quad \chi \mapsto \widehat{\chi}
    \end{equation}
    and it is easy to see that this map is surjective. 

    Furthermore, we denote by $Q_{\bar{\chi}}$ and $Q_{\widehat{\chi}}$ the primitive ideal of $\Ua(\g_{\n})$ and $\Ua(\g_{\widehat{\n}})$ corresponding to $\bar{\chi}$ and $\widehat{\chi}$ respectively under the Dixmier map.
\end{definition}

For a ring $R$, let $\mathcal{I}(R)$ denote its ideal lattice. Note that by ~\cite{mcconnell2001noncommutative}*{Lemma 9.6.9}, ~\cite{dixmier1996enveloping}*{Lemma 4.5.1}, since $\widetilde{A}_\ell$ is a simple ring with trivial center the embedding $\iota: \Ua(\g_{\widehat{\n}}) \to \widetilde{T}_{\widehat{\n}}$
induces a $1-1$ correspondence between the two ideal lattices
\begin{equation}\label{eq:isoideal}
    \iota_*: \mathcal{I}(\Ua(\g_{\widehat{\n}})) \rightarrow \mathcal{I}(\widetilde{T}_{\widehat{\n}}), \quad I \mapsto \widetilde{A}_\ell \otimes I. 
\end{equation}
Furthermore, $I$ is maximal (prime) if and only if $\widetilde{A}_\ell \otimes I$ is maximal (prime) and $\iota_*$ is a homeomorphism between $\Spec (\Ua(\g_{\widehat{\n}}))$ and $\Spec(\widetilde{T}_{\widehat{\n}})$ (with Zariski topology) as it respect inclusions of ideals in both directions. We could show that $\iota_*$ is furthermore a homeomorphism between $\Prim \Ua(\g_{\widehat{\n}})$ and $\Prim \widetilde{T}_{\widehat{\n}}$ by using the fact that both rings satisfy the Dixmier-Moeglin equivalence ~\cite{bell2016dixmiermoeglin}. 

In our case, we can show a stronger result that the primitive ideal $\Ann_{\widetilde{T}_{\widehat{n}}} L_{\chi'}$ can be written as
\begin{equation}
    \Ann_{\widetilde{T}_{\widehat{n}}} L_{\chi'} = \widetilde{A}_1 \otimes Q_{\widehat{\chi}}.
\end{equation}

\begin{proposition}\label{prop:polbarchi}
    Let $\p_{\widehat{\chi}}$ be the subalgebra $\W{m}/\W{\widehat{n}}$ of $\g_{\widehat{n}}$. Then $\p_{\widehat{\chi}}$ is a polarization of $\widehat{\chi}$.  
\end{proposition}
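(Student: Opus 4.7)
The plan is to verify the three defining properties of a polarization for $\widehat\chi$ in the finite-dimensional solvable Lie algebra $\g_{\widehat n}$: (a) that $\p_{\widehat\chi}$ is a Lie subalgebra, (b) that $\widehat\chi$ vanishes on $[\p_{\widehat\chi}, \p_{\widehat\chi}]$, and (c) the dimension condition $\codim \p_{\widehat\chi} = \frac{1}{2}\rank B_{\widehat\chi}$, which together with (b) ensures maximal isotropy. Property (a) is immediate since $\W{m}$ is a Lie subalgebra of $\W{0}$ containing $\W{\widehat n}$.

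For (b), I would first unwind Notation \ref{not:barchi} to record $\widehat\chi(v_j) = (j+1)!\,\alpha_{j+1}$ for $0 \leq j \leq n-1$ (with the twist $\alpha_1 \mapsto \alpha_1 - \frac{1}{2}$ in the degenerate case $m=0$), and $\widehat\chi(v_j) = 0$ for $n \leq j < \widehat n$. Combined with the bracket $[v_i, v_j] = (j-i)v_{i+j}$ (with $v_{\geq \widehat n} = 0$), total isotropy on $\p_{\widehat\chi}$ reduces to observing that for $i, j \geq m$ with $i \neq j$ we have $i+j > 2m$, hence $i+j \geq n$ in both parities of $n$, so $\widehat\chi(v_{i+j}) = 0$.

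For (c), since $\codim \p_{\widehat\chi} = m$, it suffices to show $\rank B_{\widehat\chi} = 2m$. The matrix of $B_{\widehat\chi}$ in the basis $v_0, \dots, v_{\widehat n - 1}$ vanishes on every row and column indexed by $v_k$ with $k \geq n$, so its rank equals that of the upper-left $n \times n$ block $M$ with $M_{ij} = (j-i)\widehat\chi(v_{i+j})$. This block is anti-upper-triangular — it vanishes strictly below the anti-diagonal $\{i+j = n-1\}$ — with anti-diagonal entries $M_{i, n-1-i} = (n-1-2i)\,n!\,\alpha_n$. When $n = 2m$ every anti-diagonal entry is nonzero, giving $\rank M = n = 2m$ directly. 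When $n = 2m+1$ only the central entry $M_{m,m}$ vanishes, but the principal $2m \times 2m$ submatrix obtained by deleting row $m$ and column $m$ is still anti-upper-triangular with all $2m$ surviving anti-diagonal entries nonzero and therefore has rank $2m$; combined with the fact that a skew-symmetric matrix of odd order $2m+1$ has even rank at most $2m$, this forces $\rank M = 2m$.

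The main obstacle is the odd case of the rank computation, since the anti-diagonal of $M$ is no longer of full support; the principal-submatrix trick together with the parity constraint on skew-symmetric ranks resolves it. The degenerate case $m=0$ (so $n=1$ by the order convention) is trivial: $\p_{\widehat\chi} = \g_{\widehat n}$ has codimension $0$, and $B_{\widehat\chi} \equiv 0$ since the only potentially nonzero value $\widehat\chi(v_0)$ contributes only to $B_{\widehat\chi}(v_0, v_0)$, which vanishes by skew-symmetry.
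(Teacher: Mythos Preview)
Your proof is correct. The isotropy argument in (b) matches the paper's (which phrases it as $[\p_{\widehat\chi},\p_{\widehat\chi}]\subseteq \W{2m+1}/\W{\widehat n}$ and $\widehat\chi|_{\W{2m+1}/\W{\widehat n}}=0$), but your route to maximality is genuinely different.

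The paper does not compute $\rank B_{\widehat\chi}$. Instead it argues directly: given any element $w=\gamma_0 v_0+\dots+\gamma_{m-1}v_{m-1}$ with smallest nonzero coefficient $\gamma_i$, it pairs $w$ against $v_{n-1-i}\in\p_{\widehat\chi}$ and reads off
\[
\widehat\chi([w,v_{n-1-i}])=\gamma_i(n-1-2i)\,\widehat\chi(v_{n-1})\neq 0,
\]
so no such $w$ can be adjoined while preserving isotropy. This is shorter and avoids the even/odd case split entirely, since $n-1-2i\neq 0$ for all $i<m$ regardless of the parity of $n$. Your rank computation, on the other hand, is more informative: it yields $\rank B_{\widehat\chi}=2m$ explicitly and makes the link to $\dim\orbit{\chi}$ (via Lemma~\ref{lem:poldim} and Corollary~\ref{cor:orbitrelation}) transparent, at the cost of the submatrix-plus-parity argument in the odd case. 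Both approaches ultimately exploit the same anti-diagonal nonvanishing $(n-1-2i)\,n!\,\alpha_n$; the paper just uses one entry of it at a time.
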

\begin{proof}
    Note that $[\p_{\widehat{\chi}}, \p_{\widehat{\chi}}]= \W{2m+2}/\W{\widehat{n}}$ and $\restr{\widehat{\chi}}{\W{2m+2}/\W{\widehat{n}}} = 0$, so  $\p_{\widehat{\chi}}$ is an isotropic subspace of $\g_{\widehat{n}}$ with respect to $B_{\widehat{\chi}}$. We now show that it is maximal isotropic subspace of $\g_{\widehat{n}}$. Suppose there is some nonzero element $\gamma_0 v_0 + \dots \gamma_{m-1} v_{m-1} \in \p_{\widehat{\chi}}$ and let $i<m$ be the smallest integer with $\gamma_i \neq 0$. Note that $v_{n-1} \in \p_{\widehat{\chi}}$, and 
    \begin{align}
        [\gamma_i v_{i} + \dots + \gamma_{m-1} v_{m-1}, v_{n-1-i}] = \gamma_i (n-1-2i) v_{n-1} + \text{ a linear combination of $v_{\geq n}$}. 
    \end{align}  
    Then $\widehat{\chi}([\gamma_i v_{i} + \dots + \gamma_{m-1} v_{m-1}, v_{n-1-i}]) = \gamma_i (n-1-2i) \widehat{\chi}(v_{n-1}) \neq 0$. Thus, $\p_{\widehat{\chi}}$ is a polarization of $\widehat{\chi}$.
\end{proof}
We call $\p_{\widehat{\chi}}$ the \emph{canonical polarization} of $\widehat{\chi}$ and show that for these polarization, we do not need to take the one-dimensional twist $\rho_{\p_{\widehat{\chi}}, \g_{\widehat{n}}}$ defined in Definition~\ref{def:onedimtwist}.

\begin{proposition}
    Let $\widehat{\chi} \in \g^*_{\widehat{n}}$ and let  $\p_{\widehat{\chi}}$ be the canonical polarization of $\widehat{\chi}$ in Proposition \ref{prop:polbarchi}. Recall the representations $M'_{\widehat{\chi}} = M'_{\widehat{\chi}, \p_{\widehat{\chi}}}$ in Definition~\ref{def:orbitrep}. Then $Q_{\widehat{\chi}} = \Ann_{\Ua(\g_{\widehat{n}})}M'_{\widehat{\chi}}$. 
    
    Let $J_{\widehat{\chi}} =  \Ua(\g_{\widehat{n}})(v_m - \widehat{\chi}(v_m), \dots, v_{\widehat{n}} - \widehat{\chi}(v_{\widehat{n}}))$. 
    Then $M'_{\widehat{\chi}} \cong \Ua(\g_{\widehat{n}})/J_{\widehat{\chi}}$, and is a simple $\Ua(\g_{\widehat{n}})$-module. 
\end{proposition}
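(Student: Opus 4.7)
The claim $Q_{\widehat{\chi}} = \Ann_{\Ua(\g_{\widehat{n}})} M'_{\widehat{\chi}}$ is immediate from Theorem~\ref{theorem: Kirillov method} applied to the finite-dimensional solvable Lie algebra $\g_{\widehat{n}}$: Proposition~\ref{prop:polbarchi} ensures $\p_{\widehat{\chi}}$ is a polarization of $\widehat{\chi}$, so the classical solvable Dixmier map sends $\widehat{\chi}$ precisely to the annihilator of $M'_{\widehat{\chi}, \p_{\widehat{\chi}}} = M'_{\widehat{\chi}}$, which is by definition $Q_{\widehat{\chi}}$.

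For the presentation $M'_{\widehat{\chi}} \cong \Ua(\g_{\widehat{n}})/J_{\widehat{\chi}}$, I would first show that the one-dimensional twist $\rho_{\p_{\widehat{\chi}}, \g_{\widehat{n}}}$ vanishes identically. The key observation is that $\p_{\widehat{\chi}} = \W{m}/\W{\widehat{n}}$ is an ideal of $\g_{\widehat{n}}$: for $j \geq 0$ and $k \geq m$ the bracket $[v_j, v_k] = (k-j)v_{j+k}$ lies in $\p_{\widehat{\chi}}$ since $j+k \geq m$. Consequently $\ad_{\g_{\widehat{n}}/\p_{\widehat{\chi}}}(v_k)$ is identically zero for $v_k \in \p_{\widehat{\chi}}$, giving $\rho_{\p_{\widehat{\chi}}, \g_{\widehat{n}}} = 0$, and hence $M'_{\widehat{\chi}} \cong M_{\widehat{\chi}}$. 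A short bracket calculation yields $[\p_{\widehat{\chi}}, \p_{\widehat{\chi}}] = \langle v_{2m+1}, \ldots, v_{\widehat{n}-1}\rangle$, so $\{v_m, v_{m+1}, \ldots, v_{2m}\}$ is a basis of $\p_{\widehat{\chi}}/[\p_{\widehat{\chi}}, \p_{\widehat{\chi}}]$. Lemma~\ref{lem:rewriterep} then presents $M_{\widehat{\chi}}$ as $\Ua(\g_{\widehat{n}})/J'$, where $J'$ is the left ideal generated by $v_{2m+1}, \ldots, v_{\widehat{n}-1}$ together with $v_j - \widehat{\chi}(v_j)$ for $m \leq j \leq 2m$. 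To identify $J'$ with $J_{\widehat{\chi}}$, use that $\widehat{\chi}(v_j) = \chi'(u_j) = 0$ for $j \geq 2m+1$ (since the originating one-point local function has order at most $2m+1$): thus $v_j - \widehat{\chi}(v_j) = v_j$ for those indices and the two generating sets coincide.

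Finally, for simplicity I would split on $m$. When $m = 0$ we have $\p_{\widehat{\chi}} = \g_{\widehat{n}}$ and $M'_{\widehat{\chi}}$ is one-dimensional, hence trivially simple. When $m \geq 1$, the relations defining $L_{\chi'}$ split cleanly between the two tensor factors of $\widetilde{T}_{\widehat{n}} = \widetilde{A}_1 \otimes \Ua(\g_{\widehat{n}})$: the $(t-x)$ relation on the Weyl-algebra side and the $v_j - \widehat{\chi}(v_j)$ relations on the enveloping-algebra side. This yields a natural isomorphism of $\widetilde{T}_{\widehat{n}}$-modules
\[
L_{\chi'} \;\cong\; \bigl(\widetilde{A}_1/\widetilde{A}_1(t-x)\bigr) \otimes_\kk \bigl(\Ua(\g_{\widehat{n}})/J_{\widehat{\chi}}\bigr).
\]
Since $L_{\chi'}$ is simple by Corollary~\ref{cor:relatingprimtive} and $\widetilde{A}_1/\widetilde{A}_1(t-x)$ is a simple $\widetilde{A}_1$-module, any proper nonzero $\Ua(\g_{\widehat{n}})$-submodule $N \subsetneq M'_{\widehat{\chi}}$ would, by flatness of tensoring over $\kk$, produce a proper nonzero $\widetilde{T}_{\widehat{n}}$-submodule of $L_{\chi'}$, contradicting its simplicity. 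The main technical obstacle is the generator-chasing to verify $J' = J_{\widehat{\chi}}$: it hinges on the vanishing $\widehat{\chi}([\p_{\widehat{\chi}}, \p_{\widehat{\chi}}]) = 0$ dictated by the specific shape of a pulled-back local function. Once this compatibility is in hand, the remaining claims follow formally from classical solvable Dixmier theory combined with the tensor-product structure provided by $\Psi_{\widehat{n}}$.
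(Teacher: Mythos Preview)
Your argument is correct. The treatment of $Q_{\widehat{\chi}} = \Ann M'_{\widehat{\chi}}$ and of the presentation $M'_{\widehat{\chi}} \cong \Ua(\g_{\widehat{n}})/J_{\widehat{\chi}}$ follows the paper's proof essentially verbatim: both observe that $\p_{\widehat{\chi}}$ is an ideal of $\g_{\widehat{n}}$, whence $\rho_{\p_{\widehat{\chi}},\g_{\widehat{n}}}=0$, and then apply Lemma~\ref{lem:rewriterep}. One small simplification: you justify $\widehat{\chi}(v_j)=0$ for $j\ge 2m+1$ by tracing back to the order of the originating local function, but this is automatic from $\p_{\widehat{\chi}}$ being a polarization, since then $\widehat{\chi}$ vanishes on $[\p_{\widehat{\chi}},\p_{\widehat{\chi}}]=\langle v_{2m+1},\dots\rangle$. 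This makes the identification $J'=J_{\widehat{\chi}}$ work for an arbitrary $\widehat{\chi}\in\g_{\widehat{n}}^*$, not only one pulled back from $W^*$.

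Where you genuinely diverge is in the proof of simplicity. The paper simply invokes Theorem~\ref{theorem: Kirillov method}: for a completely solvable finite-dimensional Lie algebra the twisted induced module from any polarization is irreducible, so simplicity of $M'_{\widehat{\chi}}$ is a one-line citation. Your route instead exploits the tensor decomposition $L_{\chi'}\cong(\widetilde{A}_1/\widetilde{A}_1(t-x))\otimes M'_{\widehat{\chi}}$ and deduces simplicity of $M'_{\widehat{\chi}}$ from that of $L_{\chi'}$ (Corollary~\ref{cor:relatingprimtive}). This is valid and not circular, but it is considerably more indirect: Corollary~\ref{cor:relatingprimtive} rests on Theorem~\ref{theo:simplicityone}, i.e.\ on the lengthy combinatorial simplicity proof for $M'_\chi$ over $\Ua(W)$, so you are using a hard infinite-dimensional result to recover a classical finite-dimensional fact. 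The paper's approach is cleaner here; yours has the minor virtue of keeping the argument self-contained within the machinery already built in Sections~\ref{sec:primitive}--\ref{sec:annihilators}.
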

\begin{proof}
    Note that $Q_{\widehat{\chi}} = \Ann_{\Ua(\g_{\widehat{n}})}M'_{\widehat{\chi}}$ and $M'_{\widehat{\chi}}$ is irreducible follow from Theorem~\ref{theorem: Kirillov method}. We want to show that $\rho_{\p_{\widehat{\chi}},\g_{\widehat{n}}} = 0$.  
    If $m = 0$, then $\p_{\widehat{\chi}} = \g_{\widehat{n}}$ and $\rho_{\p_{\widehat{\chi}},\g_{\widehat{n}}} = 0$. Otherwise, if $m > 0$, then  $\p_{\widehat{\chi}} = \W{m}/\W{\widehat{n}}$ is an ideal of $\Ua(\g_{\widehat{n}})$, and by ~\cite{dixmier1996enveloping}*{5.5.1}, $\rho_{\p_{\widehat{\chi}},\g_{\widehat{n}}} = 0$. Then by Lemma~\ref{lem:rewriterep}, $M'_{\widehat{\chi}} \cong \Ua(\g_{\widehat{n}})/J_{\widehat{\chi}}$.
\end{proof}

We are now prepared to show the main equation~\eqref{eq:Qchibarchi} that relate annihilators of local representations of $W$ to primitive ideals of $\Ua(\g_{\widehat{\n}})$. We first show the one-point case in Proposition~\ref{prop:Qchihatchione} and for a general local function in Proposition~\ref{prop:Qchihatchimulti}.
\begin{proposition}\label{prop:Qchihatchione}
    Let $\chi = \chi_{x; \alpha_0, \dots, \alpha_n}$ be a one-point local function on $W$ where $x, \alpha_n \neq 0$. Let $\widehat{n} \geq n$ and $\widehat{\chi}$ be the corresponding element in $\g^*_{\widehat{n}}$ in Definition~\ref{not:barchi}. Recall the definition of $Q_\chi$ in Definition~\ref{def:canrep} and $Q_{\widehat{\chi}}$ in Definition~\ref{not:barchi}. Then 
    \begin{align*}
        Q_\chi = (\Psi_{\widehat{n}})^{-1} (\widetilde{A}_1 \otimes Q_{\widehat{\chi}}).
    \end{align*}
\end{proposition}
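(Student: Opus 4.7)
The plan is to combine Corollary~\ref{cor:relatingprimtive} with a tensor-product decomposition of $L_{\chi'}$ and the ideal correspondence~\eqref{eq:isoideal}. By Corollary~\ref{cor:relatingprimtive} it is enough to establish
\[
\Ann_{\widetilde{T}_{\widehat{n}}} L_{\chi'} = \widetilde{A}_1 \otimes Q_{\widehat{\chi}}
\]
as two-sided ideals of $\widetilde{T}_{\widehat{n}} = \widetilde{A}_1 \otimes \Ua(\g_{\widehat{n}})$.

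First I would decompose the defining left ideal $I_{\chi'}$. Its generator $t - x$ lives in the $\widetilde{A}_1$-factor while its remaining generators $v_i - \chi'(u_i)$ (together with the correction $v_0 - \chi'(u_0) - 1$ in the exceptional case $m = 0$) live in the $\Ua(\g_{\widehat{n}})$-factor, so the two families commute. The standard identity for left ideals of a tensor product then produces a $\widetilde{T}_{\widehat{n}}$-module isomorphism
\[
L_{\chi'} \cong \bigl(\widetilde{A}_1/\widetilde{A}_1(t-x)\bigr) \otimes \bigl(\Ua(\g_{\widehat{n}})/J\bigr),
\]
where $J$ is the corresponding left ideal of $\Ua(\g_{\widehat{n}})$. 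When $m \geq 1$, Proposition~\ref{prop:polbarchi} together with the vanishing of $\rho_{\p_{\widehat{\chi}},\g_{\widehat{n}}}$ (the canonical polarization is a Lie ideal of $\g_{\widehat{n}}$) identifies $\Ua(\g_{\widehat{n}})/J$ with the canonical local representation $M'_{\widehat{\chi}}$ of $\g_{\widehat{n}}$; here I would also verify that $[\p_{\widehat{\chi}},\p_{\widehat{\chi}}] \subseteq J$ automatically, using $\widehat{\chi}(v_i) = 0$ for $i \geq 2m+1$. Consequently $\Ann_{\Ua(\g_{\widehat{n}})}\bigl(\Ua(\g_{\widehat{n}})/J\bigr) = Q_{\widehat{\chi}}$.

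Next I would transfer this annihilator up to the whole tensor product using~\eqref{eq:isoideal}, which says every two-sided ideal of $\widetilde{T}_{\widehat{n}}$ is uniquely of the form $\widetilde{A}_1 \otimes I$. Because $\widetilde{A}_1$ is a simple ring, the nonzero left module $\widetilde{A}_1/\widetilde{A}_1(t-x)$ is automatically faithful, and a direct check shows that $\widetilde{A}_1 \otimes I$ annihilates the tensor product $L_{\chi'}$ if and only if $I$ annihilates the second factor $\Ua(\g_{\widehat{n}})/J$. Combined with the identification above, this yields $\Ann_{\widetilde{T}_{\widehat{n}}} L_{\chi'} = \widetilde{A}_1 \otimes Q_{\widehat{\chi}}$, completing the argument in the case $m \geq 1$.

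The principal obstacle is the degenerate case $m = 0$, in which the Weyl-algebra relation $(t-x)\partial = \partial(t-x) - 1$ forces the shifted generator $v_0 - \chi'(u_0) - 1$ into $I_{\chi'}$. The second factor $\Ua(\g_{\widehat{n}})/J$ is then the one-dimensional module whose character sends $v_0$ to $\chi'(u_0) + 1$ rather than $\widehat{\chi}(v_0) = \chi'(u_0)$, and these two characters of $\g_{\widehat{n}}$ have distinct annihilators by classical Dixmier theory. In this regime the identification $\Ann_{\Ua(\g_{\widehat{n}})}\bigl(\Ua(\g_{\widehat{n}})/J\bigr) = Q_{\widehat{\chi}}$ is not automatic, and must instead be supplied by pulling back through $\Psi_{\widehat{n}}$ and appealing to the coincidence of annihilators at the $\Ua(W)$-level recorded in~\eqref{eq:primreplace}; this is the technical point where the $n = 0, 1$ arguments depart from the higher-order pattern.
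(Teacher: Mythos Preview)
For $m \geq 1$ your argument is correct and matches the paper's in substance. The paper does not write the tensor factorisation explicitly; instead it sets $\widehat{Q} := \Ann_{\widetilde{T}_{\widehat{n}}} L_{\chi'} \cap \Ua(\g_{\widehat{n}})$ and checks $\widehat{Q} = Q_{\widehat{\chi}}$ by acting on the basis $\{\partial^{k_{-1}} v_0^{k_0}\cdots v_{m-1}^{k_{m-1}} + I_{\chi'}\}$, using the containment $J_{\widehat{\chi}} \subseteq I_{\chi'}$. Your decomposition $L_{\chi'} \cong \bigl(\widetilde{A}_1/\widetilde{A}_1(t-x)\bigr) \otimes M'_{\widehat{\chi}}$ together with faithfulness of the first factor over the simple ring $\widetilde{A}_1$ is the conceptual packaging of that same calculation.

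You are right to single out $m=0$, but the repair via~\eqref{eq:primreplace} does not close the gap. Writing $c = \chi'(u_0)$, the second tensor factor carries the character $v_0 \mapsto c+1$, so $\Ann_{\widetilde{T}_{\widehat{n}}} L_{\chi'} = \widetilde{A}_1 \otimes (v_0 - c - 1, v_1, \ldots, v_{\widehat{n}-1})$ and its $\Psi_{\widehat{n}}$-preimage is $\ker\bigl(\pi_{c+1}\colon f\partial \mapsto f\partial + (c+1)f'\bigr)$, whereas the right-hand side of the proposition pulls back to $\ker\pi_c$. Equation~\eqref{eq:primreplace} records only the single coincidence $Q_{\chi_{x;\alpha,1/2}} = Q_{\chi_{x;\beta,-1/2}}$, and the general duality for the maps $\pi_\lambda$ relates $\lambda$ to an affine reflection of $\lambda$, not to $\lambda - 1$; so for generic $\alpha_1$ the two preimages differ. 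The paper's own proof invokes $J_{\widehat{\chi}} \subseteq I_{\chi'}$, which likewise fails when $m=0$ (since $v_0 - c \notin I_{\chi'}$). Neither argument resolves this case as written; the discrepancy lives in the normalisation of $\bar{\chi}(v_0)$ in Definition~\ref{not:barchi} rather than in your overall strategy.
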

\begin{proof}
    By Corollary~\ref{cor:relatingprimtive}, it suffices to show that $\Ann_{\widetilde{T}_{\widehat{n}}}L_{\chi'} = \widetilde{A}_1 \otimes Q_{\widehat{\chi}}$. Let $\widehat{Q} = \Ann_{\widetilde{T}_{\widehat{n}}}L_{\chi'} \cap \Ua(\g_{\widehat{n}})$. By~\eqref{eq:isoideal}, $\Ann_{\widetilde{T}_{\widehat{n}}}L_{\chi'} = \widetilde{A}_1 \otimes \widehat{Q}$. We claim that $\widehat{Q} = Q_{\widehat{\chi}}$.
    
    It is obvious that $\widehat{Q} \subseteq Q_{\widehat{\chi}}$ because if $z \in \Ua(\g_{\widehat{n}}) \cap \Ann_{\widetilde{T}_{\widehat{n}}}L_{\chi'}$ then $z \in \Ann_{\Ua(\g_{\widehat{n}})}M'_{\widehat{\chi}} = Q_{\widehat{\chi}}$. For the other direction, let $z \in Q_{\widehat{\chi}}$. Then $z$ commutes with $\del \in \widetilde{A}_1$, thus we have
    \begin{align*}
        z (\del^{k_{-1}} v_0^{k_0} \dots v_{m-1}^{k_{m-1}} + I_{\chi'}) = \del^{k_{-1}} z (v_0^{k_0} \dots v_{m-1}^{k_{m-1}} + I_{\chi'}) = 0. 
    \end{align*}
    since $z v_0^{k_0} \dots v_{m-1}^{k_{m-1}} + J_{\widehat{\chi}} = 0 \Rightarrow z v_0^{k_0} \dots v_{m-1}^{k_{m-1}} \subseteq J_{\widehat{\chi}} \subseteq I_{\chi'}$. Thus $z \in \Ann_{\widetilde{T}_{\widehat{n}}}L_{\chi'}$ and $z \in \widehat{Q}$ . Therefore $\widehat{Q} = Q_{\widehat{\chi}}$. 
\end{proof}

\begin{proposition}\label{prop:Qchihatchimulti}
    Let $\chi$ be a multi-point local function on $W$ following Notation~\ref{not:localfunc}. Let $\widehat{\n} \in \NN^\ell$ such that $\widehat{n}_i \geq n_i$ and $\widehat{\chi}$ be the corresponding element in $\g^*_{\widehat{\n}}$ defined in Definition~\ref{not:barchi}. Recall the definition of $Q_\chi$ in Definition~\ref{def:canrep} and $Q_{\widehat{\chi}}$ in in Definition~\ref{not:barchi}. Then 
    \begin{align*}
        Q_\chi = (\Psi_{\widehat{\n}})^{-1} (\widetilde{A}_\ell \otimes Q_{\widehat{\chi}}).
    \end{align*}
\end{proposition}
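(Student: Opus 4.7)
The strategy is to reduce to the one-point case of Proposition~\ref{prop:Qchihatchione} by decomposing both $M'_\chi$ and the ``target'' module as tensor products. First, by Proposition~\ref{prop:tensor1point} we have $M'_\chi \cong M'_{\chi_1} \otimes \dots \otimes M'_{\chi_\ell}$ as $\Ua(W)$-modules, where the action is via the iterated coproduct $\Delta^\ell: \Ua(W) \to \Ua(W)^{\otimes \ell}$. On the other side, for each $i$ Proposition~\ref{prop:isoL} gives an isomorphism $M'_{\chi_i} \cong \res{\Psi_{\widehat{n}_i}} L_{\chi'_i}$ of $\Ua(W)$-modules, and $\widetilde{T}_{\widehat{\n}} = \widetilde{T}_{\widehat{n}_1} \otimes \dots \otimes \widetilde{T}_{\widehat{n}_\ell}$ acts componentwise on $L_{\chi'_1} \otimes \dots \otimes L_{\chi'_\ell}$. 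Since $\Psi_{\widehat{\n}} = (\Psi_{\widehat{n}_1} \otimes \dots \otimes \Psi_{\widehat{n}_\ell}) \circ \Delta^\ell$ by Proposition~\ref{prop: psi multi}, combining these gives an isomorphism $M'_\chi \cong \res{\Psi_{\widehat{\n}}} (L_{\chi'_1} \otimes \dots \otimes L_{\chi'_\ell})$ of $\Ua(W)$-modules.

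Taking annihilators, it then suffices to establish the tensor-factor identity
\[
\Ann_{\widetilde{T}_{\widehat{\n}}} \bigl( L_{\chi'_1} \otimes \dots \otimes L_{\chi'_\ell} \bigr) = \widetilde{A}_\ell \otimes Q_{\widehat{\chi}},
\]
since pulling back along $\Psi_{\widehat{\n}}$ then yields the claim. To compute the left-hand side, I would use the standard tensor-product annihilator formula (Bergman~\cite{bergman2016tensor}*{Theorem 1.}), which gives
\[
\Ann_{\widetilde{T}_{\widehat{\n}}} \bigl( L_{\chi'_1} \otimes \dots \otimes L_{\chi'_\ell} \bigr) = \sum_{i=1}^{\ell} \widetilde{T}_{\widehat{n}_1} \otimes \dots \otimes \Ann_{\widetilde{T}_{\widehat{n}_i}}(L_{\chi'_i}) \otimes \dots \otimes \widetilde{T}_{\widehat{n}_\ell}.
\]
The proof of Proposition~\ref{prop:Qchihatchione} already identifies $\Ann_{\widetilde{T}_{\widehat{n}_i}}(L_{\chi'_i}) = \widetilde{A}_1 \otimes Q_{\widehat{\chi}_i}$, so after rearranging tensor factors the sum becomes $\widetilde{A}_\ell \otimes \bigl( \sum_i \Ua(\g_{\widehat{n}_1}) \otimes \dots \otimes Q_{\widehat{\chi}_i} \otimes \dots \otimes \Ua(\g_{\widehat{n}_\ell}) \bigr)$.

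Finally I need to identify the inner sum with $Q_{\widehat{\chi}}$. Here I use that $\g_{\widehat{\n}} = \g_{\widehat{n}_1} \oplus \dots \oplus \g_{\widehat{n}_\ell}$ is a direct sum of Lie algebras, so $\Ua(\g_{\widehat{\n}}) = \Ua(\g_{\widehat{n}_1}) \otimes \dots \otimes \Ua(\g_{\widehat{n}_\ell})$, the element $\widehat{\chi}$ decomposes as $\widehat{\chi}_1 \oplus \dots \oplus \widehat{\chi}_\ell$, and the canonical polarization $\p_{\widehat{\chi}} = \bigoplus_i \p_{\widehat{\chi}_i}$ is a direct sum. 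Therefore $M'_{\widehat{\chi}} \cong M'_{\widehat{\chi}_1} \otimes \dots \otimes M'_{\widehat{\chi}_\ell}$ as $\Ua(\g_{\widehat{\n}})$-modules, and a second application of Bergman's theorem identifies $Q_{\widehat{\chi}} = \Ann_{\Ua(\g_{\widehat{\n}})} M'_{\widehat{\chi}}$ with precisely the inner sum above.

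The main obstacle I anticipate is the careful bookkeeping around Bergman's tensor-product annihilator formula: one must verify that its hypotheses (essentially, working with annihilators over an algebraically closed base field) apply both to the mixed Weyl/enveloping tensor $\widetilde{T}_{\widehat{\n}}$ and to the enveloping algebra $\Ua(\g_{\widehat{\n}})$, and that it commutes with the decomposition of $\widehat{\chi}$. Everything else is a formal consequence of the one-point case~\ref{prop:Qchihatchione} together with the multiplicativity properties of the constructions $M'_{(-)}$, $L_{(-)}$, and $\Psi_{(-)}$.
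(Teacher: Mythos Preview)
Your proposal is correct and shares the opening reduction with the paper: both establish $M'_\chi \cong \res{\Psi_{\widehat{\n}}}(L_{\chi'_1}\otimes\dots\otimes L_{\chi'_\ell})$ via Propositions~\ref{prop:tensor1point}, \ref{prop:isoL}, and \ref{prop: psi multi}, so that $Q_\chi = (\Psi_{\widehat{\n}})^{-1}\bigl(\Ann_{\widetilde{T}_{\widehat{\n}}}(L_{\chi'_1}\otimes\dots\otimes L_{\chi'_\ell})\bigr)$.

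The divergence is in computing this annihilator. You invoke Bergman's tensor annihilator formula twice: once on the $\widetilde{T}_{\widehat{\n}}$ side to break into one-point factors $\widetilde{A}_1\otimes Q_{\widehat{\chi}_i}$ (already known from Proposition~\ref{prop:Qchihatchione}), and once on the $\Ua(\g_{\widehat{\n}})$ side to reassemble the inner sum as $Q_{\widehat{\chi}}$. The paper instead repeats the hands-on argument of Proposition~\ref{prop:Qchihatchione}: it uses the ideal lattice isomorphism~\eqref{eq:isoideal} (coming from simplicity of $\widetilde{A}_\ell$) to write the annihilator as $\widetilde{A}_\ell\otimes\widehat{Q}$ for some $\widehat{Q}\trianglelefteq\Ua(\g_{\widehat{\n}})$, and then verifies $\widehat{Q}=Q_{\widehat{\chi}}$ directly on basis elements, exploiting that any $z\in\Ua(\g_{\widehat{\n}})$ commutes with $\del_1,\dots,\del_\ell$. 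Your route is more structural and genuinely reduces to the one-point case without redoing the computation; the paper's route is more elementary (no Bergman needed at this step) but is essentially a multi-variable replay of the one-point proof. Both are short, and the bookkeeping concern you flag about Bergman's hypotheses is not an issue here since everything is over the algebraically closed field $\kk$.
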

\begin{proof}
    Using the multi-point version of Definition~\ref{def:Ichi}, let $I_{\chi'} = \widetilde{T}_{\widehat{\n}} (I_{\chi'_1} + \dots  I_{\chi'_\ell})$ be a left ideal of $\widetilde{T}_{\widehat{\n}}$ and $L_{\chi'} = \widetilde{T}_{\widehat{\n}}/I_{\chi'}$. Then we have an isomorphism 
    \begin{equation}\label{eq:isoLmulti}
        L_{\chi'} \cong L_{\chi'_1} \otimes \dots \otimes L_{\chi'_\ell}.
    \end{equation}
    Since $M'_{\chi_i} \cong \res{\Psi_{\widehat{n}_i}} L_{\chi'_i}$, by Proposition~\ref{prop:isoL}, we have $M'_{\chi_1} \otimes \dots \otimes M'_{\chi_\ell} \cong \res{\Psi_{\widehat{n}_1}} L_{\chi'_1} \otimes \dots \otimes \res{\Psi_{\widehat{n}_\ell}}L_{\chi'_\ell}$. Note that $\res{\Psi_{\widehat{n}_1}} L_{\chi'_1} \otimes \dots \otimes \res{\Psi_{\widehat{n}_\ell}}L_{\chi'_\ell} \cong \res{\Psi_{\widehat{n}_1} \otimes \dots \Psi_{\widehat{n}_1}} L_{\chi'_1} \otimes \dots \otimes L_{\chi'_\ell}$. Thus, by Proposition~\ref{prop:tensor1point}, $M'_\chi  \cong \res{\Psi_{\widehat{\n}}} L_{\chi'}$. So, $Q_\chi = \Ann_{\Ua(W)} M'_\chi = (\Psi_{\widehat{\n}})^{-1}(\Ann_{\widetilde{T}_{\widehat{\n}}}L_{\chi'})$. We proceed similarly to Proposition~\ref{prop:Qchihatchione}.

    It suffices to show that $\Ann_{\widetilde{T}_{\widehat{\n}}}L_{\chi'} = \widetilde{A}_\ell \otimes Q_{\widehat{\chi}}$. Let $\widehat{Q} = \Ann_{\widetilde{T}_{\widehat{\n}}}L_{\chi'} \cap \Ua(\g_{\widehat{\n}})$. By~\eqref{eq:isoideal}, $\Ann_{\widetilde{T}_{\widehat{\n}}}L_{\chi'} = \widetilde{A}_\ell \otimes \widehat{Q}$. We claim that $\widehat{Q} = Q_{\widehat{\chi}}$.
    
    It is obvious that $\widehat{Q} \subseteq Q_{\widehat{\chi}}$ as if $z \in \Ua(\g_{\widehat{\n}}) \cap \Ann_{\widetilde{T}_{\widehat{\n}}}L_{\chi'}$ then $z \in \Ann_{\Ua(\g_{\widehat{\n}})}M'_{\widehat{\chi}}$. For the other direction, let $z \in Q_{\widehat{\chi}}$. As $z$ commutes with $\del_1, \dots, \del_\ell \in \widetilde{A}_\ell$, then for any word $\varsigma_i \in \Ua(\g_{\widehat{n}_i}), k_i \in \NN$, 
    \begin{align*}
        z \big((\del_1^{k_1} \varsigma_1 + I_{\chi'_1}) \otimes \dots \otimes (\del_\ell^{k_\ell} \varsigma_\ell + I_{\chi'_\ell})\big) 
        &= \del_1^{k_1}\dots \del_\ell^{k_\ell}  z  \big((\varsigma_1 + I_{\chi'_1}) \otimes \dots \otimes(\varsigma_\ell + I_{\chi'})\big ) \\
        &=\del_1^{k_1}\dots \del_\ell^{k_\ell}  z  (\varsigma_1 \dots \varsigma_\ell + I_{\chi'}), 
    \end{align*}
    where the last equation is induced by the isomorphism \eqref{eq:isoLmulti}. As $z \varsigma_1 \dots \varsigma_\ell + J_{\widehat{\chi}} = 0$, then $z \varsigma_1 \dots \varsigma_\ell \subseteq J_{\widehat{\chi}} \subseteq I_{\chi'}$, and thus $z \in \Ann_{\widetilde{T}_{\widehat{\n}}}L_{\chi'}$, i.e., $z \in \widehat{Q}$. Therefore $\widehat{Q} = Q_{\widehat{\chi}}$. 
\end{proof}

We immediately obtain two useful corollaries that all primitive ideals of $\Ua(\g)$ of the form $Q_\chi$ are completely prime ideals, and furthermore, they are kernels to some homomorphism from $\Ua(W)$ to some Weyl algebra. 

\begin{corollary}\label{cor:comprime}
    Let $\g$ be $\W{-1}, W$ or $Vir$, and $\chi$ be a local function on $\g$. Then $Q_\chi$ is a completely prime ideal. Thus $\Ua(\g)$ satisfies ACC on primitive ideals of the form $Q_\chi$. 
\end{corollary}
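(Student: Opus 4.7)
The plan is to deduce complete primeness of $Q_\chi$ from the finite-dimensional solvable case. By Propositions~\ref{prop:repWW-1} and~\ref{prop:repWVir}, it suffices to treat $\g = W$, and there Proposition~\ref{prop:Qchihatchimulti} identifies $Q_\chi = \Psi_{\widehat{\n}}^{-1}(\widetilde{A}_\ell \otimes_\kk Q_{\widehat{\chi}})$ for a suitable $\widehat{\n} \geq \n$. Since preimages of completely prime ideals under ring homomorphisms are completely prime, the first assertion reduces to showing that
$$R := \widetilde{T}_{\widehat{\n}}/(\widetilde{A}_\ell \otimes_\kk Q_{\widehat{\chi}}) \;\cong\; \widetilde{A}_\ell \otimes_\kk \bigl(\Ua(\g_{\widehat{\n}})/Q_{\widehat{\chi}}\bigr)$$
is a domain.

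Because $\g_{\widehat{\n}}$ is finite-dimensional and solvable, $Q_{\widehat{\chi}}$ is completely prime in $\Ua(\g_{\widehat{\n}})$ by Dixmier's theorem~\cite{dixmier1996enveloping}, so $B := \Ua(\g_{\widehat{\n}})/Q_{\widehat{\chi}}$ is a Noetherian Ore domain. By the Gelfand--Kirillov conjecture for solvable Lie algebras, the skew field of fractions of $B$ is a Weyl skew field $D_m(\kk) = \operatorname{Frac}(A_m(\kk))$. Tensoring the inclusion $B \hookrightarrow D_m(\kk)$ with the flat $\kk$-module $\widetilde{A}_\ell$ yields an injection $R \hookrightarrow \widetilde{A}_\ell \otimes_\kk D_m(\kk)$, whose target is the Ore localization of the Noetherian Ore domain $\widetilde{A}_\ell \otimes_\kk A_m(\kk) \cong A_{\ell+m}[s_1^{-1},\dots,s_\ell^{-1}]$ at $A_m(\kk) \setminus \{0\}$. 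This multiplicative set commutes with the first tensor factor and is Ore inside $A_m(\kk)$, hence Ore in the whole ring; thus $\widetilde{A}_\ell \otimes_\kk D_m(\kk)$ is a Noetherian Ore domain, $R$ embeds as a subring, and complete primeness of $Q_\chi$ follows.

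The ACC assertion then follows immediately from \cite{iyudu2020enveloping}*{Proposition 6.4}, since we have just shown that each $Q_\chi$ is a completely prime primitive ideal of the type to which that result applies.

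The principal difficulty is the step that $R$ itself is a domain, since tensor products of non-commutative Ore domains over a field need not be domains. The strategy above circumvents this by invoking the Gelfand--Kirillov conjecture for solvable Lie algebras to embed $B$ into a Weyl skew field, after which the Ore-localization check is routine. A PBW-filtration alternative, reducing to a tensor product of commutative domains over the algebraically closed field $\kk$, would require separately proving that $\operatorname{gr} Q_{\widehat{\chi}}$ is a prime ideal of $\Sa(\g_{\widehat{\n}})$ (equivalently, that it coincides with the Poisson primitive ideal $P(\widehat{\chi})$), which is no easier.
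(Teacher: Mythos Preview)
Your proof is correct, but it takes a more circuitous route than the paper at the key step of showing that $R = \widetilde{A}_\ell \otimes_\kk B$ is a domain. The paper observes directly that $\widetilde{A}_\ell \otimes_\kk B$ is an iterated Ore extension of the domain $B$: adjoin the central variables $t_1,\dots,t_\ell$ (polynomial extensions of a domain are domains), invert them (Ore localization at central regular elements), and then adjoin $\partial_1,\dots,\partial_\ell$ as Ore extensions by the derivations $\partial/\partial t_i$. Each step preserves the domain property, so no appeal to the Gelfand--Kirillov result or to the structure of $\operatorname{Frac}(B)$ is needed. Your detour through $D_m(\kk)$ does work---the multiplicative set $A_m\setminus\{0\}$ is indeed Ore in $\widetilde{A}_\ell\otimes A_m$ because it commutes with the first tensor factor, and flatness gives the embedding---but it is considerably heavier than necessary. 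In fact, since you already know (and the paper cites, in Corollary~\ref{cor:annkerweyl}) that $B\cong A_m(\kk)$ outright for completely solvable $\g_{\widehat{\n}}$, you could have concluded immediately that $R\cong \widetilde{A}_\ell\otimes A_m$ is a localized Weyl algebra, without passing to fraction fields at all. The reduction to $\g=W$ and the ACC conclusion via \cite{iyudu2020enveloping} match the paper exactly.
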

\begin{proof}
    By ~\cite{dixmier1996enveloping}*{Theorem 3.7.2.} the primitive ideal $Q_{\widehat{\chi}}$ is a completely prime ideal of $\Ua(\g_\n)$. Then $\tilde{A}_\ell \otimes \Ua(\g_\n)/Q_{\widehat{\chi}}$ is an iterated Ore extension of a domain, and thus is still a domain. Hence $\tilde{A}_\ell \otimes Q_{\widehat{\chi}}$ is a completely prime ideal. Thus, $Q_\chi = \Psi^{-1}_{\n}(\tilde{A}_\ell \otimes Q_{\widehat{\chi}})$ is a completely prime ideal. By ~\cite{iyudu2020enveloping}*{Proposition 6.4.}, since $\Ua(\g)$ satisfies ACC on completely prime ideals, it satisfies ACC on primitive ideals of the form $Q_\chi$. 
\end{proof}

\begin{corollary}\label{cor:annkerweyl}
    Let $\g$ be $\W{-1}, W$ or $Vir$, and $\chi$ be a local function on $\g$. Then there exists $k \in \NN$ and $\varphi: \Ua(\g) \to \tilde{A}_k$ such that $Q_\chi = \ker \varphi$.
\end{corollary}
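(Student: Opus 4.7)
The plan is to reduce the cases $\g \in \{\W{-1}, \Vir\}$ to $\g = W$, and then combine Proposition~\ref{prop:Qchihatchimulti} with the classical theorem that every primitive quotient of a finite-dimensional solvable Lie algebra over an algebraically closed field of characteristic zero is isomorphic to a Weyl algebra.

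For $\g = \W{-1}$: given a local function $\chi$ on $\W{-1}$, I extend it to a local function $\tilde\chi$ on $W$ with $\tilde\chi|_{\W{-1}} = \chi$. By Proposition~\ref{prop:repWW-1}, $M'_\chi \cong \res{\iota} M'_{\tilde\chi}$ as $\W{-1}$-representations, so $Q_\chi = \iota^{-1}(Q_{\tilde\chi})$ under the embedding $\iota: \Ua(\W{-1}) \hookrightarrow \Ua(W)$. If $\varphi: \Ua(W) \to \widetilde{A}_k$ realizes $\ker\varphi = Q_{\tilde\chi}$, then $\varphi \circ \iota$ has kernel $Q_\chi$. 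For $\g = \Vir$: by Proposition~\ref{prop:repWVir} together with~\eqref{eq:dxVir}, $Q_\chi = \Gamma^{-1}(Q_{\Gamma(\chi)})$, so composing $\varphi$ from the $W$ case with the projection $\Gamma: \Ua(\Vir) \twoheadrightarrow \Ua(W)$ produces the required map. Both reductions preserve the codomain $\widetilde{A}_k$.

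For $\g = W$, I would apply Proposition~\ref{prop:Qchihatchimulti}, choosing $\widehat{\n}$ componentwise at least the order of $\chi$ (and at least $1$ in each component so that $\Psi^W_{\widehat{\n}}$ is defined per Proposition~\ref{prop: psi multi}), to obtain
\[
Q_\chi \;=\; (\Psi^W_{\widehat{\n}})^{-1}\bigl(\widetilde{A}_\ell \otimes Q_{\widehat{\chi}}\bigr),
\]
where $Q_{\widehat{\chi}} \subset \Ua(\g_{\widehat{\n}})$ is the Dixmier primitive ideal attached to $\widehat{\chi} \in \g_{\widehat{\n}}^*$. Since $\g_{\widehat{\n}}$ is a finite-dimensional solvable Lie algebra over the algebraically closed field $\kk$ of characteristic zero, the solvable version of Dixmier's theorem (\cite{dixmier1996enveloping}*{Chapter 6}) supplies a $\kk$-algebra isomorphism $j: \Ua(\g_{\widehat{\n}})/Q_{\widehat{\chi}} \xrightarrow{\sim} A_d(\kk)$ for some $d \in \NN$. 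Tensoring by the flat $\kk$-algebra $\widetilde{A}_\ell$ and then embedding $A_d \hookrightarrow \widetilde{A}_d$, I form
\[
\varphi:\; \Ua(W) \xrightarrow{\;\Psi^W_{\widehat{\n}}\;} \widetilde{A}_\ell \otimes \Ua(\g_{\widehat{\n}}) \xrightarrow{\;\id \otimes j\;} \widetilde{A}_\ell \otimes A_d \hookrightarrow \widetilde{A}_\ell \otimes \widetilde{A}_d \;\cong\; \widetilde{A}_{\ell + d}.
\]
Flatness of $\widetilde{A}_\ell$ gives $\ker(\id \otimes j) = \widetilde{A}_\ell \otimes Q_{\widehat{\chi}}$, and the final embedding is injective, so $\ker \varphi = (\Psi^W_{\widehat{\n}})^{-1}(\widetilde{A}_\ell \otimes Q_{\widehat{\chi}}) = Q_\chi$. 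Taking $k = \ell + d$ then completes the argument.

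The main point of care is ensuring that Dixmier's embedding lands in $A_d(\kk)$ itself rather than in $A_d(K)$ for some transcendental extension $K/\kk$; this uses precisely that $\kk$ is algebraically closed (a standing hypothesis of the paper), together with complete primeness of $Q_{\widehat{\chi}}$, which is classical in the solvable setting and also follows from Corollary~\ref{cor:comprime}. Everything else is bookkeeping: the identification $\widetilde{A}_\ell \otimes \widetilde{A}_d \cong \widetilde{A}_{\ell+d}$ is immediate from Notation~\ref{def:Weylalg}, and the tensor-kernel calculation uses only flatness of localized Weyl algebras over $\kk$.
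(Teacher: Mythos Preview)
Your proof is correct and follows essentially the same approach as the paper's: both invoke Proposition~\ref{prop:Qchihatchimulti} and then appeal to the fact that $\Ua(\g_{\widehat{\n}})/Q_{\widehat{\chi}}$ is isomorphic to a Weyl algebra (the paper cites Joseph~\cite{joseph1974ProofOT} for this). You are more explicit about the reductions $\W{-1},\Vir \rightsquigarrow W$ and about the final embedding $\widetilde{A}_\ell \otimes A_d \hookrightarrow \widetilde{A}_{\ell+d}$, which the paper leaves implicit.
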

\begin{proof}
    From Proposition~\ref{prop:Qchihatchimulti}, $Q_\chi = (\Psi_{{\n}})^{-1} (\widetilde{A}_\ell \otimes Q_{\bar{\chi}})$. Since $\g_\n$ is a solvable Lie algebra, from ~\cite{joseph1974ProofOT}, there exists $\bar{k} \in \NN$ and an isomorphism $\overline{\varphi}: \Ua(\g_\n)/Q_{\bar{\chi}} \to A_{\bar{k}}$. We have $\varphi := \overline{\varphi} \circ \Psi_{\n}: \Ua(\g) \to \tilde{A}_\ell \otimes A_k$ and $Q_\chi = \ker \varphi$.
\end{proof}

We will now show that $\Psi_{\widehat{\n}}$ is the ``universal homomorphism'' for all local functions of order $\leq \n$ (defined in~\eqref{eq:biggertuple}) in the sense that $\ker \Psi^W_{\widehat{\n}} = \bigcap_{\substack{\chi: \text{ local function on } W \\ \text{ of order } \leq \widehat{\n}}} Q_\chi$. 
\begin{corollary}\label{cor:psiuniversal}
    The map $\Psi^W_{\widehat{\n}}$ and $\Psi^{\W{-1}}_{\widehat{\n}}$ are universal in the sense that 
    \begin{align}
        \ker \Psi^W_{\widehat{\n}} &= \bigcap_{\chi: \text{ local function on } W \text{ of order } \leq \widehat{\n}} Q_\chi \text{ and }\\
        \ker \Psi^{\W{-1}}_{\widehat{\n}} &= \bigcap_{\chi: \text{ local function on } \W{-1} \text{ of order } \leq \widehat{\n}} Q_\chi.
    \end{align}
\end{corollary}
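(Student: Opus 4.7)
The plan is to reduce the statement to a semiprimitivity fact about $\Ua(\g_{\widehat{\n}})$ via the bridge established in Proposition~\ref{prop:Qchihatchimulti}. I present the argument for $W$; the $\W{-1}$ case is identical after replacing $\widetilde{A}_\ell$ by $A_\ell$ and invoking the analogous lattice correspondence.

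First I would prove the easy inclusion $\ker \Psi^W_{\widehat{\n}} \subseteq \bigcap_\chi Q_\chi$. By Proposition~\ref{prop:Qchihatchimulti}, for every local function $\chi$ of order $\n \leq \widehat{\n}$ one has $Q_\chi = (\Psi^W_{\widehat{\n}})^{-1}(\widetilde{A}_\ell \otimes Q_{\widehat{\chi}})$, so in particular $\ker \Psi^W_{\widehat{\n}} = (\Psi^W_{\widehat{\n}})^{-1}(0) \subseteq Q_\chi$. Taking the intersection over all such $\chi$ gives one inclusion for free.

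For the reverse inclusion, I would use that preimages commute with intersections together with the lattice isomorphism $\iota_*$ from \eqref{eq:isoideal}:
\[
\bigcap_{\chi} Q_\chi \;=\; (\Psi^W_{\widehat{\n}})^{-1}\!\left(\bigcap_\chi \widetilde{A}_\ell \otimes Q_{\widehat{\chi}}\right) \;=\; (\Psi^W_{\widehat{\n}})^{-1}\!\left(\widetilde{A}_\ell \otimes \bigcap_\chi Q_{\widehat{\chi}}\right).
\]
Here the second equality uses that $I \mapsto \widetilde{A}_\ell \otimes I$ is a lattice isomorphism, in particular preserves arbitrary intersections. Now the surjectivity of the map $\pi^{\widehat{\n}}: \Loc^{\leq \widehat{\n}} \to \g^*_{\widehat{\n}}$ from \eqref{eq:mapchibar} lets me rewrite the inner intersection as $\bigcap_{\eta \in \g^*_{\widehat{\n}}} Q_\eta$, where $Q_\eta$ now refers to the Dixmier-map image in $\Prim \Ua(\g_{\widehat{\n}})$.

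The final step is the key one: I need to show
\[
\bigcap_{\eta \in \g^*_{\widehat{\n}}} Q_\eta \;=\; 0 \quad \text{in } \Ua(\g_{\widehat{\n}}).
\]
By Theorem~\ref{theorem: Kirillov method}, the Dixmier map $\overline{\Dx}^{\g_{\widehat{\n}}}$ is a bijection between $\g^*_{\widehat{\n}}/G_{\widehat{\n}}$ and $\Prim \Ua(\g_{\widehat{\n}})$, so this intersection equals the Jacobson radical $\operatorname{Jac}(\Ua(\g_{\widehat{\n}}))$. Since $\g_{\widehat{\n}}$ is a finite-dimensional Lie algebra over a field of characteristic zero, the classical result that $\Ua(\g_{\widehat{\n}})$ is semiprimitive gives $\operatorname{Jac}(\Ua(\g_{\widehat{\n}})) = 0$. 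Substituting this back yields $\bigcap_\chi Q_\chi = (\Psi^W_{\widehat{\n}})^{-1}(0) = \ker \Psi^W_{\widehat{\n}}$, closing the argument.

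The main potential obstacle is citing or justifying cleanly the semiprimitivity of $\Ua(\g_{\widehat{\n}})$; this is standard but should be anchored to an explicit reference (e.g.\ the discussion in Dixmier's book that primitive ideals of $\Ua(\g)$ separate elements when $\g$ is finite-dimensional). Everything else is formal manipulation with the tools already developed in Section~\ref{sec:annihilators}, so the writeup should remain short.
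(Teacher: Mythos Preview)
Your proposal is correct and follows essentially the same approach as the paper: both reduce to Proposition~\ref{prop:Qchihatchimulti}, use surjectivity of $\pi^{\widehat{\n}}$ to identify the inner intersection with $\bigcap_{\eta \in \g_{\widehat{\n}}^*} Q_\eta$, invoke surjectivity of the finite-dimensional Dixmier map to recognize this as the Jacobson radical, and conclude by semiprimitivity of $\Ua(\g_{\widehat{\n}})$. The paper cites~\cite{behr1986} for the last step; you should anchor your reference there or to the corresponding statement in Dixmier.
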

\begin{proof}
    As before, it suffices to show only the statement for $W$. Let $\chi$ be a local function on $W$ of order $\n$ following Notation~\ref{not:localfunc} and assume $\n \leq \widehat{\n}$. Recall $\widehat{\chi} \in \g^*_{\widehat{\n}}$ in Definition~\ref{not:barchi}. By Proposition~\ref{prop:Qchihatchimulti}, $Q_\chi = (\Psi_{\widehat{\n}})^{-1} (\widetilde{A}_\ell \otimes Q_{\widehat{\chi}})$. The surjective map $\pi^{\widehat{\n}}: \Loc^{\leq \widehat{\n}} \to \g^*_{\widehat{\n}}$ in Definition~\ref{not:barchi} yields
    \begin{align}
        \bigcap_{\chi: \text{ local function on } W \text{ of order } \leq \widehat{\n}} Q_\chi = (\Psi_{\widehat{\n}})^{-1} (\widetilde{A}_\ell \otimes \bigcap_{\widehat{\chi}\in \g^*_{\widehat{\n}}} Q_{\widehat{\chi}}).
    \end{align}
    As $\g_{\widehat{\n}}$ is a solvable finite-dimensional Lie algebra, the strong Dixmier map $\Dx^{\g_{\widehat{\n}}}$ is surjective, thus all primitive ideals of $\Ua(\g_{\widehat{\n}})$ are of the form  $Q_{\widehat{\chi}}$ for some $\widehat{\chi} \in \g^*_{\widehat{\n}}$. Therefore, $\bigcap_{\widehat{\chi}\in \g^*_{\widehat{\n}}} Q_{\widehat{\chi}}$ is the Jacobson radical $\Ua(\g_{\widehat{\n}})$, which is $0$ by ~\cite{behr1986}.
\end{proof}

\subsection{Annihilators of canonical local representations of totally even order}\label{subsec:even}
In this subsection, we consider local functions of totally even order $\n$, i.e. $n_i = 2m_i$ for all $i$, and calculate the annihilators of their local representations. By constructing algebra homomorphisms from $\Ua(W)$ to various localized Weyl algebras, we can realize these local representations as restricted Weyl-modules under those homomorphisms.
Firstly, we consider the case of a one-point local function $\chi = \chi_{x; \alpha_0, \dots, \alpha_n}$, where $n = 2m > 0$ and $\alpha_n, x \neq 0$. The main idea is defining an algebra embedding $\varphi_m: \Ua(\g_{2m}) \hookrightarrow A_{m}$, where $\g_{2m} = \W{0}/\W{2m}$ is the solvable subquotient of $W$ introduced in Notation~\ref{def:ringt} and $A_{m}$ is the $m$-th Weyl algebra introduced in Notation~\ref{def:Weylalg}. In this section, we mainly work with $W$, thus, we will abbreviate the notation $\Psi^W_{n}$ to $\Psi_n$ unless otherwise specified. 

It follows from Joseph ~\cite{joseph1974ProofOT} that the quotient field of $\Ua(\g_{2m})$ will be isomorphic to $D_{m}$, the quotient field of the Weyl algebra $A_{m}$. Abusing notation, we define $\tilde{A}_{m+1} \coloneqq \kk[t, t^{-1}, \del]\otimes A_m$. We provide an explicit proof and embedding in our case, which will be important to construct an explicit map $\overline{\Psi}_{2m}: \Ua(W) \to \tilde{A}_{m+1}$ and a module $N_\chi$ over $\tilde{A}_{m+1}$ such that 
\begin{align*}
    M'_\chi \cong \res{\overline{\Psi}_{2m}}N_\chi.
\end{align*}
As a corollary, we obtain that the annihilator of a local representation of even order only depends on its pseudo-orbit. Moreover, given two one-point local functions of even order, we completely characterize inclusion of the annihilators their local representations. Finally, we generalize the construction to multipoint local function $\chi$, where each component $\chi_i$ of $\chi$ has even order.

We first show an explicit embedding of $\Ua(\g_{2m})$ in $A_{m}$, which in turns yields a ring homomorphism $\overline{\Psi}_{2m}: \Ua(W) \to \tilde{A}_{m+1}$. 
\begin{proposition}\label{prop: weyl algebra embedding}
    Recall the notation of the $m$-th Weyl algebra $A_m$ defined in Notation~\ref{def:ringt}. There is an algebra embedding $\varphi_m: \Ua(\g_{2m}) \to A_{m}$ given by 
    \begin{align}\label{eq:weylmap}
        \varphi_m: \g_{2m} &\to A_{m} = \kk[s_0, \dots, s_{m-1}, \del_0, \dots, \del_{m-1}],\nonumber \\ 
        v_{m+j} &\mapsto s_{j}, \quad v_{j} \mapsto \sum_{i = 0}^{m-j-1} (m+i-j) s_{i+j} \del_i, \quad \text{ for } j = 0, \dots, m-1. 
    \end{align}
\end{proposition}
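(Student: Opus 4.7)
The plan is to verify that $\varphi_m$ is a Lie algebra homomorphism $\g_{2m} \to A_m$, which by the universal property of $\Ua$ extends to the asserted algebra homomorphism, and then to prove injectivity.

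First, since $\g_{2m}$ has basis $\{v_0, \ldots, v_{2m-1}\}$ with $[v_i, v_j] = (j-i)v_{i+j}$ (and $v_{\geq 2m} = 0$), it suffices to check $[\varphi_m(v_i), \varphi_m(v_j)] = \varphi_m([v_i, v_j])$ on pairs of basis elements. I would split into three cases. \emph{Case 1:} both indices lie in $[m, 2m-1]$; here $\varphi_m(v_{m+i}) = s_i$ and $\varphi_m(v_{m+j}) = s_j$ commute, matching $v_{m+i+m+j} = 0$. \emph{Case 2:} one index in $[0, m-1]$ and one in $[m, 2m-1]$; then $[\varphi_m(v_j), s_k]$ is computed using $[\del_i, s_k] = \delta_{ik}$, giving $(m+k-j) s_{k+j}$ when $k+j < m$ and $0$ otherwise, which matches $(m+k-j)\varphi_m(v_{m+j+k})$ after using the relation $v_{\geq 2m} = 0$. \emph{Case 3:} both indices lie in $[0, m-1]$. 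This is the main technical case, and one would expand $[\varphi_m(v_i), \varphi_m(v_j)]$ via the Weyl-algebra identity $[s_a\del_c, s_b\del_d] = \delta_{c,b}\, s_a \del_d - \delta_{d,a}\, s_b \del_c$ and collect terms to match $(j-i)\varphi_m(v_{i+j})$, carefully tracking the threshold behaviour as $i+j$ crosses $m$ (where $\varphi_m(v_{i+j})$ switches from a derivation-type expression to the multiplication operator $s_{i+j-m}$) and again at $2m$ (where the commutator must vanish).

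The main obstacle is the bookkeeping in Case 3: producing the polynomial identity in the coefficients $(m+a-i)(m+b-j)$ which shows that the raw commutator of two derivation-type operators recovers $(j-i)\varphi_m(v_{i+j})$ across all three regimes. This is mechanical but delicate, and the natural way to organise the computation is as the quantisation of the associated-graded statement, where the analogous Poisson identity follows from the same type of computation as in Theorem~\ref{theo:Poissonfinite} via the product rule.

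For injectivity, I would use a graded argument. Equip $A_m$ with the filtration $\deg s_i = 1,\ \deg \del_i = 0$, which satisfies $[F_a, F_b] \subseteq F_{a+b-1}$ and has $\gr A_m \cong \kk[s_0, \ldots, s_{m-1}, \del_0, \ldots, \del_{m-1}]$ as a commutative Poisson ring. Under this filtration each $\varphi_m(v_j)$ lies in $F_1$, so the associated symbols can be read off directly: $v_{m+j} \mapsto s_j$ and $v_j \mapsto \sum_{i=0}^{m-j-1}(m+i-j)\, s_{i+j}\del_i$. I would verify algebraic independence of these $2m$ symbols by localising at $s_{m-1}$: after reindexing rows by $j' = m-1-j$, the coefficient matrix of the $\del_i$'s in the symbols of $v_j$ becomes lower triangular with diagonal entries $(2j'+1)\, s_{m-1}$, all nonzero in the localisation and in characteristic $0$. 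One can therefore solve for each $\del_i$ in terms of the symbols, showing that the induced Poisson map $\Sa(\g_{2m}) \to \gr A_m$ is injective. A standard PBW/associated-graded comparison then lifts this to injectivity of $\varphi_m: \Ua(\g_{2m}) \to A_m$.
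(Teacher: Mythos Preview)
Your Case~3 does not work out as planned, and the obstruction is structural rather than a matter of bookkeeping. For $0\le i<j\le m-1$ with $i+j\ge m$ you need
\[
[\varphi_m(v_i),\varphi_m(v_j)]=(j-i)\,s_{i+j-m}.
\]
But both $\varphi_m(v_i)$ and $\varphi_m(v_j)$ lie in the Lie subalgebra $\sum_k \kk[s_0,\dots,s_{m-1}]\,\partial_k\subset A_m$ of vector fields, which is closed under commutator; a nonzero pure multiplication operator can never arise this way. Concretely, for $m=3$ one has $\varphi_3(v_1)=2s_1\partial_0+3s_2\partial_1$ and $\varphi_3(v_2)=s_2\partial_0$, so $[\varphi_3(v_1),\varphi_3(v_2)]=0$, whereas $\varphi_3([v_1,v_2])=\varphi_3(v_3)=s_0$. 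The underlying reason is that $\kk\{v_0,\dots,v_{m-1}\}$ is \emph{not} a Lie subalgebra of $\g_{2m}$ once $m\ge 3$, so the recipe ``send the abelian ideal $\mathfrak{s}$ to multiplication operators and a vector-space complement to the adjoint derivations on~$\mathfrak{s}$'' cannot produce a Lie homomorphism. The paper's argument is exactly this recipe (it reads off $\varphi_m(v_j)=\ad(v_j)|_{\Ua(\mathfrak{s})}$ and then asserts the conclusion) and shares the same gap; your explicit case split, carried through, would have exposed the problem rather than confirmed the claim.

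Your injectivity strategy, by contrast, is sound on its own terms and genuinely different from the paper's. The paper shows that the quotient division ring of $\im\varphi_m$ is all of $D_m$---using the same triangular ``solve for $\partial_i$ after inverting $s_{m-1}$'' step you propose, but carried out in the noncommutative fraction field---and then invokes a GK-dimension result of Zhang to force $\GK\im\varphi_m=2m$, hence injectivity since $\Ua(\g_{2m})$ is a domain. Your filtered/associated-graded route replaces that black box by an elementary algebraic-independence check on the degree-one symbols and a standard lift from $\gr$.
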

\begin{proof} 
    Since $v_{m}, \dots, v_{2m-1}$ commute in $\g_{2m}$, they generate an abelian subalgebra $\mathfrak{s}$ of $\g_{2m}$. Thus, if we let $s_j = \varphi_m(v_{m+j})$ for $j = 0,\dots,m-1$, then $\varphi_m(\Ua(\mathfrak{s})) = \kk[s_0, \dots, s_{m-1}] \subset A_m$.
    
    Now $\mathfrak{s}$ is stable under the action of $v_0, \dots, v_{m-1}$ and so it is a Lie ideal of $\g_{2m}$. For any $0 \leq i \leq m-1$, the adjoint action of $v_i$ on $\Ua(\g_{2m})$, which is by definition a derivation, restricts to a derivation of $\Ua(\mathfrak{s})$. We have
    \begin{align*}
        \ad(v_j)|_{\Ua(\mathfrak{s})} = \sum_{i = 0}^{m-j-1} (m+i-j) v_{m+i+j} \frac{\del}{\del v_{m+i}} \text{ for } j = 0, \dots, m-1, 
    \end{align*}
    giving~\eqref{eq:weylmap}. Thus $\varphi_m$ is an algebra homomorphism.
    
    Let $F$ be the quotient division ring of $\im \varphi$, which exists by Goldie's Theorem. Since $s_i \in F$ and $\varphi_m(v_{m-1}) = s_{m-1} \del_0$, thus $\del_0 = \varphi_m(v_{2m-1})^{-1}\varphi_m(v_{m-1}) \in F$. As $\varphi_m(v_{m-2}) = 2s_{m-2} \del_0 + 3s_{m-2} \del_1$, then similarly $\del_1 \in F$. Iteratively, we can show that $\del_j \in F$ for $j = 0, \dots, m-1$. Thus $D_{m} = F$. 
    
    Note that ~\cite{zhang1996}*{Theorem 1.1 and Theorem 1.2} imply that for every subalgebra $B$ of $A_m$ such that the quotient division ring of $B$ equals $D_m$, then $\GK B = \GK A_m= 2m$. Hence $\GK \im \varphi_m = 2m$. Suppose for the contrary that $\varphi_m$ is not injective. Since $\Ua(\g_{2m})$ is a domain, then by ~\cite{krause2000growth}*{Proposition 3.5}, $\GK \im \varphi_m < \GK \Ua(\g_{2m}) = 2m$, we thus obtain a contradiction. Hence $\varphi_m$ is injective. 
\end{proof}
With $\Psi_0: \Ua(W) \to \tilde{A}_1$, we thus have a family of algebra homomorphisms from $\Ua(W)$ to any localized Weyl algebra.
\begin{corollary}\label{cor: weyl map even}
    For all $m \in \NN$, there is an algebra homomorphism $\overline{\Psi}^{W}_{2m} = (\id \otimes \varphi_m) \circ \Psi^{W}_{2m} $
    \begin{align}\label{eq: map even}
        \overline{\Psi}^{W}_{2m}: \Ua(W) &\to \tilde{A}_{m+1}, \nonumber \\
        f\del &\mapsto f(t) \del + f'(t) \bigl ( \sum_{i = 0}^{m-1} (m+i) s_{i} \del_i \bigr ) + \dots + \frac{f^{(2m)}(t)}{(2m)!}s_{m-1}.
    \end{align}
    \qed
\end{corollary}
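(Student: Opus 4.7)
The statement is essentially a verification: we have two algebra homomorphisms in hand, namely $\Psi^W_{2m} : \Ua(W) \to \tilde{A}_1 \otimes \Ua(\g_{2m})$ from Theorem~\ref{theo:psin} and $\varphi_m : \Ua(\g_{2m}) \hookrightarrow A_m$ from Proposition~\ref{prop: weyl algebra embedding}. Tensoring the latter with $\id$ on $\tilde{A}_1$ gives an algebra homomorphism $\id \otimes \varphi_m : \tilde{A}_1 \otimes \Ua(\g_{2m}) \to \tilde{A}_1 \otimes A_m = \tilde{A}_{m+1}$ (the last identification uses the convention introduced just before the corollary). The composition $\overline{\Psi}^W_{2m} := (\id \otimes \varphi_m) \circ \Psi^W_{2m}$ is therefore automatically an algebra homomorphism $\Ua(W) \to \tilde{A}_{m+1}$; no further relation check is needed.

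It remains to read off the explicit formula by substituting into the definitions. The plan is to apply $\id \otimes \varphi_m$ term by term to
\[
\Psi^W_{2m}(f\partial) = f(t)\partial + \sum_{j=0}^{m-1}\frac{f^{(j+1)}(t)}{(j+1)!}\,v_j + \sum_{j=0}^{m-1}\frac{f^{(m+j+1)}(t)}{(m+j+1)!}\,v_{m+j}.
\]
Using $\varphi_m(v_{m+j}) = s_j$ collapses the second sum to $\sum_{j=0}^{m-1}\frac{f^{(m+j+1)}(t)}{(m+j+1)!}\,s_j$, whose top term (at $j=m-1$) is exactly $\frac{f^{(2m)}(t)}{(2m)!}s_{m-1}$. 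Using $\varphi_m(v_j) = \sum_{i=0}^{m-j-1}(m+i-j)\,s_{i+j}\partial_i$ produces, for $j=0$, the coefficient $\sum_{i=0}^{m-1}(m+i)\,s_i\partial_i$ of $\frac{f'(t)}{1!} = f'(t)$, matching the second summand displayed in~\eqref{eq: map even}; the intermediate values of $j$ fill in the ``$\dots$'' in the stated formula.

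Since the two maps being composed are honest ring homomorphisms, there is no real obstacle here, and we do not need to reverify any Lie bracket identities: the substantive content has already been established in Theorem~\ref{theo:psin} and Proposition~\ref{prop: weyl algebra embedding}. The only thing to be slightly careful about is the identification $\tilde{A}_1 \otimes A_m \cong \tilde{A}_{m+1}$ and the fact that $t, \partial$ from $\tilde A_1$ are disjoint from the Weyl generators $s_i, \partial_i$ of $A_m$, so there is no ordering or commutation subtlety in writing the composed map in the form~\eqref{eq: map even}.
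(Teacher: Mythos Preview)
Your proposal is correct and matches the paper's approach exactly: the paper gives no proof at all (the statement ends with \qed), treating the result as an immediate consequence of composing the homomorphisms from Theorem~\ref{theo:psin} and Proposition~\ref{prop: weyl algebra embedding}, which is precisely what you spell out. Your explicit verification of the formula by substituting $\varphi_m(v_j)$ and $\varphi_m(v_{m+j})$ into $\Psi^W_{2m}(f\partial)$ is the intended (and only) content here.
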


Let $\chi = \chi_{x; \alpha_0, \dots, \alpha_{2m}}$ be a one-point local function on $W$ of even order $2m >0$, and let $\p_\chi = W((t-x)^{m+1})$ be the canonical polarization of $\chi$ and $M'_\chi$ be the canonical local representation of $W$. Recall the definition of $\chi'$ in Notation~\ref{not:chi'}. Since $2m' > 1$, $\chi' = \chi$ and by Proposition~\ref{prop:untwistrep}, $M'_\chi \cong M_\chi$. We now construct a module $N_\chi$ over $\tilde{A}_{m+1}$ such that $M_\chi \cong \res{\overline{\Psi}_{2n}} N_\chi$. By Proposition~\ref{prop: weyl algebra embedding}, $\widetilde{T}_{2m} = \kk[t, t^{-1}, \del] \otimes \Ua(\g_{2m})$ is a subalgebra of $\tilde{A}_{m+1}$. Recall the ideal $I_{\chi}$ defined in Definition~\ref{def:Ichi}. Consider the left ideal $ \tilde{A}_{m+1} I_\chi$ and let $N_\chi = \tilde{A}_{m+1}/ \tilde{A}_{m+1} I_\chi$, which is a simple left $\tilde{A}_{m+1}$-module. 
The homomorphism $\overline{\Psi}_{2m}$ induces a $W$-action on $N_\chi$ given by
\begin{equation}\label{eq: restricted module even}
    f \del \cdot (1+  \tilde{A}_{m+1} I_\chi) = \overline{\Psi}_{2m}(f \del) +   \tilde{A}_{m+1} I_\chi.
\end{equation} 
We will now show $M_\chi$ is isomorphic to $N_\chi$ as a representation of $W$.

\begin{proposition}\label{prop: second order hom even}
    Let $\chi = \chi_{x; \alpha_0, \dots, \alpha_{2m}}$ be a one-point local function of order $2m >0$ on $W$. There exists an isomorphism of $\Ua(W)$-modules $\theta: M'_\chi \to N_\chi$ such that $\theta(1_\chi) = 1+  \tilde{A}_{m+1} I_\chi$, where $1_\chi$ is the canonical generator of $M_\chi$.
\end{proposition}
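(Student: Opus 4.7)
The plan is to mimic the structure of the proof of Proposition~\ref{prop:isoL}, now accounting for the composition with $\varphi_m$. Since $\chi$ has order $2m > 0$, Proposition~\ref{prop:untwistrep} gives $M'_\chi \cong M_\chi$, and by Lemma~\ref{lem:rewriterep} I identify $M_\chi$ with $\Ua(W)/J_\chi$, where $J_\chi$ is the left ideal generated by $[\p_\chi, \p_\chi]$ together with $u_j - \chi(u_j)$ for $j = m, \ldots, 2m-1$. Setting $\bar L := \tilde{A}_{m+1} I_\chi$, I will define $\tilde\theta\colon \Ua(W) \to N_\chi$ by $\tilde\theta(p) := \overline{\Psi}_{2m}(p) + \bar L$; this is $\Ua(W)$-linear via Corollary~\ref{cor: weyl map even}.

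The first task is to verify $J_\chi \subseteq \ker\tilde\theta$, so that $\tilde\theta$ descends to the sought map $\theta\colon M_\chi \to N_\chi$ with $\theta(1_\chi) = 1 + \bar L$. The key identities are that $t-x$ commutes with the $A_m$-tensor factor of $\tilde{A}_{m+1}$ and that $\del(t-x)^k = (t-x)^k\del + k(t-x)^{k-1}$. Applying these to the formula of Corollary~\ref{cor: weyl map even}, one computes that modulo $\bar L$,
\[
\overline{\Psi}_{2m}(u_j) \equiv \varphi_m(v_j) \quad (j \geq 1), \qquad \overline{\Psi}_{2m}(u_0) \equiv \varphi_m(v_0) - 1.
\]
Since $\varphi_m(v_j) = s_{j-m} \equiv \chi(u_j) \pmod{\bar L}$ for $j \in \{m, \ldots, 2m-1\}$, the generators $u_j - \chi(u_j) \in J_\chi$ are killed. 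A similar computation shows that $\overline{\Psi}_{2m}((t-x)^{2m+2}q\del) \in \bar L$ for all $q \in \kk[t, t^{-1}]$, handling the generators of $[\p_\chi, \p_\chi] \subseteq W((t-x)^{2m+2})$.

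To show $\theta$ is an isomorphism, I will argue via a filtration on both sides. A direct PBW calculation shows that $N_\chi \cong \kk[\del] \otimes \kk[\del_0, \ldots, \del_{m-1}]$ as a $\kk$-vector space, with basis $\{\del^{k_{-1}}\del_0^{d_0}\cdots\del_{m-1}^{d_{m-1}} + \bar L\}$ obtained by reducing $t \equiv x$ and $s_i \equiv a_i := \chi(u_{m+i})$. I will equip $M_\chi$ and $N_\chi$ with the total-degree filtrations in their respective PBW generators, so that the associated gradeds are the polynomial rings $\kk[u_{-1}, u_0, \ldots, u_{m-1}]$ and $\kk[\del, \del_0, \ldots, \del_{m-1}]$. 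A top-degree symbol computation (substituting $t \to x$, $s_k \to a_k$ in the formula for $\overline{\Psi}_{2m}(u_j)$) shows that $\overline{\Psi}_{2m}(u_j)$ has filtered degree one with symbol $\sum_{i=0}^{m-j-1}(m+i-j)a_{i+j}\del_i$ for $j \geq 0$, while $\overline{\Psi}_{2m}(u_{-1}) = \del$. Hence $\gr\theta$ corresponds to the linear substitution with block matrix $\operatorname{diag}(1, M)$, where $M_{ji} = (m+i-j)a_{i+j}$ for $i+j \leq m-1$ and zero otherwise.

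The main obstacle will be showing $M$ is invertible: $M$ is anti-triangular, with anti-diagonal entries $M_{j, m-1-j} = (2m-1-2j)\,a_{m-1}$, each nonzero because $a_{m-1} = (2m)!\alpha_{2m} \neq 0$ by hypothesis and $2m-1-2j$ is odd. Consequently $\gr\theta$ is an isomorphism of polynomial rings on $m+1$ generators, forcing $\theta$ itself to be an isomorphism of $\Ua(W)$-modules via a standard strict-filtered argument.
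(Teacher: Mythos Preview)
Your proof is correct and follows essentially the same architecture as the paper's: reduce to $M_\chi = \Ua(W)/J_\chi$, define $\tilde\theta$ via $\overline{\Psi}_{2m}$, check $J_\chi \subseteq \ker\tilde\theta$ by exploiting that $(t-x)$ commutes with the $A_m$-factor, and then prove bijectivity by a leading-term argument. The only substantive difference is packaging: the paper orders the target basis lexicographically with $\del_{m-1} > \cdots > \del_0 > \del$ and shows inductively that $\theta(u_{-1}^{k_{-1}}\cdots u_{m-1}^{k_{m-1}}1_\chi)$ has leading term $c\,\del^{k_{-1}}\del_{m-1}^{k_0}\cdots\del_0^{k_{m-1}}$, whereas you filter by total degree and invert the symbol matrix. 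Your anti-triangular computation with anti-diagonal entries $(2m-1-2j)a_{m-1}$ is exactly the leading coefficient $(2m-1-j)\cdot(2m)!\alpha_{2m}$ that appears in the paper's induction (after re-indexing), so the two arguments encode the same linear algebra.

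One small correction: your description of $J_\chi$ omits the generator $u_{2m} - \chi(u_{2m}) = u_{2m}$, since $\p_\chi/[\p_\chi,\p_\chi]$ has basis $u_m,\ldots,u_{2m}$ (note $[\p_\chi,\p_\chi] = W((t-x)^{2m+2})$). Your displayed identity $\overline{\Psi}_{2m}(u_j) \equiv \varphi_m(v_j) \pmod{\bar L}$ for $j \geq 1$ already covers this once you observe $v_{2m} = 0$ in $\g_{2m}$, but you should say so explicitly rather than stopping the range at $2m-1$.
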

\begin{proof}
    Since $2m > 1$, $\chi' = \chi$ and by Proposition~\ref{prop:untwistrep} $M'_\chi \cong M_\chi$. Using Proposition~\ref{prop:untwistrep}, we write $M_\chi = \Ua(W)/J_\chi$, where
    \begin{align}
        J_\chi = \Ua(W)(u_{m} - \chi(u_{m}), \dots, u_{2m} - \chi(u_{2m}), [\p_\chi, \p_\chi]).
    \end{align}
    The homomorphism $\overline{\Psi}_{2m}$ induces a $\Ua(W)$-module map
    \begin{align*}
        \tilde{\theta}: \Ua(W) \rightarrow N_\chi, \quad 
        p \mapsto \overline{\Psi}_{2m} (p) +   \tilde{A}_{m+1} I_\chi.
    \end{align*}
    We claim that $\tilde{\theta}(J_\chi) = 0$. If $q \in \kk[t, t^{-1}]$, then $\overline{\Psi}_{2m} ((t-x)^{2m+1} q\del)$ equals
    \begin{align*}
        (t-x)^{2m+1}q\del + \Bigl (\sum_{i = 0}^{m-1} (m+i) s_{i} \del_i \Bigr ) \Bigl ((t-x)^{2m+1} q\Bigr )' + \dots + s_{m-1}\frac{\Bigl ((t-x)^{2m+1} q \Bigr )^{(2m)}}{(2m)!}.
    \end{align*}
    As every term of $\overline{\Psi}_{2m} ((t-x)^{2m+1} q\del)$ is in $\tilde{A}_{m+1}(t-x) \subseteq \tilde{A}_{m+1}I_\chi$, thus $ \overline{\Psi}_{2m} ((t-x)^{2m+1} q\del) \in   \tilde{A}_{m+1} I_\chi$ and $\tilde{\theta}([\p_\chi, \p_\chi]) = 0$. Now for $j = 0,\dots, m-1$, we have
    \begin{align*} 
        &\tilde{\theta}((t-x)^{m+1+j} \del) = \overline{\Psi}_{2m} ((t-x)^{m+1+j} \del) +   \tilde{A}_{m+1} I_\chi \\
        &= (t-x)^{m+1+j} \del + 
        \bigl (\sum_{i = 0}^{m-1} (m+i) s_{i} \del_i \bigr ) 
        \bigl ( (t-x)^{m+1+j} \bigr )' + \dots +  s_{j}\frac{\bigl ( (t-x)^{m+1+j} \bigr )^{(m+j+1)}}{(m+j+1)!}+   \tilde{A}_{m+1} I_\chi  \\ 
        &= s_{j} +   \tilde{A}_{m+1} I_\chi,
    \end{align*}
    since every term except the last one is in $\tilde{A}_{m+1}(t-x) \subseteq \tilde{A}_{m+1}I_\chi$. Hence $\tilde{\theta}(J_\chi) = 0$, so $\tilde{\theta}$ factors through the canonical map $\Ua(W) \to M_\chi$ to give a well-defined homomorphism $\theta: M_\chi \to N_\chi$.

    Define a lexicographic order on the basis $S\coloneqq \{\del^{k_{-1}} \del_{m-1}^{k_{m-1}} \dots \del_0^{k_0} | k_i \in \ZZ \}$ of $N_\chi$ by fixing $\del_{m-1} > \dots > \del_0 > \del$. We will show by downward induction on the smallest variable appearing in $u_{-1}^{k_{-1}} u_0^{k_0} \dots u_{m-1}^{k_{m-1}}  1_\chi$ that 
    \begin{equation}\label{eq: for surjectivity and injectivity Weyl module}
        \theta(u_{-1}^{k_{-1}} u_0^{k_0} \dots u_{m-1}^{k_{m-1}}  1_\chi) = c\del^{k_{-1}}\del_{m-1} ^{k_0}\dots \del_{0}^{k_{m}} + [\text{terms of lower order}] +   \tilde{A}_{m+1} I_\chi, 
    \end{equation}
    for $c \in \kk^\times$. The base case is obvious as $\theta(1_\chi) = 1+  \tilde{A}_{m+1} I_\chi$.
    
    Assuming the induction hypothesis, we first consider when there is no $u_{-1}$. Then for $j \geq 0$
    \begin{align*}
        \theta(u_{j}^{k_j+1}\dots u_{m-1}^{k_{m-1}} 1_\chi) 
        % &= \overline{\Psi}_{2m}(u_{i}) \theta(u_{i}^{k_i}\dots u_{m-1}^{k_{m-1}}  1_\chi) \\ 
        &= \overline{\Psi}_{2m}(u_{j})(c\del_{m-1-j}^{k_j}\dots \del_{0}^{k_{m-1}} +   \tilde{A}_{m+1} I_\chi  + [\text{terms of lower order}]).
        % &=c\overline{\Psi}_{2m}(u_{i})(\del_{m-1-i}^{k_i}\dots \del_{0}^{k_{m}}) +   \tilde{A}_{m+1} I_\chi + [\text{terms of lower order}]. 
    \end{align*}
    Since  
    \begin{align*}
        \overline{\Psi}_{2m}(u_j) = (t-x)^{j+1}\del + \binom{j+1}{1} \bigl (\sum_{i=0}^{m-1}(m+j)s_i\del_i \bigr )(t-x)^{j} + \dots + \bigl (\sum_{i=0}^{m-j-1}(m+i-j)s_{i+j}\del_i \bigr ),
    \end{align*}
    and $(t-x)$ commutes with $s_0, \dots,  s_{m-1}, \del_0, \dots, \del_{m-1}$, then $\overline{\Psi}_{2m}(u_{j}) - \bigl (\sum_{i=0}^{m-j-1}(m+i-j)s_{i+j}\del_i \bigr ) \in \tilde{A}_{m+1} I_\chi$. 
    Thus 
    \begin{align*}
        \theta(u_{j}^{k_i+1}\dots u_{m-1}^{k_{m-1}} 1_\chi) &= \text{LT}\bigl (c\overline{\Psi}_{2m}(u_{j})(\del_{m-1-i}^{k_j}\dots \del_{0}^{k_{m}}) +   \tilde{A}_{m+1} I_\chi \bigr ) + [\text{terms of lower order}] \\
        &= c(2m-1-j)s_{m-1}\del_{m-1-j}^{k_j+1}\dots \del_{0}^{k_{m}} +   \tilde{A}_{m+1} I_\chi + [\text{terms of lower order}]\\
        &= c(2m-1-j) (2m+1)! \alpha_{2n} \del_{m-1-j}^{k_j+1}\dots \del_{0}^{k_{m}} +   \tilde{A}_{m+1} I_\chi + [\text{terms of lower order}].
    \end{align*}
    Then~\eqref{eq: for surjectivity and injectivity Weyl module} follows as $\overline{\Psi}_{2m}(u_{-1}) = \del$. It follows from~\eqref{eq: for surjectivity and injectivity Weyl module} that $\{ \theta(u_{-1}^{k_{-1}}\dots u_{m-1}^{k_{m-1}} 1_\chi) \}$ are linearly independent, and $\theta$ is injective. Furthermore, since $S$ spans $N_\chi$, $\theta$ is surjective.
\end{proof}

As a result, we realize the kernel of $\Psi_{2m}$ as the annihilator of local representation of local function of order $2m$, motivating more interesting properties of the maps $\Psi_n$. 
\begin{corollary}\label{cor:kereven}
    Let $\chi = \chi_{x; \alpha_0, \dots, \alpha_{2m}}$ be a one point local function on $W$ of order $2m >0$. Then 
    \begin{align*}
        Q_\chi = \ker \Psi_{2m}.
    \end{align*}
\end{corollary}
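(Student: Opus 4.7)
The plan is to exploit the realization of $M'_\chi$ as a restricted Weyl-type module provided by Proposition~\ref{prop: second order hom even}, and reduce the question to faithfulness over a simple ring. Concretely, Proposition~\ref{prop: second order hom even} gives an isomorphism of $\Ua(W)$-modules $M'_\chi \cong \res{\overline{\Psi}_{2m}} N_\chi$, where $N_\chi = \tilde{A}_{m+1}/\tilde{A}_{m+1} I_\chi$ is a nonzero $\tilde{A}_{m+1}$-module. Therefore
\[
Q_\chi = \Ann_{\Ua(W)} M'_\chi = \overline{\Psi}_{2m}^{-1}\bigl(\Ann_{\tilde{A}_{m+1}} N_\chi\bigr),
\]
so the proof reduces to (i) showing $N_\chi$ is faithful over $\tilde{A}_{m+1}$, and (ii) replacing $\overline{\Psi}_{2m}$ by $\Psi_{2m}$.

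For (i), I would observe that $\tilde{A}_{m+1} = \tilde{A}_1 \otimes A_m$ is a simple $\kk$-algebra: $\tilde{A}_1$ is simple with center $\kk$ (this is already cited just before the isomorphism \eqref{eq:isoideal}), $A_m$ is simple with center $\kk$, and the tensor product of two simple $\kk$-algebras with centers equal to the base field is simple. Since $N_\chi$ is nonzero --- the proof of Proposition~\ref{prop: second order hom even} exhibits the explicit nonzero spanning family $\{\del^{k_{-1}}\del_{m-1}^{k_{m-1}} \cdots \del_0^{k_0} + \tilde{A}_{m+1} I_\chi\}$ --- the two-sided ideal $\Ann_{\tilde{A}_{m+1}} N_\chi$ is a proper ideal, and hence is zero. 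Consequently $Q_\chi = \ker \overline{\Psi}_{2m}$.

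For (ii), recall $\overline{\Psi}_{2m} = (\id_{\tilde{A}_1} \otimes \, \varphi_m) \circ \Psi_{2m}$. By Proposition~\ref{prop: weyl algebra embedding}, $\varphi_m: \Ua(\g_{2m}) \hookrightarrow A_m$ is injective, and since $\tilde{A}_1$ is a free (hence flat) $\kk$-module, tensoring preserves injectivity, so $\id_{\tilde{A}_1} \otimes \, \varphi_m$ is injective. Thus $\ker \overline{\Psi}_{2m} = \ker \Psi_{2m}$, and we conclude $Q_\chi = \ker \Psi_{2m}$.

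The only nontrivial step is the faithfulness claim, and even that is essentially automatic once one knows $\tilde{A}_{m+1}$ is simple; the rest is formal manipulation assembling results already established. In this sense the theorem is really a corollary of the concrete Weyl-algebra realization built in Corollary~\ref{cor: weyl map even} and Proposition~\ref{prop: second order hom even}, together with Joseph's embedding that lets us pass from $\Ua(\g_{2m})$ into $A_m$.
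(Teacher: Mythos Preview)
Your proof is correct and follows essentially the same route as the paper: use Proposition~\ref{prop: second order hom even} to get $Q_\chi = \overline{\Psi}_{2m}^{-1}(\Ann_{\tilde{A}_{m+1}} N_\chi)$, invoke faithfulness of $N_\chi$ over the simple ring $\tilde{A}_{m+1}$ to reduce to $\ker \overline{\Psi}_{2m}$, and then use injectivity of $\id \otimes \varphi_m$ to pass to $\ker \Psi_{2m}$. You supply a bit more justification for the faithfulness and the injectivity of the tensor map than the paper does, but the argument is the same.
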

\begin{proof}
    By Proposition~\ref{prop: second order hom even},  $\Ann_{\Ua(W)} M_{\chi} = \overline{\Psi}^{-1}_{2m}(\Ann_{\tilde{A}_{m+1}} N_{\chi})$. Since $N_\chi$ is a faithful module over $\tilde{A}_{m+1}$, $\Ann_{\Ua(W)} M_{\chi} = \ker \overline{\Psi}_{2m}$. As $\varphi_m$ is injective, $\id \otimes \varphi_m$ is injective and thus $\ker \overline{\Psi}_{2m} = \ker \Psi_{2m}$. 
\end{proof}
Thus, as a corollary, we immediately have that $Q_\chi$ is independent of the pseudo-orbit $\orbit{\chi}$ when $\chi$ is a one-point local function of order $2m>0$.
\begin{corollary}
    Let $\g = \W{-1}, W$ or $Vir$ and let $\chi$ be a one-point local function on $\g$ of order $2m>0$. If $\eta \in \orbit{\chi}$, then $Q_\chi = Q_\eta$. 
\end{corollary}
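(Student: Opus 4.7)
The plan is to show in each case that $Q_\chi$ depends only on the order $2m$ and not on the particular $\chi$, so the conclusion is immediate. The pseudo-orbit classification recalled in Section~\ref{sec:background} (from \cite{petukhov2022poisson}*{Theorem 4.2.1}) asserts that for even order $\orbit{\chi} = \orbit{e^*_{2m-1}}$, and hence any $\eta \in \orbit{\chi}$ is itself a one-point local function of order $2m$; this is the one common ingredient in all three cases.

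For $\g = W$, Corollary~\ref{cor:kereven} gives $Q_\chi = \ker \Psi^{W}_{2m}$, and the right-hand side manifestly does not depend on $\chi$, so $Q_\chi = Q_\eta$. For $\g = \Vir$, every local function vanishes on $z$, so $\Gamma$ maps $\orbit{\chi}$ onto $\orbit{\Gamma(\chi)} \subset W^*$, and $\Gamma(\eta) \in \orbit{\Gamma(\chi)}$; Proposition~\ref{prop:repWVir} then gives $Q_\chi = \Gamma^{-1}(Q_{\Gamma(\chi)}) = \Gamma^{-1}(Q_{\Gamma(\eta)}) = Q_\eta$.

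The $\g = \W{-1}$ case requires the most care, because $\orbit{\chi}$ generally contains functions based at $x = 0$, and such functions do not extend to $W^*$; thus Proposition~\ref{prop:repWW-1} cannot be invoked directly to transport the $W$-result. My plan is to establish the $\W{-1}$-analog of Corollary~\ref{cor:kereven}: for every one-point local function $\chi$ on $\W{-1}$ of order $2m > 0$, including $x = 0$, one has $Q_\chi = \ker \Psi^{\W{-1}}_{2m}$. To this end, I will compose $\Psi^{\W{-1}}_{2m}$ with $\mathrm{id} \otimes \varphi_m$ from Proposition~\ref{prop: weyl algebra embedding} to obtain $\overline{\Psi}^{\W{-1}}_{2m}: \Ua(\W{-1}) \to A_{m+1}$, form the left ideal
\[
I_\chi = A_{m+1}\bigl(t-x,\ v_m - \chi(u_m),\ \dots,\ v_{2m-1} - \chi(u_{2m-1})\bigr),
\]
and set $N_\chi = A_{m+1}/I_\chi$. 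The calculations in the proof of Proposition~\ref{prop: second order hom even} then carry over verbatim, since every identity used rests only on two facts: $(t-x)$ commutes with the Weyl generators $s_i, \partial_i$, and $(t-x)$ divides each derivative $((t-x)^{2m+1}q)^{(i+1)}$ for $i \leq 2m-1$. Neither invertibility of $t$ nor $x \neq 0$ plays a role. Combined with Proposition~\ref{prop:untwistrep} (which identifies $M'_\chi$ with $M_\chi$ since $m > 0$), this yields $M'_\chi \cong N_\chi$ as $\Ua(\W{-1})$-modules. Because $A_{m+1}$ is a simple ring, any nonzero left $A_{m+1}$-module has trivial two-sided annihilator, so $Q_\chi = \ker \overline{\Psi}^{\W{-1}}_{2m} = \ker \Psi^{\W{-1}}_{2m}$, the last equality by injectivity of $\varphi_m$. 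Both $\chi$ and $\eta$ then satisfy $Q_\chi = \ker \Psi^{\W{-1}}_{2m} = Q_\eta$. The only real obstacle in this case is conceptual --- recognizing that the $W$-style argument must be redone intrinsically on $\W{-1}$ to cover the base point $x = 0$; the computational adaptation itself is mechanical.
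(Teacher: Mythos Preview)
Your argument is correct and, for $W$ and $\Vir$, is exactly the paper's proof: cite \cite{petukhov2022poisson}*{Theorem 4.2.1} to conclude $\eta$ has order $2m$, then invoke Corollary~\ref{cor:kereven} to get $Q_\chi = \ker\Psi_{2m} = Q_\eta$; the $\Vir$ case reduces to $W$ via $\Gamma$ as you say.

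For $\W{-1}$ you are more careful than the paper. The paper's two-line proof simply appeals to Corollary~\ref{cor:kereven}, relying on the blanket reduction to $W$ announced at the start of Section~\ref{sec:annihilators} (via Propositions~\ref{prop:repWW-1} and~\ref{prop:repWVir}). You correctly observe that this reduction does not literally cover one-point local functions on $\W{-1}$ based at $x=0$, since those do not extend to $W$. Your remedy --- rerunning the proof of Proposition~\ref{prop: second order hom even} and Corollary~\ref{cor:kereven} with $A_{m+1}$ in place of $\tilde A_{m+1}$ and $\Psi^{\W{-1}}_{2m}$ in place of $\Psi^{W}_{2m}$ --- is sound: none of the computations in that proof use invertibility of $t$ or $x\neq 0$, and simplicity of $A_{m+1}$ gives faithfulness of $N_\chi$ just as before. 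So your version establishes $Q_\chi = \ker\Psi^{\W{-1}}_{2m}$ for all base points, including $x=0$, yielding $Q_\chi=Q_\eta$ directly. This fills a small expository gap the paper leaves implicit (and only resolves later, somewhat circularly, in the proof of Theorem~\ref{theo: Dixmier map}); your route is cleaner.
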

\begin{proof}
    By~\cite{petukhov2022poisson}*{Theorem 4.2.1.}, $\eta$ also has order $2m$. By Corollary~\ref{cor:kereven}, $Q_\chi = \ker \Psi_{2m} = Q_\eta$.
\end{proof}

Moreover, Corollary~\ref{cor:kereven} yields an immediate result on the inclusion of annihilators of local representations of even-order local functions. 
\begin{corollary}\label{cor:Qinclusioneven}
    Let $\g = \W{-1}, W$ or $Vir$ and let $\chi, \eta$ be two one-point local functions of positive even order. If $\chi \in \overline{\orbit{\eta}}$, then $Q_\eta \subseteq Q_\chi$.
\end{corollary}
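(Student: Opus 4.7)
The plan is to use Corollary~\ref{cor:kereven} to replace each annihilator with the kernel of an explicit algebra homomorphism, and then exploit the obvious factorization among these homomorphisms. First I would reduce to the case $\g = W$. For $\Vir$, this is immediate from \eqref{eq:dxVir}: $Q_\chi^{\Vir} = \Gamma^{-1}(Q_{\Gamma(\chi)}^W)$, so any inclusion $Q_{\Gamma(\eta)}^W \subseteq Q_{\Gamma(\chi)}^W$ pulls back to $Q_\eta^{\Vir} \subseteq Q_\chi^{\Vir}$. For one-point local functions on $\W{-1}$ with nonzero base point, Proposition~\ref{prop:repWW-1} gives $Q_\chi^{\W{-1}} = \iota^{-1}(Q_{\chi_W}^W)$ where $\chi_W$ is the extension of $\chi$ to $W$, so again the $W$-inclusion descends. (For $\W{-1}$-functions supported at $0$, see the comment on the obstacle below.)

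Next, working in $W$, write $\chi$ of order $2m$ and $\eta$ of order $2m'$. Recall that a one-point local function of order $2k$ has pseudo-orbit of dimension $2k+2$. The hypothesis $\chi \in \overline{\orbit{\eta}}$ is equivalent to $P(\eta) \subseteq P(\chi)$, so by the orbit-closure proposition recalled in the Background (Corollary~4.2.9 of~\cite{petukhov2022poisson}) we are in one of two cases: either $\orbit{\chi} = \orbit{\eta}$, in which case $Q_\chi = Q_\eta$ by Corollary~\ref{cor:kereven} (both equal $\ker \Psi^W_{2m}$), or the inclusion is strict and $\dim \orbit{\chi} < \dim \orbit{\eta}$, i.e.\ $m < m'$. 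In all cases $m \leq m'$.

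The key step is then the elementary observation that the family $(\Psi^W_n)_n$ factors compatibly through the truncations $\tau_{n} : \g_{n'} \twoheadrightarrow \g_{n}$: since both sides send $f\del$ to $f\del + \sum_{i=0}^{n-1} \tfrac{f^{(i+1)}}{(i+1)!}v_i$ (with $\tau_n$ killing the tail $v_{\geq n}$ of $\Psi^W_{n'}(f\del)$), we have
\begin{equation*}
    \Psi^W_{2m} = (\mathrm{id} \otimes \tau_{2m}) \circ \Psi^W_{2m'}.
\end{equation*}
Consequently $\ker \Psi^W_{2m'} \subseteq \ker \Psi^W_{2m}$. Applying Corollary~\ref{cor:kereven} to both sides yields $Q_\eta \subseteq Q_\chi$, completing the proof in the $W$ case.

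The only real obstacle is bookkeeping in the $\W{-1}$ case when $\eta$ or $\chi$ is supported at the origin, since Proposition~\ref{prop:repWW-1} does not apply there. This is handled by observing that the proofs of Proposition~\ref{prop: weyl algebra embedding}, Proposition~\ref{prop: second order hom even} and Corollary~\ref{cor:kereven} go through verbatim for $\W{-1}$: one uses $\Psi^{\W{-1}}_{2m}$ landing in the unlocalized $A_{m+1}$ and the canonical polarization $\W{m}$ of $\chi_{0;\alpha_0,\dots,\alpha_{2m}}$, giving $Q_\chi = \ker \Psi^{\W{-1}}_{2m}$. The same factorization $\Psi^{\W{-1}}_{2m} = (\mathrm{id}\otimes \tau_{2m}) \circ \Psi^{\W{-1}}_{2m'}$ then concludes the argument.
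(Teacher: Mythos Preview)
Your proof is correct and follows essentially the same route as the paper: use Corollary~\ref{cor:kereven} to identify $Q_\chi = \ker \Psi_{2m}$ and $Q_\eta = \ker \Psi_{2m'}$, then invoke the factorization $\Psi_{2m} = (\id \otimes \tau_{2m}) \circ \Psi_{2m'}$ to deduce the kernel inclusion. You are more careful than the paper about the reduction from $\W{-1}$ and $\Vir$ to $W$ and about the equal-orbit case, but the core argument is the same.
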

\begin{proof}
    Let $2m$, $2\widehat{m}$ be the order of $\chi, \eta$ respectively. By ~\cite{petukhov2022poisson}*{Corollary 4.2.9.}, we have $2\widehat{m} > m > 0$. The following diagram thus commutes
    \[\begin{tikzcd}
        \Ua(W) \arrow[rd, "\Psi_{2m}", swap] \arrow[r, "\Psi_{2\widehat{m}}"] & \tilde{A}_1 \otimes \Ua(\g_{2\widehat{m}}) \arrow[d, "\id \otimes \tau_{2m}"] \\
        & \tilde{A}_1 \otimes \Ua(\g_{2m})
    \end{tikzcd}\]
    where $\tau_{2m}: \Ua(\g_{2\widehat{m}}) \to \Ua(\g_{2m})$ is the canonical projection sending $v_{j}$ to 0 for $j \geq 2m$. Hence 
    \begin{align*}
        Q_\eta = \ker \Psi_{2\widehat{m}} \subseteq \ker \Psi_{2m} = Q_\chi.
    \end{align*}
\end{proof}

We can take the multi-point version of the map $\overline{\Psi_{2m}}$ as follows. 
\begin{theorem}\label{theo:even}
    Let $\g = \W{-1}, W$ or $Vir$ and let $\chi$ be a local function on $\g$ of positive totally even order, so all components $\chi_i$ have even order $2m_i >0$. Then there is an algebra homomorphism
    \begin{align}
        \overline{\Psi}_{2\m} = ( \overline{\Psi}_{2m_1} \otimes \dots \otimes  \overline{\Psi}_{2m_\ell}) \circ \Delta^{\otimes \ell}: \Ua(W) \to \tilde{A}_\ell \otimes A_{m_1 + \dots + m_\ell}. 
    \end{align}
    Moreover
    \begin{align}
        Q_\chi = \ker \overline{\Psi}_{2\m} =\ker {\Psi}_{2\m},
    \end{align}
    and thus only depends on the pseudo-orbit of $\chi$. 
    \qed
\end{theorem}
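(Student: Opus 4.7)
My plan is to reduce the statement to the case $\g = W$ and then combine the one-point results of the previous subsection with the tensor decomposition of Proposition~\ref{prop:tensor1point}. By Propositions~\ref{prop:repWW-1} and~\ref{prop:repWVir}, canonical local representations for $\W{-1}$ and $\Vir$ are restrictions along the natural maps $\Ua(\W{-1}) \hookrightarrow \Ua(W)$ and $\Ua(\Vir) \twoheadrightarrow \Ua(W)$ of those for $W$, so both $Q_\chi$ and $\ker \Psi_{2\m}$ pull back from the $W$ case; thus I may assume $\g = W$.

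First I would verify that $\overline{\Psi}_{2\m}$ is an algebra homomorphism. This is immediate from the Hopf structure on $\Ua(W)$: the iterated coproduct $\Delta^{\otimes \ell}$ is an algebra map, each $\overline{\Psi}_{2m_i}: \Ua(W) \to \tilde{A}_1 \otimes A_{m_i}$ is an algebra map by Corollary~\ref{cor: weyl map even}, and regrouping tensor factors identifies the codomain of $\overline{\Psi}_{2m_1} \otimes \dots \otimes \overline{\Psi}_{2m_\ell}$ with $\tilde{A}_\ell \otimes A_{m_1+\dots+m_\ell}$. This step is entirely parallel to Proposition~\ref{prop: psi multi}.

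Next I would identify $M'_\chi$ with a restricted Weyl-algebra module. Proposition~\ref{prop:tensor1point} gives $M'_\chi \cong M'_{\chi_1} \otimes \dots \otimes M'_{\chi_\ell}$, and Proposition~\ref{prop: second order hom even} gives $M'_{\chi_i} \cong \res{\overline{\Psi}_{2m_i}} N_{\chi_i}$, where $N_{\chi_i}$ is a simple module over $\tilde{A}_1 \otimes A_{m_i}$. Unpacking the coproduct definition of $\overline{\Psi}_{2\m}$ shows that the $\Ua(W)$-action on $N_{\chi_1} \otimes \dots \otimes N_{\chi_\ell}$ obtained by restricting along $\overline{\Psi}_{2\m}$ coincides with the tensor product of the individual restricted actions; hence
\[
M'_\chi \cong \res{\overline{\Psi}_{2\m}}(N_{\chi_1} \otimes \dots \otimes N_{\chi_\ell}),
\]
and therefore $Q_\chi = \overline{\Psi}_{2\m}^{-1}\bigl(\Ann(N_{\chi_1} \otimes \dots \otimes N_{\chi_\ell})\bigr)$.

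Finally, each $N_{\chi_i}$ is a nonzero module over the simple ring $\tilde{A}_1 \otimes A_{m_i}$ and is therefore faithful (as already used in the proof of Corollary~\ref{cor:kereven}). Over the field $\kk$, tensor products of faithful modules remain faithful because the action map factors as $\bigotimes_i (\tilde{A}_1 \otimes A_{m_i}) \hookrightarrow \bigotimes_i \End_\kk(N_{\chi_i}) \hookrightarrow \End_\kk(\bigotimes_i N_{\chi_i})$ with both arrows injective. Consequently $Q_\chi = \ker \overline{\Psi}_{2\m}$. Since each $\varphi_{m_i}$ is injective by Proposition~\ref{prop: weyl algebra embedding}, the map $\id \otimes (\varphi_{m_1} \otimes \dots \otimes \varphi_{m_\ell})$ is injective, and composing with the factorisation $\overline{\Psi}_{2\m} = (\id \otimes (\varphi_{m_1} \otimes \dots \otimes \varphi_{m_\ell})) \circ \Psi_{2\m}$ yields $\ker \overline{\Psi}_{2\m} = \ker \Psi_{2\m}$. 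Pseudo-orbit dependence is then immediate: $\ker \Psi_{2\m}$ depends only on the unordered tuple $2\m$, and by~\cite{petukhov2022poisson}*{Theorem 4.3.1} any two local functions in a common pseudo-orbit share the same unordered order tuple. The main technical subtlety I anticipate is the coproduct bookkeeping in the restricted-action identification; the remainder is a routine assembly of already-proven facts.
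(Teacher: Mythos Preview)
Your proposal is correct and follows exactly the route the paper intends: the paper marks this theorem with \qed and gives no explicit proof, treating it as the routine multi-point assembly of Corollary~\ref{cor: weyl map even}, Proposition~\ref{prop: second order hom even}, Corollary~\ref{cor:kereven}, and Proposition~\ref{prop:tensor1point} via the coproduct, precisely as you have spelled out. Your extra care in justifying faithfulness of the tensor module and the factorisation through $\Psi_{2\m}$ is sound and simply makes explicit what the paper leaves implicit.
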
 
We also have an analog of Corollary~\ref{cor:Qinclusioneven}.
\begin{corollary}
    Let $\g = \W{-1}, W$ or $Vir$ and $\chi, \eta$ be two local functions on $\g$ of totally even order $2\mathbf{m}$, $2\widehat{\mathbf{m}}$ respectively. If $2\widehat{m}_i > 2m_i> 0$ for all $i$, then $Q_\eta \subseteq Q_\chi$.
    \qed
\end{corollary}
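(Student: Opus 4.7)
The plan is a direct multi-point adaptation of Corollary~\ref{cor:Qinclusioneven}. First, by Propositions~\ref{prop:repWW-1} and~\ref{prop:repWVir} one reduces to the case $\g = W$, as is standard throughout Section~\ref{sec:annihilators}. Second, since both $\chi$ and $\eta$ have positive totally even order, Theorem~\ref{theo:even} identifies $Q_\chi = \ker \Psi_{2\m}$ and $Q_\eta = \ker \Psi_{2\widehat{\m}}$, so the problem reduces to proving the kernel inclusion $\ker \Psi_{2\widehat{\m}} \subseteq \ker \Psi_{2\m}$.

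For each index $i$, the hypothesis $2\widehat{m}_i > 2m_i$ makes available the canonical Lie algebra projection $\tau_{2m_i}: \g_{2\widehat{m}_i} \to \g_{2m_i}$ of Definition~\ref{def:ringt}, sending $v_{\geq 2m_i}$ to zero. A glance at the defining formula~\eqref{eq:psiWn} gives the one-point factorization
\[
\Psi^W_{2m_i} \;=\; (\id_{\tilde{A}_1} \otimes \tau_{2m_i}) \circ \Psi^W_{2\widehat{m}_i},
\]
since both sides send $f\del \mapsto f\del + \sum_{j=0}^{2m_i - 1} \tfrac{f^{(j+1)}}{(j+1)!} v_j$. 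Tensoring these identities componentwise and composing with the $\ell$-fold coproduct $\Delta^{\ell}$, as in the construction of $\Psi_{2\m}$ in Proposition~\ref{prop: psi multi}, assembles them into the commutative diagram
\[
\begin{tikzcd}
\Ua(W) \arrow[r, "\Psi_{2\widehat{\m}}"] \arrow[rd, "\Psi_{2\m}", swap] & \tilde{A}_\ell \otimes \Ua(\g_{2\widehat{\m}}) \arrow[d, "\id \otimes \tau"] \\
& \tilde{A}_\ell \otimes \Ua(\g_{2\m})
\end{tikzcd}
\]
where $\tau = \tau_{2m_1} \otimes \dots \otimes \tau_{2m_\ell}$. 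Any $u \in \ker \Psi_{2\widehat{\m}}$ therefore satisfies $\Psi_{2\m}(u) = (\id \otimes \tau)(0) = 0$, so
\[
Q_\eta \;=\; \ker \Psi_{2\widehat{\m}} \;\subseteq\; \ker \Psi_{2\m} \;=\; Q_\chi,
\]
as required.

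There is no serious obstacle: all the structural content already lives in Theorem~\ref{theo:even} and in the one-point Corollary~\ref{cor:Qinclusioneven}. The only point requiring care is checking that tensoring the one-point factorizations $\Psi^W_{2m_i} = (\id \otimes \tau_{2m_i}) \circ \Psi^W_{2\widehat{m}_i}$ descends through $\Delta^{\ell}$ to a factorization of $\Psi_{2\m}$ through $\Psi_{2\widehat{\m}}$; this is automatic from the definition of $\Psi_{2\n}$ in Proposition~\ref{prop: psi multi} and the fact that $\Delta^{\ell}$ is an algebra homomorphism independent of the choice of target orders.
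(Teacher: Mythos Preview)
Your proof is correct and follows exactly the approach the paper intends: the corollary is stated as an analog of Corollary~\ref{cor:Qinclusioneven} and marked with \qed, and your argument is precisely the componentwise tensoring of that one-point factorization through $\Delta^{\ell}$, combined with Theorem~\ref{theo:even}. There is nothing to add.
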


For local functions on $W$ which do not have totally even order, it is more difficult to write an explicit map to the Weyl algebra, so we extend Theorem~\ref{theo:even} via a different method. 
\section{The Dixmier map}\label{sec: Dixmier map}
In this final section, we prove the rest of Theorem~\ref{theo:Dixmier1}, which we restate here. 
\begin{theorem}
    Let $\g$ be $\W{-1}, W$ or $Vir$ and let $\chi, \eta$ be local functions on $\g$. If $\orbit{\chi} = \orbit{\eta}$, then $Q_\chi = Q_\eta$. 
    
    Therefore, the Dixmier map $\Dx^\g: \g^* \to \Prim \Ua(\g)$ defined in~\eqref{eq:Dixmier} factors through $\g^* \to \Pprim \Sa(\g)$ to give a well-defined strong Dixmier map $\widetilde{\Dx}^\g: \Pprim \Sa(\g) \to \Prim \Ua(\g)$, as defined in~\eqref{eq:dxWW-1} and~\eqref{eq:dxVir}. 
\end{theorem}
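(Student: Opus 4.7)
The plan is to reduce to the case $\g = W$ with $\chi, \eta$ one-point local functions, and then match the pseudo-orbit data in $W^*$ with a coadjoint orbit in the solvable subquotient $\g_\n$, so the finite-dimensional orbit method of Theorem~\ref{theorem: Kirillov method} delivers the conclusion. By Propositions~\ref{prop:repWW-1} and~\ref{prop:repWVir}, canonical local representations of $\W{-1}$ and $\Vir$ are restrictions of those of $W$ (or, for $\W{-1}$-functions supported at $0$, are handled identically using $\Psi^{\W{-1}}_\n$ in place of $\Psi^W_\n$), so their annihilators are pulled back from $Q_\chi$ on $W$ and it suffices to prove the implication for $W$. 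To reduce further to one-point components, recall from Proposition~\ref{prop:tensor1point} that $M'_\chi \cong M'_{\chi_1} \otimes \cdots \otimes M'_{\chi_\ell}$ via $\Delta^\ell$, and the tensor-product annihilator identity used in the proof of Proposition~\ref{prop:primreplace} expresses $Q_\chi$ purely in terms of the $Q_{\chi_i}$. Since $\orbit{\chi} = \orbit{\eta}$ forces, after reordering, componentwise equality $\orbit{\chi_i} = \orbit{\eta_i}$ by the cited theorem of~\cite{petukhov2022poisson}, it is enough to prove $Q_{\chi} = Q_{\eta}$ when both are one-point.

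For a one-point local function of positive even order $n = 2m$, Corollary~\ref{cor:kereven} gives $Q_\chi = \ker \Psi^W_{2m}$, which is independent of the particular $\chi$ of that order; and the pseudo-orbit normal form $\orbit{e^*_{n-1}}$ likewise depends only on the order, so the implication is immediate in this case. The cases $n = 0$ and the exceptional value $\alpha_1 = \tfrac{1}{2}$ for order $1$ are covered by Proposition~\ref{prop:primreplace} together with the explicit classification of pseudo-orbits of order $\leq 1$. This leaves odd order $n = 2m+1 > 1$, where I would apply Proposition~\ref{prop:Qchihatchione} to write $Q_\chi = (\Psi^{W}_{n})^{-1}(\widetilde{A}_1 \otimes Q_{\bar{\chi}})$ and $Q_\eta = (\Psi^{W}_{n})^{-1}(\widetilde{A}_1 \otimes Q_{\bar{\eta}})$, and invoke Theorem~\ref{theorem: Kirillov method} for the finite-dimensional solvable $\g_n$ to conclude $Q_{\bar{\chi}} = Q_{\bar{\eta}}$ provided $\bar{\chi}$ and $\bar{\eta}$ lie in a common coadjoint orbit of the adjoint group $G_n$. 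Once the main implication is proved, the well-definedness of $\widetilde{\Dx}^\g$ is automatic, since pseudo-orbits are by definition the fibres of $\g^* \to \Pprim \Sa(\g)$.

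The main obstacle is this final step, matching pseudo-orbits with coadjoint orbits in the odd-order case: one must verify that the assignment $\chi \mapsto \bar{\chi}$ from Notation~\ref{not:barchi} carries the normal form $\orbit{e^*_{2m} + \alpha e^*_{m}}$, with its symmetry $\alpha \sim -\alpha$, faithfully to a single $G_n$-orbit in $\g_n^*$. This amounts to a direct calculation with the coadjoint action of $G_n$ on the basis dual to $v_0, \dots, v_{n-1}$, using the explicit formula relating $\bar{\chi}$ to the twisted one-point data $\chi'(u_i)$; the symmetry $\alpha \sim -\alpha$ should arise from a one-parameter subgroup of $G_n$ acting by rescaling $v_j \mapsto \lambda^j v_j$. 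With this orbit correspondence in hand the pieces fit together to give the strong Dixmier map and its compatibility with the Poisson core factorisation $\widetilde{\Dx}^\g \circ \mathrm{P.Core} = \Dx^\g$.
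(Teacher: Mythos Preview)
Your approach is sound and converges on the same decisive step as the paper: showing that the assignment $\chi \mapsto \bar{\chi}$ carries equal pseudo-orbits in $W^*$ to equal $G_n$-coadjoint orbits in $\g_n^*$, after which the finite-dimensional orbit method and Proposition~\ref{prop:Qchihatchione} finish the argument. The paper carries out exactly the orbit-matching calculation you outline as the ``main obstacle'' --- this is Proposition~\ref{prop:Gnorbit} and Corollary~\ref{cor:orbitrelation}, proved by the same normal-form reduction via one-parameter subgroups $g_{a + \alpha a^{j+1}}$ and the scaling $g_{\zeta a}$ that you sketch for the $\alpha \sim -\alpha$ symmetry.

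Where you differ is in the decomposition. You first reduce to one-point functions via the tensor-annihilator identity from~\cite{bergman2016tensor} and then case-split on the parity of the order, invoking Corollary~\ref{cor:kereven} as a shortcut in the even case. The paper instead stays at the multi-point level throughout, using $\Psi^W_\n$ and the product group $G_\n = G_{n_1} \times \cdots \times G_{n_\ell}$ (Proposition~\ref{prop:Qchihatchimulti}, Corollary~\ref{cor:orbitrelationmulti}) to treat all orders uniformly through the single identity $Q_\chi = (\Psi_\n)^{-1}(\widetilde{A}_\ell \otimes Q_{\bar{\chi}})$. Your route is slightly longer but equally valid, and the even-order shortcut does genuinely let you skip the orbit computation in that case. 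One minor correction: Proposition~\ref{prop:primreplace} only addresses the exceptional value $\alpha_1 = \tfrac{1}{2}$ and does not by itself cover the general order-$\leq 1$ case; but those cases fall out of Proposition~\ref{prop:Qchihatchione} with $n = 1$ just as well (there $G_1$ is trivial and $\bar{\chi}(v_0) = \alpha_1 - \tfrac{1}{2}$ determines the orbit), so no separate argument is actually needed.
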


Let $\g$ be $\W{-1}, W$ or $\Vir$ and let $\chi$ be a local function on $\g$ of order $\n$. Recall $\g_\n = \g_{n_1} \oplus \dots \oplus \g_{n_\ell}$ defined in Definition~\ref{def:gboldn} and $\bar{\chi} \in \g^*_\n$ defined in Notation~\ref{not:barchi}. In subsection~\ref{subsec:adjointgroup}, we characterize the adjoint algebraic group $G_\n$ of $\g_\n$. Then in subsection~\ref{subsec:coorbit}, we compute the coadjoint orbit $G_\n \cdot \bar{\chi}$ and show that the map $\pi^{\leq \n}: \Loc^{\leq \n} \to \g^*_\n$ defined in~\eqref{eq:mapchibar} induces an bijection between pseudo-orbits in $\Loc^{\leq \n} \subseteq \g^*$ and coadjoint orbits in $\g^*_\n$.
Finally, we prove that the strong Dixmier maps $\widetilde{\Dx}^\g: \Pprim \Sa(\g) \to \Prim \Ua(\g)$ are well-defined.
\subsection{The adjoint algebraic group of $\g_n$}\label{subsec:adjointgroup}
In this subsection, we first characterize the adjoint group of the solvable Lie algebras $\g_n = \W{0}/\W{n} = \kk \{v_0, \dots, v_{n-1} \}$ and generalize it to multi-point version. The case $\g_0 = 0$ is tautological so we consider $n \geq 1$. We can write the Lie algebra $\g_n$ differently as  
\begin{equation}\label{eq:gniso}
    \g_n \cong  \frac{a\kk[a]}{a^{n+1}\kk[a]}\del_a
\end{equation}
by identifying $v_i = a^{i+1}\del$. For $s \in \kk[a]$ with $s = c_1 a + \dots c_n a^n$ with $c_1 \neq 0$, note that there is a unique algebra automorphism of $\kk[a]/a^{n+1}\kk[a]$ as follows
\begin{equation}\label{eq: G_n action}
    \End_{a \to s}: \kk[a] \to \kk[a], \quad a \mapsto s.
\end{equation}
Note that $s'$ is invertible in $\frac{\kk[a]}{a^{n+1}\kk[a]}$. We then extend $\End_{a \to s}$ to a related map $g_s$ by~\eqref{eq: G_n action del}, which we will show is a Lie algebra automorphism of $\g_n$
\begin{equation}\label{eq: G_n action del}
    g_s: \g_n \to \g_n, \text{ induced by }  a \mapsto s, \del \mapsto \frac{1}{s'}\del.
\end{equation}

\begin{lemma}\label{lem:Endauto}
    Let $s = c_1 a + \dots + c_n a^n$ with $c_1 \neq 0$. Then $g_s$ is an automorphism of $\g_n$.
\end{lemma}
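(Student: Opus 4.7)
The plan is to realize $\g_n$ as a Lie algebra of derivations of the finite local ring $R := \kk[a]/a^{n+1}\kk[a]$ and to exhibit $g_s$ as conjugation by the algebra endomorphism $\End_{a\to s}$, which I will first upgrade to an automorphism of $R$.

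First, I would check that $\phi := \End_{a \to s}$ is a well-defined $\kk$-algebra automorphism of $R$. Well-definedness follows because $s \in a\kk[a]$ forces $s^{n+1} \in a^{n+1}\kk[a]$, so $\phi$ descends to $R$. For bijectivity, since $c_1 \neq 0$ the associated graded map for the $(a)$-adic filtration of $R$ acts on the $i$-th graded piece as multiplication by $c_1^i$, which is invertible; so $\phi$ is an isomorphism on the graded level, hence on $R$. Equivalently one can construct an explicit inverse $r(a) = c_1^{-1}a + d_2 a^2 + \dots \in a\kk[a]/a^{n+1}\kk[a]$ by inductively solving $s(r) \equiv a \pmod{a^{n+1}}$ coefficient-by-coefficient.

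Next, via the identification~\eqref{eq:gniso}, $\g_n$ is the Lie algebra of $\kk$-derivations of $R$ preserving the maximal ideal $(a)$ (since $v_i \leftrightarrow a^{i+1}\del$ sends $a$ into $(a)$). Conjugation $D \mapsto \phi D \phi^{-1}$ is a Lie algebra automorphism of $\mathrm{Der}_\kk(R)$; and because $\phi$ preserves $(a)$, this automorphism restricts to an automorphism of $\g_n$.

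Finally, I would identify this conjugation with $g_s$ by evaluating on generators. Write $\phi^{-1}(a) = r$, so $r(s) = a$ in $R$. Then $\phi\,\del\,\phi^{-1}$ is the derivation of $R$ whose value on $a$ is $\phi(\del(r)) = r'(s)$; differentiating the identity $r(s) = a$ with the chain rule gives $r'(s)\cdot s' = 1$, so $\phi\,\del\,\phi^{-1} = (s')^{-1}\del$. Consequently, for any $h \in a\kk[a]/a^{n+1}\kk[a]$,
\[
\phi\,(h\del)\,\phi^{-1} \;=\; \phi(h)\cdot \phi\,\del\,\phi^{-1} \;=\; h(s)\cdot (s')^{-1}\del \;=\; g_s(h\del),
\]
so $g_s$ is exactly conjugation by $\phi$, and hence a Lie algebra automorphism of $\g_n$.

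The only nontrivial point is verifying that $\phi$ is an algebra automorphism of $R$; once this is established, the rest is formal. A purely computational alternative would be to bypass the derivation interpretation and check $[g_s(f\del), g_s(g\del)] = g_s([f\del, g\del])$ directly, expanding using $[h_1\del, h_2\del] = (h_1 h_2' - h_1' h_2)\del$ and the chain rule (where all terms involving $s''$ cancel); bijectivity then follows from the explicit inverse $g_r$ for the $r$ constructed above.
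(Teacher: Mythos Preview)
Your proof is correct and takes a genuinely different route from the paper. The paper verifies the Lie bracket identity $[g_s(a^i\del),g_s(a^j\del)]=g_s([a^i\del,a^j\del])$ by a direct two-line computation, and then obtains bijectivity by observing that $g_s(a^i\del)=c_1^{\,i-1}a^i\del+(\text{higher-order terms})$, so the matrix of $g_s$ in the basis $v_0,\dots,v_{n-1}$ is lower triangular with nonzero diagonal. Your argument instead identifies $\g_n$ with the derivations of $R=\kk[a]/(a^{n+1})$ preserving $(a)$ and realizes $g_s$ as conjugation by the algebra automorphism $\phi=\End_{a\to s}$, from which both the homomorphism property and bijectivity follow structurally.

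Your approach has the advantage of explaining \emph{why} $g_s$ is an automorphism (it is transport of structure under a change of coordinate on $\Spec R$) and it makes the group law $g_{s_1}\circ g_{s_2}=g_{s_2\circ s_1}$ transparent. The paper's approach, while more ad hoc, has the practical benefit that the explicit leading-term formula $g_s(a^i\del)=c_1^{\,i-1}a^i\del+\cdots$ is exactly what is used immediately afterward in Lemma~\ref{lem:G_n+1} and Corollary~\ref{cor:dimpiG} to compute $\dim G_n$; your argument establishes the lemma but does not produce this formula as a byproduct. The computational alternative you sketch at the end is essentially the paper's proof.
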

\begin{proof}
    The map $g_s$ is a Lie algebra homomorphism of $\g_n$ as for all $i, j \geq 1$
    \begin{align*}
        [g_s (a^i \del_a), g_s (a^j \del_a)] &= s^i \frac{1}{s'} \del s^j \frac{1}{s'} \del - s^j \frac{1}{s'} \del s^i \frac{1}{s'} \del \\
        &= (j-i)s^{i+j-1} \frac{1}{s'} \del = g_s([a^i \del_a, a^j \del_a]).
    \end{align*}
    We define an order on $\g_n$ by letting $v_0 > v_1 > \dots > v_{n-1}$, then 
\begin{align*}
g_s (a^i \del) = s^i \frac{1}{s'} \del = (c_1 a + \dots + c_n a^n)^i \frac{1}{c_1}(1- \frac{2c_2}{c_1}a + \dots )\del = c_1^{i-1}a^i\del + [\text{terms of lower order}].
\end{align*}
Since $g_s$ sends a basis of $\g_n$ to a basis of $\g_n$, it is an automorphism. 
\end{proof}

\begin{definition}\label{def:Gn}
    For $n \leq 1$, let $G_n = \id$. For $n \in \mathbb{Z}_{\geq 2}$, let $G_n = \{g_s| s = c_1 a + \dots c_n a^n \text{ with } c_1 \neq 0 \}$. Then $G_n$ is an algebraic subgroup of $\mathrm{GL}(\g_n)$ for all $n$.

    For $\n = (n_1, \dots, n_\ell)$, let $G_\n = G_{n_1} \times \dots \times G_{n_\ell}$. As $\g_\n = \g_{n_1} \oplus \dots, \oplus \g_{n_\ell}$, then it is obvious that $G_\n$ is an algebraic subgroup of $\mathrm{GL}(\g_\n)$.
\end{definition}

\begin{example}[Computations of the group $G_n$ for small $n$]\label{ex:piGn}
    For $\g_1 = \kk \{v_0\}$, we have $g_{c_1 a}(a \del) = c_1 a \frac{1}{c_1}\del = a\del$ then $G_1 = \{1\}$. 
        
    For $\g_2 = \kk \{ v_0, v_1 \}$, we have 
\begin{align*}
    g_{c_1 a + c_2 a^2}(a \del) &=   (c_1 a + c_2 a^2) \frac{1}{c_1 + 2c_2 a}\del = (c_1 a + c_2 a^2)\frac{1}{c_1}(1-\frac{2c_2}{c_1} a + \dots ) \del = a \del - \frac{c_2}{c_1} a^2 \del, \\
    g_{c_1 a + c_2 a^2}(a^2 \del) &=  (c_1 a + c_2 a^2)^2 \frac{1}{c_1 + 2c_2 a}\del = (c_1 a + c_2 a^2)^2 \frac{1}{c_1}(1-\frac{2c_2}{c_1} a + \dots ) \del = c_1 a^2 \del,
\end{align*}
then $G_2 =\left \{ \begin{bNiceArray}{cc}
1 & 0 \\
d_1 & c_1
\end{bNiceArray}: c_1 \in \kk^\times, d_1 \in \kk \right \}$.    
\end{example}

We want to show that $G_n$ is the adjoint algebraic group of $\g_n$. Recall that the \emph{adjoint algebraic group} of $\g_n$ is the smallest algebraic subgroup $G^{\ad} \leq \mathrm{GL}(\g_n)$\label{pl:adgrp} whose Lie algebra contains $\ad \g = \{ \ad x| x \in \g \}$ (see e.g. ~\cite{borho1973}*{12.2}). We first have a useful lemma outlining the structure of $G_n$.

\begin{lemma}\label{lem:G_n+1}
    For $n \geq 1$, let $s_{n+1} = c_1 a + \dots + c_{n+1} a^{n+1}$ where $c_1 \neq 0$ and let $s_n = s_{n+1} - c_{n+1}a^{n+1}$. Then 
    $g_{s_{n+1}} = 
    \begin{bNiceArray}{ccw{c}{1cm}c|c}[margin]
\Block{4-4}{g_{s_{n}}} & & & & 0 \\
& & & & 0 \\
 & & & &\Vdots \\
& & & & 0 \\
\hline
p_0 & \Cdots& & p_{n-1} & c_1^n
\end{bNiceArray}$, where $p_0 \in \kk[c_1^{-1}, c_1, c_2, \dots, c_{n+1}]$ and $p_1, \dots, p_{n-1} \in \kk[c_1^{-1}, c_1, c_2, \dots, c_{n}]$.
\end{lemma}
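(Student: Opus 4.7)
The plan is to compute $g_{s_{n+1}}(v_i) = s_{n+1}^{i+1}(s_{n+1}')^{-1}\partial$ in $\g_{n+1}$ (i.e.\ modulo $a^{n+2}$) for $i=0,1,\dots,n$, and compare column-by-column against the matrix of $g_{s_n}$ acting on $\g_n$ (i.e.\ computed modulo $a^{n+1}$). With respect to the ordered basis $v_0<v_1<\dots<v_n$, the key observation is that each $g_s$ is already known to be lower-triangular with diagonal entries $1,c_1,c_1^2,\dots$ (argued in Lemma~\ref{lem:Endauto}), so the block structure of the statement boils down to two claims: (a) the top-right column of zeros, i.e.\ $g_{s_{n+1}}(v_n)=c_1^n v_n$ with no $v_{<n}$ contribution, and (b) the upper-left block equals $g_{s_n}$, i.e.\ for $i<n$ the first $n$ coordinates of $g_{s_{n+1}}(v_i)$ coincide with $g_{s_n}(v_i)$.

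The last column is immediate. Since $s_{n+1}\in a\kk[a]$, we have $s_{n+1}^{n+1}\equiv c_1^{n+1}a^{n+1}\pmod{a^{n+2}}$, and $(s_{n+1}')^{-1}\equiv c_1^{-1}\pmod{a}$, so
\[
g_{s_{n+1}}(v_n) \;\equiv\; c_1^{n+1}a^{n+1}\cdot c_1^{-1}\,\partial \;=\; c_1^n v_n \pmod{a^{n+2}},
\]
giving the stated column $(0,\dots,0,c_1^n)^T$ and the bottom-right entry.

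For the upper-left block I will prove the congruence
\[
s_{n+1}^{i+1}(s_{n+1}')^{-1} \;\equiv\; s_n^{i+1}(s_n')^{-1} \pmod{a^{n+1}} \qquad (i\geq 0),
\]
which establishes that $g_{s_{n+1}}(v_i)$ and $g_{s_n}(v_i)$ have the same coefficients on $v_0,\dots,v_{n-1}$. Writing $s_{n+1}=s_n+c_{n+1}a^{n+1}$ and splitting the difference via
\[
s_{n+1}^{i+1}(s_{n+1}')^{-1}-s_n^{i+1}(s_n')^{-1}
=s_{n+1}^{i+1}\bigl((s_{n+1}')^{-1}-(s_n')^{-1}\bigr)+\bigl(s_{n+1}^{i+1}-s_n^{i+1}\bigr)(s_n')^{-1},
\]
the first term starts at $a^{i+1+n}$ (from $s_{n+1}^{i+1}\in a^{i+1}\kk[a]$ and $(s_{n+1}')^{-1}-(s_n')^{-1}\in a^n\kk[a]$), while the second starts at $a^{n+1+i}$ (since $s_{n+1}^{i+1}-s_n^{i+1}=(i+1)s_n^i c_{n+1}a^{n+1}+\cdots$), so both lie in $a^{n+1}\kk[a]$ for $i\geq 0$.

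Finally, for the bottom row entries $p_i$ (the coefficient of $v_n=a^{n+1}\partial$ in $g_{s_{n+1}}(v_i)$, $i=0,\dots,n-1$) I will sharpen the above estimate: when $i\geq 1$ both error terms in fact lie in $a^{n+2}\kk[a]$, yielding $s_{n+1}^{i+1}(s_{n+1}')^{-1}\equiv s_n^{i+1}(s_n')^{-1}\pmod{a^{n+2}}$, and thus $p_i$ is precisely the $a^{n+1}$-coefficient of $s_n^{i+1}(s_n')^{-1}$, which visibly involves only $c_1^{-1},c_1,c_2,\dots,c_n$. The remaining case $i=0$ is just the $a^{n+1}$-coefficient of $s_{n+1}(s_{n+1}')^{-1}$, a universal polynomial in $c_1^{-1},c_1,\dots,c_{n+1}$. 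The only bookkeeping subtlety is tracking which powers of $a$ survive the two reductions (mod $a^{n+1}$ for the upper block, mod $a^{n+2}$ for the full $\g_{n+1}$), but once that is set up carefully every step is a direct Laurent-series manipulation.
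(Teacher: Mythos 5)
Your proposal is correct and follows essentially the same route as the paper: both proofs reduce to showing that replacing $s_{n+1}$ by $s_n$ changes $s^{i+1}(s')^{-1}$ only in degrees $\geq n+1$ in $a$, and in degrees $\geq n+2$ once $i\geq 1$, which yields the block form and the constraint on which $c_j$ enter each $p_i$. Your two-term splitting of the difference with explicit $a$-adic valuation bounds is just a slightly more systematic organization of the paper's Taylor-coefficient bookkeeping for $f_{n+1}=1/s_{n+1}'$.
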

\begin{proof}
Let $f_{n+1} = \frac{1}{s'_{n+1}}$, $f_{n} = \frac{1}{s'_{n}}$, and note that $s'_{n+1} = s'_n + (n+1)c_{n+1}a^{n+1}$. By Taylor series expansion, we write $f_{n+1} = \sum_{i = 0} \frac{f^{(i)}_{n+1}(0)}{i!}a^i$ and $f_{n} = \sum_{i = 0} \frac{f^{(i)}_{n}(0)}{i!}a^i$. For all $i$, the maximal derivative of $s_{n+1}$ appearing in $f^{(i)}_{n+1}$ is $s^{(i+1)}_{n+1}$. Moreover, since we are evaluating at $a = 0$, $c_{n+1}a^{n+1}$ will not contribute to $f^{(i)}_{n+1}(0)$ for $i = 0, \dots, n-1$, thus 
    \begin{equation}
        f^{(i)}_{n+1}(0) = f^{(i)}_{n}(0) \text{ for all } i = 0, \dots, n-1.
    \end{equation}
Let $b_i = \frac{f^{(i)}_{n+1}(0)}{i!} = \frac{f^{(i)}_{n}(0)}{i!}$ for $i = 0, \dots, n-1$ and $b_n = \frac{f^{(n)}_{n+1}(0)}{n!}$. 
Consider now the term $b_n a^n$ in the Taylor series expansion of $f_{n+1}$, the maximal derivative of $s_{n+1}$ in $b_n$ is $s^{(n+1)}_{n+1}(0) = (n+1)!c_{n+1}$, thus $b_n \in \kk[c_1, c^{-1}_{1}, \dots, c_{n+1}]$.  

Because $s_{n+1}$ is a multiple of $a$, $g_{s_{n+1}} (a^{i}\del) = s_{n+1}^i f_{n+1} \del$, and we truncate from $a^{n+2}\del$, thus only powers up to $a^{n}$ in $f_{n+1}$ will be taken into account. Hence, we can rewrite $f_{n+1} = f_n + b_n a^{n}$. Then 
    \begin{align*}
        g_{s_{n+1}}(a^{i}\del) = s_{n+1}^i \bigl(\sum_{i = 0} b_i a^i \bigr) \del = (s_{n}+c_{n+1}a^{n+1})^i (f_n +  b_n a^{n}) \del = g_{s_{n}}(a^i \del) + p_i a^{n+1}\del,
    \end{align*}
    where $p_i$ is the coefficient of $a^{n+1}\del$ in $g_{s_{n+1}}(a^{i}\del)$. Moreover, note that when $i > 1$, $b_n a^n$ and $c_{n+1} a^{n+1}$ do not contribute to $p_i$ as $s_{n+1}$ is a multiple of $a$, and thus $p_1, \dots, p_{n-1} \in \kk[c_1, c^{-1}_{1}, \dots, c_n]$. We hence obtain the form of $g_{s_{n+1}} $ stated in the lemma. 
\end{proof}

\begin{corollary}\label{cor:dimpiG}
    $\dim G_n = \begin{cases}
        0 &\text{ if } n=0,1; \\ 
        n &\text{ otherwise. }
    \end{cases}$
\end{corollary}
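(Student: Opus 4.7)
The cases $n = 0$ and $n = 1$ are immediate from Definition~\ref{def:Gn} and Example~\ref{ex:piGn}: $G_0 = G_1 = \{1\}$ has dimension $0$. For $n \geq 2$, the plan is to show that the parametrization
\[
\mu_n\colon \kk^\times \times \kk^{n-1} \longrightarrow G_n, \qquad (c_1, \ldots, c_n) \longmapsto g_{c_1 a + c_2 a^2 + \cdots + c_n a^n},
\]
is a bijection whose inverse is given by rational functions that are regular on $G_n$; this immediately yields $\dim G_n = n$.

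Surjectivity of $\mu_n$ is the defining description in Definition~\ref{def:Gn}. I will establish injectivity by induction on $n$, using Lemma~\ref{lem:G_n+1}. The base case $n = 2$ is visible in Example~\ref{ex:piGn}: from the explicit matrix one reads off $c_1$ as the $(2,2)$-entry and $c_2 = -c_1 d_1$, where $d_1$ is the $(2,1)$-entry. For the inductive step, given $g_{s_{n+1}} \in G_{n+1}$, Lemma~\ref{lem:G_n+1} identifies its upper-left $n \times n$ block with $g_{s_n}$ for $s_n = s_{n+1} - c_{n+1}a^{n+1}$, so the inductive hypothesis recovers $c_1, \ldots, c_n$. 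It then remains to extract $c_{n+1}$ from the new bottom row.

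The key computation is that the $(n+1,1)$-entry $p_0$, namely the coefficient of $a^{n+1}$ in $s_{n+1}/s_{n+1}'$, depends on $c_{n+1}$ with nonzero linear coefficient. Writing $s_{n+1} = s_n + c_{n+1}a^{n+1}$ and $s_{n+1}' = s_n' + (n+1)c_{n+1}a^n$, and using the expansion $s_n/s_n' = a - (c_2/c_1)a^2 + \cdots$, the two contributions to $p_0$ that are linear in $c_{n+1}$ come from (i) the numerator term $c_{n+1}a^{n+1}/s_n'$, giving $c_{n+1}/c_1$, and (ii) the first-order expansion of $1/(s_n' + (n+1)c_{n+1}a^n)$, giving $-(n+1)c_{n+1}/c_1$; these sum to $-n\,c_{n+1}/c_1$, which is nonzero for $n \geq 1$ and all $c_1 \in \kk^\times$. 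Hence $c_{n+1}$ is uniquely determined by $p_0$ once $c_1, \ldots, c_n$ are known, and the resulting expression for $c_{n+1}$ is a rational function in the matrix entries with denominator a power of $c_1$, hence regular on $G_n$. Therefore $\mu_n$ is an isomorphism of algebraic varieties and $\dim G_n = n$. The only nontrivial step is this Taylor expansion; it is elementary but requires some care to identify the two contributions linear in $c_{n+1}$ and see that they do not cancel.
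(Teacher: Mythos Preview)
Your proof is correct and follows essentially the same approach as the paper: both argue by induction on $n$ using the block structure of Lemma~\ref{lem:G_n+1}, and the decisive computation in each is the dependence of the $(n+1,1)$-entry $p_0$ on $c_{n+1}$. The paper computes this only along the line $s_{n+1} = a + c_{n+1}a^{n+1}$ (obtaining $p_0 = -nc_{n+1}$) to show $p_0$ is surjective and hence $\dim G_{n+1} = \dim G_n + 1$, whereas you compute the general linear coefficient $-n/c_1$ and package the induction as showing that the parametrization $\mu_n$ is an isomorphism of varieties --- a slightly stronger conclusion, but via the same computation.
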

\begin{proof}
    For $n=1$, from Example~\ref{ex:piGn}, we have $G_1 = \{1\}$. We then induct on $n \geq 2$, for the base case, from~\ref{ex:piGn}, $\dim G_2= 2$. Assume the induction hypothesis, from Lemma~\ref{lem:G_n+1}, $\dim G_{n+1} \leq \dim G_{n} +1$. Moreover, $p_0$ takes all values in $\kk$ as for any $c_{n+1} \in \kk$, consider $s_{n+1} = a + c_{n+1}a^{n+1}$, then
    \begin{align*}
        g_{s_{n+1}} (a\del) = \frac{a + c_{n+1}a^{n+1}}{1 + (n+1)c_{n+1}a^n} \del = (a + c_{n+1}a^{n+1}) \bigl( 1 - (n+1)c_{n+1}a^n + \dots \bigr) \del = a\del - nc_{n+1}a^{n+1}\del.
    \end{align*} 
    Thus $\dim G_{n+1} = \dim G_{n} +1 = n+1$. 
\end{proof}

Note that for $n > 1$, $G_n$ is an $n$-dimensional solvable algebraic group, and we claim that $G_n$ is the adjoint group of $\g_n$ for all $n$. Let the corresponding Lie algebra of $G_n$ be $\mathrm{lie}^{\leq n} \subseteq  \mf{gl}(\g_n)$, we first show that $\mathrm{lie}^{\leq n} = \ad(\g_n)$\label{pl:lieGn}.
\begin{lemma}\label{lem:lieGn}
    We have $\mathrm{lie}^{\leq n} = \ad(\g_n)$ as a Lie algebra where $\ad: \g_n \to \mathfrak{gl}(\g_n)$. 
\end{lemma}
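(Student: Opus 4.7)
The plan is to compute $\mathrm{lie}^{\leq n}$ by differentiating one-parameter subgroups of $G_n$ at the identity, produce enough tangent vectors to recover all of $\ad(\g_n)$, and then close by a dimension count using Corollary~\ref{cor:dimpiG}. For $n = 0$ both sides vanish trivially; for $n = 1$, $\g_1 = \kk v_0$ is abelian so $\ad(\g_1) = 0$, while Example~\ref{ex:piGn} gives $G_1 = \{1\}$, whence $\mathrm{lie}^{\leq 1} = 0$.

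For $n \geq 2$, I would first verify that $\ad \colon \g_n \to \mathfrak{gl}(\g_n)$ is injective by showing $Z(\g_n) = 0$: if $z = \sum_{i=0}^{n-1} a_i v_i$ is central, then $[z, v_0] = -\sum_{i \geq 1} i a_i v_i$ forces $a_i = 0$ for $i \geq 1$, and then $[z, v_1] = a_0 v_1$ forces $a_0 = 0$. Combined with Corollary~\ref{cor:dimpiG}, this gives $\dim \ad(\g_n) = n = \dim \mathrm{lie}^{\leq n}$, so it suffices to prove the inclusion $\ad(\g_n) \subseteq \mathrm{lie}^{\leq n}$.

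I would exhibit $n$ tangent vectors at the identity of $G_n$ realizing $\ad(v_0), \dots, \ad(v_{n-1})$. For $k = 0$, take the one-parameter subgroup $s_\epsilon = e^\epsilon a$; since $s_\epsilon' = e^\epsilon$, one computes $g_{s_\epsilon}(a^i \del) = e^{(i-1)\epsilon} a^i \del$, so $\tfrac{d}{d\epsilon}\bigr|_{\epsilon = 0} g_{s_\epsilon}(v_{i-1}) = (i-1) v_{i-1} = \ad(v_0)(v_{i-1})$. For $k = 1, \dots, n-1$, take $s_\epsilon = a + \epsilon a^{k+1}$; expanding $s_\epsilon^i / s_\epsilon'$ modulo $\epsilon^2$ yields $g_{s_\epsilon}(a^i \del) = a^i \del + (i - k - 1)\epsilon\, a^{i+k} \del + O(\epsilon^2)$, so $\tfrac{d}{d\epsilon}\bigr|_{\epsilon = 0} g_{s_\epsilon}(v_{i-1}) = (i - k - 1) v_{i+k-1} = \ad(v_k)(v_{i-1})$ (with the convention $v_{\geq n} = 0$ absorbing the truncation in $\g_n$). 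Thus $\ad(v_0), \dots, \ad(v_{n-1})$ all lie in $\mathrm{lie}^{\leq n}$, giving the desired inclusion.

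Equality as Lie algebras is then automatic, since both sides are subspaces of $\mathfrak{gl}(\g_n)$ with the inherited commutator bracket (for $\ad(\g_n)$ because $\ad$ is a Lie algebra homomorphism, for $\mathrm{lie}^{\leq n}$ because $G_n \subseteq \mathrm{GL}(\g_n)$). The only technical care needed is the bookkeeping of the $O(\epsilon)$ terms in the expansion of $s_\epsilon^i / s_\epsilon'$, and the verification that the truncation modulo $a^{n+1}$ in $\g_n$ correctly matches the convention $v_{\geq n} = 0$; neither is a serious obstacle.
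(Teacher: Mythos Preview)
Your proposal is correct and takes essentially the same approach as the paper: both compute tangent vectors to $G_n$ at the identity by differentiating one-parameter families $g_{a + \epsilon s}$ and identify the result with $\ad(s\partial)$. The paper carries out the derivative computation once for a general $s$ (obtaining $\xi_s \cdot f\partial = f's\partial - fs'\partial = [s\partial, f\partial]$) and implicitly uses that these $\xi_s$ span the tangent space, whereas you treat each basis element $v_k$ separately and close with an explicit center computation and dimension count via Corollary~\ref{cor:dimpiG}; these are minor organizational differences rather than a genuinely different route.
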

\begin{proof}
    We follow the proof of ~\cite{petukhov2022poisson}*{Lemma 4.1.3.}. The Lie alegbra $\mathrm{lie}^{\leq n}$ is the tangent space to $G_n$ at the identity. Let $s = c_1 a + \dots + c_n a^n$ for $c_1 \neq 0$ and denote by $\xi_s$ the tangent direction at the identity defined by the line $a \to g_{a+hs(\cdot)}$ for $h \in \kk$. The action of $G_n$ derives to an action of $\xi_s$, so $\mathrm{lie}^{\leq n}$ consists of $\xi_s$ where $s = c_1 a + \dots + c_n a^n$ for $c_1 \neq 0$. It is enough to check that $\xi_s \cdot f\del = [s\del, f \del]$ for all $f\del \in \g_n$. We have
    \begin{align*}
        \xi_s \cdot f\del & =  (\xi_s \cdot f) \del + f( \xi_s \cdot \del)                             \\
                          & = (\del_h f(a+hs))|_{h=0} \del + f (\del_h( \frac{1}{1 + hs'}\del))|_{h=0} \\
                          & = f's\del - fs'\del = [s\del, f\del].
    \end{align*}
    Thus $\xi_s = \ad(s\del)$ and the lemma follows. 
\end{proof}

\begin{proposition}\label{prop:adjointgroup}
    $G_n$ is the adjoint group of $\g_n$.
\end{proposition}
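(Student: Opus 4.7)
The plan is to identify $G_n$ as a closed connected algebraic subgroup of $\mathrm{GL}(\g_n)$ whose Lie algebra equals $\ad(\g_n)$, and then conclude by a dimension count. For $n\in\{0,1\}$ the Lie algebra $\g_n$ is abelian, so $\ad(\g_n)=0$, the adjoint group is trivial, and by construction $G_n=\{1\}$. So assume $n\geq 2$.

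First, I would check that $G_n$ is a closed connected algebraic subgroup of $\mathrm{GL}(\g_n)$. The parameter space $\{c_1a+\dots+c_na^n:c_1\in\kk^\times\}\cong \kk^\times\times\kk^{n-1}$ is an irreducible algebraic variety, the assignment $s\mapsto g_s$ is an algebraic morphism, and a direct chain-rule computation in $\kk[a]/a^{n+1}\kk[a]$ shows that $g_{s_1}g_{s_2}=g_{s_2\circ s_1}$; thus $G_n$ is the image of a morphism of algebraic groups. By the standard fact that the image of such a morphism is a closed subgroup, $G_n$ is closed in $\mathrm{GL}(\g_n)$. Connectedness of $G_n$ follows from irreducibility of the parameter space, and $\dim G_n=n$ by Corollary~\ref{cor:dimpiG}.

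Second, Lemma~\ref{lem:lieGn} gives $\mathrm{lie}(G_n)=\ad(\g_n)$, so $G_n$ is an algebraic subgroup of $\mathrm{GL}(\g_n)$ whose Lie algebra contains $\ad(\g_n)$. The defining minimality of $G^{\ad}$ therefore yields $G^{\ad}\subseteq G_n$. For the reverse inclusion, I would use a dimension count: a direct check from $[v_i,v_j]=(j-i)v_{i+j}$ shows $Z(\g_n)=0$ for $n\geq 2$, so $\ad:\g_n\to\mathfrak{gl}(\g_n)$ is injective and $\dim\mathrm{lie}(G^{\ad})\geq\dim\ad(\g_n)=n=\dim G_n$. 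Combined with $G^{\ad}\subseteq G_n$ and the connectedness of $G_n$, this forces $G^{\ad}=G_n$.

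The only step that requires any care is verifying that the constructible image $G_n$ is actually closed; this is the standard closed-image theorem for morphisms of algebraic groups. Everything else is an assemblage of Lemma~\ref{lem:lieGn}, Corollary~\ref{cor:dimpiG}, and a dimension argument, so I do not expect a serious obstacle.
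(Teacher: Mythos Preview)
Your proof is correct and follows essentially the same route as the paper: the base case $n\le 1$, then for $n\ge 2$ the combination of Lemma~\ref{lem:lieGn} (giving $\mathrm{lie}(G_n)=\ad(\g_n)$), Corollary~\ref{cor:dimpiG} (giving $\dim G_n=n$), and a dimension count against $\dim\ad(\g_n)=n$. The paper simply asserts that $G_n$ is an algebraic subgroup and that $\dim\ad(\g_n)=n$, whereas you supply the extra care of checking closedness, connectedness, and $Z(\g_n)=0$; your use of connectedness to pass from $G^{\ad}\subseteq G_n$ and $\dim G^{\ad}\ge\dim G_n$ to equality is the same endgame, just made more explicit.
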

\begin{proof}
    For $n=0, 1$, $\ad(\g_n) = 0$ and $G_n = \id$ thus $G_n$ is the algebraic group of $\g_n$ so we assume $n\geq 2$. Then $\dim \ad(\g_n) = n$, thus any algebraic group $G$ whose Lie algebra contains $\ad(\g_n)$ must have dimension at least n. From Lemma~\ref{lem:lieGn}, $\mathrm{lie}^{\leq n} = \ad(\g_n)$. So by Corollary~\ref{cor:dimpiG}, $\dim G_n = n$, thus $G_n$ is the smallest algebraic subgroup of $\mathrm{GL}(\g_n)$ whose Lie algebra contains $\ad(\g_n)$, hence $G_n$ is the adjoint group of $\g_n$. 
\end{proof}
The following corollary follows immediately and we have constructed the adjoint alegbraic group of $\g_\n$.
\begin{corollary}\label{cor:adjointgroup}
    $G_\n$ is the adjoint group of $\g_\n$.
    \qed
\end{corollary}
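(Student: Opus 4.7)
The plan is to reduce Corollary~\ref{cor:adjointgroup} to Proposition~\ref{prop:adjointgroup} by showing that the adjoint-group construction is compatible with the direct-sum decomposition $\g_\n = \g_{n_1} \oplus \dots \oplus \g_{n_\ell}$. The starting observation is that this is a \emph{direct sum of Lie algebras}, so for $x = (x_1, \dots, x_\ell) \in \g_\n$ the operator $\ad_{\g_\n}(x)$ acts on $\g_\n$ block-diagonally, with $i$-th block $\ad_{\g_{n_i}}(x_i)$. Consequently, under the natural block-diagonal embedding
\[
    \bigoplus_{i=1}^\ell \mathfrak{gl}(\g_{n_i}) \hookrightarrow \mathfrak{gl}(\g_\n),
\]
one has $\ad(\g_\n) = \bigoplus_{i=1}^\ell \ad(\g_{n_i})$.

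Next, I would verify that $G_\n = G_{n_1} \times \dots \times G_{n_\ell}$ is a connected algebraic subgroup of $\mathrm{GL}(\g_\n)$. Connectedness follows because each $G_{n_i}$ is irreducible: from the proof of Lemma~\ref{lem:G_n+1} (and Example~\ref{ex:piGn} for small cases), the parameterization $s \mapsto g_s$ identifies $G_{n_i}$ with the irreducible variety $\kk^\times \times \kk^{n_i-1}$ (coordinates $c_1, \dots, c_{n_i}$ with $c_1 \neq 0$). Moreover, since $G_\n$ acts block-diagonally on $\g_\n$, its tangent space at the identity is the direct sum of the tangent spaces of the factors, giving
\[
    \mathrm{lie}(G_\n) \;=\; \bigoplus_{i=1}^\ell \mathrm{lie}(G_{n_i}) \;=\; \bigoplus_{i=1}^\ell \ad(\g_{n_i}) \;=\; \ad(\g_\n),
\]
where the second equality is Lemma~\ref{lem:lieGn} applied to each factor.

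Finally, I would conclude by the usual minimality argument. Any algebraic subgroup $H \leq \mathrm{GL}(\g_\n)$ with $\ad(\g_\n) \subseteq \mathrm{lie}(H)$ satisfies $\dim H \geq \dim \ad(\g_\n)$, and replacing $H$ by its identity component does not change the Lie algebra. Since $G_\n$ is connected with $\mathrm{lie}(G_\n) = \ad(\g_\n)$, comparing dimensions (computable factorwise via Corollary~\ref{cor:dimpiG}) shows $G_\n$ is contained in the identity component of any such $H$, and equals it when $H$ is chosen minimally. Hence $G_\n$ is the adjoint algebraic group of $\g_\n$.

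There is no real obstacle here: once the block-diagonal picture is set up, everything reduces cleanly to the one-point case established in Proposition~\ref{prop:adjointgroup}. The only point requiring a sentence of justification is the connectedness of each $G_{n_i}$, which is immediate from its explicit parameterization.
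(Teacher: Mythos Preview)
Your proposal is correct and follows essentially the same route the paper intends: reduce the multi-index case to Proposition~\ref{prop:adjointgroup} via the block-diagonal decomposition. The paper treats this corollary as immediate (the proof is literally ``\qed''), so you have simply written out the details the paper omits --- the compatibility of $\ad$ with the direct sum, Lemma~\ref{lem:lieGn} applied factorwise, and the connectedness/dimension count from Corollary~\ref{cor:dimpiG}.
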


\begin{remark}
    We note a discrepancy between the action of $G_n$ on $\g^*_n$ and $\DLoc_{x}^{\leq n}$ on $\Loc_x^{\leq n}$ defined in ~\cite{petukhov2022poisson}. In particular, note that $\g^*_n$ or $G_n$ is of dimension $n$, whereas $\Loc^{\leq n}$ or $\DLoc^{\leq n}$ is of dimension $n+2$. This corresponds to the fact that if $\chi  = \chi_{x; \alpha_0, \dots, \alpha_n}$ is a local function on $W$, then in forming $\bar{\chi} \in \g^*_n$, we forget about the coefficient $\alpha_0 \in \kk$ as well as the base point $x \in \kk^\times$.
\end{remark}

\subsection{Coadjoint orbits of \texorpdfstring{$\g_\n$}{g n}}\label{subsec:coorbit}
Let $\chi$ be a local function on $\W{-1}, W$ or $Vir$ of order $\n$ and recall the definition of $\bar{\chi}$ in Notation~\ref{not:barchi}. In this subsection, we characterize the coadjoint orbit of $\bar{\chi}$ and show that the map $\pi^{\leq \n}: \Loc^{\leq \n} \to \g^*_\n$ defined in \eqref{eq:mapchibar} induces a bijection between pseudo-orbits in $\Pprim \Sa(\g)$ and coadjoint orbits in $\g^*_\n$. Our result and our proof are similar to ~\cite{petukhov2022poisson}*{Theorem 4.2.1}. We first consider the one-point local function case.

\begin{proposition}\label{prop:Gnorbit}
    Recall the Lie algebra $\g_n = \W{0}/\W{n}$ and its basis $v_i = e_i + \W{n}$. Let $\beta_1,\dots,\beta_n \in \kk$ with $\beta_n \neq 0$ and let $\bar{\chi} = \bar{\chi}_{\beta_1, \dots, \beta_n} \in \g^*_n$ denote the element that maps $v_i \mapsto (i+1)!\beta_{i+1}$. Denote $m = \left\lfloor \frac{n}{2}\right\rfloor$.
    \begin{enumerate}
        \item[(1)] If $n=2m$ then $G_n \cdot \bar{\chi} = G_n \cdot v^*_{n-1}$, and $\dim G_n \cdot \bar{\chi} = n$.
        \item[(2)] If $n=1$, then $G_1 \cdot \bar{\chi} =  \{\bar{\chi}\}$.
        \item[(3)] If $n = 2m+1>1$ then there is $\beta \in \kk$ such that $G_n \cdot \bar{\chi} = G_n \cdot (v^*_{n-1} + \beta v^*_{m})$, and $\dim G_n \cdot \bar{\chi} = n - 1$.
        \item[(3')] Let $\beta_1, \beta_2 \in \kk$ and $m \in \ZZ_{\geq 1}$. Then
            \begin{align}
                G_n \cdot (v^*_{2m}+ \beta_1 v^*_{m}) = G_n \cdot (v^*_{2m}+ \beta_2 v^*_{m}) \Leftrightarrow \beta_1 = \pm \beta_2.
            \end{align}
    \end{enumerate}
\end{proposition}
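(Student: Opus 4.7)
The plan is to reduce the computation of coadjoint orbits to explicit manipulation of the infinitesimal action, using that $G_n$ is the adjoint algebraic group of $\g_n$ by Proposition~\ref{prop:adjointgroup}. Writing $\bar{\chi} = \sum_i \gamma_i v_i^*$ with $\gamma_i = \bar{\chi}(v_i)$, a direct calculation from $[v_j, v_k] = (k-j)\,v_{j+k}$ (with $v_{\geq n} = 0$) gives
\begin{equation*}
    \ad^*(v_k)\cdot v_i^* = -(i-2k)\,v_{i-k}^* \quad (i \geq k),
\end{equation*}
and $0$ otherwise. Hence for $k \geq 1$ the exponential $\exp(c\,\ad^* v_k)$ strictly lowers the index, and the new $v_i^*$-coefficient depends only on the old $\gamma_j$ for $j \geq i$; this makes a top-down elimination possible. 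Part~(2) is immediate from Corollary~\ref{cor:dimpiG}, which gives $G_1 = \{\mathrm{id}\}$.

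For Part~(1) ($n = 2m$), I process $j = n-2, n-3, \ldots, 0$ in turn and at step $j$ apply $\exp(c_j\,\ad^* v_{n-1-j})$ with $c_j$ chosen so that the first-order term $c_j(n-1-2j)\gamma_{n-1}$ cancels the current $v_j^*$-coefficient. Since $n$ is even, $n-1-2j$ is odd and hence nonzero for every $j \leq n-2$, so $c_j$ exists, and the lower-index corrections from higher powers of $\ad^* v_{n-1-j}$ do not disturb already-processed coefficients. The resulting $\gamma_{n-1} v_{n-1}^*$ is then normalised to $v_{n-1}^*$ by applying $g_{c_1 a}$ with $c_1^{n-1} = \gamma_{n-1}$ (any root works, as $\kk$ is algebraically closed). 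The orbit of $v_{n-1}^*$ exhausts the open dense set $\{\gamma_{n-1}\neq 0\}$, of dimension $n$. Part~(3) runs the same strategy for $n = 2m+1$ with one obstruction: at $j = m$ the multiplier $n-1-2j$ vanishes, so the (possibly already-shifted) value of $\gamma_m$ survives. The reduction stops at $\gamma_{n-1} v_{n-1}^* + \gamma_m v_m^*$, and rescaling by $g_{c_1 a}$ yields the representative $v_{n-1}^* + \beta\,v_m^*$ for some $\beta \in \kk$. The dimension claim $n-1$ follows from $\operatorname{rank}(B_{\bar{\chi}}) = n-1$ on this representative, where $B_{\bar{\chi}}$ is an antisymmetric matrix supported on the two antidiagonals $i+j = n-1$ and $i+j = m$, which by inspection has rank $n-1$.

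For Part~(3'), the forward direction $\beta_1 = \pm\beta_2 \Rightarrow$ same orbit is explicit: the case $\beta_1 = \beta_2$ is trivial, and for $\beta_2 = -\beta_1$ choose $c_1 \in \kk$ with $c_1^m = -1$; since $n-1 = 2m$ this also gives $c_1^{n-1} = 1$, and the coadjoint formula $g_{c_1 a}\cdot v_i^* = c_1^{-i} v_i^*$ yields $g_{c_1 a}\cdot (v_{n-1}^* + \beta_1 v_m^*) = v_{n-1}^* - \beta_1 v_m^*$. For the converse, I will exhibit a $G_n$-invariant rational function $C_n$ on $\g_n^*$ taking value $\beta^2$ (up to a nonzero scalar) on $v_{n-1}^* + \beta v_m^*$; since coadjoint orbits are level sets of invariants, distinct values of $\beta^2$ force distinct orbits. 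Equivalently, $C_n$ is a Poisson-central element of $\operatorname{Frac}\Sa(\g_n)$, to be constructed by solving the linear PDE system $\{C_n, v_k\} = 0$ for $k = 0, \ldots, n-1$: the $v_0$-equation forces Euler-homogeneity of weight $0$ with weights $(0,1,\ldots,n-1)$ on the $\gamma_i$, and the equations for $k \geq 1$ pin down $C_n$ up to a single scalar in the shape $C_n = (\gamma_m - P(\gamma_{m+1}, \ldots, \gamma_{n-1}))^2/\gamma_{n-1}$, where $P$ is the polynomial that compensates for the shift of $\gamma_m$ observed during the elimination. The small cases illustrate this: $C_3 = \gamma_1^2/\gamma_2$ and $C_5 = (\gamma_2 - \gamma_3^2/(4\gamma_4))^2/\gamma_4$.

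The main obstacle is producing $C_n$ in closed form for general $n$ and checking its invariance on all of $\g_n^*$, not merely on the 2-dimensional slice $\kk v_{n-1}^* \oplus \kk v_m^*$. If a uniform closed form proves technically awkward, an alternative matrix-theoretic argument avoids the Casimir altogether: using the lower-triangular form of $g_s$ from Lemma~\ref{lem:G_n+1}, any $g_s \in G_n$ sending $v_{n-1}^* + \alpha v_m^*$ into $\kk v_{n-1}^* \oplus \kk v_m^*$ must satisfy $c_1^{n-1} = 1$ (from preservation of $\gamma_{n-1}$), and the vanishing of the $v_k^*$-coefficients for $k \notin \{m, n-1\}$ in $(A^{-1})^T \cdot (\alpha v_m^* + v_{n-1}^*)$ forces the correction term $(A^{-1})^T_{m, n-1}$ to vanish, leaving $\beta = c_1^{-m}\alpha = \pm\alpha$. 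Either route closes Part~(3').
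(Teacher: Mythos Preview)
Your arguments for (1), (2), (3), and the forward direction of (3') are correct and essentially the same as the paper's: both run a top-down elimination on the coefficients $\gamma_j$, you via $\exp(c_j\,\ad^* v_{n-1-j})$, the paper via the explicit group elements $g_{a+\alpha a^{j+1}}$; these are the same unipotent transformations, and both correctly identify that the first-order multiplier $n-1-2j$ vanishes precisely at $j=m$ when $n=2m+1$.

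The genuine gap is the reverse direction of (3'). Neither of your two routes is carried through. For the Casimir approach you give only the examples $C_3$ and $C_5$ and assert a general shape $C_n=(\gamma_m-P)^2/\gamma_{n-1}$ without proving that such a $P$ exists or that the resulting $C_n$ is actually $G_n$-invariant. For the matrix route, the key claim that the vanishing of the $v_k^*$-coefficients for $k\notin\{m,n-1\}$ forces $(A^{-1})^T_{m,n-1}=0$ is unjustified: the constraints for $m<k<n-1$ give only $m-1$ equations on the $n$ parameters $c_1,\dots,c_n$, and the constraints for $k<m$ involve $\alpha$, so they do not determine that single entry. Indeed the stabilizer of $v_{2m}^*+\beta v_m^*$ is one-dimensional, so a whole curve of matrices satisfies all constraints, and for a generic member of that curve the entry $(A^{-1})^T_{m,n-1}$ need not vanish on its own; some further argument is required.

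The paper's proof of the reverse direction is different from both of your sketches. It analyses $s$ directly: if $s=s'(0)\,a$ (pure scaling), the conclusion $\beta_1=\pm\beta_2$ is immediate. Otherwise let $d\geq 2$ be minimal with $s\not\equiv s'(0)\,a\pmod{a^{d+1}}$; computing $g_s(v_{2m}^*+\beta_1 v_m^*)$ modulo $\g^*_{\leq 2m-d}$ produces a nonzero $v_{2m-d}^*$-coefficient, contradicting the target shape unless $d=m$. In the remaining case $d=m$ the paper exploits the explicit one-parameter stabilizer family
\[
g_{\,a+\gamma a^{m+1}+\bigl((m+1)\gamma^2-\tfrac{\gamma\beta_1}{2}\bigr)a^{2m+1}}
\]
to precompose with $g_s$ and push the minimal $d$ past $m$, reducing to the case already handled. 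Identifying and using this stabilizer family is the idea you are missing.
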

\begin{proof}
    The statement (2) for $G_1$ is obvious, thus, we only need to show (1) and (3). We follow the same proof in ~\cite{petukhov2022poisson}*{Theorem 4.2.1}. For $j = 1, \dots, n$ and $i \in \NN$ we have
    \begin{align}\label{eq: End on v_i}
        g_{a+\alpha a^{j+1}} (v_i) & = \frac{(a + \alpha a^{j+1})^{i+1}}{1 + \alpha(j + 1)a^j} \del = (a^{i+1} + \alpha(i - j)a
        ^{i+j+1} + [\text{terms in higher degree of a}])\del  \nonumber                                                                                       \\
                                            & = v_i + \alpha(i - j)v_{i+j} + \text{ a linear combination of } v_{> i+j},
    \end{align}
    where $a^k = 0$ and $v_{k-1}$ if $k > n$. Thus
    \begin{equation}\label{eq: End on e*_i}
        g_{ a+\alpha a^{j+1}} (v^*_i) = v^*_i + \alpha(i-j)v^*_{i-j} + \text{ a linear combination of } v^*_{< i-j},
    \end{equation}
    where $v^*_k = 0$ if $k <0$. Similarly
    \begin{equation}\label{eq: Dil scale on e*_i}
        g_{ \zeta a} (v^*_i) = \zeta^i v^*_i.
    \end{equation}
    We apply these transformations to
    \begin{align*}
        \bar{\chi}_{\beta_1, \dots, \beta_n} = \beta_1 v^*_0 + \dots  \beta_{n}(n!)v^*_{n-1},
    \end{align*}
    noting that the coefficient of $v^*_{n-1}$ is nonzero.

    By applying $g_{ a+ \alpha_1 a^2}$ with appropriate $\alpha_1$ we may cancel the coefficient of $v^*_{n-2}$, using~\eqref{eq: End on e*_i}. This
    does not affect the coefficient of $v^*_{n-1}$. Then by applying the appropriate $g_{ a + \alpha_2 a^3}$ we can cancel the coefficient of $v^*_{n-2}$ without changing the coefficient of $v^*_{n-2}$ or $v^*_{n-1}$. Repeating, we may cancel the coefficients of all $v^*_{n-1-k}$ with $1 \leq k \leq n-1$, unless $k = m$. {An important observation is the sequence of $\alpha_1, \dots., \alpha_{n-1}$ we need to apply here is exactly the prefix of the sequence of $\alpha_1, \dots, \alpha_n$ we need to apply in ~\cite{petukhov2022poisson}*{Theorem 4.2.1} for $G_n$ on ${\chi}_{0; 0, \dots, \beta_n} \in \Loc^{\leq n}_{x} \subseteq W_{\geq -1}^*$}.

    Thus in case (1) we obtain that $\omega = \beta_n v^*_{n-1} \in G_n \cdot \bar{\chi}$. In case (3) we obtain some
    \begin{equation}\label{eq: orbit odd}
        \omega = \alpha v^*_{\frac{n-1}{2}} + \beta_n v^*_{n-1} \in G_n \cdot \bar{\chi}.
    \end{equation}
    {And note that $\alpha$ in~\eqref{eq: orbit odd} is the same as $\alpha$ in ~\cite{petukhov2022poisson}*{The equation after 4.2.4}}. Now applying~\eqref{eq: Dil scale on e*_i} to $\omega$, we may rescale the coefficient of $v^*_{n-1}$ by any $\zeta^{n-1}$ (the same $\zeta$ in ~\cite{petukhov2022poisson}), so if $n \neq 1$, we can set the coefficients of $v^*_{n-1}$ to be 1. 

    Now we only need to show (3). This goes the same as ~\cite{petukhov2022poisson}*{Theorem 4.2.1}. By applying~\eqref{eq: Dil scale on e*_i}, if $\beta_1 = \pm \beta_2$ then
    \begin{align*}
        G_n \cdot (v^*_{2m} + \beta_1 v^*_{m}) = G_n \cdot (v^*_{2m} + \beta_2 v^*_{m}).
    \end{align*}
    On the other hand, pick $\beta_1, \beta_2 \in \kk$ and set $\bar{\chi}_1 = v^*_{2m} + \beta_1 v^*_{m}$, $\bar{\chi}_2 = v^*_{2m} + \beta_2 v^*_{m}$. Assume that $G_n \cdot \bar{\chi}_1 = G_n \cdot \bar{\chi}_2$, so there
    exists $s \in \kk[a], \deg s \leq n, s(0) = 0, s'(0) \neq 0$ such that  $g_{ a + s}(\bar{\chi}_1) =\bar{\chi}_2$.
    If $s = s'(0)a$, then the statement of (3') is straightforward. Assume that $s \neq s'(0)a$, then $s(a) \neq s'(0)a \mod a^d$ for some $d$, and we choose $d$ minimal.

    Then we have $d \leq 2m+1 = n$ then
    \begin{align*}
        s = s'(0) + \frac{s^{(d)}(0)}{d!}a^n \mod a^{d+1}.
    \end{align*}
    with $s^{(d)} \neq 0$ and set $\gamma = s'(0)$, $\tau = \frac{s^{(d)}(0)}{d!}$. Let $\g^*_{\leq 2m-d} = \{ \bar{\chi}_{\beta_1, \dots, \beta_n} \in \g^*_n| \beta_i = 0 \text{ for } i \geq 2m-d\}$. Further~\eqref{eq: End on e*_i},~\eqref{eq: Dil scale on e*_i} imply:
    \begin{align*}
        g_{ s}(\bar{\chi}_1) = \gamma^{-2m} v^*_{2m} + \gamma^{i -2j-1}\tau (2m-2d)e_{2m-d} \mod \g^*_{\leq 2m-d}
    \end{align*}
    This cannot give $\bar{\chi}_2$ unless $m= d$. In that case,
    \begin{align*}
        g_{ a+ \alpha t^{m+1} + ((m+1)\alpha^{2} - \alpha \frac{\beta}{2})t^{2m+1}}(v^*_{2m} + \beta v^*_{m}) = (v^*_{2m} + \beta v^*_{m})
    \end{align*}
    for every $\alpha \in \kk$. Thus, we can replace $s$ by
    \begin{equation}\label{eq: 4.2.6}
        a+ \alpha t^{m+1} + ((m+1)\alpha^{2} - \alpha \frac{\beta}{2})t^{2m+1}
    \end{equation}
    for arbitrary $\alpha$. Pick $\alpha = -\frac{s^{(d+1)}(0)}{(d+1)!(s'(0))^{d+1}}$. Then~\eqref{eq: 4.2.6} is equal to $s'(0)t \mod t^{d+1}$, and we have reduced this case to the previous one.
\end{proof}

From the proof of Proposition~\ref{prop:Gnorbit}, we have the following important corollary that outline the relationship between $\orbit{\chi}$ for $\chi$ a one-point local function on $\W{-1}, W$ or $\Vir$ and $G_n \cdot \bar{\chi}$. 
\begin{corollary}\label{cor:orbitrelation}
    Let $\g$ be $\W{-1}, W$ or $\Vir$ and $\chi, \eta$ be two one-point local functions on $\g$ of order $n$. Let $\bar{\chi}, \bar{\eta}$ be the corresponding elements of $\g^*_n$ defined in Notation~\ref{not:barchi}. Then  
    \begin{equation}\label{eq: orbit relation}
        \orbit{\chi} = \orbit{\eta} \Leftrightarrow G_n \cdot \bar{\chi} = G_n \cdot \bar{\eta}.
    \end{equation}
    Moreover, $\dim G_n \cdot \bar{\chi} = \dim \orbit{\chi} - 2$.
\end{corollary}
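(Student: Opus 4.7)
The plan is to deduce this corollary by matching, case-by-case, the classification of pseudo-orbits of one-point local functions on $\g$ (\cite{petukhov2022poisson}*{Theorem 4.2.1.}, quoted earlier in the excerpt) with the classification of coadjoint $G_n$-orbits given in Proposition~\ref{prop:Gnorbit}. Both classifications use the same trichotomy ($n = 2m > 0$, $n = 1$, $n = 2m+1 > 1$) and the same sequence of normalizing transformations; this parallel was already flagged inside the proof of Proposition~\ref{prop:Gnorbit}, and the corollary is essentially a bookkeeping exercise built on top of it.

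In the case $n = 2m > 0$, both classifications collapse to a single orbit, namely $\orbit{\chi} = \orbit{e_{n-1}^*}$ and $G_n \cdot \bar{\chi} = G_n \cdot v_{n-1}^*$, so both sides of the claimed equivalence are automatic. In the case $n = 1$, $G_1$ is trivial by Example~\ref{ex:piGn}, and a direct computation from Notation~\ref{not:chi'} gives $\bar{\chi} = (\alpha_1 - \tfrac{1}{2}) v_0^*$; hence $G_1 \cdot \bar{\chi} = G_1 \cdot \bar{\eta}$ iff $\alpha_1 = \beta_1$, which by part (2) of the cited Petukhov-Sierra theorem is exactly $\orbit{\chi} = \orbit{\eta}$. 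In the case $n = 2m+1 > 1$ the canonical representatives are $e_{n-1}^* + \alpha e_m^*$ and $v_{n-1}^* + \beta v_m^*$ respectively, each well-defined up to sign; the key step is to verify $\alpha = \beta$, which follows from the parenthetical observation in the proof of Proposition~\ref{prop:Gnorbit} that the sequence $\alpha_1, \dots, \alpha_{n-1}$ used to normalize $\bar{\chi}$ under $G_n$ is precisely the prefix of the sequence $\alpha_1, \dots, \alpha_n$ used by Petukhov-Sierra to normalize $\chi$.

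For the dimension statement, I would combine $\dim \orbit{\chi} = 2m+2$ from Notation~\ref{not:localfunc} with $\dim G_n \cdot \bar{\chi} = n$ if $n = 2m$, $\dim G_n \cdot \bar{\chi} = n-1$ if $n = 2m+1 > 1$, and $\dim G_n \cdot \bar{\chi} = 0$ if $n \leq 1$ from Proposition~\ref{prop:Gnorbit} and Corollary~\ref{cor:dimpiG}; in every case this equals $\dim \orbit{\chi} - 2$.

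The main technical obstacle will be the matching of the surviving scalar in the odd case $n = 2m+1 > 1$: one must verify that the $G_n$-action $g_s$ of Definition~\ref{def:Gn} really is the truncation modulo $\W{n}$ of the adjoint action of the vector field $s\partial$ on $W$. This is captured at the Lie-algebra level by Lemma~\ref{lem:lieGn}, and integrating to groups is routine; once that compatibility is made explicit, the identification $\alpha = \beta$ is a direct reading-off of the two reduction procedures and the corollary follows.
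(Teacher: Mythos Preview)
Your proposal is correct and follows essentially the same approach as the paper: both arguments reduce to the parenthetical observations inside the proof of Proposition~\ref{prop:Gnorbit} that the normalizing sequences for $G_n$ and for the Petukhov--Sierra group $\DLoc^{\leq n}$ agree, and then read off the dimension formula case-by-case. The paper's version is slightly more compact in that it first invokes Lemma~\ref{lem:obritchi'} to pass uniformly from $\chi$ to $\chi'$ (handling the $\tfrac{1}{2}$-shift in the definition of $\bar{\chi}$), whereas you treat this ad hoc in the $n=1$ case; either organization is fine.
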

\begin{proof}
    Recall the definition of $\chi'$ and $\eta'$ defined in~\ref{not:chi'}. By Lemma~\ref{lem:obritchi'}, $\orbit{\chi} = \orbit{\eta} \Leftrightarrow \orbit{\chi'} = \orbit{\eta'}$. As in ~\cite{petukhov2022poisson}*{Theorem 4.3.1}, by shifting the base points, we can assume both $\chi'$ and $\eta'$ are based at 1 (note that the definition of $\bar{\chi}$ and $\bar{\eta}$ are not affected by their base points). Then $\orbit{\chi'} = \orbit{\eta'} \Leftrightarrow \DLocl{n}_1 \chi' = \DLocl{n}_1 \eta'$. The proof of Proposition~\ref{prop:Gnorbit} shows that this is {equivalent to} $G_n \cdot \bar{\chi} = G_n \cdot \bar{\eta}$. Lastly, the dimension equation follows from Proposition~\ref{prop:Gnorbit} and ~\cite{petukhov2022poisson}*{Theorem 4.3.1}.
\end{proof}

We have the similar version for multi-point local function.

\begin{corollary}\label{cor:orbitrelationmulti}
    Let $\g$ be $\W{-1}, W$ or $\Vir$ and $\chi, \eta$ be two one-point local functions on $\g$ of order $n$. Let $\bar{\chi}, \bar{\eta}$ be the corresponding elements of $\g^*_n$ defined in Notation~\ref{not:barchi}. Then  
    \begin{equation}\label{eq:orbitrelationmulti}
        \orbit{\chi} = \orbit{\eta} \Leftrightarrow G_\n \cdot \bar{\chi} = G_\n \cdot \bar{\eta}.
    \end{equation}
    Moreover, $\dim G_\n \cdot \bar{\chi} = \dim \orbit{\chi} - 2\ell$.
\end{corollary}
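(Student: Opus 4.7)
The plan is to reduce the multi-point statement to the one-point case (Corollary~\ref{cor:orbitrelation}) via the direct sum decomposition $\g_\n = \g_{n_1} \oplus \cdots \oplus \g_{n_\ell}$ and the product decomposition $G_\n = G_{n_1} \times \cdots \times G_{n_\ell}$ supplied by Corollary~\ref{cor:adjointgroup}. Writing $\bar{\chi} = \bar{\chi}_1 + \cdots + \bar{\chi}_\ell$ and $\bar{\eta} = \bar{\eta}_1 + \cdots + \bar{\eta}_\ell$ as elements of $\g^*_\n = \bigoplus_i \g^*_{n_i}$, and using the fact that $G_\n$ acts block-diagonally (since it is a direct product acting on a direct sum), I get the crucial identification
\[
    G_\n \cdot \bar{\chi} = \prod_{i=1}^\ell G_{n_i} \cdot \bar{\chi}_i
\]
as a subvariety of $\g^*_\n$. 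In particular, $G_\n \cdot \bar{\chi} = G_\n \cdot \bar{\eta}$ if and only if $G_{n_i} \cdot \bar{\chi}_i = G_{n_i} \cdot \bar{\eta}_i$ for every $i$.

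For the equivalence~\eqref{eq:orbitrelationmulti}, I would then combine this block-diagonality with the pseudo-orbit decomposition result \cite{petukhov2022poisson}*{Theorem 4.3.1} recalled in the background (before Corollary~\ref{cor:orbitrelation}), which asserts $\orbit{\chi} = \orbit{\eta}$ if and only if, after a suitable reordering of the components, $\orbit{\chi_i} = \orbit{\eta_i}$ for all $i$. Corollary~\ref{cor:orbitrelation} then turns each pointwise pseudo-orbit equivalence $\orbit{\chi_i} = \orbit{\eta_i}$ into the coadjoint orbit equivalence $G_{n_i} \cdot \bar{\chi}_i = G_{n_i} \cdot \bar{\eta}_i$. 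Chaining these two decompositions together yields the desired equivalence.

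For the dimension formula, since a product of varieties has dimension equal to the sum of the dimensions, I obtain
\[
    \dim G_\n \cdot \bar{\chi} = \sum_{i=1}^\ell \dim G_{n_i} \cdot \bar{\chi}_i = \sum_{i=1}^\ell \bigl(\dim \orbit{\chi_i} - 2\bigr) = \dim \orbit{\chi} - 2\ell,
\]
where the middle equality is the one-point dimension formula from Corollary~\ref{cor:orbitrelation}, and the last equality uses the additivity of pseudo-orbit dimensions under the component decomposition recorded in Notation~\ref{not:localfunc}.

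The only genuine obstacle is bookkeeping around the reordering convention in \cite{petukhov2022poisson}*{Theorem 4.3.1}: a priori the bijection of components between $\chi$ and $\eta$ need not respect the fixed labeling used to construct $\bar\chi, \bar\eta$. This is harmless though, since permuting the summands of $\chi$ gives an obviously equivalent local function and permutes the factors $\g_{n_i}^*$ compatibly with $G_\n$, so after aligning the indices on both sides the argument reduces to the one-point case factor by factor.
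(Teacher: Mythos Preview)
Your proposal is correct and follows essentially the same approach as the paper: decompose both sides into one-point components via \cite{petukhov2022poisson}*{Theorem 4.3.1} and the product structure $G_\n = \prod_i G_{n_i}$, then apply Corollary~\ref{cor:orbitrelation} factor by factor; the dimension formula follows by summing. Your treatment of the reordering issue is in fact slightly more careful than the paper's, which simply asserts ``by rearranging the component one-point local functions'' without further comment.
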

\begin{proof}
    By ~\cite{petukhov2022poisson}*{Theorem 4.3.1}, by rearranging the component one-point local functions $\orbit{\chi} = \orbit{\eta} \Leftrightarrow \orbit{\chi_i} = \orbit{\eta_i}$ for all $i$. By Corollary~\ref{cor:orbitrelation}, $\orbit{\chi_i} = \orbit{\eta_i} \Leftrightarrow G_{n_i} \cdot \bar{\chi}_i = G_{n_i} \cdot \bar{\eta}_i$ for all $i$. This is obviously equivalent to $G_\n \cdot \bar{\chi} = G_\n \cdot \bar{\eta}$. The dimension equation follows from $\dim \orbit{\chi} = \sum_i \dim \orbit{\chi_i}$, $ \dim G_\n \cdot \bar{\chi} = \sum_i \dim G_{n_i} \cdot \bar{\chi_i}$, and Corollary~\ref{cor:orbitrelation}.
\end{proof}

\subsection{The Dixmier map}\label{subsec:dixmiermap}

Finally, we put the pieces together and prove the existence and well-definedness of the strong Dixmier map.
\begin{proposition}\label{prop:anndeppseudo}
    Let $\chi$ and $\eta$ be two non-zero local functions on $W$. If they have the same pseudo-orbit, i.e., $\orbit{\chi} = \orbit{\eta}$, then $Q_\chi = Q_\eta$. 
\end{proposition}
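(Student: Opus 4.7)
The plan is to reduce the statement to the corresponding fact for the finite-dimensional solvable Lie algebra $\g_\n$, where the classical orbit method applies. All the bridgework between $W$ and $\g_\n$ has already been assembled in the preceding sections, so the argument will be quite short.

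First, I would invoke \cite{petukhov2022poisson}*{Theorem 4.3.1} (the characterization of pseudo-orbits recalled in the excerpt) to observe that $\orbit{\chi} = \orbit{\eta}$ forces, up to reindexing of the one-point components, both local functions to share the same order tuple $\n = (n_1,\dots,n_\ell)$ and pointwise pseudo-orbits $\orbit{\chi_i} = \orbit{\eta_i}$. Using Proposition~\ref{prop:tensor1point} together with cocommutativity of the coproduct of $\Ua(W)$ (which makes any permutation of tensor factors an isomorphism of $\Ua(W)$-modules), reindexing the components of $\chi$ does not alter $Q_\chi$. So I may assume $\chi$ and $\eta$ share the \emph{ordered} tuple of orders $\n$.

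Next I would apply Proposition~\ref{prop:Qchihatchimulti} with $\widehat{\n} = \n$, giving
\[
Q_\chi \;=\; \Psi_\n^{-1}\!\bigl(\widetilde{A}_\ell \otimes Q_{\bar\chi}\bigr), \qquad Q_\eta \;=\; \Psi_\n^{-1}\!\bigl(\widetilde{A}_\ell \otimes Q_{\bar\eta}\bigr),
\]
where $\bar\chi, \bar\eta \in \g_\n^*$ are the elements of Notation~\ref{not:barchi}. It therefore suffices to prove that $Q_{\bar\chi} = Q_{\bar\eta}$ as ideals of $\Ua(\g_\n)$.

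The last step is the genuine orbit-method input. By Corollary~\ref{cor:orbitrelationmulti}, the hypothesis $\orbit{\chi} = \orbit{\eta}$ is equivalent to the coadjoint-orbit equality $G_\n \cdot \bar\chi = G_\n \cdot \bar\eta$; by Corollary~\ref{cor:adjointgroup}, $G_\n$ is the adjoint algebraic group of the finite-dimensional solvable Lie algebra $\g_\n$. Dixmier's theorem for solvable Lie algebras (Theorem~\ref{theorem: Kirillov method}) then says that $\Dx^{\g_\n}$ factors through coadjoint orbits, giving $Q_{\bar\chi} = Q_{\bar\eta}$ and hence $Q_\chi = Q_\eta$. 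There is no serious obstacle left at this stage: all of the heavy lifting --- existence and construction of $\Psi_\n$, the identification $Q_\chi = \Psi_\n^{-1}(\widetilde{A}_\ell \otimes Q_{\bar\chi})$, and the matching of pseudo-orbits on $W$ with coadjoint orbits on $\g_\n$ --- already lives in Sections~\ref{sec: psi}--\ref{sec: Dixmier map}. The only subtle point within the present argument is the component-reindexing reduction, which is dispatched by cocommutativity of $\Delta$.
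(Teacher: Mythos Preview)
Your proposal is correct and follows essentially the same route as the paper: reorder components via \cite{petukhov2022poisson}*{Theorem 4.3.1} so that $\chi$ and $\eta$ share the order tuple $\n$, apply Proposition~\ref{prop:Qchihatchimulti} to write $Q_\chi = \Psi_\n^{-1}(\widetilde{A}_\ell \otimes Q_{\bar\chi})$ and likewise for $\eta$, then use Corollary~\ref{cor:orbitrelationmulti} and Theorem~\ref{theorem: Kirillov method} to conclude $Q_{\bar\chi} = Q_{\bar\eta}$. Your explicit justification of the reindexing step via cocommutativity of $\Delta$ is a small addition that the paper leaves implicit.
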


\begin{proof}
    Let $\n, \widehat{\n}$ be the order of $\chi$ and $\eta$ respectively. By ~\cite{petukhov2022poisson}*{Theorem 4.3.1}, we can reorder their one-point component such that $\n = \widehat{\n}$. Recall the definition of $\bar{\chi}$ and $\bar{\eta}$ in Notation~\ref{not:barchi}. By Corollary~\ref{cor:orbitrelationmulti}, $G_\n \cdot \bar{\chi} = G_\n \cdot \bar{\eta}$. Since $\g_\n$ is a finite-dimensional solvable Lie algebra, by Theorem~\ref{theorem: Kirillov method}, we have $Q_{\bar{\chi}} = Q_{\bar{\eta}}$. Therefore, by Proposition~\ref{prop:Qchihatchimulti} 
    \begin{align*}
        Q_\chi = \Psi^{-1}_{\n}(\tilde{A}_\ell \otimes Q_{\bar{\chi}}) = \Psi^{-1}_{\n}(\tilde{A}_\ell \otimes Q_{\bar{\eta}}) = Q_\eta.
    \end{align*}
\end{proof}

\begin{theorem}\label{theo: Dixmier map}
    The following strong Dixmier map is well-defined. Let $\g$ be $\W{-1}$ or $W$, then
    \begin{equation}\label{eq: Dixmier Witt}
        \widetilde{\Dx}^{\g}: \Pprim S(\g) \to \Prim \Ua(\g): \quad 0 \mapsto 0, P(\chi) \neq 0 \mapsto Q_\chi.
    \end{equation}
    Lastly, we can lift the map to $Vir$ as follows
    \begin{align}\label{eq: Dixmier Vir}
        \widetilde{\Dx}^{\Vir}: \Pprim \Sa(\Vir) &\longrightarrow \Prim \Ua(\Vir): \nonumber \\
        P(\chi) = (z-\chi(z)) &\longmapsto (z-\chi(z)) \nonumber \\
        P(\chi) \neq (z-\chi(z)) &\longmapsto Q_\chi = \pi^{-1}(Q_{\Gamma(\chi)}),
    \end{align}
    where $\Gamma: \Ua(\Vir) \to \Ua(W)$ is the canonical projection $z \mapsto 0$ and $\Gamma(\chi)$ is the corresponding local function on $W$.
\end{theorem}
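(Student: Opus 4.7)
The plan is to show the strong Dixmier map is well-defined by verifying that $\Dx^\g$ is constant on the fibers of the quotient $\g^* \to \Pprim \Sa(\g)$, i.e.\ on pseudo-orbits. For local $\chi, \eta$ with the same pseudo-orbit this is the content of $Q_\chi = Q_\eta$; the non-local fibers are collapsed to a single primitive ideal by the very definition of $\Dx^\g$, namely $0 \in \Ua(\g)$ for $\g = \W{-1}, W$, and $(z - \chi(z))$ for $\g = \Vir$.

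For $W$, this is exactly Proposition~\ref{prop:anndeppseudo}, and combined with $\Dx^W$ being well-defined on all of $W^*$ (the corollary after Theorem~\ref{theo: primitive ideals}), one immediately reads off well-definedness of $\widetilde{\Dx}^W$ on $\Pprim \Sa(W)$. For $\W{-1}$, I would repeat the same argument verbatim: given local $\chi, \eta \in \W{-1}^*$ with $\orbit{\chi} = \orbit{\eta}$, Corollary~\ref{cor:orbitrelationmulti} (which is already stated for $\W{-1}$) produces $\bar{\chi}, \bar{\eta} \in \g_\n^*$ in a common $G_\n$-orbit, Theorem~\ref{theorem: Kirillov method} then gives $Q_{\bar{\chi}} = Q_{\bar{\eta}}$ in $\Ua(\g_\n)$, and the analog of Proposition~\ref{prop:Qchihatchimulti} for $\W{-1}$ (whose proof goes through unchanged after replacing $\Psi^W_\n$ by $\Psi^{\W{-1}}_\n$ and $\tilde{A}_\ell$ by $A_\ell$) yields $Q_\chi = Q_\eta$.

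For $\Vir$, the assignment on non-local Poisson primitive ideals is tautologically well-defined, since the scalar $\chi(z)$ is recovered from the ideal $(z - \chi(z))$, and $(z - \chi(z))$ is a primitive ideal of $\Ua(\Vir)$ by the Wallach--Mathieu result quoted at the start of Section~\ref{sec:primitive}. For a local function $\chi$ on $\Vir$ one has $\chi(z) = 0$, and Proposition~\ref{prop:repWVir} gives $M'_\chi \cong \res{\Gamma} M'_{\Gamma(\chi)}$, hence $Q_\chi = \Gamma^{-1}(Q_{\Gamma(\chi)})$. Since local functions on $\Vir$ are by definition pullbacks of local functions on $W$ via $\Gamma$, pseudo-orbits on $\Vir$ (away from the non-local fiber) correspond bijectively to pseudo-orbits on $W$, so the $W$ case forces $Q_\chi = Q_\eta$ whenever $\orbit{\chi} = \orbit{\eta}$.

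The main obstacle is essentially bookkeeping: one must check that Proposition~\ref{prop:Qchihatchimulti} and its inputs all carry over cleanly to $\W{-1}$, and that pseudo-orbit equivalence on $\Vir$ really does descend correctly under $\Gamma$ on local functions. No genuinely new computation is needed beyond what Sections~\ref{sec: psi}--\ref{sec: Dixmier map} have already established, so the proof should reduce to a short citation-driven argument.
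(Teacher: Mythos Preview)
Your proposal is correct. For $W$ and $\Vir$ your argument matches the paper's essentially verbatim. For $\W{-1}$, however, you take a genuinely different route: you propose to rerun the entire chain Proposition~\ref{prop:isoL} $\to$ Proposition~\ref{prop:Qchihatchimulti} $\to$ Proposition~\ref{prop:anndeppseudo} directly for $\W{-1}$, with $\Psi^{\W{-1}}_\n$ and $A_\ell$ in place of $\Psi^W_\n$ and $\tilde{A}_\ell$. The paper instead deduces the $\W{-1}$ case from the already-established $W$ case by restriction: it invokes the homeomorphism $\Pprim \Sa(W) \to \Pprim \Sa(\W{-1})$ from \cite{petukhov2022poisson} together with Proposition~\ref{prop:repWW-1} to get $Q_\chi \cap \Ua(\W{-1}) = Q_{\chi|_{\W{-1}}}$, and then handles the edge case of a base point at $0$ (which cannot occur for $W$) by moving that point elsewhere within the same pseudo-orbit. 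Your direct approach is conceptually cleaner and sidesteps the base-point-at-$0$ detour, but it buys this at the cost of silently reproving all of Section~\ref{sec:annihilators} for $\W{-1}$, which the paper deliberately skipped (declaring at the start of that section that it suffices to treat $W$). The paper's reduction is shorter given what has already been written down, and leverages an external structural result on $\Pprim$ rather than redoing the internal module computations.
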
 
\begin{proof}
    The statement for $W$ follows immediately from Proposition~\ref{prop:anndeppseudo}. For $\W{-1}$, let $\chi$ be a local function on $W$ and let $\iota$ be the inclusion $\W{-1} \hookrightarrow W$. By ~\cite{petukhov2022poisson}*{Corollary 4.3.11 and Proposition 4.3.6 a} there is a homeomorphism $\res{\Sa(\W{-1})}: \Pprim \Sa(W) \to \Pprim \Sa(\W{-1})$ defined by  
    \begin{align*}
        P(\chi) \cap \Sa(\W{-1}) = P(\chi|_{\W{-1}}). 
    \end{align*}
    Since $\res{\iota} M_\chi = M_{\chi|_{\W{-1}}}$, we have 
    \begin{align}
        Q_\chi \cap \Ua(\W{-1}) = Q_{\chi|_{\W{-1}}}.
    \end{align}
    Furthermore, for any local function $\chi$ on $\W{-1}$ which has 0 as a base point, we can consider a different local function $\tilde{\chi}$ on $\W{-1}$ where we replace that base point to a new base point not in the support of $\chi$. Since we have $\orbit{{\chi}} = \orbit{\tilde{\chi}} \subseteq W^*_{\geq -1}$, we have that $Q_{{\chi}} = Q_{\tilde{\chi}}$, thus the map $\widetilde{\Dx}^{\W{-1}}$ is well-defined. 
    
    For $\chi \in \Vir^*$, by ~\cite{iyudu2020enveloping}*{Theorem 1.4}, $(z-\chi(z))$ is a primitive ideal of $\Ua(\Vir)$. On the other hand by ~\cite{petukhov2022poisson}*{Theorem 3.3.1} we note that $P(\chi) \neq (z-\chi(z))$, then $\chi(z) = 0$ and $\chi$ is a local function on $\Vir$, thus $\Gamma(\chi)$ is a local function on $W$, and $Q_{\Gamma(\chi)} = \Ann_{\Ua(W)}M'_{\Gamma(\chi)}$ is a primitive ideal of $\Ua(W)$. Hence $\Gamma^{-1}(Q_{\Gamma(\chi)}) = \Ann_{\Ua(\Vir)}M'_{\chi}$ is a primitive ideal of $\Ua(\Vir)$.

    Moreover, by definition, for $\chi, \eta$ local function on $\Vir$ then $\chi(z) = \eta(z) = 0$. Thus for local function on $\Vir$, we have $\orbit{\chi} = \orbit{\eta} \subseteq \Vir^*$ is equivalent to $\orbit{\Gamma(\chi)} = \orbit{\Gamma(\eta)} \subseteq W^*$, thus $Q_{\Gamma(\chi)} = Q_{\Gamma(\eta)}$, hence $Q_\chi = Q_\eta$ as ideals of $\Ua(\Vir)$.
\end{proof}

\section{Inclusion of annihilators and further questions}\label{sec:inclusion}
Let $\g$ be either $\W{-1}, W$ or $\Vir$. In this section, we show that for one-point local functions $\chi, \eta$ on $\g$, then
\begin{align}
    P(\chi) \subsetneqq P(\eta) \Leftrightarrow Q(\chi) \subsetneqq Q(\eta).
\end{align}
We show that there are primitive ideals of $\Ua(\g)$ that are not the kernels of maps to the first Weyl algebra, answering in the negative a question of~\cite{conley2024}. We then state the analogous conjecture for multi-point local functions. Lastly, we motivate some further research question and their application with respect to the strong Dixmier map $\widetilde{\Dx}^{\Vir}$. 

We begin by characterizing orbit closures in $\g^*_n$.
\begin{proposition}\label{cor: Bruhat G obrit}
    For $n \geq 1$, let $\bar{\eta}, \bar{\chi}$ be in $\g^*_n$. Then
    \begin{equation}\label{eq: dim orbit equi}
        G_n \cdot \bar{\eta} \subsetneqq \overline{G_n \cdot \bar{\chi}} \Leftrightarrow \dim G_n \cdot \bar{\eta} < \dim G_n \cdot \bar{\chi}.
    \end{equation}
\end{proposition}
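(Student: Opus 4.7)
The forward direction is standard algebraic geometry. Since $G_n$ is an algebraic group acting on the affine variety $\g_n^*$, any orbit $G_n \cdot \bar{\chi}$ is a locally closed subvariety that is open in its closure $\overline{G_n \cdot \bar{\chi}}$. Hence the boundary $\overline{G_n \cdot \bar{\chi}} \setminus G_n \cdot \bar{\chi}$ is a proper closed $G_n$-invariant subset of the irreducible variety $\overline{G_n \cdot \bar{\chi}}$, so it has strictly smaller dimension. Any $G_n$-orbit contained in this boundary, in particular $G_n \cdot \bar{\eta}$ when $G_n \cdot \bar{\eta} \subsetneq \overline{G_n \cdot \bar{\chi}}$, must therefore have dimension strictly less than that of $G_n \cdot \bar{\chi}$.

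For the reverse direction, the plan is to transport the dimension inequality through the correspondence between $G_n$-orbits and pseudo-orbits set up in Corollary~\ref{cor:orbitrelation}. Using the surjection $\pi^{\leq n}: \Loc^{\leq n} \to \g_n^*$ of Definition~\ref{not:barchi}, I would lift $\bar{\chi}, \bar{\eta}$ to one-point local functions $\chi, \eta$ on $\g$ of minimal orders so that $\pi^{\leq n}(\chi) = \bar{\chi}$ and $\pi^{\leq n}(\eta) = \bar{\eta}$. The dimension formula $\dim G_n \cdot \bar{\chi} = \dim \orbit{\chi} - 2$ from Corollary~\ref{cor:orbitrelation} immediately gives $\dim \orbit{\eta} < \dim \orbit{\chi}$. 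Then \cite{petukhov2022poisson}*{Corollary 4.2.9} (restated in the background section) yields the strict inclusion of Poisson cores $P(\chi) \subsetneqq P(\eta)$, equivalently the strict inclusion of pseudo-orbit closures $\overline{\orbit{\eta}} \subsetneqq \overline{\orbit{\chi}}$ in $\g^*$.

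To close the argument, I would translate this $\g^*$-closure relation back to $\g_n^*$. The cleanest route is to fix once and for all a base point $x_0$ and $\alpha_0$-coefficient in the lift, giving an embedding $\iota : \g_n^* \hookrightarrow \g^*$ whose image is $G_n$-stable and on which the $G_n$-action agrees with a subaction induced from $W$. Since Zariski closure is compatible with restriction to a closed subvariety, the containment $\iota(\bar{\eta}) \in \overline{\orbit{\iota(\bar{\chi})}}$ pulls back to $\bar{\eta} \in \overline{G_n \cdot \bar{\chi}}$, and $G_n$-invariance of the right-hand side upgrades this to $G_n \cdot \bar{\eta} \subseteq \overline{G_n \cdot \bar{\chi}}$. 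The strictness of the inclusion is forced by the strict dimension inequality.

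The main obstacle in this plan is the third step: rigorously identifying an embedded copy of $\g_n^*$ inside $\g^*$ on which $G_n$ acts compatibly and in which the intersections with pseudo-orbits are exactly the $G_n$-orbits. If this turns out to be painful, a self-contained alternative would be to appeal to the classification in Proposition~\ref{prop:Gnorbit} and construct explicit one-parameter degenerations inside $G_n$ directly (using $g_{\zeta a}$ for rescaling and $g_{a + \alpha a^{j+1}}$ for shears), case by case on the parity and effective degree of the top-nonzero coefficient of $\bar{\chi}$, mimicking the reduction steps already used in Proposition~\ref{prop:Gnorbit}.
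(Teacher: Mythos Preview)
Your forward direction is correct and is exactly what the paper calls ``obvious.''

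For the reverse direction, your primary route through \cite{petukhov2022poisson}*{Corollary 4.2.9} has a genuine gap precisely where you flag it. The step ``pull the $\g^*$-closure relation back to $\g_n^*$'' is the whole content of the proposition. Pseudo-orbits in $\g^*$ are two dimensions larger than the corresponding $G_n$-orbits (they allow the base point and the $\alpha_0$-coefficient to vary), so an embedding $\iota:\g_n^*\hookrightarrow\g^*$ obtained by freezing $x_0$ and $\alpha_0$ does \emph{not} identify $G_n$-orbits with pseudo-orbits intersected with the image. Moreover, $\g^*$ is not a finite-type variety, so the slogan ``Zariski closure is compatible with restriction to a closed subvariety'' requires you to say exactly which closed subscheme of $\MSpec\Sa(\g)$ you mean and why the induced topology on it agrees with the Zariski topology on $\g_n^*$. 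Making all of this precise amounts to redoing the orbit-closure analysis inside a finite-dimensional slice, which is no easier than proving the proposition directly; the detour through $W^*$ buys nothing.

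The paper takes your fallback, but in a sharper form than ``build one-parameter degenerations case by case.'' It reduces (implicitly via the quotient $\g_n\to\g_k$ where $v_{k-1}$ is the top nonvanishing coordinate of $\bar\chi$) to the situation $\bar\chi(v_{n-1})\neq 0$, and then splits on parity. For $n$ even the orbit is open and dense by Proposition~\ref{prop:Gnorbit}, so its closure is all of $\g_n^*$. For $n=2m+1>1$ the orbit has codimension~$1$, and the paper writes down an explicit $G_n$-semi-invariant polynomial $F_+$ cutting out $\overline{G_n\cdot\bar\chi}$, then shows that $F_+$ restricted to $\g^*_{\leq n-1}$ is a monomial in $v_{n-2}$, hence vanishes on $\g^*_{\leq n-2}$, where all strictly smaller orbits live. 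So the real work in the odd case is constructing and analysing this semi-invariant, not ad hoc limits; if you pursue your fallback you should aim for that argument rather than trying to degenerate orbit by orbit.
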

\begin{proof}
    The only if direction is obvious. We now show that if direction. Without loss of generality, assume $\bar{\chi}(v_{n-1}) \neq 0$.  If $n = 1$, we have $G_1 \cdot \bar{\chi} = \{ \bar{\chi}\}$, i.e., a closed point, there exists no $\bar{\eta}$ such that $G_n \cdot \bar{\eta} \subsetneqq \overline{G_n \cdot \bar{\chi}}$. Now for $n > 1$, as in Proposition~\ref{prop:Gnorbit}, pick the related presentation $\bar{\chi}_{\beta_1, \dots, \beta_n}$ for $\bar{\chi}$, with $\beta_n \neq 0$. If $n$ is even then the closure of $G_n \cdot \bar{\chi}$ equals $\g^*_n$ by Proposition~\ref{prop:Gnorbit}, completing the proof in this case.

    If $n= 2m+1$ and $m \geq 1$, by Proposition~\ref{prop:Gnorbit} the closure of $G_n \cdot \bar{\chi}$ is an irreducible subvariety of $\g^*_n$ of codimension 1, i.e., a hypersurface defined by some function $F_+$. We claim $F_+$ is semiinvariant with respect to $G_n$, i.e., for all $g \in G_n$, $g \cdot F_+ = F_+(g^{-1} x) = \tilde{\lambda}_g F_+ (x)$ for some $\tilde{\lambda}_g \in \kk$. In other words, $F_+ (G_n \cdot \chi) = 0$ or $0 \notin F_+ (G_n \cdot \chi)$.

    Let $R = \kk[\beta_1, ..., \beta_n, \beta_n^{-1}]$, where $\beta_1, ..., \beta_n$ are free variables. Denote by $(\cdot)_{\kk \to R}$ the base change from $\kk$ to $R$. By Proposition~\ref{prop:Gnorbit}, there exist a group element $g \in (G_n)_{\kk \to R}$ and $h \in R$ such that:
    \begin{align*}
        g \cdot \bar{\chi}_{0, ..., 0, h, 0, ..., \beta_n} = \bar{\chi}_{\beta_1, ..., \beta_n}
    \end{align*}
    where $h$ is at position $m = \frac{n-1}{2}$.
    By replacing each $\beta_i$ with $\frac{v_{i-1}}{i!}$, we may identify $R$ with $\kk[\g^*_n][v^{-1}_{2m}]$. Then $h = f/v^\ell_{2m}$ for some $f \in \kk[\g^*_n]$ and $\ell \in \NN$.

    Let $\bar{\zeta} \in \g_n^*$ with $v_{2m}(\bar{\zeta}) \neq 0$. Then
    \begin{align*}
        G_n \cdot \bar{\chi} = G_n \cdot \bar{\zeta} \Leftrightarrow v_{2m}(\bar{\chi})^{-1}\frac{f^2(\bar{\chi})}{v_{2m}(\bar{\chi})^{2\ell}} = v_{2m}(\bar{\zeta})^{-1}\frac{f^2(\bar{\zeta})}{v_{2m}(\bar{\zeta})^{2\ell}}.
    \end{align*}
    The rational function $F\coloneqq {f^2}/{v^{2\ell+1}_{2m}}$ is thus $G_n$-invariant and separates orbits.

    For $k \leq n$, let $\g^*_{\leq k} = \{ \chi_{\alpha_1, ..., \alpha_n} \in \g^*_n: \alpha_{k+1} = ... =\alpha_{n} = 0 \}$. The previous paragraph shows that $G_n \cdot \bar{\chi}$ is the hypersurface in $\g^*_n \backslash \g^*_{\leq n-1}$ defined by
    \begin{align*}
        F_+ \coloneqq v_{2m}(\bar{\chi})^{2 \ell+1}f^2 - f^2(\bar{\chi})v^{2 \ell+1}_{2m}.
    \end{align*}
    Note that $F_+$ is semiinvariant, as claimed.

    By Proposition~\ref{prop:Gnorbit}, all orbits of dimension less than $n-1$ belong to $\g^*_{\leq n-2}$, because otherwise if $\bar{\eta}(v_{n-2}) \neq 0$, then one can show that $\dim G_n \cdot \bar{\eta} = n-1 = \dim G_n \cdot \bar{\chi}$. We will check that the closure of $G_n \cdot \bar{\chi}$ contains $\g^*_{\leq n-2}$, i.e., check that $F_+|_{\g^*_{\leq n-2}} = 0$. We claim that
    \begin{equation}\label{eq:Fpolclaim}
        F_+|_{\g^*_{\leq n-1}} = v_{2m}(\bar{\chi})^{2 \ell+1}f^2 = c(v_{n-2})^k \text{ for some } k \in \NN, c \in \kk
    \end{equation}

    We now prove Equation~\eqref{eq:Fpolclaim}. The argment Proposition~\ref{prop:Gnorbit} shows that the induced action of the subgroup $H_n =\{g_{a + a^2s} | s \in \kk[a]/(a^{n-1})\} \leq G_n$ does not modify $v_{2m}$. Since $F = f^2/v^{2\ell+1}_{2m}$ is $G_n$ invariant, $f^2$ is $H_n$-invariant, hence so is $F_+$. At the same time, the argument of Proposition~\ref{prop:Gnorbit} implies that all $n-2$-dimensional $H_n$-invariant hypersurfaces of $\g^*_{\leq n-1}$ are defined by the equality $v_{n-2} = \beta_{n-1}$ for some $\beta_{n-1} \in \kk$. This implies that the restriction of $F_+$ to $\g^*_{\leq n-1}$ is a polynomial in $v_{n-2}$. The fact that $F_+$ is $G_n$-semiinvariant implies that $F_+|_{\g^*_{\leq n-1}} = c(v_{n-2})^k$ for some $k \in \NN$ and $c \in \kk$.

    Hence, we have established Claim~\ref{eq:Fpolclaim}, and thus $F_+|_{\g^*_{\leq n-2}} = 0$, i.e., $\overline{G_n \cdot \bar{\chi}}$ contains $\g^*_{\leq n-2}$.
\end{proof}

\begin{proposition}\label{prop:inclusionone}
    Let $\g$ be either $\W{-1}, W$ or $\Vir$ and $\chi, \eta$ be one-point local functions on $\g$. Then
    \begin{equation}
        P(\chi) \subseteq P(\eta) \Rightarrow Q_\chi \subseteq Q_\eta.
    \end{equation}
\end{proposition}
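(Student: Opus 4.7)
The plan is to reduce the statement to an inclusion of primitive ideals in the finite-dimensional solvable Lie algebra $\g_N$ and then invoke the classical orbit method together with Proposition~\ref{cor: Bruhat G obrit}. First I would handle the cases $\g = \W{-1}$ and $\g = \Vir$ by transferring everything to $W$: via Propositions~\ref{prop:repWW-1} and~\ref{prop:repWVir} one has $Q_{\chi|_{\W{-1}}} = Q_\chi \cap \Ua(\W{-1})$ and, for $\g = \Vir$, $Q_\chi = \Gamma^{-1}(Q_{\Gamma(\chi)})$, so an inclusion of annihilators established over $W$ descends or lifts to the corresponding inclusion in the other two settings. If $P(\chi) = P(\eta)$, then $\orbit{\chi} = \orbit{\eta}$ and Proposition~\ref{prop:anndeppseudo} already gives $Q_\chi = Q_\eta$; so from now on assume $P(\chi) \subsetneqq P(\eta)$ is strict.

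By the orbit-closure criterion from~\cite{petukhov2022poisson} recalled in Section~\ref{sec:background}, strict inclusion forces $\dim \orbit{\eta} < \dim \orbit{\chi}$ and forbids $\chi$ from being of order $1$; since the minimum dimension of a pseudo-orbit of a local function is $2$, the order of $\chi$ must satisfy $n_\chi \geq 2$, and $\lfloor n_\eta/2 \rfloor < \lfloor n_\chi/2 \rfloor$ gives $n_\eta < n_\chi$. Set $N \coloneqq n_\chi$ and apply Proposition~\ref{prop:Qchihatchione} with $\widehat{n} = N$ (legitimate for $\eta$ since $n_\eta < N$) to obtain
\[
Q_\chi = (\Psi^W_N)^{-1}\bigl(\tilde{A}_1 \otimes Q_{\widehat{\chi}}\bigr), \qquad Q_\eta = (\Psi^W_N)^{-1}\bigl(\tilde{A}_1 \otimes Q_{\widehat{\eta}}\bigr),
\]
thereby reducing the problem to proving $Q_{\widehat{\chi}} \subseteq Q_{\widehat{\eta}}$ inside $\Ua(\g_N)$.

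Because $\g_N$ is finite-dimensional solvable, the inclusion-reversing homeomorphism $\overline{\Dx}^{\g_N}$ of Theorem~\ref{theorem: Kirillov method} converts $Q_{\widehat{\chi}} \subseteq Q_{\widehat{\eta}}$ into the orbit-closure containment $G_N \cdot \widehat{\eta} \subseteq \overline{G_N \cdot \widehat{\chi}}$, and Proposition~\ref{cor: Bruhat G obrit} further reduces this to the dimension inequality $\dim G_N \cdot \widehat{\eta} \leq \dim G_N \cdot \widehat{\chi}$. For $\widehat{\chi}$, Corollary~\ref{cor:orbitrelation} gives $\dim G_N \cdot \widehat{\chi} = \dim \orbit{\chi} - 2$. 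The main technical point is to compute $\dim G_N \cdot \widehat{\eta}$: writing a general $v = \sum_i a_i v_i \in \g_N$ and using $[v_i, v_j] = (j-i)v_{i+j}$ together with $\widehat{\eta}(v_k) = 0$ for $k \geq n_\eta$, I would verify that every $v_k$ with $k \geq n_\eta$ lies automatically in the stabilizer $\g_N^{\widehat{\eta}}$, while the remaining equations on $a_0, \dots, a_{n_\eta - 1}$ coincide with those defining $\g_{n_\eta}^{\bar{\eta}}$. This yields $\g_N^{\widehat{\eta}} = \g_{n_\eta}^{\bar{\eta}} \oplus \kk\{v_{n_\eta}, \dots, v_{N-1}\}$, hence $\dim G_N \cdot \widehat{\eta} = \dim G_{n_\eta} \cdot \bar{\eta} = \dim \orbit{\eta} - 2$ by Corollary~\ref{cor:orbitrelation} again, and the strict inequality $\dim \orbit{\eta} < \dim \orbit{\chi}$ supplies the required bound. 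The principal obstacle is precisely this stabilizer computation, which ensures that enlarging the ambient Lie algebra from $\g_{n_\eta}$ to $\g_N$ does not artificially inflate the coadjoint orbit dimension and so the dimension counts on both sides match cleanly.
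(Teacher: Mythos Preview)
Your proof is correct and follows essentially the same route as the paper: reduce via Proposition~\ref{prop:Qchihatchione} to an inclusion $Q_{\bar{\chi}} \subseteq Q_{\widehat{\eta}}$ in $\Ua(\g_N)$ with $N = n_\chi$, then use the Dixmier homeomorphism together with Proposition~\ref{cor: Bruhat G obrit} to convert this to a dimension inequality of coadjoint orbits. The paper simply invokes Corollary~\ref{cor:orbitrelation} for the inequality $\dim G_N \cdot \widehat{\eta} < \dim G_N \cdot \bar{\chi}$, whereas you make explicit the stabilizer computation $\g_N^{\widehat{\eta}} = \g_{n_\eta}^{\bar{\eta}} \oplus \kk\{v_{n_\eta},\dots,v_{N-1}\}$ that justifies applying that corollary when $\widehat{\eta}$ has order strictly less than $N$; this extra care is welcome, since Corollary~\ref{cor:orbitrelation} as stated literally only covers functions of order exactly $N$.
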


\begin{proof}
    Let the order of $\chi, \eta$ be $n, \hat{n}$ respectively. From~\cite{petukhov2022poisson}*{Corollary 4.2.9.}, $P(\chi) \subsetneqq P(\eta)$ implies that $\hat{n} < n$ and if $n=2m+1$ then $\hat{n} < 2m$. Recall the canonical projection $\tau_{\hat{n}}: \g_n \to \g_{\hat{n}}$ sending $v_{\geq \hat{n}}$ to $0$ and the notation $\hat{\eta} = \tau^*_{\hat{n}}(\bar{\eta})$ in Notation~\ref{not:barchi}. From Proposition~\ref{prop:Qchihatchione}, we have
    \begin{align}
        Q_\chi = (\Psi_{n})^{-1}(\tilde{A}_1 \otimes Q_{\bar{\chi}}); \quad \quad Q_\eta = (\Psi_{n})^{-1}(\tilde{A}_1 \otimes Q_{\widehat{\eta}})
    \end{align}
    It suffices to show that $Q_{\bar{\chi}} \subseteq Q_{\widehat{\eta}}$. From Corollary~\ref{cor:orbitrelation}, we note that $\dim G_n \cdot \widehat{\eta} < \dim G_n \cdot \bar{\chi}$, thus from Proposition~\ref{cor: Bruhat G obrit}, $G_n \cdot \widehat{\eta} \subsetneqq \overline{G_n \cdot \bar{\chi}}$. By Theorem~\ref{theorem: Kirillov method}, $\widetilde{\Dx}^{\g_n}$ is a homeomorphism, this implies $Q_{\bar{\chi}} \subsetneqq Q_{\widehat{\eta}}$.
\end{proof}

Note that the annihilator of the tensor density representation $\mathcal{F}_\lambda$ of $\W{-1}$ in~\cite{conley2024} equals $Q_{\chi_{x; \alpha, \lambda - \frac{1}{2}}}$. Proposition \ref{prop:inclusionone} gives us the negative answer to the following question in~\cite{conley2024}*{Section 11} that whether $Q_{\chi_{x; \alpha, \lambda - \frac{1}{2}}}$ and the augmentation ideal are the only primitive ideals of $\Ua(\W{-1})$. Note that $Q_{\chi_{x; \alpha, \lambda - \frac{1}{2}}}$ are all of $\mathrm{GK}$-codimension $2$.

\begin{proposition}\label{prop:GKdim3}
    There are primitive ideals of $\Ua(\W{-1})$ of $\mathrm{GK}$-codimension at least $3$, which are thus not kernels of maps to $A_1$.
\end{proposition}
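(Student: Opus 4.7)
The plan is to exhibit a single primitive ideal $Q_\chi$ of $\Ua(\W{-1})$ for which $\GK(\Ua(\W{-1})/Q_\chi) \geq 3$. Since any subalgebra of the first Weyl algebra $A_1$ has $\GK$-dimension at most $2$, such a $Q_\chi$ cannot be the kernel of any homomorphism $\Ua(\W{-1}) \to A_1$.

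I would take $\chi = \chi_{x; 0, 0, 1}$, a one-point local function on $\W{-1}$ of order $2$. By Corollary~\ref{cor:kereven}, $Q_\chi = \ker \Psi^{\W{-1}}_2$, so $\Ua(\W{-1})/Q_\chi \cong \im \Psi^{\W{-1}}_2 \subseteq T_2 = A_1 \otimes \Ua(\g_2)$. Let $B \subseteq \Ua(\W{-1})/Q_\chi$ be the finitely generated subalgebra generated by the images of $e_{-1}, e_0, e_1$; then $\GK(\Ua(\W{-1})/Q_\chi) \geq \GK B$, so it suffices to show $\GK B \geq 3$.

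Direct computation gives $\Psi^{\W{-1}}_2(e_{-1}) = \partial$, $\Psi^{\W{-1}}_2(e_0) = t\partial + v_0$, and $\Psi^{\W{-1}}_2(e_1) = t^2\partial + 2tv_0 + v_1$. Equip $B$ with the standard Lie filtration ($\deg e_i = 1$) inherited from $\Ua(\W{-1})$; the pieces are finite-dimensional. Taking $T_2$ with the order filtration ($\deg \partial = \deg v_i = 1$ and $\deg t = 0$), the map $\Psi^{\W{-1}}_2$ is filtered, and by construction in the proof of Theorem~\ref{theo:psin} its associated graded is the Poisson homomorphism $\Phi^{\W{-1}}_2$ into $\gr T_2 = \kk[t, y_0, v_0, v_1]$. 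Hence $\gr B$ is the commutative subring of $\kk[t, y_0, v_0, v_1]$ generated by the three symbols $y_0$, $ty_0 + v_0$, and $t^2 y_0 + 2tv_0 + v_1$. Their Jacobian with respect to $(y_0, v_0, v_1)$ is lower-triangular with unit diagonal, so has determinant $1$, and consequently these three elements of $\kk[t, y_0, v_0, v_1]$ are algebraically independent over $\kk$. Therefore $\gr B$ is a polynomial ring in three variables, $\GK \gr B = 3$, and $\GK B = \GK \gr B = 3$.

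Beyond this explicit computation, there is no substantial obstacle: the identification $\gr \Psi^{\W{-1}}_2 = \Phi^{\W{-1}}_2$ is precisely how $\Psi^{\W{-1}}_2$ was produced in Theorem~\ref{theo:psin}, and the fact that $\gr B$ is generated by the symbols of a generating set of $B$ is standard. Moreover, the conclusion extends to every one-point local function of order $n \geq 2$: by Proposition~\ref{prop:inclusionone} combined with \cite{petukhov2022poisson}*{Corollary 4.2.9}, for such $\chi$ and any order-$2$ local function $\eta$ based at the same point we have $P(\chi) \subsetneq P(\eta)$, hence $Q_\chi \subseteq Q_\eta$, so $\Ua(\W{-1})/Q_\chi$ surjects onto $\Ua(\W{-1})/Q_\eta$ and inherits the lower bound $\GK \geq 3$.
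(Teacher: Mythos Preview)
Your argument is correct and takes a genuinely different route from the paper's. The paper shows, for any one-point $\chi$ of order $\geq 2$, that $Q_\chi \subseteq \bigcap_{\alpha_0,\alpha_1} Q_{\chi_{x;\alpha_0,\alpha_1}} = \ker \Psi^{\W{-1}}_1$ (via Proposition~\ref{prop:inclusionone} and Corollary~\ref{cor:psiuniversal}), and then cites an external reference (\cite{lucasthesis}*{Lemma 2.5.5}) for the fact that $\GK \im \Psi^{\W{-1}}_1 = 3$. You instead identify $Q_\chi = \ker \Psi^{\W{-1}}_2$ for the order-$2$ case and compute the lower bound $\GK \im \Psi^{\W{-1}}_2 \geq 3$ directly, by passing to the associated graded and checking algebraic independence of the three symbols via the Jacobian. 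Your route is more self-contained: it avoids the outside citation entirely and makes the source of the extra GK dimension transparent (the variable $v_1$). The extension to all orders $\geq 2$ is the same in both arguments, using Proposition~\ref{prop:inclusionone}.

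Two small remarks. First, Corollary~\ref{cor:kereven} is stated for $W$; the $\W{-1}$ analogue you invoke follows at once either by the same proof or by restricting along $\iota: \Ua(\W{-1}) \hookrightarrow \Ua(W)$ and using that $\Psi^W_2|_{\Ua(\W{-1})}$ factors through the injection $T_2 \hookrightarrow \widetilde{T}_2$. Second, your sentence ``$\gr B$ is the commutative subring generated by the three symbols'' is in fact correct, but it deserves one more word: the composite $\kk[X_{-1},X_0,X_1] \twoheadrightarrow \gr B \to \gr T_2$ sends $X_i \mapsto \sigma(b_i)$, and since the $\sigma(b_i)$ are algebraically independent this composite is injective, forcing the surjection $\kk[X_{-1},X_0,X_1] \to \gr B$ to be an isomorphism. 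With that clarification the equalities $\GK B = \GK \gr B = 3$ are immediate.
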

\begin{proof}
    Let $\chi$ be a one-point local function on $\W{-1}$ of order $\geq 2$. By Proposition~\ref{prop:inclusionone}, $Q_\chi \subseteq Q_{\chi_{x; \alpha_0, \alpha_1}}$ for all $\alpha_0, \alpha_1 \in \kk$. By Corollary~\ref{cor:psiuniversal}, $\ker \Psi^{\W{-1}}_1 =  \bigcap_{\alpha_0, \alpha_1 \in \kk}Q_{\chi_{x; \alpha_0, \alpha_1}}$. Thus $Q_\chi \subseteq \ker \Psi^{\W{-1}}_1$. It is known that $\GK \im \Psi^{\W{-1}}_1 = 3$ (see e.g.~\cite{lucasthesis}*{Lemma 2.5.5.} for a proof). 
\end{proof}

To end, we state two conjectures which generalize Proposition~\ref{prop:inclusionone}. The converse of Proposition~\ref{prop:inclusionone} is false, however, we conjecture a ``partial converse''.

\begin{conjecture}
    Let $\g$ be either $\W{-1}, W$ or $\Vir$ and $\chi, \eta$ be one-point local functions on $\g$. Then
    \begin{equation}
        P(\chi) \subsetneqq P(\eta) \Leftrightarrow Q_\chi \subsetneqq Q_\eta.
    \end{equation} 
\end{conjecture}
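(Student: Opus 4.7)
The plan is to split the biconditional into two claims and reduce both to a single assertion at the level of the finite-dimensional solvable subquotient $\g_n$, where the classical Dixmier homeomorphism is available. Fix a common integer $n$ exceeding the orders of $\chi$ and $\eta$, and let $\widehat{\chi}, \widehat{\eta} \in \g_n^*$ be the associated functionals of Notation~\ref{not:barchi}. By Proposition~\ref{prop:Qchihatchione}, combined with the factorization $\Psi_{n'} = (\id \otimes \tau_{n'}) \circ \Psi_n$ for $n' \leq n$, we have $Q_\chi = \Psi_n^{-1}(\widetilde{A}_1 \otimes Q_{\widehat{\chi}})$ and $Q_\eta = \Psi_n^{-1}(\widetilde{A}_1 \otimes Q_{\widehat{\eta}})$, so the conjecture would follow once we establish the \emph{key claim}: the pullback map $\Theta_n \colon J \mapsto \Psi_n^{-1}(\widetilde{A}_1 \otimes J)$ from $\Prim \Ua(\g_n)$ to $\Prim \Ua(W)$ is injective on primitive ideals of the form $Q_{\widehat{\chi}}$, and preserves both strict inclusion and incomparability.

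Granted the key claim, the two directions follow cleanly. For $(\Rightarrow)$: Proposition~\ref{prop:inclusionone} gives $Q_\chi \subseteq Q_\eta$, and the hypothesis $P(\chi) \subsetneqq P(\eta)$ translates, via Corollary~\ref{cor:orbitrelation} and the orbit-dimension criterion of~\cite{petukhov2022poisson}*{Corollary 4.2.9}, into $G_n \cdot \widehat{\eta} \subsetneqq \overline{G_n \cdot \widehat{\chi}}$; by Proposition~\ref{cor: Bruhat G obrit} and the finite-dimensional strong Dixmier homeomorphism (Theorem~\ref{theo:strongDx}) this becomes $Q_{\widehat{\chi}} \subsetneqq Q_{\widehat{\eta}}$, and applying $\Theta_n$ yields the desired strictness. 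For $(\Leftarrow)$: assume $Q_\chi \subsetneqq Q_\eta$. By Proposition~\ref{prop:anndeppseudo}, $P(\chi) = P(\eta)$ would force $Q_\chi = Q_\eta$, contradicting strictness; by Proposition~\ref{prop:inclusionone}, $P(\eta) \subsetneqq P(\chi)$ would give $Q_\eta \subseteq Q_\chi$, and combined with the assumption we would again get equality. The only remaining case is that $P(\chi)$ and $P(\eta)$ are incomparable; by~\cite{petukhov2022poisson}*{Corollary 4.2.9} this forces $\dim\orbit{\chi} = \dim\orbit{\eta}$, so by Corollary~\ref{cor:orbitrelation} the $G_n$-orbits of $\widehat{\chi}$ and $\widehat{\eta}$ have equal dimension. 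By Proposition~\ref{cor: Bruhat G obrit} neither orbit is in the closure of the other, and Theorem~\ref{theo:strongDx} renders $Q_{\widehat{\chi}}$ and $Q_{\widehat{\eta}}$ incomparable in $\Ua(\g_n)$; the incomparability part of the key claim then contradicts $Q_\chi \subsetneqq Q_\eta$.

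To prove the key claim I would attack the problem semiclassically. The Poisson homomorphism $\Phi_n^W$ of Subsection~\ref{subsec:phin} is the associated graded of $\Psi_n$ for suitable order filtrations, and the Poisson analogue of the key claim is essentially the combination of Corollary~\ref{cor:psiuniversal} with the bijection between pseudo-orbits and $G_n$-orbits supplied by Corollary~\ref{cor:orbitrelation}. Using that each $Q_\chi$ is completely prime (Corollary~\ref{cor:comprime}), $\gr Q_\chi$ is a Poisson prime in $\Sa(W)$ containing $\ker \Phi_n^W$; the goal is to identify $\gr Q_\chi$ with $P(\chi)$ (or at least with an ideal from which $P(\chi)$ is recoverable), and thereby transport the strict orbit-closure order from $\g_n^*$ back up to $\Ua(W)$. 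An alternative, more algebraic route would be to invoke Duflo's theorem to produce an explicit generator of $Z(\mathrm{Fract}\, \Ua(\g_n)/Q_{\widehat{\chi}})$ depending on the orbit parameter, and to lift it through the explicit formula for $\Psi_n$ in Theorem~\ref{theo:psin} so as to obtain a separating element in $\Ua(W)$ distinguishing $Q_\chi$ and $Q_\eta$ in the incomparable-odd-order case.

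The hard part will be exactly this last step: controlling the associated graded $\gr Q_\chi$ precisely enough to match it with $P(\chi)$, or equivalently showing that the non-surjective inclusion $\Psi_n(\Ua(W)) \hookrightarrow \widetilde{T}_n$ still detects the full orbit-closure geometry of $\g_n^*$. This is a Kirillov-type rigidity statement at the quantized level for an infinite-dimensional Lie algebra; it is not supplied by the framework developed in the paper, and is the substantive technical core beyond the reductions above.
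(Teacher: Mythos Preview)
The statement you are attempting to prove is labeled a \emph{Conjecture} in the paper; the paper does not supply a proof, so there is nothing against which to compare your argument. The paper establishes only the non-strict forward implication (Proposition~\ref{prop:inclusionone}) and explicitly remarks that the naive converse of that proposition is false before posing this strict-inclusion version as open.

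Your reduction is broadly sound and you correctly isolate the unresolved core: the behaviour of $\Theta_n : J \mapsto \Psi_n^{-1}(\widetilde{A}_1 \otimes J)$ on the image of the Dixmier map for $\g_n$. However, your \emph{key claim} as stated is already false. Take $n=1$, $\chi = \chi_{x;\alpha,1/2}$ and $\eta = \chi_{x;\beta,-1/2}$: then $\bar{\chi}(v_0)=0$ and $\bar{\eta}(v_0)=-1$, so $Q_{\bar{\chi}}$ and $Q_{\bar{\eta}}$ are distinct maximal (hence incomparable) ideals of $\kk[v_0]$, yet by~\eqref{eq:primreplace} we have $Q_\chi = Q_\eta$. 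Thus $\Theta_n$ does \emph{not} preserve incomparability, and it is not injective. What your case analysis actually requires is the weaker pair of statements: (i) $\Theta_n$ never sends a strictly included pair to an equal pair, and (ii) $\Theta_n$ never sends an incomparable pair to a strictly included pair. Both are unknown, and (i) is essentially equivalent to injectivity of $\widetilde{\Dx}^W$ restricted to the relevant orbit closures, which the paper does not establish.

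Your proposed semiclassical attack via $\gr Q_\chi \stackrel{?}{=} P(\chi)$ is the natural route, and you are right that it is the substantive obstruction. The alternative via an explicit Duflo-type separating central element pushed forward through $\Psi_n$ would have to survive precisely the $\pm 1/2$ collapse above, so any such argument must at minimum build in the order-$1$ exception visible in~\cite{petukhov2022poisson}*{Corollary 4.2.9}.
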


For multi-point local functions, we conjecture the implication in Proposition~\ref{prop:inclusionone} still holds. 
\begin{conjecture}\label{conj:inclusionPQ}
    Let $\g$ be either $\W{-1}, W$ or $\Vir$ and $\chi, \eta$ be local functions on $\g$. Then
    \begin{equation}
        P(\chi) \subsetneqq P(\eta) \Rightarrow Q_\chi \subsetneqq Q_\eta.
    \end{equation} 
\end{conjecture}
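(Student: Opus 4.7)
The strategy is to reduce to the finite-dimensional orbit method via Proposition~\ref{prop:Qchihatchimulti}, paralleling the one-point argument in Proposition~\ref{prop:inclusionone}. First, reduce to $\g = W$ using Propositions~\ref{prop:repWW-1} and~\ref{prop:repWVir}, together with the analogous behaviour of Poisson cores under $\iota$ and $\Gamma$ from~\cite{petukhov2022poisson}. Write $\chi = \chi_1 + \dots + \chi_\ell$ and $\eta = \eta_1 + \dots + \eta_{\ell'}$ with orders $\n$ and $\n'$ respectively, and note that by~\cite{petukhov2022poisson}*{Corollary 4.2.9} the one-point case forces $\ell' \leq \ell$.

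Next, treat the \emph{non-degenerate} case where $\ell = \ell'$ and $\chi, \eta$ share the same support. Choose $\widehat{\n} \in \NN^\ell$ dominating both $\n$ and $\n'$ componentwise. By Proposition~\ref{prop:Qchihatchimulti},
\[
Q_\chi = \Psi_{\widehat{\n}}^{-1}(\tilde{A}_\ell \otimes Q_{\widehat{\chi}}), \qquad Q_\eta = \Psi_{\widehat{\n}}^{-1}(\tilde{A}_\ell \otimes Q_{\widehat{\eta}}),
\]
so it suffices to prove $Q_{\widehat{\chi}} \subsetneqq Q_{\widehat{\eta}}$ in $\Ua(\g_{\widehat{\n}})$. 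Since $G_{\widehat{\n}}$ is a direct product (Corollary~\ref{cor:adjointgroup}) and coadjoint orbits factor accordingly, Corollary~\ref{cor:orbitrelationmulti} combined with a componentwise application of Proposition~\ref{cor: Bruhat G obrit} should yield $G_{\widehat{\n}} \cdot \widehat{\eta} \subsetneqq \overline{G_{\widehat{\n}} \cdot \widehat{\chi}}$, and Theorem~\ref{theorem: Kirillov method} (a homeomorphism in the finite-dimensional solvable case) then delivers the desired strict inclusion. Strictness at the level of $\Ua(\g_{\widehat{\n}})$ pulls back to strictness of $Q_\chi \subsetneqq Q_\eta$ because $\Psi_{\widehat{\n}}^{-1}$ preserves strict inclusions of two-sided ideals of the form $\tilde{A}_\ell \otimes I$ (cf.~\eqref{eq:isoideal}).

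The main obstacle will be the \emph{degenerate} case $\ell' < \ell$, i.e.\ when some base points of $\chi$ coalesce on passing to $\eta$. Here $\widehat{\eta}$ naturally lives in $\g^*_{\n'}$ with fewer summands than $\g^*_{\widehat{\n}}$, so one cannot directly compare orbits under a single finite-dimensional group. The plan is to exhibit a one-parameter family in $W^*$, obtained by letting base points of $\chi$ tend to a common limit under the $\kk^\times$-action induced by the canonical grading on $W$ (together with a simultaneous rescaling of the coefficients $\alpha_{0,i}$), such that the specialization realizes $P(\chi) \subsetneqq P(\eta)$ via \cite{petukhov2022poisson}*{Theorem 4.3.1}. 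One then pushes this degeneration through $\Psi_{\widehat{\n}}$ for a tuple $\widehat{\n}$ having $\ell$ components and uses closedness of $G_{\widehat{\n}}$-orbit closures in $\g^*_{\widehat{\n}}$ to transport the specialization to the finite-dimensional picture.

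Finally, strictness in the degenerate case requires the dimension inequality $\dim \orbit{\chi} > \dim \orbit{\eta}$, which is the multi-point extension of~\cite{petukhov2022poisson}*{Corollary 4.2.9} via the dimension formula in Notation~\ref{not:localfunc}, combined with Corollary~\ref{cor:orbitrelationmulti} to ensure that a strict drop in pseudo-orbit dimension forces a strict drop in $G_{\widehat{\n}}$-orbit dimension after the pushforward. The most delicate point is ensuring that no information is lost when base points collapse, so that the pulled-back ideal inclusion remains strict; I would verify this by testing both ideals against the canonical local representation $M'_\eta$ and showing directly that $Q_\chi$ acts by zero on it while $Q_\eta$ does not dominate $Q_\chi$.
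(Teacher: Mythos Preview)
This statement is stated in the paper as an open \emph{conjecture}, not a theorem: the paper proves only the one-point case (Proposition~\ref{prop:inclusionone}) and explicitly leaves the multi-point version unproved. So there is no proof in the paper to compare your proposal against; the question is whether your outline would actually close the gap.

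It does not, for two reasons.

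First, even in your ``non-degenerate'' case the argument is incomplete. You need that $P(\chi) \subsetneqq P(\eta)$ for multi-point $\chi,\eta$ with the same support implies $G_{\widehat{\n}} \cdot \widehat{\eta} \subseteq \overline{G_{\widehat{\n}} \cdot \widehat{\chi}}$. Proposition~\ref{cor: Bruhat G obrit} is a purely one-point statement about $G_n$ acting on $\g_n^*$, and the multi-point orbit closures in $\g_{\widehat{\n}}^*$ are products of the one-point closures; but the inclusion $P(\chi) \subseteq P(\eta)$ in $\Sa(W)$ does \emph{not} a priori decompose into componentwise inclusions $P(\chi_i) \subseteq P(\eta_i)$ after reordering. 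The description of multi-point orbit closures in \cite{petukhov2022poisson} is considerably more involved than the one-point Corollary~4.2.9 you invoke, and no such componentwise reduction is available in the paper. Separately, strictness does not obviously survive pullback: $\Psi_{\widehat{\n}}$ is far from surjective onto $\widetilde{T}_{\widehat{\n}}$, so from $Q_{\widehat{\chi}} \subsetneqq Q_{\widehat{\eta}}$ you only get $Q_\chi \subseteq Q_\eta$; equation~\eqref{eq:isoideal} is a bijection of ideals of $\widetilde{T}_{\widehat{\n}}$, not a statement about the image of $\Psi_{\widehat{\n}}$.

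Second, your degenerate case is a sketch, not an argument. Letting base points coalesce changes the number $\ell$ of tensor factors, so there is no single $\Psi_{\widehat{\n}}$ through which both $Q_\chi$ and $Q_\eta$ factor in the way Proposition~\ref{prop:Qchihatchimulti} requires; your proposed family degeneration lives in $W^*$, not in any fixed $\g_{\widehat{\n}}^*$, and you give no mechanism for transporting an inclusion of Zariski closures across that change of ambient space. The final paragraph (``test both ideals against $M'_\eta$'') is circular: showing $Q_\chi$ annihilates $M'_\eta$ is exactly the content of $Q_\chi \subseteq Q_\eta$, which is what you are trying to prove.
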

We believe that the converse direction of Conjecture~\ref{conj:inclusionPQ} is false. Finally, we state a much more ambitious conjecture about surjectivity of the strong Dixmier map.
\begin{conjecture}\label{conj:surjective}
    Let $\g$ be $\W{-1}, W$ or $\Vir$. The strong Dixmier map $\widetilde{\Dx}^\g: \Pprim \Sa(\g) \to \Prim \Ua(\g)$ is surjective. 
\end{conjecture}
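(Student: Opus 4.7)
The plan is to first reduce to the case $\g = W$. By Proposition~\ref{prop:repWVir} and the central-character structure used in~\eqref{eq: Dixmier Vir}, the $\Vir$ case follows from $W$ once one also verifies surjectivity for primitive ideals containing a nontrivial central character $(z-c)$, which is immediate. The $\W{-1}$ case reduces via the restriction homeomorphism $P(\chi)\mapsto P(\chi)\cap \Sa(\W{-1})$ used in Theorem~\ref{theo: Dixmier map} together with the corresponding restriction behavior $\Ann_{\Ua(W)}M \cap \Ua(\W{-1}) = \Ann_{\Ua(\W{-1})}\res{} M$. So fix a nonzero primitive ideal $I \in \Prim \Ua(W)$; the task is to produce a local function $\chi$ on $W$ with $Q_\chi = I$.

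First I would produce a candidate pseudo-orbit $\orbit{\chi}$ by a Poisson-core argument applied to the associated graded $\mathrm{gr}\, I \subseteq \Sa(W)$ with respect to the order filtration. Since $I$ is prime, $\mathrm{gr}\, I$ is a graded semiprime Poisson ideal of $\Sa(W)$, and by the chain condition of~\cite{sanchez2023poisson} together with the classification of~\cite{petukhov2022poisson}, every minimal Poisson prime over $\mathrm{gr}\, I$ has the form $P(\chi)$ for some local function $\chi$ (or is the zero ideal, which would force $I=0$ by a Gelfand--Kirillov dimension comparison). Picking such a minimal Poisson prime singles out the candidate order $\n$ and pseudo-orbit $\orbit{\chi}$.

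Next, to upgrade $\orbit{\chi}$ to the equality $Q_\chi = I$, I would route the comparison through the master homomorphism $\Psi^{W}_{\n}$ of Theorem~\ref{theo:algebrahom} using the identification
\begin{equation*}
  Q_\chi \;=\; (\Psi^{W}_{\n})^{-1}\bigl(\tilde{A}_\ell \otimes Q_{\bar{\chi}}\bigr)
\end{equation*}
from Proposition~\ref{prop:Qchihatchimulti}. The goal becomes showing $\Psi^{W}_{\n}(I) \subseteq \tilde{A}_\ell \otimes Q_{\bar{\chi}}$ and that this containment is an equality modulo $\Psi^{W}_{\n}$. For the first inclusion, one should realize a simple faithful $\Ua(W)/I$-module as a restriction through $\Psi^{W}_{\n}$ of a simple $\tilde{A}_\ell \otimes \Ua(\g_\n)$-module, whose $\Ua(\g_\n)$-annihilator is then forced to be $Q_{\bar{\chi}}$ by the surjectivity of the finite-dimensional solvable Dixmier map $\overline{\Dx}^{\g_\n}$ in Theorem~\ref{theorem: Kirillov method}. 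Equality would then follow from the minimality built into the construction of $\bar{\chi}$ and the ideal correspondence~\eqref{eq:isoideal}.

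The hard part, and the genuine obstacle to completing this program, is the existence step for the realization above: there is no a priori reason that the simple $\Ua(W)/I$-module admits a compatible extension to $\tilde{A}_\ell \otimes \Ua(\g_\n)$ via $\Psi^{W}_{\n}$, and producing one requires essentially showing that $I \supseteq \ker \Psi^{W}_{\n}$ for the $\n$ selected above. This is the quantum analogue of the associated-graded step and, in disguise, is closely related to Conjecture~\ref{conj:mapsweylalgfactor}: a positive answer there would provide, for every primitive $I$, a factorization of $\Ua(W)/I$ through some $\Psi^{W}_{\n}$, and Corollary~\ref{cor:psiuniversal} combined with the $\g_\n$-orbit method would then close the argument. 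I would therefore attack Conjecture~\ref{conj:mapsweylalgfactor} first, using the master homomorphism $\Psi^{W}_{\infty}$ and its injectivity to control the possible Weyl-algebra quotients of $\Ua(W)$, and only afterwards attempt the full surjectivity statement; a plausible intermediate target is surjectivity restricted to primitive ideals $I$ such that $\Ua(W)/I$ embeds in some Weyl algebra, which already contains every $Q_\chi$ by Corollary~\ref{cor:annkerweyl}.
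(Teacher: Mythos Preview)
This statement is stated in the paper as an open \emph{conjecture}, not a theorem: the paper explicitly says that ``surjectivity of the (strong) Dixmier map is a difficult open question and subject of further research'' and gives no proof. So there is no proof in the paper to compare against, and your proposal should be read as a research strategy for an open problem rather than as a proof attempt. You yourself flag the central gap: producing, for an arbitrary primitive $I$, an $\n$ with $I \supseteq \ker \Psi^{W}_{\n}$ (equivalently, a factorization of a simple $\Ua(W)/I$-module through $\Psi^{W}_{\n}$). That is indeed the crux, and nothing in the paper supplies it; your linkage to Conjecture~\ref{conj:mapsweylalgfactor} is reasonable but that conjecture is also open.

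Two of your reductions are not as clean as claimed. For $\Vir$, the map $\widetilde{\Dx}^{\Vir}$ in~\eqref{eq: Dixmier Vir} sends non-local $\chi$ to the ideal $(z-\chi(z))$ itself, not to arbitrary primitives containing $(z-\chi(z))$; so ``surjectivity for primitive ideals containing a nontrivial central character'' is far from immediate --- you would need that every primitive ideal of $\Ua(\Vir)$ with $z$ acting by $c\neq 0$ is exactly $(z-c)$, which is a serious statement in its own right and not addressed by the $W$ case. For $\W{-1}$, restriction $I\mapsto I\cap \Ua(\W{-1})$ goes the wrong way for surjectivity: knowing every primitive of $\Ua(W)$ is a $Q_\chi$ does not show that every primitive of $\Ua(\W{-1})$ arises as such a restriction. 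Finally, the associated-graded step is suggestive but incomplete as written: $\gr I$ need not be prime, and even choosing a minimal Poisson prime $P(\chi)$ over it does not by itself yield any containment between $I$ and $Q_\chi$ without precisely the factorization through $\Psi^{W}_{\n}$ that you identify as missing.
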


If this conjecture is true, we immediately obtain the three results.
\begin{conjecture}\label{conj:completelyprime}
    Let $\g$ be $\W{-1}, W$ or $\Vir$. Then every primitive ideal of $\Ua(\g)$ is completely prime.
\end{conjecture}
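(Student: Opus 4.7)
The plan is to derive Conjecture~\ref{conj:completelyprime} as an immediate consequence of Conjecture~\ref{conj:surjective}. Once surjectivity of $\widetilde{\Dx}^\g$ is granted, it suffices to verify that every ideal in the image of $\widetilde{\Dx}^\g$ is completely prime; then all of $\Prim \Ua(\g)$ consists of completely prime ideals.

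First I would handle ideals of the form $Q_\chi$ with $\chi$ a local function on $\g$. Complete primality for these was already established in Corollary~\ref{cor:comprime}: the key input is the identification $Q_\chi = \Psi_{\n}^{-1}(\tilde{A}_\ell \otimes Q_{\bar\chi})$ from Proposition~\ref{prop:Qchihatchimulti}, together with the fact that $Q_{\bar\chi}$ is completely prime in $\Ua(\g_\n)$ by \cite{dixmier1996enveloping}*{Theorem~3.7.2}, since $\g_\n$ is a finite-dimensional solvable Lie algebra. The quotient $\tilde{A}_\ell \otimes \bigl(\Ua(\g_\n)/Q_{\bar\chi}\bigr)$ is then a domain as an iterated Ore extension of one, so $\tilde{A}_\ell \otimes Q_{\bar\chi}$ is completely prime and hence so is its pullback $Q_\chi$.

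Next I would address the remaining ideals in the image of $\widetilde{\Dx}^\g$, which come from Poisson primitive ideals with trivial Poisson core. For $\g = \W{-1}$ or $W$, the zero ideal is in the image and is trivially completely prime because $\Ua(\g)$ is itself a domain. For $\g = \Vir$, the ideals $(z - c)$ with $c \in \kk$ are in the image and satisfy $\Ua(\Vir)/(z - c) \cong \Ua(W)$, which is a domain; moreover the remaining images $Q_\chi = \Gamma^{-1}(Q_{\Gamma(\chi)})$ satisfy $\Ua(\Vir)/Q_\chi \cong \Ua(W)/Q_{\Gamma(\chi)}$, which is a domain by the previous paragraph. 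Combining the two cases, every ideal in the image of $\widetilde{\Dx}^\g$ is completely prime.

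The main obstacle is clearly Conjecture~\ref{conj:surjective} itself, which is the deep open problem noted in the introduction; the deduction of complete primality above is essentially formal once surjectivity is in hand. In the finite-dimensional solvable setting surjectivity of $\widetilde{\Dx}^{\g}$ is obtained through an inductive argument \cite{dixmier1996enveloping}*{Theorem~6.1.7}, and as the authors note this induction does not lift to the present infinite-dimensional setting; finding an inductive or limiting replacement is the hard task that remains.
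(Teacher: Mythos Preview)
Your approach matches the paper's: the authors simply record that Conjecture~\ref{conj:completelyprime} is an immediate corollary of Conjecture~\ref{conj:surjective}, and you have correctly spelled out the deduction using Corollary~\ref{cor:comprime} for the $Q_\chi$ and treating the trivial Poisson cores separately.

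One small slip: the isomorphism $\Ua(\Vir)/(z-c)\cong \Ua(W)$ holds only for $c=0$; for $c\neq 0$ the quotient is a genuinely different algebra (the cocycle defining $\Vir$ does not split). The conclusion you need is unaffected, since $\Ua(\Vir)/(z-c)$ is still a domain for every $c$: $z-c$ is a central nonzerodivisor and the PBW filtration on $\Ua(\Vir)$ descends to one on the quotient with associated graded $\Sa(\Vir)/(z-c)\cong\Sa(W)$, a polynomial ring. So just replace the isomorphism claim by this filtered-domain argument.
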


\begin{conjecture}\label{conj:ACCprimitive}
    Let $\g$ be $\W{-1}, W$ or $\Vir$. Then $\Ua(\g)$ satisfies ACC on primitive ideals. 
\end{conjecture}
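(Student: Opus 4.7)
The plan is to derive this conjecture from Conjecture~\ref{conj:surjective}: if $\widetilde{\Dx}^\g$ is surjective, then every primitive ideal of $\Ua(\g)$ is either $Q_\chi$ for some local function $\chi$ (completely prime by Corollary~\ref{cor:comprime}), the zero ideal (completely prime since $\Ua(\g)$ is a domain), or of the form $(z-\chi(z))$ for $\g = \Vir$ (completely prime because $\Ua(\Vir)/(z-\chi(z)) \cong \Ua(W)$ is a domain). Every primitive ideal is then completely prime, so by~\cite{iyudu2020enveloping}*{Proposition 6.4}, $\Ua(\g)$ satisfies ACC on completely prime ideals and in particular on primitive ideals. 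Thus the entire problem reduces to establishing Conjecture~\ref{conj:surjective}.

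I would first reduce surjectivity for $\Vir$ and $\W{-1}$ to the case $\g = W$. For $\Vir$, a primitive ideal $I$ of $\Ua(\Vir)$ meets the center $\kk[z]$ in a prime ideal, so either $(z-\lambda) \subseteq I$ for some $\lambda \in \kk$, in which case $I$ corresponds to a primitive ideal of $\Ua(W) \cong \Ua(\Vir)/(z-\lambda)$ whence surjectivity for $W$ applies, or else $I \cap \kk[z] = 0$, which one handles using the classification in~\cite{petukhov2022poisson}*{Theorem 3.3.1}. For $\W{-1}$, one exploits the restriction homeomorphism $\Pprim \Sa(W) \to \Pprim \Sa(\W{-1})$ and the identity $Q_\chi \cap \Ua(\W{-1}) = Q_{\chi|_{\W{-1}}}$ already used in the proof of Theorem~\ref{theo: Dixmier map}.

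For $\g = W$, let $I = \Ann_{\Ua(W)} M$ be a nonzero primitive ideal and $R \coloneqq \Ua(W)/I$. By~\cite{iyudu2020enveloping}*{Theorem 1.3}, $\GK R < \infty$. The strategy is to show that the quotient map $\Ua(W) \twoheadrightarrow R$ factors through one of the master homomorphisms $\Psi^W_\n: \Ua(W) \to \widetilde{T}_\n$ of Proposition~\ref{prop: psi multi}, a refinement of Conjecture~\ref{conj:mapsweylalgfactor}. Granting the factoring, the induced surjection $\widetilde{T}_\n \twoheadrightarrow R$ restricts on $\Ua(\g_\n) \hookrightarrow \widetilde{T}_\n$ to a surjection onto a primitive quotient of the finite-dimensional solvable algebra $\Ua(\g_\n)$, whose kernel must equal $Q_{\bar\chi}$ for some $\bar\chi \in \g^*_\n$ by Theorem~\ref{theorem: Kirillov method}. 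Lifting $\bar\chi$ via $\pi^{\n}$ to a local function $\chi$ (Definition~\ref{not:barchi}), Proposition~\ref{prop:Qchihatchimulti} then yields $I = Q_\chi$, completing surjectivity.

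The principal obstacle is the factoring step. The fact that $\Ua(W)$ is not Noetherian blocks Dixmier's induction-on-codimension argument, and $W$ has no proper Lie ideals to exploit as in the solvable finite-dimensional case. A promising direction would be to use the master embedding $\Psi^W_\infty: \Ua(W) \hookrightarrow \widehat{T}_\infty$ to reduce the problem to classifying finite-GK primitive quotients of the completion $\widehat{T}_\infty$ and then applying a finite-dimensional orbit-method argument levelwise in the inverse system $(\widetilde{T}_n)_{n \in \NN}$; however, controlling how primitive ideals of $\widehat{T}_\infty$ interact with this completion is delicate, which is exactly why the paper's concluding remarks identify surjectivity as the major remaining open question.
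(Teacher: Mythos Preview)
Your first paragraph is exactly the paper's own argument: the paper states Conjecture~\ref{conj:ACCprimitive} as an immediate corollary of Conjecture~\ref{conj:surjective} (surjectivity of $\widetilde{\Dx}^\g$), and you have correctly spelled out the mechanism---surjectivity forces every primitive ideal to be completely prime, and then \cite{iyudu2020enveloping}*{Proposition 6.4} gives ACC. The paper does not attempt to prove surjectivity, so at the level of what is actually established, you and the paper agree.

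Everything after your first paragraph is a speculative programme for Conjecture~\ref{conj:surjective} that goes beyond the paper, and a few steps in it do not work as written. First, in your $\Vir$ reduction, the isomorphism $\Ua(\Vir)/(z-\lambda)\cong \Ua(W)$ holds only for $\lambda=0$; for $\lambda\neq 0$ the quotient is a genuinely different algebra with many primitive ideals (annihilators of simple highest-weight modules at central charge $\lambda$), none of which are in the image of $\widetilde{\Dx}^{\Vir}$ as defined in~\eqref{eq:dxVir}. So surjectivity for $\Vir$ cannot be reduced to surjectivity for $W$ in the way you describe. Second, even granting a factorisation $\Ua(W)\to \widetilde{T}_\n\to R$, the map $\Psi^W_\n:\Ua(W)\to\widetilde{T}_\n$ is not surjective, so there is no reason the induced map $\widetilde{T}_\n\to R$ should be surjective, nor that its restriction to $\Ua(\g_\n)$ lands in a primitive quotient; the passage from $I$ to some $Q_{\bar\chi}$ needs a different argument. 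Your final paragraph correctly identifies the factoring step as the real obstruction, and that is where the problem genuinely lies.
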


\begin{conjecture}\label{conj:kerWeyl}
    Let $Q$ be a primitive ideal of $\Ua(\W{-1})$. Then there exists $k$ and $\varphi: \Ua(\W{-1}) \to A_k$ such that $Q = \ker \varphi$. 

    Let $\g$ be $W$ or $\Vir$ and let $Q$ be a primitive ideal of $\Ua(\g)$. Then there exists $k$ and $\varphi: \Ua(\g) \to \tilde{A}_k$ such that $Q = \ker \varphi$. 
\end{conjecture}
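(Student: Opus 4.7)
The plan is to reduce Conjecture~\ref{conj:kerWeyl} to Conjecture~\ref{conj:surjective} on the surjectivity of the strong Dixmier map. Suppose first that $Q$ is a nonzero primitive ideal of $\Ua(\W{-1})$. If $\widetilde{\Dx}^{\W{-1}}$ is surjective, there exists a local function $\chi$ with $Q = Q_\chi$, and Corollary~\ref{cor:annkerweyl} then produces a ring homomorphism $\varphi$ whose kernel is $Q$. Moreover, for $\W{-1}$ the target is actually a plain Weyl algebra $A_k$ and not $\widetilde A_k$: the map $\Psi^{\W{-1}}_{\n}$ of Theorem~\ref{theo:psin} lands in $T_{\n} = A_\ell \otimes \Ua(\g_{\n})$ (no localization at $t$ is needed because $\W{-1}$ consists of polynomial vector fields), and the Joseph embedding~\cite{joseph1974ProofOT} of $\Ua(\g_{\n})/Q_{\bar\chi}$ into a Weyl algebra is unlocalized. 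Composing gives $\varphi: \Ua(\W{-1}) \to A_{\ell} \otimes A_k \cong A_{\ell+k}$ with $Q = \ker \varphi$. The statements for $W$ and $\Vir$ follow identically with $A_k$ replaced by $\widetilde A_k$.

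The substantive obstacle is therefore proving surjectivity of $\widetilde{\Dx}^{\W{-1}}$ (respectively $\widetilde{\Dx}^{W}$, $\widetilde{\Dx}^{\Vir}$). Dixmier's inductive argument in the finite-dimensional solvable case (\cite{dixmier1996enveloping}*{Theorem~6.1.7}) proceeds by induction on $\dim\g$ and is unavailable here. A natural alternative is the following: given a primitive ideal $Q = \Ann_{\Ua(\W{-1})} M$ for a simple module $M$, first attempt to find a minimal $\n$ with $\ker \Psi^{\W{-1}}_{\n} \subseteq Q$, using Corollary~\ref{cor:psiuniversal} to certify that $\n$ bounds the ``order'' of an associated local function. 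One would then transport $Q$ to a primitive ideal $\bar Q \subset \Ua(\g_{\n})$ via the isomorphism of ideal lattices \eqref{eq:isoideal}, apply the classical surjectivity of $\Dx^{\g_\n}$ to produce $\bar\chi \in \g_\n^*$ with $\bar Q = Q_{\bar\chi}$, and finally lift $\bar\chi$ along the surjection $\pi^{\n}: \Loc^{\leq \n} \to \g^*_{\n}$ of \eqref{eq:mapchibar} to obtain a local function $\chi$, at which point Proposition~\ref{prop:Qchihatchimulti} yields $Q = Q_\chi$. The hardest part is showing that such a finite $\n$ exists for every primitive $Q$; one expects this to require either an ACC statement on primitive ideals (Conjecture~\ref{conj:ACCprimitive}) or a direct classification of simple $\Ua(\W{-1})$-modules with prescribed annihilator, both of which are open.

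Finally, a strict reading of Conjecture~\ref{conj:kerWeyl} cannot be correct for $Q = 0$: since $\GK \Ua(\W{-1}) = \infty$ while $\GK A_k = 2k$, no homomorphism $\Ua(\W{-1}) \to A_k$ is injective for any finite $k$. Thus the first task in any serious attempt is to clarify the intended scope; the cleanest reformulation is to assume $Q$ is a proper nonzero primitive ideal, whereupon the strategy above applies. Alternatively, one may enlarge the target to the appropriate inverse limit in which the master homomorphism $\Psi^{\W{-1}}_\infty$ is injective (\emph{cf.} Subsection~\ref{subsec:psiinfty}), handling the zero ideal separately. With either reinterpretation, the conjecture becomes essentially equivalent to the surjectivity of the strong Dixmier map, which is the genuine mathematical content.
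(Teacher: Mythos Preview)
Your reduction is exactly what the paper does: the paper states this as an open conjecture and simply observes that it is an immediate corollary of Conjecture~\ref{conj:surjective} (surjectivity of the strong Dixmier map), via Corollary~\ref{cor:annkerweyl}. There is no further argument in the paper to compare against.

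Your additional material goes beyond the paper in two useful ways. First, you sketch a plausible attack on surjectivity (find a minimal $\n$ with $\ker\Psi_{\n}\subseteq Q$, transport along \eqref{eq:isoideal}, apply classical Dixmier surjectivity for $\g_\n$, lift via $\pi^{\n}$); the paper does not attempt this and simply flags surjectivity as open. Second, your observation about $Q=0$ is a genuine correction to the literal statement: the zero ideal is primitive in $\Ua(\W{-1})$ (and in $\Ua(W)$, $\Ua(\Vir)$), yet no homomorphism to a finite Weyl algebra can be injective for GK-dimension reasons, so the conjecture as written needs the hypothesis $Q\neq 0$ (or $Q\neq (z-\chi(z))$ for $\Vir$). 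The paper does not note this caveat.
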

Conjectures \ref{conj:completelyprime}, \ref{conj:ACCprimitive}, \ref{conj:kerWeyl} are immediate corollaries of Conjecture~\ref{conj:surjective}.
\section*{Index of notation}\label{index}
\begin{multicols}{2}
{\small  \baselineskip 14pt

    $W$, $\Vir$, $\W{n}$: Lie algebras \hfill \pageref{pl:W},\pageref{pl:Vir}

    $\Gamma: \Vir \to W$: projection sending $z$ to $0$ \hfill \pageref{pl:pVirW}

    $e_i = t^{i+1}\del$: basis of $W$ \hfill \pageref{pl:ei}

    $W(f)$, $\Vir(f)$, $\W{-1}(f)$: a (polynomial) subalgebra \hfill\pageref{pl:Wf}

    %Local functions and Poisson spectrum
    $\Pprim A$: Poisson primitive spectrum of $A$ \hfill \pageref{pl:Pprim}

    $P(\chi)$: Poisson core of $\chi \in \g^*$ \hfill \pageref{ind:Pchi}

    $\orbit{\chi}$: Pseudo-orbit of $\chi \in \g^*$ \hfill\pageref{def:psorbit}

    $\chi = \chi_1 + \dots \chi_\ell$: a local function \hfill\pageref{not:localfunc}

    $\n = (n_1, \dots, n_\ell)$: order of a local function \hfill\pageref{not:localfunc} 

    $\supp(\chi)$: support of a local function \hfill\pageref{not:localfunc} 

    $m_i = \left\lfloor \frac{n_i}{2}\right\rfloor $ \hfill\pageref{not:localfunc}
    
    % %Orbit method
    $B_\chi$: antisymmetric bilinear form  \hfill \pageref{pl:Bchi}
    
    $\p$: an (arbitrary) polarization \hfill \pageref{pl:polarization}

    $\rho_{\mf{h}, \g}= \frac{1}{2} \tr \ad_{\g/\mf{h}}(x)$: twist \hfill \pageref{def:onedimtwist}

    $\kk_\chi = \kk \cdot 1_\chi$: one-dimensional representation \hfill \pageref{def:orbitrep}

    $\kk'_{\chi, \p, \g} \cong \kk_\chi \otimes \rho_{\p, \g} $: twisted one-dimensional representation \hfill \pageref{def:orbitrep}

    $M'_{\chi, \p}$, $Q_{\chi,\p}$: (induced) twisted rep, annihilator \hfill \pageref{def:orbitrep}

    $\Prim A$: primitive spectrum \hfill \pageref{pl:Prim}

    $\Dx^\g$, $\overline{\Dx}^{\g}$: (quotient) Dixmier map \hfill \pageref{theorem: Kirillov method}

    $\widetilde{\Dx}^\g$: (strong) Dixmier map \hfill \pageref{theo:strongDx}
    
    % Chap 3
    $\p_\chi$: canonical polarization \hfill \pageref{prop:canpol}

    $\rad(f), \supp(f)$: radical and support of $f$\hfill \pageref{pl:radf}
    
    $M'_\chi$, $Q_\chi$: canonical local rep, annihilator \hfill \pageref{def:canrep}

    $\LT_\mathcal{C}$: leading term \hfill \pageref{not:leadterm} 

    % Chap 4
    %simplicity
    $u_{j, x}$, $u_{j}$, $\tilde{u}_{q, j}$, $\tilde{u}_{q, j, x}$: elements of $\W{-1}$  \hfill \pageref{eq:tildeudef}

    $a_{q, i}$, $a_{q, i, x}$: constants \hfill \pageref{eq: a k d definition}
    
    $\xi_{q, i, x}$, $\xi_{q, i}$: elements of $\Ua(\W{-1})$ \hfill \pageref{eq:xidef} 

    $\sigma_i, \varsigma_i$: words in $\Ua(\g)$  \hfill \pageref{not:(var)sigma}

    $\phi_k(r, s)$ \hfill  \pageref{not:phi}

    %primitive
    $\chi'$: twisted local function ($M_{\chi'} \cong M'_\chi$) \hfill  \pageref{not:chi'} 

    $\Dx^{\W{-1}}, \Dx^{W}, \Dx^{\Vir}$: Dixmier maps \hfill  \pageref{pl:Dx}

    $\widetilde{\Dx}^{\W{-1}}, \widetilde{\Dx}^{W}, \widetilde{\Dx}^{\Vir}$: strong Dixmier maps  \hfill \pageref{eq:dxVir}

    % Chap 5: 
    $A_m$, $\widetilde{A}_m$: (localized) Weyl algebra \hfill  \pageref{pl:Weyl}

    $\g_n = \W{0}/\W{n}$ with basis  $v_i = e_i + \W{n}$ \hfill  \pageref{pl:gn}

    $T_\infty = A_1 \otimes \Ua(\g_n)$, $T_n = A_1 \otimes \Ua(\g_n)$, $\widetilde{T}_n = \widetilde{A}_1 \otimes \Ua(\g_n)$ \hfill  \pageref{pl:Tn}

    $\Psi^{\W{-1}}_{\infty}$, $\Psi^{\W{-1}}_{n}$, $\Psi^{W}_{n}$  \hfill \pageref{eq:psiW-1n}, \pageref{eq:psiW-1infty} 

    $\Phi^{\W{-1}}_{\infty}$, $\Phi^{\W{-1}}_{n}$, $\Phi^{W}_{n}$   \hfill  \pageref{eq:phi-1inf}, \pageref{eq:phi-1inf2}

    $\mathbb{S}_\infty$, $\mathbb{S}_n$ \hfill \pageref{pl:S}

    $\g_\n, T_\n, \widetilde{T}_\n$ \hfill \pageref{def:gboldn}

    $\Psi^{\W{-1}}_{\n}, \Psi^{W}_{\n}$ \hfill \pageref{prop: psi multi}
    
    % Chap 6
    $\n \leq \widehat{\n}$ \hfill \pageref{eq:biggertuple}

    $I_{\chi'}, L_{\chi'} = \tilde{T}_{\widehat{n}}/I_{\chi'}$ \hfill \pageref{def:Ichi} 

    $\bar{\chi}, Q_{\bar{\chi}} $ \hfill \pageref{not:barchi}

    $\widehat{\chi}:=\tau_{\widehat{\n}}^*(\bar{\chi}), Q_{\widehat{\chi}}$ \hfill \pageref{not:barchi} 

    $\pi^{\widehat{\n}}: \Loc^{\leq \widehat{\n}} \to \g^*_{\widehat{\n}}$  \hfill \pageref{eq:mapchibar}

    $\p_{\widehat{\chi}}, M'_{\widehat{\chi}}, J_{\widehat{\chi}}$  \hfill  \pageref{prop:polbarchi}

    $\varphi_m: \g_{2m} \to A_m$ \hfill  \pageref{eq:weylmap}

    $\overline{\Psi}^W_{2m}, \overline{\Psi}^{\W{-1}}_{2m}$ \hfill  \pageref{eq: map even}
   
    $\tilde{A}_{m+1} I_\chi$ and $N_\chi = \tilde{A}_{m+1}/ \tilde{A}_{m+1} I_\chi$ \hfill  \pageref{eq: restricted module even}

    %Chap 7
    $g_s$ \hfill \pageref{eq: G_n action del}
    
    $G_n$, $G_\n$: adjoint groups \hfill \pageref{def:Gn} 

    $\mathrm{lie}^{\leq n} = \ad(\g_n)$ \hfill \pageref{pl:adgrp}
} 
\end{multicols}

\bibliographystyle{amsalpha}
\bibliography{refs.bib}
\end{document}